\newcommand{\Max}{{\rm Max}}
\DeclareMathOperator{\cofi}{hocofiber}
\newcommand{\tor}{\mathrm{Tor}}
\newcommand{\st}{\mathrm{st}}
\newcommand{\Idem}{\mathrm{Idem}}
\newcommand{\GL}{\mathrm{GL}}
\newcommand{\Spec}{\mathrm{Spec}}
\DeclareMathOperator{\hofi}{hofiber}
\newcommand{\spec}{{\rm Spec}\,}
\newcommand{\comp}{\mathfrak{Comp}}
\newcommand{\pol}{\mathfrak{Pol}}
\newcommand{\set}{\mathfrak{Set}}
\newcommand{\rC}{\mathfrak{C}}
\newcommand{\fC}{\mathfrak{A}}
\newcommand{\frakm}{\mathfrak{M}}
\newcommand{\rA}{\mathfrak{Ass}}
\newcommand{\Spt}{\mathfrak{Spt}}
\newcommand{\Co}{\mathfrak{Comm}}
\newcommand{\ab}{\mathfrak{Ab}}
\newcommand{\cF}{\mathcal F}
\newcommand{\cO}{\mathcal O}
\newcommand{\cV}{\mathcal V}
\newcommand{\N}{\mathbb N}
\newcommand{\C}{\mathbb C}
\newcommand{\R}{\mathbb R}
\newcommand{\F}{\mathbb F}
\newcommand{\Q}{\mathbb Q}
\newcommand{\Z}{\mathbb Z}
\newcommand{\cod}{{\rm cod}}
\newcommand{\map}{{\rm map}}
\def\colim{{\operatornamewithlimits{colim}}}
\newcommand{\coker}{{\rm coker}\,}
\newcommand{\comment}[1]{}
\theoremstyle{plain}
\newtheorem{theorem}[equation]{Theorem}
\newtheorem{introth}[equation]{Theorem}
\newtheorem{corollary}[equation]{Corollary}
\newtheorem{proposition}[equation]{Proposition}
\newtheorem{lemma}[equation]{Lemma}
\theoremstyle{definition}
\newtheorem{definition}[equation]{Definition}
\theoremstyle{remark}
\newtheorem{remark}[equation]{Remark}
\newtheorem{example}[equation]{Example}
\newtheorem{examples}[equation]{Examples}
\newtheorem*{ack}{Acknowledgements}
\begin{document}

\title{Algebraic Geometry of Topological Spaces I}

\keywords{algebraic K-theory, Serre's Conjecture, projective modules, rings of continuous functions, algebraic approximation}

\author{Guillermo Corti\~nas}
\address{Guillermo Corti\~nas, Departamento de Matem\'atica\\ FCEN-Universidad de Buenos Aires\\ Ciudad Universitaria Pab 1\\ (1428) Buenos Aires, Argentina}
\email{gcorti@dm.uba.ar}
\urladdr{http://mate.dm.uba.ar/\~{}gcorti}

\author{Andreas Thom}
\address{Andreas Thom, Mathematisches Institut der Universit\"at Leipzig,
Johannisgasse 26\\ 04103 Leipzig, Germany.}
\email{thom@math.uni-leipzig.de}
\urladdr{http://www.math.uni-leipzig.de/MI/thom}
\thanks{Corti\~nas' research was partly supported by grants PICT 2006-00836, UBACyT-X057, and
MTM2007-64704. Thom's research was partly supported by the DFG (GK \textit{Gruppen und Geometrie} G\"ottingen).}

\begin{abstract}
We use techniques from both real and complex algebraic geometry to study $K$-theoretic and related invariants of
the algebra $C(X)$ of continuous complex-valued functions on a compact Hausdorff topological space $X$. For example,
we prove a parametrized version of a theorem of Joseph Gubeladze; we show that if $M$ is a countable, abelian, cancellative, torsion-free, seminormal monoid, and $X$ is contractible, then every finitely generated projective module over $C(X)[M]$ is free. The particular case $M=\N_{0}^n$ gives a parametrized version of the celebrated theorem proved independently by Daniel Quillen and Andrei Suslin that finitely generated projective modules over a polynomial ring over a field are free. The conjecture of Jonathan Rosenberg which
predicts the homotopy invariance of the negative algebraic $K$-theory of $C(X)$ follows from the particular case $M=\Z^n$. We also give algebraic conditions for a functor from commutative algebras to abelian groups to be homotopy invariant on $C^*$-algebras, and for a homology theory of commutative algebras to vanish on $C^*$-algebras. These criteria have numerous applications. For example, the vanishing criterion applied to nil-$K$-theory implies that commutative $C^*$-algebras are $K$-regular. As another application, we show that the familiar formulas of
Hochschild-Kostant-Rosenberg and Loday-Quillen for the algebraic Hochschild and cyclic homology of the coordinate ring of a smooth algebraic variety remain valid
for the algebraic Hochschild and cyclic homology of $C(X)$. Applications to the conjectures of Be\u\i linson-Soul\'e and Farrell-Jones are also given.
\end{abstract}

\maketitle

\tableofcontents

\section{Introduction.}\label{sec:intro}
\numberwithin{equation}{section}
In his foundational paper \cite{fac}, Jean-Pierre Serre asked whether all finitely generated projective modules over the polynomial ring $k[t_1,\dots,t_n]$ over a field $k$ are free. This question, which became known as Serre's conjecture, remained open for about twenty years. An affirmative answer was given independently by Daniel Quillen \cite{quiqs} and Andrei Suslin \cite{susqs}. Richard G. Swan observed in \cite{swanlaurent} that the Quillen-Suslin theorem implies
that all finitely generated projective modules over the Laurent polynomial ring $k[t_1,t_1^{-1}\dots,t_n,t_n^{-1}]$ are free.
This was later generalized by Joseph Gubeladze \cite{gubelan1}, \cite{gubelan2}, who proved, among other things, that if $M$ is an abelian, cancellative, torsion-free, seminormal monoid, then every finitely generated projective module over $k[M]$ is free. Quillen-Suslin's theorem and Swan's theorem are the special cases $M=\N_0^n$ and $M=\Z^n$ of Gubeladze's result. On the other hand, it is classical that if $X$ is a contractible compact Hausdorff space, then all finitely generated projective modules over the algebra $C(X)$ of complex-valued continuous functions on $X$ -- which by another theorem of Swan, are the same thing as locally trivial complex vector bundles on $X$ -- are free. In this paper we prove (see \ref{thm:pswan}):

\begin{introth}\label{thm:intropgubel}
Let $X$ be a contractible compact space, and $M$ a countable, cancellative, torsion-free, seminormal, abelian monoid. Then every finitely generated projective module over $C(X)[M]$ is free.
\end{introth}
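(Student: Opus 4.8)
The plan is to reduce the theorem, by an algebraic approximation argument, to Gubeladze's theorem over the ground field $\C$, using Swan's identification of finitely generated projective $C(X)$-modules with complex vector bundles to feed in the contractibility of $X$.

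\emph{Reductions.} First I would reduce to the case $M$ finitely generated. A finitely generated projective $C(X)[M]$-module $P$ is the image of an idempotent matrix $e\in M_r(C(X)[M])$; its finitely many entries, and the finitely many equations $e^2=e$, are already defined and satisfied over $C(X)[M_0]$ for some finitely generated submonoid $M_0\subseteq M$, so $P$ is extended from a finitely generated projective $C(X)[M_0]$-module. Since $M$ is seminormal, the seminormalization of $M_0$ taken inside $M_0^{\mathrm{gp}}\subseteq M^{\mathrm{gp}}$ is again a finitely generated submonoid of $M$; replacing $M_0$ by it, we may assume $M_0$ is cancellative, torsion-free, seminormal and affine, i.e.\ a seminormal submonoid of some $\Z^n$. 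As freeness is inherited along the base change $C(X)[M_0]\to C(X)[M]$, it suffices to prove the theorem for such $M$. Next I would replace ``free'' by ``extended from $C(X)$'': it is enough to show every finitely generated projective $C(X)[M]$-module $P$ is isomorphic to $Q\otimes_{C(X)}C(X)[M]$ for a finitely generated projective $C(X)$-module $Q$ (necessarily $Q\cong P\otimes_{C(X)[M]}C(X)$ along the augmentation $\epsilon\colon C(X)[M]\to C(X)$, $M\mapsto 1$), because by Swan's theorem $Q$ is the module of sections of a complex vector bundle on $X$, which is trivial since $X$ is contractible, whence $Q$, and therefore $P$, is free. So the entire content is the parametrized assertion that finitely generated projective modules over $C(X)[M]$ are extended from $C(X)$ — a statement that no longer mentions contractibility, which is precisely what makes it accessible to the homotopy-invariance machinery of the paper.

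\emph{Algebraic approximation.} On the algebraic side one uses Gubeladze's theorem in the relative form valid over any regular ring essentially of finite type over a field — combining the field case with Lindel's theorem on the Bass--Quillen problem — with $\N_0^n$ replaced by an arbitrary seminormal affine monoid $M$: finitely generated projective $R[M]$-modules are extended from $R$ whenever $R$ is a smooth finitely generated $\C$-algebra. On the topological side, the idempotent $e$ defining $P$ has entries in $R[M]$ for some finitely generated $\C$-subalgebra $R=\cO(V)\subseteq C(X)$, with $V$ an affine complex variety receiving a continuous map from $X$. One then approximates: perturbing the generating functions within $C(X)$, and the idempotent along with them (idempotents sufficiently close to $e$ are conjugate to it, so the isomorphism class of $P$ is unchanged), one arranges the perturbed subalgebra $R'\subseteq C(X)$ to be smooth and still finitely generated. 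Once $P$ is defined over $R'[M]$ with $R'$ smooth, the relative Gubeladze theorem makes $P$ extended from $R'$, hence from $C(X)$, completing the argument.

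\emph{Main obstacle.} The delicate point is this approximation. A finitely generated $\C$-subalgebra of $C(X)$ is in general the coordinate ring of a \emph{singular} affine variety, over whose monoid ring the extension statement genuinely fails — for instance $R[t]$ can carry strictly more projective modules than $R$ when $R$ is the coordinate ring of a cuspidal cubic — so one cannot dispense with the perturbation. Making the real-analytic approximation of continuous functions compatible both with the algebraic structure and with the behaviour of finitely generated projective modules and their $K$-theory under it is where the real work lies; in the paper this is subsumed in a general criterion asserting that a suitable functor of commutative algebras (here the obstruction to a module over $-[M]$ being extended) is homotopy invariant on $C^*$-algebras, so that the algebra homotopy equivalence $C(X)\simeq\C$ coming from contractibility immediately gives the conclusion. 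The concluding passage from a possibly stable extension statement back to the level of modules costs nothing here, since over $C(X)$ stably free modules are already free.
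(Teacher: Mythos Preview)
Your route is genuinely different from the paper's, and there is a real gap at the end. Invoking the homotopy invariance criterion (Theorem~\ref{thm:htpy}) for an abelian-group valued functor such as $A\mapsto\coker\bigl(K_0(A)\to K_0(A[M])\bigr)$ is legitimate and yields $K_0(C(X))\cong K_0(C(X)[M])$, so every finitely generated projective $C(X)[M]$-module is \emph{stably} free. But your closing sentence --- ``over $C(X)$ stably free modules are already free'' --- is about the wrong ring: what is needed is that stably free $C(X)[M]$-modules are free, and that is precisely (a special case of) the statement being proved. No independent cancellation theorem for $C(X)[M]$ is on hand. The intermediate ``perturb the generators so that the subalgebra becomes smooth'' step is likewise not an argument as it stands: small perturbations of $f_1,\dots,f_m\in C(X)$ need not make $\C[f_1',\dots,f_m']$ smooth (try perturbing $x^2,x^3\in C([0,1])$). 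You correctly flag this as the hard point and defer to Theorem~\ref{thm:htpy}, but that lands you back at the $K_0$ level. Note also that your ``relative Gubeladze'' --- \emph{all} projectives over $\cO(V)[M]$ extended from $\cO(V)$ for smooth $V$ --- is stronger than what the paper actually cites (Swan's Theorem~\ref{thm:swan} gives this only in rank $>\dim V$).

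The paper's proof bypasses the stable-to-unstable passage entirely by working with a set-valued lifting problem rather than an abelian-group invariant. An idempotent $e\in M_\infty(C(X)[M])$ is read as a continuous map $e\colon X\to P_n(\C[M])$ (Gubeladze over $\C$ decomposes the idempotent space as $\coprod_n P_n(\C[M])$), and freeness becomes the existence of a lift along $\pi\colon\GL(\C[M])\to P_n(\C[M])$. The substance is that $\pi$ is a compact fibration (Theorem~\ref{thm:fib}) and that the quotient and subspace topologies on $P_n(\C[M])$ agree (algebraic compactness, Theorem~\ref{thm:algcomp}); this is where Swan's theorem enters, only at the $K_0$ level, and where the algebraic approximation is carried out --- but on $\ell^\infty(\C)$, where the domain is discrete and lifting through a desingularization is trivial, rather than on $C(X)$. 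Contractibility of $X$ then produces the lift $g\colon X\to\GL(\C[M])$, i.e.\ an element of $\GL(C(X)[M])$ conjugating $e$ to $1_n\oplus 0_\infty$: freeness directly, with no cancellation step.
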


Moreover we show (Theorem \ref{thm:loctriv}) that bundles of finitely generated free $\C[M]$-modules over a not necessarily contractible, compact Hausdorff $X$ which are direct summands of trivial bundles, are locally trivial.
The case $M=\N_0^n$ of Theorem \ref{thm:intropgubel} gives a parametrized version of Quillen-Suslin's theorem. The case $M=\Z^n$ is connected with a conjecture of Jonathan Rosenberg \cite{roshan} which predicts that the negative algebraic $K$-theory groups of $C(X)$ are homotopy invariant for compact Hausdorff $X$. Indeed, if $R$ is any ring, then
the negative algebraic $K$-theory group $K_{-n}(R)$ is defined as a certain canonical direct summand of $K_0(R[\Z^n])$;
the theorem above thus implies that $K_{-n}(C(X))=0$ if $X$ is contractible. Using this and excision, we derive
the following result (see Theorem \ref{thm:rosen1}).

\begin{introth}\label{thm:introrosconj}
Let $\comp$ be the category of compact Hausdorff spaces and let $n>0$. Then the functor $\comp\to \ab$,
$X\mapsto K_{-n}(C(X))$ is homotopy invariant.
\end{introth}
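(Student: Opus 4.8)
The plan is to derive homotopy invariance from two facts: that $K_{-n}(C(Y))=0$ whenever $Y$ is a contractible compact space and $n>0$, and that negative algebraic $K$-theory satisfies excision for the ideals $C_0(U)$ arising from open subsets. The first is precisely the consequence of Theorem \ref{thm:intropgubel} recorded above (apply it with $M=\Z^n$). For the second I would invoke that such an ideal, being a commutative $C^*$-algebra, is $H$-unital, so that for every compact Hausdorff $Y$ and closed $Z\subseteq Y$ the short exact sequence $0\to C_0(Y\setminus Z)\to C(Y)\to C(Z)\to 0$ induces a long exact sequence of $K$-groups in all degrees, negative ones included.

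The key computation is the vanishing of the negative $K$-theory of a cone ideal. Fix a compact Hausdorff space $X$, let $CX=X\times[0,1]/(X\times\{0\})$ be the unreduced cone --- compact Hausdorff and contractible --- and let $J\triqui C(X\times[0,1])$ be the ideal of functions vanishing on $X\times\{0\}$. Then $J$ is also an ideal of $C(CX)$, on which it is the kernel of evaluation at the cone point, so there is a short exact sequence $0\to J\to C(CX)\to\C\to 0$. Its long exact sequence contains, for $m\ge 1$, the portion $K_{-m}(C(CX))\to K_{-m}(\C)\to K_{-m-1}(J)\to K_{-m-1}(C(CX))$; the two outer groups vanish because $CX$ and a point are contractible compact spaces, and the map $K_0(C(CX))\cong\Z\to\Z\cong K_0(\C)$ induced by evaluation at the cone point is onto. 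A short diagram chase then gives $K_{-k}(J)=0$ for every $k\ge 1$.

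Finally I would assemble the pieces. To show that $F:=K_{-n}(C(-))\colon\comp\to\ab$ is homotopy invariant it suffices to check that for every compact Hausdorff $X$ the two inclusions $j_0,j_1\colon X\to X\times[0,1]$ induce one and the same homomorphism $F(X\times[0,1])\to F(X)$, since any two homotopic maps $X\to Y$ are obtained from a single map $X\times[0,1]\to Y$ by precomposition with $j_0$ and $j_1$ respectively. Now $j_0^*\colon C(X\times[0,1])\to C(X)$, restriction to $X\times\{0\}$, is surjective with kernel $J$, so excision gives $K_{-n}(J)\to K_{-n}(C(X\times[0,1]))\xrightarrow{\,j_0^*\,}K_{-n}(C(X))\to K_{-n-1}(J)$ with both outer terms zero for $n>0$; hence $j_0^*$ is an isomorphism. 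Since $\pi\circ j_0=\pi\circ j_1=\mathrm{id}_X$ for the projection $\pi\colon X\times[0,1]\to X$, we have $j_0^*\pi^*=j_1^*\pi^*=\mathrm{id}$, and the invertibility of $j_0^*$ then forces $\pi^*=(j_0^*)^{-1}$ and $j_1^*=(\pi^*)^{-1}=j_0^*$, as required. Granting Theorem \ref{thm:intropgubel}, the one substantive point is the availability of the excision sequence for the non-unital ideal $J$; its essential input is the $H$-unitality of commutative $C^*$-algebras, after which the argument is a formal diagram chase.
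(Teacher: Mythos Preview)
Your argument is correct. It differs from the paper's proof mainly in the $K$-theoretic machinery invoked. The paper does not work with $K_{-n}$ directly: instead it uses the fundamental theorem to realize $K_{-n}(C(X))$ as a natural direct summand of $K_0(C(X)[\Z^n])$, and then proves that $X\mapsto K_0(C(X)[M])$ is homotopy invariant for any countable torsion-free seminormal cancellative $M$ (Theorem~\ref{thm:k0htpy}). That homotopy invariance follows from Theorem~\ref{thm:intropgubel} together with Proposition~\ref{prop:homcor}, which says that any split-exact $\ab$-valued functor on $\comp$ vanishing on contractible spaces is homotopy invariant; the cone argument you wrote out is essentially the content of that proposition. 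The key point is that the paper needs only the \emph{split-exactness} of $K_0$, which is elementary, whereas you invoke the full Suslin--Wodzicki excision theorem for $C^*$-algebras to get long exact sequences in negative degrees. So your route is a bit more direct (no detour through $K_0$ of a Laurent polynomial ring) but imports heavier input; the paper's route uses lighter $K$-theoretic tools and yields the stronger intermediate statement about $K_0(C(X)[M])$ for general $M$ along the way.
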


A partial result in the direction of Theorem \ref{thm:introrosconj} was obtained by Eric Friedlander and Mark E. Walker in \cite{fw}. They proved that $K_{-n}(C(\Delta^p))=0$ for $p\ge 0$, $n>0$. In Section \ref{subsec:second},
we give a second proof of Theorem \ref{thm:introrosconj} which uses the Friedlander-Walker result. Elaborating
on their techniques, and combining them with our own methods, we obtain the following general criterion for homotopy invariance (see Theorem \ref{thm:htpy}).

\begin{introth} \label{thm:introhtpy}
Let $F$ be a functor on the category $\Co/\C$ of commutative $\C$-algebras with values in the category $\ab$ of abelian groups. Assume that the following three conditions are satisfied.
\begin{itemize}
\item[(i)] $F$ is split-exact on $C^*$-algebras.
\item[(ii)] $F$ vanishes on coordinate rings of smooth affine varieties.
\item[(iii)] $F$ commutes with filtering colimits.\\
\end{itemize}
\vspace{0.0125cm}

Then the functor
\[
\comp\to\ab,\quad X \mapsto F(C(X)),
\]
is homotopy invariant and $F(C(X))=0$ for $X$ contractible.
\end{introth}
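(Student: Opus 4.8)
The plan is to reduce the statement to the contractibility case and then exploit the geometry of $C(X)$ through algebraic approximation. First I would recall the standard reduction: a functor $G$ on $\comp$ with values in $\ab$ is homotopy invariant if and only if $G(\mathrm{pt})\to G(I)$ is an isomorphism (where $I=[0,1]$), equivalently if $G$ sends every projection $X\times I\to X$ to an isomorphism; and for the second assertion, once homotopy invariance is known, contractibility of $X$ forces $F(C(X))\cong F(C(\mathrm{pt}))=F(\C)$, so I would separately check $F(\C)=0$ — but this follows immediately from hypothesis (ii), since $\C$ is the coordinate ring of a point, a smooth affine variety. Thus the whole theorem comes down to proving homotopy invariance of $X\mapsto F(C(X))$.

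For homotopy invariance, the key input is that $C(X\times I)$ can be approximated, in a suitable sense, by coordinate rings of smooth affine varieties, together with the three hypotheses on $F$. Concretely, I would use a version of the Friedlander--Walker algebraic approximation technique: the algebra $C(X\times I)$, and more generally $C(X\times \Delta^p)$, sits inside a filtering colimit (over finer and finer triangulations, or over polynomial/semialgebraic approximations of continuous functions) of algebras built out of coordinate rings of smooth affine real or complex varieties. Hypothesis (iii) lets me pass $F$ through such colimits, hypothesis (ii) kills the contributions of the smooth pieces, and hypothesis (i) — split-exactness on $C^*$-algebras — is what allows one to control the gluing: when one assembles $C(X)$ from pieces along a Mayer--Vietoris or cotriangle of $C^*$-algebras, split-exactness turns the relevant sequences into split short exact sequences of abelian groups, so that vanishing on the pieces propagates. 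The formal mechanism is to show that the simplicial abelian group $[p]\mapsto F(C(X\times\Delta^p))$ is both constant (by an Eilenberg swindle / contractibility of $\Delta^p$ argument fed through the hypotheses) and computes the relevant homotopy-invariant replacement, forcing $F(C(X\times I))\to F(C(X))$ to be an isomorphism.

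In more detail, the steps in order would be: (1) observe $F(\C)=0$ from (ii); (2) establish that $F$ vanishes on $C(X)$ for $X$ a finite simplicial complex that is contractible — here one induces on the number of simplices, using a Mayer--Vietoris cotriangle for $C(X)=C(A)\times_{C(A\cap B)}C(B)$, split-exactness from (i) to get a short exact sequence of $F$-values, and the inductive hypothesis together with the fact that the nerve-type pieces are (up to the colimit in (iii)) smooth; (3) bootstrap from finite complexes to arbitrary compact $X$ by writing $C(X)$ as a filtering colimit of $C(X_\alpha)$ for $X_\alpha$ finite complexes approximating $X$ (inverse limit of spaces, direct limit of algebras), applying (iii); (4) deduce homotopy invariance: for general compact $X$, compare $F(C(X\times I))$ with $F(C(X))$ via the simplicial object $[p]\mapsto F(C(X\times\Delta^p))$, showing it is homotopy constant by reducing each level, via approximation and the hypotheses, to the already-handled contractible-complex case fibered over $X$.

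The main obstacle I anticipate is step (4) together with the precise form of the approximation in step (2)--(3): making rigorous that $C(X\times\Delta^p)$ — or the comparison map into it — is controlled by coordinate rings of \emph{smooth} affine varieties through a filtering colimit, uniformly enough that split-exactness can be applied levelwise and the simplicial homotopy is genuine. This is exactly where the Friedlander--Walker machinery and the real/complex algebraic-geometry approximation results from earlier in the paper must be invoked; the subtle point is smoothness (one must resolve or avoid singularities of the approximating varieties) and the compatibility of the splittings across the diagram. Everything else — the reductions, the Mayer--Vietoris bookkeeping, the colimit arguments — is routine once that geometric input is in place.
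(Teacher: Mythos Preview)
Your broad strategy --- reduce homotopy invariance to vanishing on contractibles (this is exactly Proposition \ref{prop:homcor}), then reduce contractible polyhedra to simplices via cones and star-like Mayer--Vietoris decompositions --- matches the paper. But two of your steps contain real gaps.

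Step (3) is false as stated: for a general compact Hausdorff $X$, the algebra $C(X)$ is \emph{not} a filtering colimit of algebras $C(X_\alpha)$ with $X_\alpha$ polyhedra. The remark following Theorem \ref{thm:draw} gives an explicit Cantor-set counterexample. What the paper proves instead is that $F(C(X))$ is a natural \emph{retract} of the right Kan extension $\colim_{f\in(X\downarrow\pol)}F(C(\cod f))$, via the construction in \eqref{eq:fixem}, and that this Kan extension is homotopy invariant by the Calder--Siegel theorem \ref{thm:cs}. Neither ingredient follows from hypothesis (iii) alone.

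More seriously, your step (2) never actually proves $F(C(\Delta^n))=0$; the phrase ``the nerve-type pieces are (up to the colimit) smooth'' is precisely what fails. Writing $C(\Delta^n)=\colim_{\iota:\Delta^n\to Y_{an}}\cO(Y)$ reduces the problem to killing $\iota^*\alpha$ for each $\alpha\in F(\cO(Y))$. Hironaka plus Jouanolou do produce a smooth affine $Y'\to Y$, but a continuous $\iota:\Delta^n\to Y_{an}$ need \emph{not} lift to $Y'_{an}$: the desingularization is proper and surjective but not a fibration over the singular locus. The paper's actual mechanism is Theorem \ref{thm:main}: one chooses a compact semi-algebraic $T\supset\iota(\Delta^n)$ in $Y$, builds a compact semi-algebraic $S\subset Y'$ surjecting onto $T$, and invokes Hardt triviality to produce a triangulation of $T$ on whose simplices the split-exact kernel $\ker(F(\Delta^m)\to F(f^{-1}(\Delta^m)))$ is trivial. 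One then subdivides $\Delta^n$ so finely that each top simplex lands in a closed star of that triangulation, and uses split-exactness (i) to glue the simplex-by-simplex vanishing. Your Mayer--Vietoris induction cannot substitute for this semi-algebraic input, and the simplicial object $[p]\mapsto F(C(X\times\Delta^p))$ of your step (4) plays no role and would in any case require $F(C(\Delta^p))=0$ as input.
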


Observe that $K_{-n}$ satisfies all the hypothesis of the theorem above $(n>0)$. This gives a third proof of Theorem \ref{thm:introrosconj}. We also use Theorem \ref{thm:introhtpy} to prove the following vanishing theorem for homology theories (see \ref{thm:vanish}). In this paper a homology theory on a category $\rC$ of algebras is simply a functor $E:\rC \to \Spt$ to the category of spectra which preserves finite products up to homotopy.

\begin{introth}\label{thm:introvanish}
Let $E:\Co/\C\to \Spt$ be a homology theory of commutative $\C$-algebras and $n_0\in\Z$. Assume that the following three conditions are satisfied.
\begin{itemize}
\item[(i)] $E$ satisfies excision on commutative $C^*$-algebras.
\item[(ii)] $E_n$ commutes with filtering colimits for $n\ge n_0$.
\item[(iii)] $E_n(\cO(V))=0$ for each smooth affine algebraic variety $V$ for $n\ge n_0$.\\
\end{itemize}
\vspace{0.0125cm}

Then $E_n(A)=0$ for every commutative $C^*$-algebra $A$ for $n\ge n_0$.
\end{introth}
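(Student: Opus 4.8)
The plan is to deduce the statement from Theorem \ref{thm:introhtpy}, applied with $F=E_n$ for each $n\ge n_0$, and then to use the excision hypothesis to upgrade ``vanishing on contractibles'' to ``vanishing on all commutative $C^*$-algebras''. First I would check that, for fixed $n\ge n_0$, the functor $E_n\colon\Co/\C\to\ab$ satisfies the three hypotheses of Theorem \ref{thm:introhtpy}: hypothesis (iii) there is hypothesis (ii) here, hypothesis (ii) there is hypothesis (iii) here, and for hypothesis (i) I would observe that excision carries a split extension $0\to I\to B\to B/I\to 0$ of commutative $C^*$-algebras to a homotopy fibration $E(I)\to E(B)\to E(B/I)$; a section of $B\to B/I$ then makes the connecting maps of the long exact sequence vanish and exhibits $E_n(B)\cong E_n(I)\oplus E_n(B/I)$, so $E_n$ is split-exact. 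Hence Theorem \ref{thm:introhtpy} applies and gives, for every $n\ge n_0$, that the functor $\comp\to\ab$, $X\mapsto E_n(C(X))$, is homotopy invariant and vanishes on contractible $X$.

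Next I would run the bootstrap. A point is contractible, so $E_n(\C)=0$; hence for $A=C_0(Y)$ with $Y$ locally compact Hausdorff, splitting the unitalization sequence $0\to A\to C(Y^{+})\to\C\to 0$ reduces us to proving $E_n(C(X))=0$ for $X$ compact Hausdorff. For such $X$ the unreduced cone $CX$ is compact and contractible, so $E_n(C(CX))=0$; feeding the restriction extension $0\to C_0(CX\setminus X)\to C(CX)\to C(X)\to 0$ into excision, using the homeomorphism $(CX\setminus X)^{+}\cong\Sigma X$ and split-exactness for the unitalization $0\to C_0(CX\setminus X)\to C(\Sigma X)\to\C\to 0$, one obtains a suspension isomorphism $E_n(C(\Sigma X))\cong E_{n+1}(C(X))$ valid for all $n\ge n_0$. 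Iterating from $C(S^{0})=\C\times\C$, on which $E_n$ vanishes since $E$ preserves finite products and $E_n(\C)=0$, yields $E_n(C(S^{k}))=0$ for all $k$, and then the unitalization $0\to C_0(\R^{k})\to C(S^{k})\to\C\to 0$ gives $E_n(C_0(\R^{k}))=0$. An induction on the number of cells, using excision for the extensions $0\to C_0(\R^{j})\to C(Y_i)\to C(Y_{i-1})\to 0$ attached to a cell filtration of a finite $CW$-complex $P$, then gives $E_n(C(P))=0$.

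Finally I would pass from finite complexes to arbitrary compact Hausdorff $X$. Here $C(X)$ is the filtering colimit in $\Co/\C$ of its $C^*$-subalgebras $C(Z)$, where $Z$ ranges over the compact metrizable quotients of $X$ cut out by finite sets of functions, and each such $Z$ is an inverse limit of finite polyhedra $P_j$, so that $C(Z)$ is the $C^*$-completion of $\bigcup_j C(P_j)$; combining this with hypothesis (ii) and the vanishing from the previous paragraph yields $E_n(C(X))=0$. I expect this last reduction to be the main obstacle: one must make precise the passage from the algebraic filtering colimit $\bigcup_j C(P_j)$ to its $C^*$-completion $C(Z)$ --- the place where the algebraic approximation arguments of the earlier sections are needed --- and one must watch the degree bound, since the excision long exact sequences only control $E_n$ for $n\ge n_0$ and the connecting maps that would reach degree $n_0-1$ are unconstrained. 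By contrast, the invocation of Theorem \ref{thm:introhtpy} and the cone/suspension computation are essentially formal once the hypotheses are in place.
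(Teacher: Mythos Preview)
Your first three steps are fine and essentially parallel the paper. Verifying that excision implies split-exactness of $E_n$, applying Theorem~\ref{thm:htpy} to obtain homotopy invariance and vanishing on contractibles, and then running an induction over finite complexes using excision is correct; your suspension/cell induction is a perfectly good variant of the paper's induction on $\dim D$ via the Mayer--Vietoris sequence
\[
E_{n+1}(C(L))\to E_n(C(D))\to E_n(C(\Delta^m))\oplus E_n(C(D'))\to E_n(C(L)),
\]
and both arguments yield $E_n(C(D))=0$ for every compact polyhedron $D$ and every $n\ge n_0$.

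The genuine gap is your final step. Your proposed route through compact metrizable quotients $Z$ and inverse systems of polyhedra $P_j$ runs into exactly the obstruction you flag: $C(Z)$ is the $C^*$-completion of $\bigcup_j C(P_j)$, not its algebraic colimit, and hypothesis~(ii) gives you nothing about $C^*$-completions. There is no mechanism in the hypotheses to pass from $E_n(\bigcup_j C(P_j))=0$ to $E_n(C(Z))=0$, and the ``algebraic approximation arguments of the earlier sections'' do not address this passage either --- they write $C(X)$ as an algebraic colimit of coordinate rings $\cO(Y)$, not of $C(P)$'s. So as written this step fails.

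The paper closes this gap differently, and you already have the tool in hand. The proof of Theorem~\ref{thm:draw} shows (using only that $E_n$ commutes with algebraic filtering colimits) that for every $X\in\comp$ the group $E_n(C(X))$ is a natural direct summand of the right Kan extension
\[
\colim_{f\in (X\downarrow\pol)} E_n\bigl(C(\cod(f))\bigr).
\]
Once you know $E_n(C(D))=0$ for every compact polyhedron $D$, this colimit vanishes and hence so does its summand $E_n(C(X))$. Replacing your last paragraph by this one-line appeal to the argument of Theorem~\ref{thm:draw} completes the proof; no detour through metrizable quotients or $C^*$-completions is needed.
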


Recall that a ring $R$ is called $K$-regular if
\[
\coker(K_n(R)\to K_n(R[t_1,\dots,t_p]))=0\qquad (p\ge 1, n\in\Z).
\]
As an application of Theorem \ref{thm:introvanish} to the homology theory $F^p(A)=\cofi(K(A\otimes_\C\cO(V))\to K(A\otimes_\C\cO(V)[t_1,\dots,t_p])$ where $V$ is a smooth algebraic variety, we obtain the following (Theorem
\ref{thm:kreg}).

\begin{introth}\label{thm:introkreg}
Let $V$ be a smooth affine algebraic variety over $\C$, $R=\cO(V)$, and $A$ a commutative $C^*$-algebra. Then $A\otimes_\C R$ is $K$-regular.
\end{introth}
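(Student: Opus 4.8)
The plan is to deduce the theorem from Theorem~\ref{thm:introvanish} applied to the homology theories
\[
F^p\colon\Co/\C\longrightarrow\Spt,\qquad F^p(A)=\cofi\bigl(K(A\otimes_\C R)\to K(A\otimes_\C R[t_1,\dots,t_p])\bigr)\qquad(p\ge 1),
\]
and then to read off $K$-regularity of $A\otimes_\C R$ from the vanishing of all the groups $F^p_n(A)$. First I would check that each $F^p$ is a homology theory on $\Co/\C$: algebraic $K$-theory carries finite products of rings to products of spectra up to homotopy, the base changes $-\otimes_\C R$ and $-\otimes_\C R[t_1,\dots,t_p]$ are exact (everything is flat over the field $\C$) and preserve finite products, and $\cofi$ of a natural transformation between product-preserving functors into $\Spt$ is again product-preserving. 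The same formal manipulations, together with the facts that algebraic $K$-theory commutes with filtering colimits of rings and that homotopy groups of spectra commute with filtering colimits, show that $F^p_n$ commutes with filtering colimits for every $n\in\Z$; this is hypothesis (ii) of Theorem~\ref{thm:introvanish} for any $n_0$.

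For hypothesis (iii), let $W$ be a smooth affine algebraic variety over $\C$. Then $\cO(W)\otimes_\C R=\cO(W\times V)$ and $\cO(W)\otimes_\C R[t_1,\dots,t_p]=\cO(W\times V)[t_1,\dots,t_p]$, and $W\times V$ is again smooth affine, so $S=\cO(W\times V)$ is a regular noetherian ring. Since regular noetherian rings are $K$-regular, the map $K(S)\to K(S[t_1,\dots,t_p])$ is an equivalence of nonconnective spectra, hence $F^p(\cO(W))$ is contractible and $F^p_n(\cO(W))=0$ for all $n$.

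The delicate step is hypothesis (i), excision on commutative $C^*$-algebras. Given a short exact sequence $0\to I\to B\to B/I\to 0$ of commutative $C^*$-algebras --- whose outer terms are again commutative $C^*$-algebras --- tensoring with the flat $\C$-modules $R$ and $R[t_1,\dots,t_p]$ keeps it exact, producing short exact sequences of not-necessarily-unital $\C$-algebras. Here one invokes that $C^*$-algebras are $H$-unital (Suslin--Wodzicki) and that $H$-unitality persists after tensoring over $\C$ with a flat unital $\C$-algebra, so that the ideals $I\otimes_\C R$ and $I\otimes_\C R[t_1,\dots,t_p]$ satisfy excision in algebraic $K$-theory; since a map of homotopy fibration sequences of spectra induces a homotopy fibration sequence on the homotopy cofibers of its vertical arrows, $F^p(I)\to F^p(B)\to F^p(B/I)$ is a homotopy fibration. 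Controlling the algebraic $K$-theory of the nonunital rings $I\otimes_\C R$ is the part of the argument I expect to require the most care; the rest is formal once Theorem~\ref{thm:introvanish} is in hand.

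Having verified (i)--(iii), Theorem~\ref{thm:introvanish} applied with $n_0$ taken arbitrarily negative gives $F^p_n(A)=0$ for every commutative $C^*$-algebra $A$ and every $n\in\Z$. The long exact sequence of the homotopy cofibration defining $F^p$ then shows that $K_n(A\otimes_\C R)\to K_n(A\otimes_\C R[t_1,\dots,t_p])$ is an isomorphism for all $n$, so in particular $\coker\bigl(K_n(A\otimes_\C R)\to K_n(A\otimes_\C R[t_1,\dots,t_p])\bigr)=0$. Since $p\ge 1$ was arbitrary, $A\otimes_\C R$ is $K$-regular.
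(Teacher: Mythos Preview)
Your proposal is correct and follows essentially the same route as the paper: define $F^p(A)=\cofi(K(A\otimes_\C R)\to K(A\otimes_\C R[t_1,\dots,t_p]))$, verify the three hypotheses of Theorem~\ref{thm:introvanish}, and conclude. For the excision step you hedged on, the paper simply cites \cite[Corollary~9.7]{MR997314} to obtain that $A[t_1,\dots,t_p]\otimes_\C R$ is $K$-excisive for every $C^*$-algebra $A$, which is exactly the $H$-unitality-is-preserved-under-tensoring argument you sketched; so your instinct there was right and no further work is needed.
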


The case $R=\C$ of the previous result was discovered by Jonathan Rosenberg, see Remark \ref{rem:creditreg}.

We also give an application of Theorem \ref{thm:introvanish} which concerns the algebraic Hochschild and cyclic homology of $C(X)$.
We use the theorem in combination with the celebrated results of Gerhard Hochschild, Bertram Kostant and Alex Rosenberg (\cite{hkr}) and of Daniel Quillen and Jean-Louis Loday (\cite{lq}) on the Hochschild and cyclic homology
of smooth affine algebraic varieties and the spectral sequence of Christian Kassel and Arne Sletsj{\o}e (\cite{ks}), to prove the following (see Theorem \ref{thm:hh} for a full statement of our result and for the appropriate definitions).

\begin{introth}\label{thm:introhh}
Let $k\subset\C$ be a subfield. Write $HH_*(/k)$, $HC_*(/k)$, $\Omega^*_{/k}$, $d$ and $H_{dR}^*(/k)$ for algebraic  Hochschild and
cyclic homology, algebraic K\"ahler differential forms, exterior differentiation, and algebraic de Rham cohomology, all taken relative to the field $k$. Let $X$ be a compact Hausdorff space. Then
\begin{gather*}
HH_n(C(X)/k)=\Omega^n_{C(X)/k}\\
HC_n(C(X)/k)=\Omega^n_{C(X)/k}/d\Omega^{n-1}_{C(X)/k}\oplus\bigoplus_{2\le 2p\le n}H_{dR}^{n-2p}(C(X)/k)\qquad (n\in\Z)
\end{gather*}
\end{introth}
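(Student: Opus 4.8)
The plan is to deduce the Hochschild and cyclic homology formulas from the vanishing Theorem \ref{thm:introvanish} by comparing $HH_*(C(X)/k)$ (resp.\ $HC_*$) with the ``na\"ive'' candidate answer $\Omega^*_{C(X)/k}$ (resp.\ the Hodge-type direct sum) and showing that the difference is a homology theory to which Theorem \ref{thm:introvanish} applies. Concretely, recall that for any commutative $k$-algebra $R$ there is a natural antisymmetrization map $\Omega^n_{R/k}\to HH_n(R/k)$; by the Hochschild--Kostant--Rosenberg theorem this map is an isomorphism when $R$ is smooth over $k$, and (via the Loday--Quillen/Kassel--Sletsj\o e computation) the analogous statement holds for $HC_*$ with the stated Hodge decomposition. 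So the first step is to assemble, for each fixed $n$, a functor $E_n$ on $\Co/\C$ whose value on $R$ is the (co)fiber of the antisymmetrization map, so that $E_n(\cO(V))=0$ for $V$ smooth affine by HKR and Loday--Quillen. One must be slightly careful here: $\Omega^*$ and $HH_*$ are relative to the subfield $k\subset\C$, not to $\C$, but $C(X)$ and $\cO(V)$ are $\C$-algebras, so one should regard everything as a functor on $\Co/\C$ while computing the invariants over $k$; since $\C/k$ is a (huge, but filtered-colimit-of-finitely-generated) field extension this causes no problem with the colimit hypothesis.

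The second step is to verify the three hypotheses of Theorem \ref{thm:introvanish} for this $E$ (or rather, to package $HH$, $HC$ and $\Omega^*$ as homology theories valued in spectra and take the appropriate homotopy cofiber). Hypothesis (ii), commutation with filtering colimits, holds because Hochschild homology, cyclic homology, and K\"ahler differentials all commute with filtering colimits of algebras — each is built functorially from tensor powers and these commute with filtered colimits — and the de Rham pieces $H^*_{dR}$ inherit this. Hypothesis (iii) is precisely the classical HKR and Loday--Quillen theorems together with the Kassel--Sletsj\o e spectral sequence, which identifies $HC_*$ of a smooth algebra with the stated sum of de Rham cohomology and $\Omega^n/d\Omega^{n-1}$; thus the comparison map is an isomorphism on smooth affine coordinate rings and the cofiber vanishes there. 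The key input, and the step I expect to be the main obstacle, is hypothesis (i): excision on commutative $C^*$-algebras. Hochschild and cyclic homology do \emph{not} satisfy excision for arbitrary ideals, so one cannot apply the theorem to $HH$, $HC$, $\Omega^*$ separately; the right move is to show that the \emph{cofiber} $E$ of the antisymmetrization map is excisive on commutative $C^*$-algebras. For this I would use that commutative $C^*$-algebras are H-unital (indeed have bounded approximate units, hence are H-unital, so $HH$ and $HC$ satisfy Wodzicki excision among them) and that $\Omega^*_{/k}$, being a quotient functor of $\otimes$-powers, also behaves well on such algebras; one checks that the relevant relative terms agree. Equivalently, one may observe that the obstruction to excision for $HH$ and for $HC$ is measured by the same Tor-type bifunctor that obstructs excision for $\Omega^*$, and these cancel in the cofiber.

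Once excision for the cofiber homology theory $E$ is established, Theorem \ref{thm:introvanish} (applied with $n_0$ ranging over all of $\Z$, i.e.\ for every $n$) gives $E_n(A)=0$ for every commutative $C^*$-algebra $A$; in particular $E_n(C(X))=0$, which says exactly that the antisymmetrization maps $\Omega^n_{C(X)/k}\xrightarrow{\ \sim\ } HH_n(C(X)/k)$ and the corresponding map for cyclic homology are isomorphisms. Unravelling the cyclic case through the Kassel--Sletsj\o e spectral sequence — which now degenerates for $C(X)$ because its $E_2$-page is built from the de Rham complex of $C(X)$ and the relevant differentials are detected by the same vanishing — yields the displayed formula
\[
HC_n(C(X)/k)=\Omega^n_{C(X)/k}/d\Omega^{n-1}_{C(X)/k}\oplus\bigoplus_{2\le 2p\le n}H_{dR}^{n-2p}(C(X)/k).
\]
The remaining bookkeeping — checking naturality, that the spectra are correctly set up so that $\pi_n$ recovers $HH_n$ and $HC_n$, and that the finite-product (homology theory) axiom holds — is routine and will be carried out in the body of the proof of Theorem \ref{thm:hh}.
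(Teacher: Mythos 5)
Your overall strategy coincides with the paper's: both reduce the statement to Theorem \ref{thm:vanish} applied to the ``difference'' between Hochschild/cyclic homology and the differential-forms answer, with hypothesis (iii) supplied by Hochschild--Kostant--Rosenberg, Loday--Quillen and the Kassel--Sletsj{\o}e spectral sequence (needed because $\cO(V)$ is smooth over $\C$, not over $k$), hypothesis (ii) being routine, and hypothesis (i) coming from Wodzicki's theorem that $C^*$-algebras satisfy $\tor_*^{A^+_k}(k,A)=0$ for every subfield $k\subset\C$. The packaging differs, and the difference matters exactly at the step you flag as the main obstacle. You form the cofiber of the antisymmetrization map and justify its excisivity by an appeal to ``cancelling Tor obstructions''; as written this is not an argument, and note that $\Omega^n_{-/k}$ by itself need not be excisive on $C^*$-algebras in the naive sense. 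The paper avoids the issue entirely by working with the Hodge ($\lambda$-)decomposition $C(A/k)=\bigoplus_p C^{(p)}(A/k)$: each piece is a \emph{natural direct summand} of the Hochschild complex, hence inherits excision on $C^*$-algebras from Remark \ref{rem:wodpnas}, and one applies Theorem \ref{thm:vanish} to $E^{(p)}$ with $n_0=p+1$ to get $HH_n^{(p)}(C(X)/k)=0$ for $n>p$, which says precisely that $\mu$ is a quasi-isomorphism. Your cofiber route can be repaired by the same direct-summand principle: since $\mu\circ\epsilon=\mathrm{id}$ in characteristic zero, $\epsilon$ is naturally split injective and its cofiber is a natural direct summand of the Hochschild complex, hence excisive; but then you are essentially reconstructing the paper's device.

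Two smaller corrections. First, for the cyclic statement the relevant comparison must be a map of \emph{mixed} complexes; this is $\mu$ (which satisfies $\mu B=d\mu$ with the $1/n!$ normalization), not the antisymmetrization $\epsilon$, and once $\mu$ is known to be a quasi-isomorphism the $HC$ formula follows from the standard computation of $HC$ of $(\Omega_{A/k},0,d)$ -- not from any degeneration of the Kassel--Sletsj{\o}e spectral sequence for $C(X)$, which in the paper is used only to establish the smooth case over $k\subsetneq\C$ (Corollary \ref{coro:ks}). Second, your remark that $\C/k$ being a filtered colimit of finitely generated extensions ``causes no problem'' is beside the point; the real content needed there is the base-change spectral sequence itself.
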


We also apply Theorem \ref{thm:introvanish} to the $K$-theoretic isomorphism conjecture of Farrell-Jones and to the Beilison-Soul\'e conjecture. The $K$-theoretic isomorphism conjecture for the group $\Gamma$ with coefficients in a ring $R$ asserts that a certain assembly map
\[
\mathcal{A}^\Gamma(R):\mathbb{H}^{\Gamma}(E_{\mathcal{VC}}(\Gamma),K(R))\to K(R[\Gamma])
\]
is an equivalence. Applying Theorem \ref{thm:introvanish} to the cofiber of the assembly map, we obtain that if
$\mathcal{A}^\Gamma(\cO(V))$ is an equivalence for each smooth affine algebraic variety $V$ over $\C$, then
$\mathcal{A}^\Gamma(A)$ is an equivalence for any commutative $C^*$-algebra $A$. The (rational) Be\u\i linson-Soul\'e
conjecture concerns the decomposition of the rational $K$-theory of a commutative ring into the sum of eigenspaces
of the Adams operations
\[
K_n(R)\otimes\Q=\oplus_{i\ge 0}K_n^{(i)}(R)
\]
The conjecture asserts that if $R$ is regular noetherian, then
\[
K_n^{(i)}(R)=0 \mbox{ for } n\ge \max\{1,2i\}
\]
It is well-known that the validity of the conjecture for $R=\C$ would imply that it also holds for $R=\cO(V)$
whenever $V$ is a smooth algebraic variety over $\C$. We use Theorem \ref{thm:introvanish} to show that the validity of the conjecture for $\C$ would further imply that it holds for every commutative $C^*$-algebra.

\vspace{0.2cm}

Next we give an idea of the proofs of our main results, Theorem \ref{thm:intropgubel} and Theorem \ref{thm:introhtpy}.

The basic idea of the proof of Theorem \ref{thm:intropgubel} goes back to Rosenberg's article \cite{roskk} and ultimately to the usual proof of the fact that locally trivial bundles over a contractible compact Hausdorff space
are trivial. It consists of translating the question of the freedom of projective modules into a lifting problem:
\begin{equation}\label{diag:introlift}
\xymatrix{& &\GL(\C[M])\ar[d]^\pi\\
X\ar[r]^e\ar@{.>}[urr] &P_n(\C[M])&\frac{\GL(\C[M])}{\GL_{[1,n]}(\C[M])\times \GL_{[n+1,\infty)}(\C[M])}\ar[l]_(.6)\iota}
\end{equation}

Here we think of a projective module of constant rank $n$ over $C(X)$ as a map $e$ to the set of all rank $n$ idempotent matrices, which by Gubeladze's theorem is the same as the set $P_n(\C[M])$ of those matrices which are
conjugate to the diagonal matrix $1_n\oplus 0_\infty$. Thus $g\mapsto g(1_n\oplus 0_\infty)g^{-1}$ defines a surjective map $\GL(\C[M])\to P_n(\C[M])$ which identifies the latter set with the quotient of $\GL(\C[M])$ by the
stabilizer of $1_n\oplus 0_\infty$, which is precisely the subgroup $\GL_{[1,n]}(\C[M])\times \GL_{[n+1,\infty)}(\C[M])$. For this setup to make sense we need to equip each set involved in \eqref{diag:introlift} with a topology in such a way that all maps in the diagram are continuous. Moreover for the lifting problem to have a solution, it will suffice to show that $\iota$ is a homeomorphism and that $\pi$ is a \emph{compact fibration}, i.e. that it restricts to a fibration over each compact subset of the base. In Section
\ref{sec:fib} we show that any countable dimensional $\R$-algebra $R$ is equipped with a canonical compactly generated topology which makes it into a topological algebra. A subset $F\subset R$ is closed in this topology if and only if $F\cap B\subset B$ is closed for every compact semi-algebraic subset $B$ of every finite dimensional subspace of $R$. In particular this applies to $M_\infty R$. The subset $P_n(R)\subset M_\infty(R)$ carries
the induced topology, and the map $e$ of \eqref{diag:introlift} is continuous for this topology. The group
$\GL (R)$ also carries a topology, generated by the compact semi-algebraic subsets $\GL_n(R)^B$. Here $B\subset M_nR$ is any compact semi-algebraic subset as before, and $\GL_n(R)^B$ consists
of those $n\times n$ invertible matrices $g$ such that both $g$ and $g^{-1}$ belong to $B$. The subgroup $\GL_{[1,n]}(R)\times \GL_{[n+1,\infty)}(R)\subset \GL(R)$ turns out to be closed, and we show in Section \ref{subsec:quot} -- with the aid of Gregory Brumfiel's theorem on quotients of semi-algebraic sets (see \ref{thm:brum}) -- that for a topological group $G$ of this
kind, the quotient $G/H$ by a closed subgroup $H$ is again compactly generated by the images of the compact semi-algebraic subsets defining the topology of $G$, and these images are again compact, semi-algebraic subsets. Moreover the restriction of the projection $\pi:G\to G/H$ over each compact semi-algebraic subset $S\subset G$
is semi-algebraic. We also show (Theorem \ref{thm:fib}) that $\pi$ is a compact fibration.
This boils
down to showing that if $S\subset G$ is compact semi-algebraic, and $T=f(S)$, then we can find an open covering of $T$
such that $\pi$ has a section over each open set in the covering. Next we observe that if $U$ is any space, then the group $\map(U,G)$ acts on the set $\map(U,G/H)$, and a map $U\to G/H$ lifts to $U\to G$ if and only if its
class in the quotient $F(U)=\map(U,G/H)/\map(U,G)$ is the class of the trivial element: the constant map $u\mapsto H$ ($u\in U$). For example the class of the composite of $\pi$ with the inclusion $S\subset G$ is the trivial element of $F(S)$. Hence if $p=\pi_{|S}:S\to T$, then $F(p)$ sends the inclusion $T\subset G/H$ to the trivial element of $F(S)$. In Section \ref{sec:splitex} we introduce a notion of (weak) split exactness for contravariant functors of topological spaces with values in pointed sets; for example the functor $F$ introduced above is split exact (Lemma \ref{splitlem}). The key
technical tool for proving that $\pi$ is a fibration is the following (see \ref{thm:main}); its proof
uses the good topological properties of semi-algebraic sets and maps, especially Hardt's triviality theorem \ref{thm:hardt}.

\begin{introth}\label{thm:intromain}
Let $T$ be a compact semi-algebraic subset of $\R^k$.
Let $S$ be a semi-algebraic set and let $f\colon S \to T$ be a proper continuous semi-algebraic surjection.  Then, there exists a semi-algebraic triangulation of $T$ such that for every weakly split-exact contravariant functor $F$ from the category $\pol$ of compact polyhedra to the category $\set_+$ of pointed sets, and every simplex $\Delta^n$ in the
triangulation, we have
\[
\ker(F(\Delta^n)\to F(f^{-1}(\Delta^n)))=*
\]
\end{introth}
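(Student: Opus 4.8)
The plan is to combine Hardt's triviality theorem with semi-algebraic triangulations adapted to a prescribed finite family of semi-algebraic subsets, and then to run an induction over the simplices of the resulting triangulation in which weak split-exactness of $F$ is used only as formal bookkeeping. First I would apply Hardt's theorem \ref{thm:hardt} to the proper continuous semi-algebraic surjection $f$. This produces a finite partition of $T$ into locally closed semi-algebraic pieces $T_1,\dots,T_r$ over each of which $f$ is semi-algebraically trivial, say $f^{-1}(T_i)$ is semi-algebraically homeomorphic, over $T_i$, to a product $T_i\times\Phi_i$; after refining I may assume that the $T_i$ form a stratification and that each $\Phi_i$ is compact (which it is, $f$ being proper). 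By the semi-algebraic triangulation theorem applied relative to $\{T_1,\dots,T_r\}$ I then pick a semi-algebraic homeomorphism $h\colon |K|\to T$, with $K$ a finite simplicial complex, such that every stratum $T_i$ is a union of open simplices of $K$; this is the triangulation claimed by the theorem, and its point is that $f$ is semi-algebraically trivial over the interior of each simplex of $K$.

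With the triangulation fixed, I would prove the apparently stronger statement that $\ker(F(L)\to F(f^{-1}(L)))=*$ for \emph{every} subcomplex $L\subseteq K$ and every weakly split-exact $F$, by induction on the number of simplices of $L$. If $L$ is obtained from a smaller subcomplex $L'$ by attaching a single maximal closed simplex $\bar\sigma$ along $\partial\sigma=L'\cap\bar\sigma$, then $L'$ and $\partial\sigma$ have strictly fewer simplices, so the statement holds for them by induction, and the case of $L$ is deduced by applying weak split-exactness to the excisive decomposition $L=L'\cup\bar\sigma$ (after thickening $\partial\sigma$ to a collar inside $\bar\sigma$ so that the gluing inclusion acquires a retraction). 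Everything therefore reduces to the case $L=\bar\sigma$ of a single closed $n$-simplex with $n\ge 1$, where one already knows $\ker(F(\partial\sigma)\to F(f^{-1}(\partial\sigma)))=*$ and must promote it from $\partial\sigma$ to $\bar\sigma$.

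For the key case I subdivide $\bar\sigma=N\cup C$: here $N$ is a small closed simplex around the barycenter of $\sigma$, chosen small enough to lie inside the single open stratum containing the interior of $\sigma$ — so that $f$ is \emph{genuinely trivial} over $N$ and over $\partial N$ — and $C$ is the complementary closed collar, semi-algebraically identified with $\partial\sigma\times[0,1]$ in such a way that $\partial\sigma$ sits at one end and $\partial N$ at the other. The essential geometric input — and the step I expect to be the main obstacle — is that over the collar the family $f$ is semi-algebraically a \emph{mapping cylinder} along the collar coordinate: there is a semi-algebraic map $g\colon \partial\sigma\times\Phi\to f^{-1}(\partial\sigma)$ over $\partial\sigma$ (where $\Phi$ is the fibre of $f$ over the open stratum of $\sigma$) and a semi-algebraic homeomorphism $f^{-1}(C)\cong\mathrm{Cyl}(g)$ over $C\cong\partial\sigma\times[0,1]$; consequently there is a semi-algebraic retraction $\tilde r\colon f^{-1}(C)\to f^{-1}(\partial\sigma)$ lying over the collar retraction $C\to\partial\sigma$. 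Extracting this honest, simplicially compatible retraction from Hardt's piecewise triviality and the conical structure of semi-algebraic maps over simplices is the real work; everything else is formal.

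Granting it, the argument closes. Let $x\in F(\bar\sigma)$ have trivial image in $F(f^{-1}(\bar\sigma))$. Triviality of $f$ over $N$ makes $F(N)\to F(f^{-1}(N))$ split injective — a point of $\Phi$ gives a semi-algebraic section of $f^{-1}(N)\to N$ — so $x|_N=*$, and hence $x|_{\partial N}=*$. The inductive hypothesis on $\partial\sigma$ gives $x|_{\partial\sigma}=*$. Combining these with the lifted collar retraction $\tilde r$ and triviality of $f$ over $\partial N$ forces $x|_C=*$ as well. Finally, weak split-exactness applied to the excisive square with corners $\partial N,N,C,\bar\sigma$ — in which $\partial N\hookrightarrow C$ is split by the collar retraction — says exactly that the basepoint is the only element of $F(\bar\sigma)$ restricting to the basepoint on both $N$ and $C$; therefore $x=*$. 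The pointed-set bookkeeping required at each of these steps is precisely what the notion of weak split-exactness introduced in Section \ref{sec:splitex} is designed to license.
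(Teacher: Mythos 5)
Your overall architecture (Hardt triviality, an adapted triangulation, induction over simplices, and weak split-exactness applied to a square in which one leg is split) matches the paper's, and you have correctly located the crux: one must produce a retraction of $f^{-1}$ of a simplex onto $f^{-1}$ of (part of) its boundary. But two steps are genuinely missing. First, the retraction you posit --- a semi-algebraic mapping-cylinder structure on $f^{-1}(C)$ \emph{over} the collar $C\cong\partial\sigma\times[0,1]$, with $\tilde r$ lying over the collar retraction --- is a relative triviality statement that does not follow from Hardt's theorem (which controls $f$ only over the open strata, not how the trivializations degenerate as one approaches the boundary), and you explicitly grant it rather than prove it. The paper avoids this entirely: it needs only a \emph{topological} retraction $f^{-1}(\Delta^n)\to f^{-1}(\Delta^{n-1})$, not one over the base, and obtains it cheaply because $f^{-1}(\Delta^{n-1})$, being a compact semi-algebraic set, is an ANR and hence has a compact neighborhood $N$ in $f^{-1}(\Delta^n)$ retracting onto it; properness of $f$ shows $f^{-1}(A)\subset N$ for some standard neighborhood $A$ of the face, and the product structure of $f$ over $\Delta^n\setminus\Delta^{n-1}$ lets one retract $f^{-1}(\Delta^n)$ onto $f^{-1}(A)$. (The paper also arranges, via Proposition \ref{prop:descmap}, that each simplex has at most one codimension-one face meeting the deeper stratum, so the induction runs over a single face rather than the whole boundary, sidestepping your subcomplex induction.)

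Second, even granting your mapping cylinder, the step ``combining these \dots forces $x|_C=*$'' is not licensed by anything you have assumed: knowing $x|_{\partial\sigma}=*$ and having a retraction $r\colon C\to\partial\sigma$ does not yield $x|_C=*$ unless $F$ is homotopy invariant, which it is not. To convert the retraction of preimages into information about $F(C)$ one must form the pushout $Z=\partial\sigma\cup_{f^{-1}(\partial\sigma)}f^{-1}(C)$ (a compact semi-algebraic set, hence an object of $\pol$, by Corollary \ref{cor:brum}), apply weak split-exactness to that square (the inclusion $f^{-1}(\partial\sigma)\hookrightarrow f^{-1}(C)$ being split by $\tilde r$), and then split the natural map $Z\to C$ by taking the closure of the graph of a constant section over the open part where $f$ is trivial. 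This pushout-plus-section device is the second key move of the paper's proof and is absent from your argument; once you have it, the $N$/collar decomposition becomes superfluous, since the same argument can be run directly on $\bar\sigma$ relative to a face.
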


Here $\ker$ is the kernel in the category of pointed sets, i.e. the fiber over the base point. In our situation Theorem \ref{thm:intromain} applies to show that there is a triangulation of $T\subset G/H$
such that the projection $\pi$ has a section over each simplex in the triangulation. A standard argument now shows that $T$ has an open covering (by open stars of a subdivision of the previous triangulation) such that $\pi$ has
 section over each open set in the covering.
Thus in diagram
\eqref{diag:introlift} we have that $\pi$ is a compact fibration and that $e$ is continuous. The map
$\iota:\GL(R)/\GL_{[1,n]}(R)\times \GL_{[n+1,\infty)}(R)\to P_n(R)$ is continuous for every countable dimensional $\R$-algebra $R$ (see \ref{subsec:algcomp}). We show in Proposition \ref{prop:algcompeq} that it is a homeomorphism whenever the map
\begin{equation}\label{map:bounded}
K_0(\ell^\infty(R))\to \prod_{n\ge 1}K_0(R)
\end{equation}
is injective. Here $\ell^\infty(R)$ is the set of all sequences $\N\mapsto R$ whose image is contained in one of the compact semi-algebraic subsets $B\subset R$ which define the topology of $R$; it is isomorphic to $\ell^\infty(\R)\otimes R$ (Lemma \ref{lem:bounded}). The algebraic compactness theorem
(\ref{thm:algcomp}) says that if $R$ is a countable dimensional $\C$-algebra such that

\begin{equation}\label{map:introalgcomp}
K_0(\cO(V))\overset\sim\longrightarrow K_0(\cO(V)\otimes_\C R)\qquad (\forall \mbox{ smooth affine } V),
\end{equation}

then \eqref{map:bounded} is injective. A theorem of Swan (see \ref{thm:swan}) implies that $R=\C[M]$ satisfies
\eqref{map:introalgcomp}. Thus the map $\iota$ of diagram \eqref{diag:introlift} is a homeomorphism. This concludes
the sketch of the proof of Theorem \ref{thm:intropgubel}.

The proof of the algebraic compactness theorem uses the following theorem (see \ref{thm:boundedapprox}).

\begin{introth}\label{thm:introboundedapprox}
Let $F$ and $G$ be functors from commutative $\C$-algebras to sets.
Assume that both $F$ and $G$ preserve filtering colimits. Let $\tau:F\to G$ be a natural transformation.
Assume that $\tau(\cO(V))$ is injective (resp. surjective) for each smooth affine algebraic variety $V$ over $\C$.
Then $\tau(\ell^\infty(\C))$ is injective (resp. surjective).
\end{introth}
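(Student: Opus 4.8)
The plan is to reduce $\ell^\infty(\C)$ to its finitely generated subalgebras by the colimit hypothesis, and to isolate the real content in an \emph{algebraic approximation lemma}: every $\C$-algebra homomorphism $\varphi\colon A\to\ell^\infty(\C)$ with $A$ finitely generated factors as $A\xrightarrow{\phi}\cO(V)\xrightarrow{\psi}\ell^\infty(\C)$ with $\psi\phi=\varphi$ and $V$ a smooth affine variety over $\C$ (allowed to be a finite disjoint union of irreducible ones). Granting the lemma, the theorem is a formal diagram chase. Write $\ell^\infty(\C)=\colim_A A$ over its finitely generated subalgebras; since $F$ and $G$ commute with filtering colimits, $F(\ell^\infty(\C))=\colim_A F(A)$, $G(\ell^\infty(\C))=\colim_A G(A)$, and $\tau(\ell^\infty(\C))=\colim_A\tau(A)$. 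For surjectivity I would take $y\in G(\ell^\infty(\C))$, lift it to $y_A\in G(A)$ for some finitely generated subalgebra $A$, apply the lemma to $A\hookrightarrow\ell^\infty(\C)$, use that $\tau(\cO(V))$ is onto to choose $z\in F(\cO(V))$ with $\tau_{\cO(V)}(z)=\phi_*(y_A)$, and set $x=\psi_*(z)$, so that $\tau(x)=\psi_*\phi_*(y_A)=y$. For injectivity I would, given $\tau(x)=\tau(x')$, pick a finitely generated $A$ with representatives $x_A,x'_A\in F(A)$, enlarged within $\ell^\infty(\C)$ so that already $\tau_A(x_A)=\tau_A(x'_A)$; pushing forward along $\phi$ and using injectivity of $\tau(\cO(V))$ gives $\phi_*(x_A)=\phi_*(x'_A)$, and pushing forward along $\psi$ gives $x=x'$.

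To prove the approximation lemma I would proceed as follows. First identify $\ell^\infty(\C)$ with $C(\beta\N)$, the algebra of complex continuous functions on the Stone--\v Cech compactification of $\N$ (by Lemma \ref{lem:bounded} this is $\ell^\infty(\R)\otimes_\R\C$). Put $W=\Spec A$; since $A\subset\ell^\infty(\C)$ it is reduced, so $W$ is a reduced affine variety over $\C$, and because for compact Hausdorff $Z$ a $\C$-algebra homomorphism $\cO(W')\to C(Z)$ is the same as a continuous map $Z\to W'(\C)$ for the analytic topology, $\varphi$ is the same as a continuous map $g\colon\beta\N\to W(\C)$ with compact image $C$. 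Next I would invoke resolution of singularities: a proper surjective morphism $\pi\colon\tilde W\to W$ with $\tilde W$ smooth. Properness makes $\pi$ a proper map on $\C$-points, so $\pi^{-1}(C)$ is compact and $\pi\colon\pi^{-1}(C)\to C$ is a continuous surjection of compact Hausdorff spaces (surjective since a surjective morphism of varieties over an algebraically closed field is surjective on points). Because $\beta\N$ is extremally disconnected it is a projective object in the category of compact Hausdorff spaces (Gleason's theorem), so $g$ lifts to a continuous $\tilde g\colon\beta\N\to\pi^{-1}(C)\subset\tilde W(\C)$ with $\pi\tilde g=g$. Now cover $\tilde W$ by finitely many affine open subvarieties $V_1,\dots,V_r$, each automatically smooth and affine; the sets $\tilde g^{-1}(V_i(\C))$ form a finite open cover of $\beta\N$, and since $\beta\N$ is a Stone space this cover is refined by a finite partition into clopen pieces $\beta\N=\bigsqcup_{j=1}^s U_j$ with $U_j\subset\tilde g^{-1}(V_{i(j)}(\C))$. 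Then $\ell^\infty(\C)=\prod_{j}C(U_j)$; I would set $V=\bigsqcup_j V_{i(j)}$ (so $\cO(V)=\prod_j\cO(V_{i(j)})$), let $\psi$ have $j$-th component the homomorphism induced by $\tilde g|_{U_j}\colon U_j\to V_{i(j)}(\C)$, and $\phi$ have $j$-th component $A\to\cO(\tilde W)\to\cO(V_{i(j)})$; since $\pi\circ\tilde g|_{U_j}=g|_{U_j}$, one gets $\psi\phi=\varphi$.

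The step I expect to be the crux is exactly this approximation lemma, which is genuinely not formal: a finitely generated subalgebra of $\ell^\infty(\C)$ is the coordinate ring of a generally \emph{singular} affine variety $W$, and one cannot in general fit the associated compact subset of $W(\C)$ into the smooth locus of a single smooth affine variety mapping to $W$ — already the cone over an elliptic curve, with a sequence spiralling into the vertex from a dense set of directions, resists this. The two ingredients that make it work anyway are resolution of singularities and the total disconnectedness of $\beta\N$: one lifts $g$ through the (non-affine) resolution $\tilde W$, which is possible because $\pi$ is proper and $\beta\N$ is projective in compact Hausdorff spaces, and then, rather than needing one affine chart, one partitions $\beta\N$ into clopen pieces each landing in one chart of $\tilde W$ and reassembles, the point being that a finite disjoint union of smooth affine charts is again a smooth affine variety. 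One bookkeeping point to watch: ``smooth affine variety'' must here be read so as to allow finite disjoint unions of irreducible ones, equivalently finite products of coordinate rings, since $F$ and $G$ are not assumed to preserve finite products; with that reading the hypotheses on $\tau$ apply directly to $\cO(V)$.
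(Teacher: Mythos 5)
Your argument is correct, and its skeleton coincides with the paper's: reduce to finitely generated subalgebras via the filtering-colimit hypothesis, and prove that each inclusion $A=\cO(W)\hookrightarrow\ell^\infty(\C)$ factors through $\cO(V)$ with $V$ smooth affine, using Hironaka's resolution $\pi\colon\tilde W\to W$ as the first step. Where you genuinely diverge is in the two "topological" steps. For the lift through $\pi$, the paper works with the map $\N\to W_{an}$ with precompact image rather than with $\beta\N$, so the lift is produced pointwise (properness of $\pi_{an}$ only being needed to keep the lifted image precompact); your appeal to Gleason's theorem on the projectivity of extremally disconnected compacta achieves the same thing but is heavier than necessary. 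More substantially, to pass from the smooth quasi-projective $\tilde W$ to something affine, the paper invokes Jouanolou's device: an affine vector-bundle torsor $V'\to\tilde W$, whose analytification admits a continuous section because $\tilde W_{an}$ is a CW-complex, so the lift factors through the single smooth affine variety $V'$. You instead cover $\tilde W$ by affine charts and use the zero-dimensionality of $\beta\N$ to split it into clopen pieces each landing in one chart, reassembling over a finite disjoint union of charts. Your route buys the elimination of Jouanolou's device at the cost of being tied to totally disconnected parameter spaces: the paper reuses the identical Hironaka--Jouanolou factorization in the proof of the homotopy invariance theorem with $\Delta^p$ in place of $\N$, where a clopen-partition argument is unavailable. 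Your bookkeeping remark that $V$ must be allowed to be a finite disjoint union of irreducible smooth affine varieties is apt and is consistent with the paper's convention that an affine variety is any reduced affine scheme of finite type over $\C$.
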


The proof of \ref{thm:introboundedapprox} uses a technique which we call \emph{algebraic approximation}, which we
now explain. Any commutative $\C$-algebra is the colimit of its subalgebras of finite type, which form
a filtered system. If the algebra contains no nilpotent elements, then each of
its subalgebras of finite type is of the form $\cO(Y)$ for {\it affine variety} $Y$,
by which we mean a reduced affine scheme of finite type over $\C$. If $\C[f_1,\dots,f_n]\subset \ell^\infty(\C)$ is the subalgebra generated by $f_1,\dots,f_n$, and $\C[f_1,\dots,f_n]\cong \cO(Y)$, then $Y$ is isomorphic
to a closed subvariety of $\C^n$, and $f=(f_1,\dots,f_n)$ defines a map from $\N$ to a precompact subset of the
 space $Y_{an}$ of closed points of $Y$ equipped the topology inherited by the euclidean topology on $\C^n$.
The space $Y_{an}$ is equipped with the structure of a (possibly singular) analytic variety, whence
the subscript. Summing up, we have
\begin{equation}\label{eq:filtering}
\ell^\infty(\C)=\colim_{\N\to Y_{an}}\cO(Y)
\end{equation}
where the colimit runs over all affine varieties $Y$ and all maps with precompact image. The proof of Theorem
\ref{thm:introboundedapprox} consists of showing that in \eqref{eq:filtering} we can restrict to maps $\N\to V$ with $V$
smooth. This uses Hironaka's desingularization \cite{hironaka} to lift a map $f:\N\to V$ with $V$ affine and singular, to a map
$f':\N\to \tilde{V}$ with $\tilde{V}$ smooth and possibly non-affine, and Jouanoulou's device \cite{jou} to further lift $f'$ to a map $f'':\N\to W$ with $W$ smooth and affine.

The idea of algebraic approximation appears in the work of Jonathan Rosenberg \cite{roskk, rosop, roshan}, and later in the article of Eric Friedlander and Mark E. Walker \cite{fw}. One source of inspiration is the work of Andrei Suslin \cite{MR714090}. In \cite{MR714090}, Suslin studies an inclusion of algebraically closed fields $L \subset K$ and analyzes $K$ successfully in terms of its finitely generated $L$-subalgebras.

Next we sketch the proof of Theorem \ref{thm:introhtpy}. The first step is to reduce to the polyhedral case. For
this we use Theorem \ref{thm:introdraw} below, proved in \ref{thm:draw}. Its proof uses another algebraic approximation argument
together with a result of Allan Calder and Jerrold Siegel, which says that the right Kan extension to $\comp$ of a homotopy invariant functor defined on $\pol$ is homotopy invariant on $\comp$.

\begin{introth}\label{thm:introdraw}
Let $F:\Co\to\ab$ be a functor. Assume that $F$ satisfies each of the following conditions.
\begin{enumerate}
\item[(i)] $F$ commutes with filtered colimits.
\item[(iii)]  The functor $\pol\to\ab$, $D \mapsto F(C(D))$ is homotopy invariant.
\end{enumerate}
Then the functor
\[
\comp\to\ab,\quad X \mapsto F(C(X))
\]
is homotopy invariant.
\end{introth}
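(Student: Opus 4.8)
The plan is to show that, for every compact Hausdorff space $X$, the group $F(C(X))$ is computed as a colimit of the groups $F(C(D))$ over compact polyhedra $D$ equipped with a continuous map $X\to D$, and then to invoke the cited theorem of Calder and Siegel. Put $G:=F\circ C\colon\comp\to\ab$ and $H:=G|_{\pol}$. By hypothesis (iii), $H$ is homotopy invariant, so the Calder--Siegel theorem produces a homotopy invariant functor $\widetilde G\colon\comp\to\ab$ — the right Kan extension of $H$ along $\pol\hookrightarrow\comp$ — together with a canonical comparison map relating it to $G$ which is the identity on $\pol$. It therefore suffices to prove that this comparison is an isomorphism on all of $\comp$; homotopy invariance of $G$ then follows from that of $\widetilde G$.

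The identification is carried out by an algebraic approximation argument. Since $C(X)$ is the filtered union of its finitely generated $\C$-subalgebras $A_\lambda$, hypothesis (i) gives $F(C(X))=\colim_\lambda F(A_\lambda)$. Because $C(X)$ is reduced, each $A_\lambda$ is the coordinate ring $\cO(Y_\lambda)$ of an affine complex variety, and a choice of generators exhibits $Y_\lambda$ as a closed subvariety of some $\C^r$ and produces a continuous map $X\to(Y_\lambda)_{an}$ — with compact image, since $X$ is compact — inducing the inclusion $A_\lambda\hookrightarrow C(X)$. The crucial observation is that $(Y_\lambda)_{an}$ is a Zariski closed, hence semi-algebraic, subset of $\C^r=\R^{2r}$, so it is triangulable; the compact image of $X$ therefore lies in a finite subcomplex, i.e. in a compact polyhedron $Q_\lambda$ contained in $(Y_\lambda)_{an}$. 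Restriction of regular functions gives a ring homomorphism $\cO(Y_\lambda)\to C(Q_\lambda)$ — well defined precisely because $Q_\lambda$ sits inside $(Y_\lambda)_{an}$, so the polynomial relations cutting out $Y_\lambda$ hold on $Q_\lambda$ for free — through which $A_\lambda\hookrightarrow C(X)$ factors as $A_\lambda\to C(Q_\lambda)\to C(X)$, the second arrow being restriction along $X\to Q_\lambda$. Applying $F$, every class in $F(C(X))$ comes from some $F(C(Q_\lambda))$ with $Q_\lambda$ a compact polyhedron receiving a map from $X$; this gives surjectivity of the comparison map.

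For injectivity one applies the same procedure to a \emph{witness of a vanishing}: if a class built from $F(C(D))$ with $q\colon X\to D$ maps to $0$ in $F(C(X))$, then by (i) it already vanishes after mapping into $F(A')$ for some finitely generated $A'\subset C(X)$; realizing $A'$ inside a compact polyhedron as above and chasing the resulting square of polyhedra and maps over $X$ — using homotopy invariance of $H$ on $\pol$ to kill the ambiguity introduced by the choice of triangulations and polyhedral neighbourhoods — shows the class was already trivial for $\widetilde G$.

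The main obstacle is that the polyhedron $Q_\lambda$ produced above is strictly larger than the image of $X$, so identities that hold on that image need not hold on $Q_\lambda$; consequently the system of polyhedra-with-maps-from-$X$ over which one is taking a colimit is only filtered \emph{up to homotopy}, not on the nose. Organizing the argument so that this discrepancy is harmless — making the triangulations and neighbourhoods compatible, controlling the slack between the image and the chosen polyhedral neighbourhood, and inserting homotopy invariance at exactly the places where $F$ (which only commutes with honest filtered colimits) would otherwise misbehave — is the real content; the Calder--Siegel theorem is precisely what converts this ``homotopy filtered'' phenomenon into the clean assertion that the Kan extension of $H$ is homotopy invariant on $\comp$.
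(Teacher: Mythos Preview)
Your overall strategy---compare $F(C(X))$ with the right Kan extension of $H=F\circ C|_{\pol}$ and invoke Calder--Siegel---is exactly the paper's strategy, and your surjectivity argument is essentially the paper's construction. But your injectivity claim is wrong, and the paper explicitly shows it: the comparison map $\pi'\colon G^{\pol}(X)\to F(C(X))$ is \emph{not} an isomorphism in general. The paper gives a concrete counterexample. Take $X$ a Cantor set in $[0,1]$, $D=[0,1]$, $f$ the inclusion, and $s$ the distance function to $X$. Then $s\circ f=0$ in $C(X)$, but there is no factorization $X\to D'\to D$ through a compact polyhedron $D'$ with $s|_{D'}=0$: the image of $D'$ in $[0,1]$ would have to lie in $X$ yet have only finitely many components, impossible since $X$ is totally disconnected and uncountable. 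So the ``witness of a vanishing'' you want to produce in the comma category $(X\downarrow\pol)$ simply does not exist, and no amount of homotopy-invariance bookkeeping on $\pol$ will manufacture it.

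The fix is that you do not need injectivity at all. The factorization $\C\langle F\rangle\to C(P_F)\to C(X)$ you already built, taken over all finite $F$, assembles to a factorization of the identity
\[
C(X)\;\xrightarrow{\ \beta\ }\;\colim_{F}C(P_F)\;\xrightarrow{\ \pi\ }\;C(X),
\]
so after applying your functor and mapping through the Kan extension you see that $F(C(X))$ is a \emph{natural retract} of $G^{\pol}(X)$. A natural direct summand of a homotopy invariant functor is homotopy invariant; Calder--Siegel finishes the proof. Your last paragraph correctly senses the obstruction (the system of polyhedra over $X$ is not honestly filtered), but misdiagnoses its cure: rather than repairing the colimit, one simply abandons the isomorphism claim and keeps the retract.
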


Next, Proposition \ref{prop:homcor} says that we can restrict to showing that $F$ vanishes on contractible polyhedra.
Since any contractible polyhedron is a retract of its cone, which is a starlike polyhedron, we further reduce to showing
that $F$ vanishes on starlike polyhedra. Using excision, we may restrict once more, to proving that $F(\Delta^p)=0$ for all $p$.  For this we follow the strategy used by Friedlander-Walker in \cite{fw}. To start, we use algebraic approximation again.  We write
\begin{equation}\label{eq:introapprox}
C(\Delta^p)=\colim_{\Delta^p\to Y_{an}}\cO(Y)
\end{equation}
where the colimit runs over all continuous maps from $\Delta^p$ to
affine algebraic varieties, equipped with the euclidean topology.
Since $F$ is assumed to vanish on $\cO(V)$ for smooth affine $V$, it
would suffice to show that any map $\Delta^p\to Y_{an}$ factors as
$\Delta^p\to V_{an}\to Y_{an}$ with $V$ smooth and affine. Actually
using excision again we may restrict to showing this for each
simplex in a sufficiently fine triangulation of $\Delta^p$. As in
the proof of Theorem \ref{thm:intropgubel}, this is done using
Hironaka's desingularization, Jouanoulou's device and Theorem
\ref{thm:intromain}.

\goodbreak

\bigskip
The rest of this paper is organized as follows. In Section \ref{sec:splitex} we give the appropriate definitions
and first properties of split exactness. We also recall some facts about algebraic $K$-theory and cyclic homology,
such as the key results of Andrei Suslin and Mariusz Wodzicki on excision for algebraic $K$-theory and algebraic
cyclic homology. In Section \ref{sec:real}, we recall some facts from real algebraic geometry, and
prove Theorem \ref{thm:intromain} (\ref{thm:main}). Large semi-algebraic groups
and their associated compactly generated topological groups are the subject of Section \ref{sec:fib}. The main result of this section is the Fibration Theorem \ref{thm:fib} which says that the quotient map of such a group by a closed subgroup is a compact fibration. Section \ref{sec:algcomp} is devoted to algebraic compactness,
that is, to the problem of giving conditions on a countable dimensional algebra $R$ so that the map
$\iota:\GL(R)/\GL_{[1,n]}(R)\times \GL_{[n+1,\infty)}(R)\to P_n(R)$ be a homeomorphism. The connection between
this problem and the algebra $\ell^\infty(R)$ of bounded sequences is established by Proposition \ref{prop:algcompeq}. Theorem \ref{thm:introboundedapprox} is proved in \ref{thm:boundedapprox}. Theorem \ref{thm:algcomp} establishes that the map \eqref{map:bounded} is injective whenever \eqref{map:introalgcomp} holds.
Section \ref{sec:pgubel} contains the proofs of Theorems \ref{thm:intropgubel} and \ref{thm:introrosconj} (\ref{thm:pswan} and \ref{thm:rosen1}). We also show (Theorem \ref{thm:loctriv}) that if $M$ is a monoid as in Theorem \ref{thm:intropgubel} then any bundle of finitely generated free $\C[M]$ modules over a compact
Hausdorff space which is a direct summand of a trivial bundle is locally trivial. Section \ref{sec:htpy} deals
with homotopy invariance. Theorems \ref{thm:introdraw}, \ref{thm:introhtpy} and \ref{thm:introvanish} (\ref{thm:draw}, \ref{thm:htpy} and \ref{thm:vanish}) are proved in this section, where also a second proof
of Rosenberg's conjecture, using a result of Friedlander and Walker, is given (see \ref{subsec:second}).
Section \ref{sec:appvanish} is devoted to applications of the homotopy invariance and vanishing homology theorems,
including Theorems \ref{thm:introkreg} and \ref{thm:introhh} (\ref{thm:kreg} and \ref{thm:hh}) and also to the applications to the conjectures of Farrell-Jones (\ref{thm:fj}, \ref{thm:fjg}) and of Be\u\i linson-Soul\'e (\ref{thm:bs}).

\section{Split-exactness, homology theories, and excision.}\label{sec:splitex}
\numberwithin{equation}{subsection}
\subsection{Set-valued split-exact functors on the category of compact Hausdorff spaces.} \label{subsec:splitsec}

In this section we consider contravariant functors from the category of compact Hausdorff topological spaces to the category $\set_+$ of pointed sets. Recall that if $T$ is a pointed set and $f:S\to T$ is a map, then
\[
\ker f=\{s\in S:f(s)=*\}
\]
We say that a functor $F\colon \comp \to \set_+$ is \emph{split exact} if
for each push-out square
\begin{equation}\label{diag:pushout}
\xymatrix{ X_{12} \ar^{\iota_1}[r] \ar^{\iota_2}[d] & X_1 \ar[d] \\ X_2 \ar[r]& X }
\end{equation}
of topological spaces with $\iota_1$ or $\iota_2$ split injective, the map
\[F(X) \to F(X_1) \times_{F(X_{12})} F(X_2)\]
is a surjection with trivial kernel. We say that $F$ is \emph{weakly split exact} if the map above has
trivial kernel. In case the functor takes values in abelian groups, the notion of split exactness above is equivalent to the usual one. For more details on split-exact functors taking values in the category $\ab$ of abelian groups, see Subsection \ref{subsec:splitexab}.

\vspace{0.2cm}

In the next lemma and elsewhere, if $X$ and $Y$ are topological spaces, we write
\[
\map(X,Y)=\{f:X\to Y \mbox{ continuous} \}
\]
for the set of continuous maps from $X$ to $Y$.

\begin{lemma}
Let $(Y,y)$ be a pointed topological space. The contravariant functor
$$X \mapsto \map(X,Y)$$
from compact Hausdorff topological spaces to pointed sets is split-exact.
\end{lemma}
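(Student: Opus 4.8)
The plan is to verify directly that for a push-out square \eqref{diag:pushout} with, say, $\iota_2$ split injective, the canonical map $\map(X,Y) \to \map(X_1,Y) \times_{\map(X_{12},Y)} \map(X_2,Y)$ is a surjection with trivial kernel. Since $Y$ here is just a pointed topological space and the functor is covariant in $Y$ but contravariant in the space argument, the whole content is a statement about gluing continuous maps along a push-out.

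First I would recall that in the category of compact Hausdorff spaces a push-out square of the form \eqref{diag:pushout} with $\iota_1$ (or $\iota_2$) a closed inclusion is a genuine pushout in topological spaces as well, i.e. $X$ is the quotient $X_1 \sqcup_{X_{12}} X_2$; and a split injection of compact Hausdorff spaces is automatically a closed embedding, so this applies. Then the universal property of the topological push-out says precisely that to give a continuous map $X \to Y$ is the same as to give continuous maps $g_1 \colon X_1 \to Y$ and $g_2 \colon X_2 \to Y$ with $g_1 \circ \iota_1 = g_2 \circ \iota_2$; that is, an element of the fibre product $\map(X_1,Y) \times_{\map(X_{12},Y)} \map(X_2,Y)$. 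Hence the comparison map is a bijection, which in particular is surjective with trivial kernel, giving split-exactness (in fact something stronger).

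For the trivial-kernel part I would spell it out pointwise to be safe: if $h \colon X \to Y$ restricts to the constant map $y$ on both $X_1$ and $X_2$, then since $X_1 \cup X_2 = X$ (the images cover $X$, as $X$ is the push-out), $h$ is the constant map $y$, i.e. $h$ is the base point of $\map(X,Y)$. This is immediate and uses nothing about $\iota_i$ being split.

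The only genuine point to be careful about — the "main obstacle", though it is a small one — is the claim that the push-out square in $\comp$ is computed the same way in $\mathrm{Top}$, i.e. that the underlying-space functor preserves these particular push-outs. This holds because a split monomorphism in $\comp$ is a retract, hence a closed embedding (a continuous injection from a compact space into a Hausdorff space is a closed embedding), and the push-out of a closed embedding along any map in compact Hausdorff spaces agrees with the topological push-out and is again compact Hausdorff; one checks the quotient $X_1 \sqcup_{X_{12}} X_2$ is Hausdorff using that $\iota_2$ is a closed embedding and a short separation argument, or simply cites that $\comp$ is closed under such push-outs. Note we do not even need the splitting for the identification of $\map(X,-)$ with the fibre product — the splitting hypothesis is only there because it is what guarantees the square is a push-out of a closed inclusion; a plain injection $\iota_i$ need not be closed. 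Granting this, the lemma is just the universal property of the push-out together with the elementary observation about constant maps.
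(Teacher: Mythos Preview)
Your proof is correct and follows essentially the same approach as the paper: both observe that the universal property of the push-out identifies $\map(X,Y)$ with the fibre product $\map(X_1,Y)\times_{\map(X_{12},Y)}\map(X_2,Y)$, which is even stronger than required. The paper's version is a one-liner citing the universal property, while you add a careful discussion of why the push-out in $\comp$ agrees with the topological one; incidentally, \emph{any} continuous injection between compact Hausdorff spaces is already a closed embedding, so your remark that ``a plain injection $\iota_i$ need not be closed'' is off --- the splitting is not needed even for that step.
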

\begin{proof}
Note that $\map(X,Y)$ is naturally pointed by the constant map taking the value $y \in Y$.
Let \eqref{diag:pushout}
be a push-out of compact Hausdorff topological spaces and assume that $\iota_1$ is a split-injection. It is sufficient to show that
the diagram
$$ \xymatrix{ \map(X_{12},Y)   & \map(X_1,Y) \ar[l] \\  \map(X_2,Y) \ar[u] & \map(X,Y) \ar[u] \ar[l]} $$
is a pull-back. But this is immediate from the universal property of a push-out.
\end{proof}

\begin{lemma} \label{splitlem}
Let $H \subset G$ be an inclusion of topological groups. Then the pointed set $\map(X,G/H)$ carries a natural left action of the group $\map(X,G)$ and the functor
$$X \mapsto \frac{\map(X,G/H)}{\map(X,G)}$$
is split exact.
\end{lemma}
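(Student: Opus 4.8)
The plan is to make the $\map(X,G)$-action explicit and then run two essentially parallel diagram chases, one for surjectivity and one for triviality of the kernel, each time using a retraction of the given split injection in \eqref{diag:pushout} to extend continuous maps defined on $X_{12}$.

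First I would set up the structure. Since $G/H$ carries the quotient topology, left translation $G\times G/H\to G/H$ is continuous, so for $\phi\in\map(X,G)$ and $\psi\in\map(X,G/H)$ the pointwise product $(\phi\cdot\psi)(x)=\phi(x)\psi(x)$ is continuous; one checks readily that $(\phi,\psi)\mapsto\phi\cdot\psi$ is a left action of the group $\map(X,G)$ on $\map(X,G/H)$, and it is natural in $X$ because $(\phi\cdot\psi)\circ g=(\phi\circ g)\cdot(\psi\circ g)$ for continuous $g$. We point $F(X)=\map(X,G/H)/\map(X,G)$ by the class of the constant map $c_X\colon x\mapsto eH$; note that a class $[\psi]$ is trivial if and only if $\psi$ factors as $X\to G\to G/H$, i.e. $\psi$ lifts to $G$. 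Finally, by the preceding lemma applied to $Y=G$ and to $Y=G/H$, applying $\map(-,Y)$ to the push-out \eqref{diag:pushout} gives a pull-back of sets; write $\Gamma_\bullet$ and $A_\bullet$ for $\map(-,G)$ and $\map(-,G/H)$ evaluated at $X,X_1,X_2,X_{12}$, so that $A_X=A_{X_1}\times_{A_{X_{12}}}A_{X_2}$, $\Gamma_X=\Gamma_{X_1}\times_{\Gamma_{X_{12}}}\Gamma_{X_2}$, and $F(X_\bullet)=A_\bullet/\Gamma_\bullet$.

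By symmetry assume $\iota_1$ is split injective and fix a continuous retraction $r\colon X_1\to X_{12}$ with $r\iota_1=\mathrm{id}$. For surjectivity of $F(X)\to F(X_1)\times_{F(X_{12})}F(X_2)$, take $\psi_i\in A_{X_i}$ ($i=1,2$) whose classes agree in $F(X_{12})$ and pick $\gamma\in\Gamma_{X_{12}}$ with $\gamma\cdot(\psi_1|_{X_{12}})=\psi_2|_{X_{12}}$. Then $\gamma\circ r\in\Gamma_{X_1}$ restricts to $\gamma$ over $X_{12}$, so after replacing $\psi_1$ by $(\gamma\circ r)\cdot\psi_1$, which does not change $[\psi_1]$ in $F(X_1)$, we may assume $\psi_1$ and $\psi_2$ literally agree over $X_{12}$; the pull-back identification of $A_X$ then furnishes $\psi\in A_X$ restricting to $\psi_1,\psi_2$, and $[\psi]$ is the desired preimage. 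For triviality of the kernel, suppose $[\psi]\in F(X)$ restricts to the trivial class in both $F(X_1)$ and $F(X_2)$, so there are $\gamma_i\in\Gamma_{X_i}$ with $\psi|_{X_i}(x)=\gamma_i(x)H$. Over $X_{12}$ the two lifts differ by the map $h\colon X_{12}\to H$, $h(x)=(\gamma_1|_{X_{12}}(x))^{-1}\gamma_2|_{X_{12}}(x)$, which is continuous into $H$ because $H$ carries the subspace topology. Then $\gamma_1':=\gamma_1\cdot(h\circ r)\in\Gamma_{X_1}$ still lifts $\psi|_{X_1}$ (we multiplied on the right by an $H$-valued map) and now $\gamma_1'|_{X_{12}}=\gamma_2|_{X_{12}}$; by the pull-back identification of $\Gamma_X$ the pair $(\gamma_1',\gamma_2)$ glues to $\gamma\in\map(X,G)$ with $\gamma(x)H=\psi(x)$ for all $x$, so $[\psi]=\ast$. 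The case in which $\iota_2$ is split injective is identical.

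The only place the hypothesis is used — and it is used crucially — is the extension of a continuous map off $X_{12}$: forming $\gamma\circ r$ and $h\circ r$ requires the retraction, and without split injectivity the conclusion fails in general. The one technical subtlety worth stating is that the discrepancy $h$ is continuous \emph{into} $H$, which is exactly why we read ``$H\subset G$, an inclusion of topological groups'' as meaning that $H$ is a subgroup equipped with the subspace topology; granting that, the remainder is a routine diagram chase built on the pull-back identifications for $\map(-,G)$ and $\map(-,G/H)$.
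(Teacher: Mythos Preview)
Your proof is correct and follows essentially the same approach as the paper: both arguments use the retraction $r$ (the paper writes $\sigma$) to extend the relevant $G$- or $H$-valued discrepancy from $X_{12}$ to $X_1$, thereby matching up the two pieces so that they glue via the push-out universal property. Your write-up is in fact a bit more explicit than the paper's in setting up the action, invoking the pull-back identification from the preceding lemma, and flagging that continuity of $h$ into $H$ uses that $H$ carries the subspace topology.
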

\begin{proof}
We need to show that the map

\begin{equation}\label{map:G}
\frac{\map(X,G/H)}{\map(X,G)} \to \frac{\map(X_1,G/H)}{\map(X_1,G)} \times_{\frac{\map(X_{12},G/H)}{\map(X_{12},G)}} \frac{\map(X_2,G/H)}{\map(X_2,G)}
\end{equation}
is a surjection with trivial kernel. Let $f \colon X \to G/H$ be
such that its pull-backs  $f_i:X_i\to G/H$ admit continuous lifts
$\hat{f}_i \colon X_i \to G$. Although the pull-backs of $\hat{f}_1$
and $\hat{f}_2$ to $X_{12}$ might not agree, we can fix this
problem. Let $\sigma$ be a continuous splitting of the inclusion
$X_{12} \hookrightarrow X_1$. Define a map \[ \gamma \colon X_1 \to
H,\quad \gamma(x)=(\hat{f}_1|_{X_{12}}(\sigma(x))^{-1} \cdot
(\hat{f}_2|_{X_{12}}(\sigma(x))
\]
Note $\hat{f}_1 \cdot \gamma$ is still a lift of $f_1$ and agrees with $\hat{f}_2$ on $X_{12}$; hence they define a map $\hat{f}:X\to G$ which lifts $f$. This proves that \eqref{map:G} has trivial kernel. Let now $f_1 \colon X_1 \to G/H$ and $f_2 \colon X_2 \to G/H$ be such that there exists a function $\theta \colon X_{12} \to G$ with $\theta(x) \cdot f_1(x) = f_2(x)$ for all $x \in X_{12}$. Using the splitting $\sigma$ of the inclusion $X_{12} \hookrightarrow X_1$ again, we can extend $\theta$ to $X_1$ to obtain
$f'_1(x) = \theta(\sigma(x)) f_1(x)$, for $x \in X_1$. Note $f'_1$ is just another representative of the class of $f_1$. Since $f'_1$ and $f_2$ agree on $X_{12}$, we conclude that there exists a continuous map $f \colon X \to G/H$, which pulls back to $f'_1$ on $X_1$ and to $f_2$ on $X_2$. This proves that \eqref{map:G} is surjective.
\end{proof}

\begin{proposition}\label{prop:homcor}
Let $\mathfrak{C}$ be either the category $\comp$ of compact Hausdorff spaces or the full subcategory $\pol$ of compact polyhedra.  Let $F:\mathfrak{C}\to \ab$ be a split-exact functor. Assume that $F(X)=0$ for contractible $X\in\mathfrak{C}$.
Then $F$ is homotopy invariant.
\end{proposition}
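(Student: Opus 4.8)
The plan is to deduce homotopy invariance from the vanishing hypothesis via a standard Mayer–Vietoris / mapping-cylinder argument. First I would recall that a functor $F$ on $\mathfrak{C}$ is homotopy invariant precisely when, for every $X\in\mathfrak{C}$, the projection $p:X\times[0,1]\to X$ induces an isomorphism $F(X)\xrightarrow{\sim} F(X\times[0,1])$; since $p$ has the section $i_0:X\hookrightarrow X\times[0,1]$, $x\mapsto(x,0)$, the map $F(p)$ is always split injective, so the content is the \emph{surjectivity} of $F(p)$, equivalently the vanishing of the "relative" group measuring the difference between $X$ and $X\times[0,1]$.

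The key step is to set up a push-out square to which split-exactness applies. I would write $X\times[0,1]$ as the pushout of two copies of the mapping cylinder along $X\times\{1/2\}$, or more simply observe that $X\times[0,1]$ is obtained by gluing $X\times[0,1]$ to the cone $CX$ (a contractible object, a retract of which lives in $\mathfrak{C}$) along $X\times\{0\}\cong X$. Concretely: form the pushout
\[
\xymatrix{ X \ar[r]^{i_0} \ar[d]_{i_0} & X\times[0,1] \ar[d] \\ CX \ar[r] & Y }
\]
where $CX$ denotes the cone on $X$. Both maps $i_0$ are split injective (retract the cylinder and the cone onto $X\times\{0\}$). Split-exactness gives that $F(Y)\to F(X\times[0,1])\times_{F(X)}F(CX)$ is an isomorphism; since $CX$ is contractible, $F(CX)=0$, and since $Y$ is homeomorphic to $CX$ again (gluing a cone along the base of a cylinder just extends the cone), $F(Y)=0$ as well. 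Chasing the resulting fibre product identity $0 = F(X\times[0,1])\times_{F(X)}0$, i.e. $F(X\times[0,1])=\ker(F(i_0):F(X\times[0,1])\to F(X))$, together with the fact that $F(i_0)$ is split surjective by $F(p)$, forces $F(p)$ to be an isomorphism. The same argument run inside $\pol$ works because the cone on a compact polyhedron is again a compact polyhedron, and $Y$ above stays in $\pol$.

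The main obstacle — and the place requiring care — is checking that all the spaces I build (cones, mapping cylinders, the pushout $Y$) genuinely lie in $\mathfrak{C}$ and that the split injectivity hypothesis in the definition of split-exactness is met by the relevant inclusion; for $\mathfrak{C}=\comp$ this is routine since cones and cylinders of compact Hausdorff spaces are compact Hausdorff, and for $\mathfrak{C}=\pol$ one uses that these constructions preserve compact polyhedra. A secondary subtlety is bookkeeping the fibre-product/kernel manipulation in $\ab$: because $F$ is split-exact with values in abelian groups, the sequence $0\to F(Y)\to F(X\times[0,1])\oplus F(CX)\to F(X)$ behaves like a genuine Mayer–Vietoris sequence, and I would phrase the conclusion by noting that the composite $X\hookrightarrow X\times[0,1]\xrightarrow{p} X$ is the identity while $X\times[0,1]\xrightarrow{p}X\xhookrightarrow{i_0}X\times[0,1]$ is homotopic to the identity through a contraction of the interval — but I must avoid circularity by extracting homotopy invariance only from the vanishing input, not assuming it. The clean way is exactly the pushout above: it expresses the obstruction to surjectivity of $F(p)$ as a subquotient of $F(\text{contractible})=0$.
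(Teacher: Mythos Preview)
Your approach is correct and is essentially the same as the paper's: both collapse the bottom of the cylinder to form a cone and use split-exactness to identify $\ker\bigl(F(i_0):F(X\times[0,1])\to F(X)\bigr)$ with a fibre product that vanishes because $F$ kills contractible spaces. The only difference is cosmetic: the paper uses the simpler pushout
\[
\xymatrix{X\ar[r]^{i_0}\ar[d]&X\times[0,1]\ar[d]\\ \star\ar[r]&cX}
\]
with $X_2=\star$ rather than your $X_2=CX$; since $F(\star)=0$ by hypothesis, this yields the cone directly and avoids having to argue that the pushout $Y$ is again a cone. Your line ``$F(X\times[0,1])=\ker(F(i_0))$'' is mis-stated---you mean that the fibre product $F(X\times[0,1])\times_{F(X)}0$ \emph{equals} $\ker(F(i_0))$, which is then forced to be $0$---but the conclusion you draw from it is right.
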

\begin{proof} We have to prove that if $X\in\mathfrak{C}$ and $1_X \times 0:X\to X\times [0,1]$ is the inclusion, then $F(1_X \times 0) \colon F(X \times [0,1]) \to F(X)$ is a bijection. Since it is obviously a split-surjection it remains to show that this map is injective. Consider the pushout diagram
\[
\xymatrix{X\ar[d]\ar[r]^(.4){1_X\times 0}&X\times [0,1]\ar[d]\\ \star \ar[r]& cX}
\]
By split-exactness, the map $F(cX)\to P:=F(*)\times_{F(X)}F(X\times [0,1])$ is onto. Since we are also assuming that $F$ vanishes on contractible spaces, we further have $F(*)=F(cX)=0$,
whence $P=\ker(F(1_X\times 0))=0$.
\end{proof}

\subsection{Algebraic $K$-theory.}\label{subsec:kth}

In the previous subsection we considered contravariant functors on
spaces; now we turn our attention  to the dual picture of covariant
functors from categories of algebras to pointed sets or abelian
groups. The most important example for us is algebraic $K$-theory.
Before we go on, we want to quickly recall some definitions and
results. Let $R$ be a unital ring. The abelian group $K_0(R)$ is
defined to be the Grothendieck group of the monoid of isomorphism
classes of finitely generated projective $R$-modules with direct sum
as addition. We define
$$K_n(R) = \pi_n(B\GL(R)^+),\star),\quad \forall n \geq 1,$$
where $X \mapsto X^+$ denotes Quillen's plus-construction \cite{qui}. Bass' Nil $K$-groups of a ring are defined as
\begin{equation} \label{ink}
NK_n(R) = \coker\left( K_n(R) \to K_n(R[t]) \right).
\end{equation}
The so-called fundamental theorem gives an isomorphism
\begin{equation}\label{bhs}
K_n(R[t,t^{-1}]) = K_n(R) \oplus K_{n-1}(R) \oplus NK_n(R) \oplus NK_n(R), \quad \forall n \geq 1
\end{equation}
which holds for all unital rings $R$. One can use this to define $K$-groups and Nil-groups in negative degrees. Indeed, if one puts
$$K_{n-1}(R) = \coker\left( K_n(R[t]) \oplus K_n(R[t^{-1}]) \to K_n(R[t,t^{-1}]) \right),$$
negative $K$-groups can be defined inductively. There is a functorial spectrum $K(R)$, such that
\begin{equation} \label{kspecdef}
K_n(R) = \pi_n K(R)\qquad (n\in\Z).
\end{equation}
This spectrum can be constructed in several equivalent ways (e.g.
see \cite{gers2}, \cite{pw1},\cite[\S 5]{pw2}, \cite[\S6]{tho},
\cite{wag}). Functors from the category of algebras to spectra and
their properties will be studied in more detail in the next
subsection.

\vspace{0.2cm}

A ring $R$ is called $K_n$-\emph{regular} if the map $K_n(R)\to K_n(R[t_1,\dots,t_m])$ is an isomorphism
for all $m$; it is called $K$-\emph{regular} if it is $K_n$-regular for all $n$. It is well-known that if $R$ is a regular noetherian ring then $R$ is $K$-regular and $K_nR=0$ for $n<0$. In particular this applies when $R$ is the coordinate ring of a smooth affine algebraic variety over a field.
We think of the Laurent polynomial ring $R[t_1,t_1^{-1},\dots, t_n,t_n^{-1}]$ as the group ring $R[\Z^n]$ and use  the fact that if the natural map $K_0(R) \to K_0(R[\Z^n])$ is an isomorphism for all $n \in \N$, then all negative algebraic $K$-groups and all (iterated ) nil-$K$-groups in negative degrees vanish. This can be proved with an easy induction argument.

\vspace{0.2cm}

\begin{remark} Iterating the nil-group construction, one obtains the following formula for the $K$-theory of the polynomial ring in $m$-variables
\begin{equation}\label{formu:kpol}
K_n(R[t_1,\dots,t_m])=\bigoplus_{p=0}^mN^pK_n(R)\otimes \bigwedge^p\Z^m
\end{equation}
Here $\bigwedge^p$ is the exterior power and $N^pK_n(R)$ denotes the iterated nil-group defined using the analogue of formula (\ref{ink}). Thus a ring $R$ is $K_n$-regular if and only if $N^pK_n(R)=0$ for all $p\ge 0$. In \cite{bass} Hyman Bass raised the question of whether the condition that $NK_n(R)=0$ is already sufficient for $K_n$-regularity. This question was settled in the negative in \cite[Thm. 4.1]{chwbass}, where an example of a commutative algebra $R$ of finite type over $\Q$ was given such that
$NK_0(R)=0$ but $N^2K_0(R)\ne 0$.
On the other hand it was proved \cite[Cor. 6.7]{chwnk} (see also \cite{gubelbq}) that if $R$ is of finite
type over a large field such as $\R$ or $\C$, then $NK_n(R)=0$ does imply that $R$ is $K_n$-regular. This is already sufficient for our purposes, since the rings this paper is concerned with are algebras over $\R$. For completeness let us remark further that if $R$ is any ring such that $NK_n(R)=0$ for \emph{all} $n$
then $R$ is $K$-regular, i.e. $K_n$-regular for all $n\in\Z$. As observed by Jim Davis in \cite[Cor. 3]{davis} this follows from Frank Quinn's theorem that
the Farrell-Jones conjecture is valid for the group $\Z^n$ (see also \cite[4.2]{chwnk}).
\end{remark}

\subsection{Homology theories and excision.}\label{subsec:func-n-H}
We consider functors and homology theories of associative, not necessarily unital algebras over
a fixed field $k$ of characteristic zero. In what follows, $\rC$ will denote either the category $\rA/k$ of associative $k$-algebras or the full subcategory
$\Co/k$ of commutative algebras. A {\it homology theory} on $\rC$ is a functor $E:\rC \to \Spt$ to the category
of spectra which preserves finite products up to homotopy. That is, $E(\prod_{i\in I}A_i)\to \prod_{i\in I}E(A_i)$
is a weak equivalence for finite $I$.
If $A\in\rC$ and $n\in\Z$, we write $E_n(A)=\pi_nE(A)$ for the $n$-th stable homotopy group.
Let $E$ be a homology theory and let
\begin{equation}\label{seq}
0\to A\to B\to C\to 0
\end{equation}
be an exact sequence (or extension) in $\rC$. We say that $E$ {\it satisfies excision} for \eqref{seq}, if
$E(A)\to E(B)\to E(C)$ is a homotopy fibration. The algebra $A$ is {\it $E$-excisive} if $E$ satisfies
excision on any extension \eqref{seq} with kernel $A$. If $\fC\subset\rC$ is a subcategory, and
$E$ satisfies excision for every sequence \eqref{seq} in $\fC$, then we say that $E$ satisfies excision
on $\fC$.

\begin{remark} If we have a functor $E$ which is only defined on the subcategory $\rC_1\subset \rC$ of  unital algebras and unital homomorphisms, and which preserves finite products up to homotopy, then we can extend it to all of $\rC$ by setting
\[
E(A)=\hofi(E(A^+_k)\to E(k))
\]
Here $A^+_k$ denotes the unitalization of $A$ as a $k$-algebra.
The restriction of the new functor $E$ to unital algebras is not the same as the old one, but it is homotopy
equivalent to it. Indeed, for $A$ unital, we have $A^+_k \cong A \oplus k$ as $k$-algebras. Since $E$ preserves finite products, this implies the claim.
In this article, whenever we encounter a homology theory defined
only on unital algebras, we shall implicitly consider it extended to non-unital algebras by the procedure just
explained. Similarly, if $F:\rC_1\to\ab$ is a functor to abelian groups which preserves finite products,
it extends to all of $\rC$ by
\[
F(A)=\ker(F(A^+_k)\to F(k))
\]
In particular this applies when $F=E_n$ is the homology functor associated to a homology theory as above.
\end{remark}

The main examples of homology theories we are interested in are $K$-theory, Hochschild homology and the various variants of cyclic homology.
A milestone in understanding excision in $K$-theory is the following result of Andrei Suslin and Mariusz Wodzicki, see \cite{sw0, sw1}.
\begin{theorem}[Suslin-Wodzicki]\label{thm:swex}
A $\Q$-algebra $R$ is $K$-excisive if and only if for the
$\Q$-algebra unitalization $R^+_\Q=R\oplus \Q$ we have
\[
\tor^{R^+_\Q}_n(\Q,R)=0
\]
for all $n\ge 0$.
\end{theorem}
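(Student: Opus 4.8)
The plan is to recognize the $\tor$-vanishing hypothesis as the assertion that $R$ is \emph{homologically unital} (\emph{H-unital}), and then to prove that H-unitality is equivalent to excision. Resolving $\Q = R^+_\Q/R$ over $R^+_\Q$ by the reduced bar resolution and tensoring with $R$ identifies $\tor^{R^+_\Q}_n(\Q,R)$ with the $n$-th homology of the \emph{bar complex}
\[
\cdots \longrightarrow R^{\otimes 3}\longrightarrow R^{\otimes 2}\longrightarrow R\longrightarrow 0,\qquad b'(a_0\otimes\cdots\otimes a_n)=\sum_{i=0}^{n-1}(-1)^i\, a_0\otimes\cdots\otimes a_ia_{i+1}\otimes\cdots\otimes a_n.
\]
Hence $\tor^{R^+_\Q}_0(\Q,R)=R/R^2$, and the full hypothesis says precisely that this complex is acyclic, i.e.\ that $R$ is H-unital; as $R$ is already a $\Q$-algebra, there is no distinction between $R$ and $R\otimes_\Z\Q$ here. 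So the statement to prove is: \emph{$R$ is $K$-excisive if and only if $R$ is H-unital}. I would take as known inputs Wodzicki's theorem that H-unitality is equivalent to excision in Hochschild and cyclic homology, and the standard identification (via the plus-construction in positive degrees, and Bass-type delooping for the rest) of relative $K$-theory with the homology of a relative plus-construction on $B\GL$.

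For the ``only if'' direction, the plan is to show that a failure of H-unitality obstructs excision. Fix an ideal embedding $R\triqui A$ with $B=A/R$; excision asserts $\hofi(K(A)\to K(B))\simeq K(R^+_\Q,R)$ for every such $A$. Passing to the relative plus-construction, the low-degree relative $K$-groups are computed by the low-degree homology of $\GL(A,R)=\ker(\GL(A)\to\GL(B))$, which is in turn governed by the groups $\tor^{A}_*(\Q,R)$; if the bar complex of $R$ first had nonzero homology in some degree $m$, one detects this by a relative $K$-class depending on the ambient ring $A$, contradicting excision. The passage from arbitrary $m$ to small $m$ is made by replacing $R$ with matrix rings $M_k(R)$ and with a Karoubi-type suspension, which shifts the Tor-degree; the tight relation between relative $K$-theory and relative cyclic homology (together with Wodzicki's theorem for the latter) can also be used to organise this reduction.

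The substantial direction is ``if'': H-unital $\Rightarrow$ $K$-excisive. First I would reduce to a statement in group homology: since the plus-construction preserves homology, the equivalence $K(A,R)\simeq K(R^+_\Q,R)$ holds for all $A$ exactly when the relative group homology $H_*(\GL(A),\GL(A,R);\Z)$ is independent of the ambient ring $A$, i.e.\ agrees through the natural map with $H_*(\GL(R^+_\Q),\GL(R^+_\Q,R);\Z)$. Next I would compute this relative homology by a Volodin-type model: the homology of the relative $\GL$ is the homology of a simplicial set assembled from the unipotent subgroups built out of the \emph{non-unital} ring $R$ placed in the various coordinate positions, and the associated spectral sequence has the iterated tensor powers $R^{\otimes\bullet}$ as input. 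The acyclicity of the bar complex of $R$ is what forces this spectral sequence to collapse to an answer manifestly depending on $R$ alone; feeding this back shows $H_*(\GL(A),\GL(A,R);\Z)$ is independent of $A$, hence so is $K(A,R)$, which is excision.

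The main obstacle is that last computation: reducing excision for the homology of the infinite general linear group to the purely homological acyclicity of the bar complex of $R$. This is the technical heart of Suslin--Wodzicki; it demands a delicate analysis of the Volodin construction and control of several spectral sequences, and it genuinely uses that we work over $\Q$ --- so that averaging/transfer arguments and the Malcev correspondence relating the relevant discrete unipotent groups to nilpotent Lie algebras are available --- the corresponding statement over $\Z$ being considerably more subtle.
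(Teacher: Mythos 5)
A point of context first: the paper does not prove this statement. It is quoted as a theorem of Suslin and Wodzicki, with citations to their papers, and is used later as a black box (e.g.\ to deduce that $C^*$-algebras are $K$-excisive). So there is no internal proof to compare yours against; the only meaningful comparison is with Suslin--Wodzicki's own argument.

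Your preliminary reduction is correct: resolving $\Q$ over $R^+_\Q$ by the bar resolution and tensoring with $R$ identifies $\tor^{R^+_\Q}_n(\Q,R)$ with the $n$-th homology of the bar complex $(R^{\otimes\bullet+1},b')$, so the hypothesis is exactly H-unitality of $R$ as a $\Q$-algebra, and the statement becomes Suslin--Wodzicki's theorem that a $\Q$-algebra is $K$-excisive iff it is H-unital. Your outline of their proof is also essentially faithful: the passage from excision to the independence of $H_*(\GL(A),\GL(A,R))$ from the ambient ring $A$ (legitimate because the relative $K$-theory spaces are simple, so a homology equivalence suffices, with a Bass-type delooping handling the negative range), the Volodin model for this relative homology built from triangular subgroups with entries in $R$, the spectral sequences whose input consists of tensor powers of $R$, and the essential use of $\Q$-coefficients via the Malcev correspondence between unipotent groups and nilpotent Lie algebras. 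But as a proof the proposal has a genuine gap, which you yourself flag: the entire mathematical content of the theorem is the ``last computation'' you defer, namely the analysis of the Volodin spaces and the degeneration of the associated spectral sequences forced by acyclicity of the bar complex. That analysis occupies the bulk of Suslin--Wodzicki's Annals paper and cannot be reconstructed from the outline. The ``only if'' direction is likewise only gestured at (``one detects this by a relative $K$-class depending on the ambient ring''); making it precise requires specific test embeddings $R\triqui A$ and the dimension-shifting via matrix rings and suspensions that you name but do not carry out. In short, your proposal is an accurate roadmap to the known proof, not a proof; for the purposes of this paper the correct move is the one the authors make, namely to cite the result.
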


For example it was shown in \cite[Thm. C]{sw1} that any ring satisfying a certain ``triple factorization property" is $K$-excisive; since any $C^*$-algebra has this property, (\cite[Prop. 10.2]{sw1}) we have

\begin{theorem} [Suslin-Wodzicki]\label{thm:swcstar}
$C^*$-algebras are $K$-excisive.
\end{theorem}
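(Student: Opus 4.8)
The plan is to obtain the statement by combining the two results of Suslin and Wodzicki already recalled, so that no genuinely new argument is needed here. Recall that \cite[Thm.~C]{sw1} identifies a sufficient condition for $K$-excision, the \emph{triple factorization property}: a requirement that every finite system of elements of the ring admit certain controlled two-sided factorizations through a common element, i.e. that finitely generated one-sided ideals factor, in a two-sided way, through a single principal ideal. By Theorem \ref{thm:swex} this is equivalent to the vanishing $\tor^{R^+_\Q}_n(\Q,R)=0$ for all $n\ge 0$. The second result, \cite[Prop.~10.2]{sw1}, asserts that every $C^*$-algebra has this property. Granting both, the theorem is immediate: a $C^*$-algebra $A$ has the triple factorization property, hence is $K$-excisive.

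It is worth indicating why the second ingredient holds, since this is the only point at which analysis enters. Given finitely many elements $a_1,\dots,a_m$ of a $C^*$-algebra $A$, consider the positive element $p=\sum_i a_ia_i^*\in A$. Since $a_ia_i^*\le p$ for each $i$, each $a_i$ lies in the norm closure of $p^{1/2}A$, and a standard functional-calculus argument (an instance of the Cohen--Hewitt factorization phenomenon, the relevant module action being non-degenerate by positivity) produces $b_i\in A$ with $a_i=p^{1/4}b_i$; iterating this, together with the symmetric construction on the left, yields the two-sided two-step factorizations demanded by the definition. All fractional powers stay inside $A$ by the continuous functional calculus, so no completion beyond $A$ itself is invoked.

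Thus the proof has no obstacle internal to this paper. The substantive work --- that the triple factorization property forces the relative $\tor$ groups to vanish, and hence that $K$ satisfies excision on such rings --- is carried out in \cite{sw1}; the $C^*$-algebraic input is the elementary functional-calculus verification sketched above. If a self-contained treatment were wanted, one would instead establish $\tor^{A^+_\Q}_n(\Q,A)=0$ directly from a quasi-central bounded approximate unit of $A$, but this is precisely what \cite[Prop.~10.2]{sw1} packages.
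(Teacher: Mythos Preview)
Your proposal is correct and follows exactly the paper's own route: the paper simply cites \cite[Thm.~C]{sw1} for the fact that the triple factorization property implies $K$-excision and \cite[Prop.~10.2]{sw1} for the fact that $C^*$-algebras enjoy this property, which is precisely what you do (with the bonus of a sketch of the functional-calculus verification). One small quibble: your sentence ``By Theorem~\ref{thm:swex} this is equivalent to the vanishing\ldots'' is slightly misleading if ``this'' is read as the triple factorization property rather than $K$-excision itself, but the logic of the argument is unaffected.
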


Excision for Hochschild and cyclic homology of $k$-algebras,
denoted respectively $HH(/k)$ and $HC(/k)$, has been studied in detail by Wodzicki in
\cite{MR997314}; as a particular case of his results, we cite the following:
\begin{theorem}[Wodzicki]\label{thm:wodex}
The following are equivalent for a $k$-algebra $A$.
\begin{enumerate}
\item $A$ is $HH(/k)$-excisive.
\item $A$ is $HC(/k)$-excisive.
\item $\tor_*^{A^+_k}(k,A)=0$.
\end{enumerate}
\end{theorem}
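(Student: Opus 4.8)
\emph{Proof strategy.} I would follow Wodzicki's argument, whose conceptual core is a reformulation of condition (3) in terms of the \emph{bar complex}. Write $A^+=A^+_k=A\oplus k$, and let $B(A)$ be the complex with $B(A)_n=A^{\otimes_k(n+1)}$ for $n\ge 0$ and differential the truncated Hochschild differential $b'(a_0\otimes\cdots\otimes a_n)=\sum_{i=0}^{n-1}(-1)^i\,a_0\otimes\cdots\otimes a_ia_{i+1}\otimes\cdots\otimes a_n$ (no cyclic term). Identifying $B(A)$ with the normalized bar resolution of the right $A^+$-module $k$, tensored over $A^+$ with $A$, one gets $H_n(B(A))\cong\tor^{A^+}_n(k,A)$; thus (3) is precisely the statement that $B(A)$ is acyclic (one says $A$ is $H$-unital). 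The one technical lemma I would prove first is a stability (Künneth-type) statement: \emph{if $B(A)$ is acyclic, then for every right $A^+$-module $M$ and every left $A^+$-module $N$ the two-sided bar complex $B(M,A,N)$, with $B(M,A,N)_n=M\otimes_k A^{\otimes_k n}\otimes_k N$ and the evident $b'$-differential, is acyclic in positive degrees and has $H_0\cong M\otimes_{A^+}N$}. This lemma is where $H$-unitality of $A$ does all the work; its proof combines the $k$-flatness of every module over a field with a careful $\tor^{A^+}$-balancing of the bar complexes, and it is the real technical heart of the theorem.

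\emph{(3) $\Rightarrow$ (1).} Given an extension $0\to A\to B\xrightarrow{\pi}C\to 0$ in $\rC$, I would filter the normalized Hochschild complex $\bar C_*(B/k)$ by the number of tensor slots that lie in the ideal $A$ (a standard device: the unfiltered slots are realized as living in $B/A=C$). This filtration is exhaustive and bounded in each homological degree, so the associated spectral sequences converge. Comparing with the analogous one-column filtration of $\bar C_*(C/k)$, the homotopy fibre of $\bar C_*(B/k)\to\bar C_*(C/k)$ inherits a filtration whose $E^1$-page consists of one column canonically isomorphic to $\bar C_*(A/k)$ together with further columns, each a direct sum of two-sided bar complexes $B(M,A,N)$ with middle degree $\ge 1$ and with $M$, $N$ tensor powers of $C$ (and copies of $B$). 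By the stability lemma these extra columns are acyclic, so the spectral sequence collapses onto the first column and the relative complex is quasi-isomorphic to $\bar C_*(A/k)$; equivalently, $HH(A/k)\to HH(B/k)\to HH(C/k)$ is a homotopy fibration. Since this holds for every extension with kernel $A$, condition (1) follows. I expect this to be the main obstacle: setting the filtration up precisely enough that its associated graded is visibly a sum of bar complexes, checking convergence, and, above all, proving the stability lemma.

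\emph{The remaining implications.} That (1) $\Leftrightarrow$ (2) is formal: for a fixed extension, the Connes periodicity (SBI) sequences for $A$, $B$, $C$ assemble into a ladder containing the three Hochschild long exact sequences as columns; given (1) for that extension these Hochschild columns are exact, and an induction on homological degree together with the five lemma forces the three cyclic columns to form a long exact sequence as well, which is (2) for that extension — and the same diagram read in the other direction gives the converse. In particular, together with the second paragraph, (3) $\Rightarrow$ (2). For the reverse direction (1) $\Rightarrow$ (3) (and hence, via (2) $\Rightarrow$ (1), also (2) $\Rightarrow$ (3)), one argues by obstruction theory: if $\tor^{A^+}_*(k,A)$ did not vanish, let $n_0$ be the least degree with $\tor^{A^+}_{n_0}(k,A)\neq 0$; a universal-example construction out of the bar complex $B(A)$, using this lowest nonvanishing class, produces an explicit extension with kernel $A$ for which the excision comparison map on relative Hochschild homology fails to be an isomorphism near degree $n_0$, contradicting (1). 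This completes the cycle of implications.
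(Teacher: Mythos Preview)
The paper does not give a proof of this statement at all: it is quoted as a result of Wodzicki, with the citation \cite{MR997314} and the phrase ``as a particular case of his results, we cite the following.'' So there is no in-paper argument to compare against.

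That said, your outline is a fair summary of Wodzicki's actual proof in \cite{MR997314}. The identification of condition (3) with acyclicity of the bar complex $(A^{\otimes(n+1)},b')$ is exactly his notion of $H$-unitality; the filtration of the Hochschild complex of $B$ by the number of tensor factors lying in the ideal $A$, and the recognition of the associated graded as a sum of two-sided bar complexes, is precisely the mechanism he uses for (3)$\Rightarrow$(1); and the SBI/five-lemma argument for (1)$\Leftrightarrow$(2) is standard. The one place where your sketch is looser than Wodzicki's treatment is the ``stability lemma'': what is actually needed (and what Wodzicki proves) is that $H$-unitality of $A$ forces acyclicity of complexes of the form $V\otimes_k B(A)$ and of the mixed bar pieces that appear in the associated graded, and over a field this is indeed a K\"unneth argument, but your formulation in terms of arbitrary $A^+$-modules $M,N$ and $M\otimes_{A^+}N$ is stronger than required and would need more care to justify. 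For (1)$\Rightarrow$(3), Wodzicki does not build an ad hoc extension from a lowest nonvanishing $\tor$ class; rather, he exhibits a specific functorial extension (essentially a cone/suspension-type construction) for which failure of $H$-unitality visibly obstructs excision. Your ``universal example'' paragraph is pointing in the right direction but is not yet a proof; if you want to fill this in, look at the explicit extension Wodzicki uses rather than trying to manufacture one from the $\tor$ class.
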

Note that it follows from \eqref{thm:swex} and \eqref{thm:wodex} that a $k$-algebra $A$ is $K$-excisive
if and only if it is $HH(/\Q)$-excisive.

\begin{remark}\label{rem:wodpnas} Wodzicki has proved (see \cite[Theorems ~1 and ~4]{wodpnas})
that if $A$ is a $C^*$-algebra then $A$ satisfies the conditions of Theorem \ref{thm:wodex} for any subfield
$k\subset \C$.
\end{remark}

\subsection{Milnor squares and excision.} \label{subsec:splitexab}

We now record some facts about Milnor squares of $k$-algebras and excision.

\begin{definition}
A square of $k$-algebras
\begin{equation}\label{milsq}
\xymatrix{ A \ar[r] \ar[d] & B \ar[d]^f \\ C \ar[r]^g & D }
\end{equation}
is said to be a \emph{Milnor square} if it is a pull-back square and either $f$ or $g$ is surjective. It is said to be \emph{split} if either $f$ or $g$ has a section.
\end{definition}

Let $F$ be a functor from $\rC$ to abelian groups and let
\begin{equation}\label{spseq}
\xymatrix{0\ar[r]&A\ar[r]&B\ar[r]&C\ar@/_/[l]\ar[r]&0}
\end{equation}
be a split extension in $\rC$. We say that $F$ is {\it split exact} on \eqref{spseq} if
\begin{equation*}
\xymatrix{0\ar[r]&F(A)\ar[r]&F(B)\ar[r]&F(C)\ar[r]&0}
\end{equation*}
is (split) exact. If $\fC\subset \rC$ is a subcategory, and $F$ is split exact on all split
exact sequences contained in $\fC$, then we say that $F$ is split exact on $\fC$.

\begin{lemma}\label{lem:Exci} Let $E:\rC\to \Spt$ be a homology theory and \eqref{milsq} a Milnor square. Assume that
$\ker(f)$ is $E$-excisive. Then $E$ maps \eqref{milsq} to a homotopy cartesian square.
\end{lemma}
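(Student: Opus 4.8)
The plan is to deduce the result from the hypothesis that $\ker(f)$ is $E$-excisive, exploiting the fact that in a Milnor square the vertical kernels agree. First I would set $I=\ker(f)$; since \eqref{milsq} is a pull-back square and $f$ is surjective (the case where $g$ is surjective is symmetric, interchanging the two pairs of parallel arrows), the induced map $\ker(A\to C)\to \ker(f)=I$ is an isomorphism, and $A\to C$ is itself surjective. Thus we have a commutative ladder of extensions in $\rC$
\[
\xymatrix{0\ar[r]&I\ar[r]\ar@{=}[d]&A\ar[r]\ar[d]&C\ar[r]\ar[d]^g&0\\
0\ar[r]&I\ar[r]&B\ar[r]^f&D\ar[r]&0}
\]
whose left vertical arrow is the identity on $I$. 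Applying $E$ and using that $I$ is $E$-excisive, both rows are carried to homotopy fibrations: $E(I)\to E(A)\to E(C)$ and $E(I)\to E(B)\to E(D)$ are homotopy fibre sequences.

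Next I would compare these two fibrations. We obtain a map of homotopy fibration sequences in which the map on fibres $E(I)\to E(I)$ is the identity (hence an equivalence). A standard fact about homotopy fibre sequences of spectra — or, equivalently, the long exact sequence of homotopy groups together with the five lemma applied levelwise — then shows that the square
\[
\xymatrix{E(A)\ar[r]\ar[d]&E(B)\ar[d]\\ E(C)\ar[r]&E(D)}
\]
is homotopy cartesian: the homotopy fibre of $E(A)\to E(C)$ maps by an equivalence to the homotopy fibre of $E(B)\to E(D)$, and this is precisely the assertion that the square is homotopy cartesian. One small point to attend to is that $E$ is only assumed to preserve finite products up to homotopy and to satisfy excision where stated; but the non-unital extension of $E$ described in the Remark following Theorem \ref{thm:swex} is built precisely so that these fibration sequences make sense, so no further hypotheses are needed.

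I do not expect a serious obstacle here; the lemma is essentially a formal consequence of "$E$-excisive" plus the Milnor-square identification of kernels. The only place requiring a little care is the symmetry reduction — verifying that "$\ker(f)$ is $E$-excisive" is the hypothesis one wants regardless of whether it is $f$ or $g$ that is surjective — and the observation that in a pull-back square with $f$ surjective one automatically has $A\to C$ surjective with the same kernel, which is a routine diagram chase in the category of $k$-modules.
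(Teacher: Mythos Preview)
The paper states this lemma without proof, so there is nothing to compare against; your argument is the standard one and is correct. Your identification of $\ker(A\to C)\cong\ker(f)$ and the surjectivity of $A\to C$ when $f$ is surjective are routine for pull-back squares, and applying $E$-excisiveness of $I$ to both extensions to obtain two homotopy fibrations with equivalent fibres is exactly how one shows the resulting square is homotopy cartesian.

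One small imprecision: your remark that ``the case where $g$ is surjective is symmetric, interchanging the two pairs of parallel arrows'' is not quite right as stated. Interchanging the roles of $f$ and $g$ would naturally lead you to require that $\ker(g)$, not $\ker(f)$, be $E$-excisive. Since the lemma singles out $\ker(f)$ in its hypothesis, it is implicitly intended that $f$ be the surjective map (this is the usual convention for Milnor squares), and you should simply say so rather than invoke a symmetry that does not preserve the hypothesis. This does not affect the substance of your proof.
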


\begin{lemma}\label{lem:fsplit}
Let $F:\rC\to \ab$ be a functor, $\fC\subset \rC$ a subcategory closed under kernels, and \eqref{milsq} a split Milnor square in $\fC$. Assume that $F$ is split exact on $\fC$. Then the sequence
\[
0\to F(A)\to F(B)\oplus F(C)\to F(D)\to 0
\]
is split exact.
\end{lemma}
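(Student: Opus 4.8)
The plan is to use the pull-back property of the Milnor square to manufacture two split short exact sequences in $\fC$ with a common kernel, apply the hypothesis that $F$ is split exact on $\fC$ to each of them, and then splice the two resulting exact sequences together by a standard diagram chase.

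Since the desired conclusion is symmetric under interchanging $B$ and $C$ (and $f$ and $g$), I may assume that $g\colon C\to D$ has a section $s$; note this already forces $g$ to be surjective. As \eqref{milsq} is cartesian, $A$ is the algebra $\{(b,c)\in B\times C:f(b)=g(c)\}$, and I would first record the elementary facts that (a) the projection $\mathrm{pr}_B\colon A\to B$ is surjective — because $g$ is — with kernel the copy of $\ker g$ sitting inside $A$ as $\{(0,c):g(c)=0\}$, and that $b\mapsto(b,sf(b))$ is an algebra homomorphism $B\to A$ splitting $\mathrm{pr}_B$; and (b) $\ker g$ lies in $\fC$, since $\fC$ is closed under kernels and $g$ is one of the structure maps of \eqref{milsq}. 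This produces a split extension $0\to\ker g\to A\xrightarrow{\mathrm{pr}_B}B\to 0$ in $\fC$, sitting over the split extension $0\to\ker g\to C\xrightarrow{g}D\to 0$ via the other projection $\mathrm{pr}_C\colon A\to C$ on middle terms, the identity on $\ker g$, and $f$ on cokernels; the square on the right of this ladder commutes precisely by the defining relation $f(b)=g(c)$ on $A$.

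Applying the split-exactness of $F$ on $\fC$ to both rows yields a commutative ladder
\[
\xymatrix{
0\ar[r] & F(\ker g)\ar[r]\ar@{=}[d] & F(A)\ar[r]^{F(\mathrm{pr}_B)}\ar[d]^{F(\mathrm{pr}_C)} & F(B)\ar[r]\ar[d]^{F(f)} & 0\\
0\ar[r] & F(\ker g)\ar[r] & F(C)\ar[r]^{F(g)} & F(D)\ar[r] & 0
}
\]
of split short exact sequences of abelian groups in which the left-hand vertical map is an isomorphism. A routine diagram chase then shows that
\[
0\to F(A)\xrightarrow{(F(\mathrm{pr}_B),\,F(\mathrm{pr}_C))}F(B)\oplus F(C)\xrightarrow{F(f)-F(g)}F(D)\to 0
\]
is exact: injectivity of the first map together with exactness in the middle is the assertion that the right-hand square of the ladder is cartesian, which follows from the left vertical being an isomorphism, while surjectivity onto $F(D)$ is immediate because $F(g)$ is already split surjective. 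Finally, the splitting $F(s)$ of the bottom row provides a section of $F(f)-F(g)$, so the displayed sequence splits, which is exactly the statement.

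The argument is essentially formal, so I do not foresee a serious obstacle; the one point that needs care is making sure the two auxiliary extensions genuinely lie in $\fC$ in the sense required for the hypothesis on $F$ to apply — which is precisely what closure of $\fC$ under kernels (together with the structure maps of the square lying in $\fC$) provides — after which the homological bookkeeping at the end is completely routine.
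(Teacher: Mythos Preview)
The paper states this lemma without proof, recording it alongside Lemma~\ref{lem:Exci} as a standard fact about Milnor squares at the end of \S\ref{subsec:splitexab}. Your argument is the expected one and is correct: pull back the splitting of $g$ to a splitting of $\mathrm{pr}_B\colon A\to B$, apply split-exactness of $F$ to the two split extensions with common kernel $\ker g$, and diagram-chase the resulting ladder (the chase you outline goes through verbatim). The one point you flag---that the auxiliary extensions lie in $\fC$---is exactly what ``closed under kernels'' is there for, and in the paper's intended applications $\fC$ is a full subcategory (commutative $C^*$-algebras inside $\Co/\C$), so the morphisms come along automatically.
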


\section{Real algebraic geometry and split exact functors.}\label{sec:real}

In this section, we recall several results from real algebraic geometry and prove a theorem on the behavior of weakly split exact
functors with respect to proper semi-algebraic surjections (see Theorem \ref{thm:main}). Recall that a semi-algebraic set is \emph{a priori}
a subset of $\R^n$ which is described as the solution set of a finite number of polynomial equalities and inequalities. A map between semi-algebraic sets is semi-algebraic if its graph is a semi-algebraic set.
For general background on semi-algebraic sets, consult \cite{basu}.

\subsection{General results about semi-algebraic sets.}\label{subsec:genreal}

Let us start with recalling the following two propositions.

\begin{proposition}[see Proposition $3.1$ in \cite{basu}] \label{prop:closure}
The closure of a semi-algebraic set is semi-algebraic.
\end{proposition}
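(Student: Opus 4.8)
The statement to prove is Proposition \ref{prop:closure}: the closure of a semi-algebraic set is semi-algebraic. This is a classical fact (Bochnar–Coste–Roy, \cite{basu}), so the expected proof is short and standard. Let me sketch it.

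The plan is to express the closure by an explicit first-order formula and invoke quantifier elimination (the Tarski–Seidenberg theorem). Suppose $S\subseteq\R^n$ is semi-algebraic. Then $S$ is the solution set of some first-order formula $\varphi(x)$ over the ordered field $\R$ (indeed one can take $\varphi$ quantifier-free, being a Boolean combination of polynomial equalities and inequalities). Now the (Euclidean) closure $\overline{S}$ consists of those $x\in\R^n$ such that every $\varepsilon$-ball around $x$ meets $S$; that is,
\[
x\in\overline{S}\quad\Longleftrightarrow\quad \forall\,\varepsilon>0\ \ \exists\,y\ \bigl(\varphi(y)\ \wedge\ \|x-y\|^2<\varepsilon^2\bigr).
\]
The right-hand side is a first-order formula in the language of ordered rings with parameters in $\R$ (the norm-squared $\|x-y\|^2=\sum_i(x_i-y_i)^2$ is a polynomial, so no square roots are needed). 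By the Tarski–Seidenberg theorem, the set of $x$ satisfying this formula is again semi-algebraic. Hence $\overline{S}$ is semi-algebraic.

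Alternatively, and avoiding even the explicit formula, one can argue via projections: the set $Z=\{(x,y,\varepsilon): \varphi(y)\wedge \|x-y\|^2<\varepsilon^2\}\subseteq\R^{2n+1}$ is semi-algebraic by construction; the image $\pi(Z)$ of $Z$ under the projection $(x,y,\varepsilon)\mapsto(x,\varepsilon)$ is semi-algebraic since the class of semi-algebraic sets is stable under linear projections (Tarski–Seidenberg); its complement $W$ in $\{(x,\varepsilon):\varepsilon>0\}$ is semi-algebraic; and finally $\overline{S}$ is the complement of the projection of $W$ to the $x$-coordinates — again semi-algebraic by Tarski–Seidenberg and stability under complements. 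Either packaging works; the substance is the same.

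There is no real obstacle here: the only ingredient is the Tarski–Seidenberg projection theorem (equivalently, quantifier elimination for real closed fields), which is the foundational result being cited from \cite{basu} and which we take as given. The one point to be slightly careful about is that the metric closure and not some weaker notion is what is meant, and that $\|\cdot\|^2$ rather than $\|\cdot\|$ must be used so as to stay inside the first-order language of ordered rings; with that observed, the proof is complete.
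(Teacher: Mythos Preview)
Your argument is correct and is precisely the standard proof via Tarski--Seidenberg quantifier elimination. Note, however, that the paper does not actually give a proof of this proposition: it is stated with a citation to \cite{basu} and no argument, so there is nothing in the paper to compare against; your sketch is essentially what one finds in the cited reference.
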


\begin{proposition}[see Proposition $2.83$ in \cite{basu}] \label{prop:image} Let $S$ and $T$ be semi-algebraic sets, $S'\subset S$
 and $T'\subset T$ semi-algebraic subsets and  $f\colon S \to T$ a semi-algebraic map.
Then $f(S')$ and $f^{-1}(T')$ are semi-algebraic.
\end{proposition}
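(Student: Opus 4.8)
The plan is to reduce both assertions to the Tarski--Seidenberg projection theorem, which is among the basic facts about semi-algebraic sets recorded in \cite{basu}: the image of a semi-algebraic subset of $\R^{p+q}$ under a coordinate projection $\R^{p+q}\to\R^{p}$ is again semi-algebraic. Apart from this I would use only two elementary closure properties that are immediate from the definition: the semi-algebraic subsets of a fixed $\R^{N}$ are closed under finite unions, finite intersections and complements; and if $A\subset\R^{p}$ and $B\subset\R^{q}$ are semi-algebraic, then so is $A\times B\subset\R^{p+q}$, since a polynomial sign condition on the first (resp.\ second) block of coordinates is a polynomial sign condition on $\R^{p+q}$, and $A\times B$ is cut out by the conjunction of the two defining systems.

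Write $S\subset\R^{n}$, $T\subset\R^{m}$, and let $\Gamma_{f}=\{(s,f(s)):s\in S\}\subset\R^{n}\times\R^{m}$ be the graph of $f$, which is semi-algebraic by hypothesis. Let $\pi_{1}\colon\R^{n}\times\R^{m}\to\R^{n}$ and $\pi_{2}\colon\R^{n}\times\R^{m}\to\R^{m}$ be the two coordinate projections. For the preimage, I would observe that, since $f$ is a function, a point $x$ lies in $f^{-1}(T')$ precisely when there exists $y\in T'$ with $(x,y)\in\Gamma_{f}$; hence
\[
f^{-1}(T')=\pi_{1}\bigl(\Gamma_{f}\cap(\R^{n}\times T')\bigr).
\]
Here $\R^{n}\times T'$ is semi-algebraic by the product property, its intersection with $\Gamma_{f}$ is semi-algebraic by Boolean closure, and its image under $\pi_{1}$ is semi-algebraic by Tarski--Seidenberg. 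Symmetrically, $y\in f(S')$ precisely when there exists $x\in S'$ with $(x,y)\in\Gamma_{f}$, so
\[
f(S')=\pi_{2}\bigl(\Gamma_{f}\cap(S'\times\R^{m})\bigr),
\]
and the same three steps give that $f(S')$ is semi-algebraic.

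The only substantive ingredient is the Tarski--Seidenberg theorem itself; once it is in hand --- and it is exactly the quantifier-elimination statement for real closed fields assembled in \cite{basu} --- the rest is pure bookkeeping, so I do not expect any genuine obstacle here. (Of course one may also simply invoke \cite[Prop.~2.83]{basu}, as the statement already does.)
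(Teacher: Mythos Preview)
Your argument is correct. In fact the paper gives no proof of this proposition at all: it simply records the statement with a reference to \cite[Prop.~2.83]{basu}. (There is a proof in the source file that was commented out; it phrases the same idea in the language of quantifier elimination, invoking \cite[Thm.~2.74]{basu} to replace the first-order formula defining $f(S)$ by a quantifier-free one.) Your graph-and-projection formulation is the standard geometric incarnation of exactly that quantifier-elimination step, so the two approaches are the same in substance; yours is slightly more explicit in handling both $f(S')$ and $f^{-1}(T')$.
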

\comment{
\begin{proof}
Suppose that $T \subset \R^k$. Since $S$ and $f$ are semi-algebraic, $f(S)$ is defined by some formula $\Psi(y_1,\dots,y_k)$ in the language of ordered fields with coefficients in $\R$. By Theorem $2.74$ in \cite{basu}, there exists a quantifier free formula $\Phi(y_1,\dots,y_k)$
with the property that for all $(y_1,\dots,y_k) \in \R^k$ the formula $\Psi(y_1,\dots,y_k)$ holds if and only $\Phi(y_1,\dots,y_k)$ holds. This shows that $f(S)$ is semi-algebraic.
\end{proof}
}
Note that a semi-algebraic map does not need to be continuous. Moreover, within the class of continuous maps, there are surjective maps $f \colon S \to T$, for which the quotient topology induced by $S$ does not agree with the topology on $T$.
An easy example is the projection map from $\{(0,0)\} \cup \{(t,t^{-1}) \mid t>0\}$ to $[0,\infty)$. This motivates the following definition.

\begin{definition}
Let $S,T$ be semi-algebraic sets.
A continuous semi-algebraic surjection $f \colon S \to T$ is said to be \emph{topological}, if for every semi-algebraic map $g \colon T \to Q$ the composition $g \circ f$ is continuous if and only if $g$ is continuous.
\end{definition}

Gregory Brumfiel proved the following result, which says that (under certain conditions) semi-algebraic equivalence relations lead to good quotients.

\begin{theorem}[see Theorem 1.4 in \cite{brum}] \label{thm:brum}
Let $S$ be a semi-algebraic set and let $R\subset S \times S$ be a closed semi-algebraic equivalence relation. If $\pi_1\colon R \to S$ is proper, then there exists a semi-algebraic set $T$ and a topological semi-algebraic surjection $f\colon S \to T$ such that
$$R= \{(s_1,s_2) \in S \times S \mid f(s_1)=f(s_2)\}.$$
\end{theorem}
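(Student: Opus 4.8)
This is Brumfiel's theorem, so what follows is an outline of how one argues rather than a self-contained proof. Two things must be produced: a semi-algebraic set $T$ together with a semi-algebraic surjection $f\colon S\to T$ whose fibres are exactly the $R$-equivalence classes, and a verification that $f$ is topological. The first move is to exploit properness of $\pi_1$: writing $\pi_2\colon R\to S$ for the other projection, each equivalence class $C_s:=\pi_2(\pi_1^{-1}(s))=\{s'\mid (s,s')\in R\}$ is then compact, being the continuous image of the compact set $\pi_1^{-1}(s)$, and it is semi-algebraic by Proposition \ref{prop:image}.

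Next I would manufacture $R$-invariant semi-algebraic functions separating the classes. The natural candidates are the distance functions $H\colon S\times S\to\R$, $H(b,s)=\mathrm{dist}(b,C_s)$: this is semi-algebraic (the set $\{(b,s,s')\mid (s,s')\in R\}$ is semi-algebraic and the infimum is attained because $C_s$ is compact), it depends only on the class of $s$, and $H(b,s)=0$ exactly when $b\in C_s$. The family $\{H(b,-)\}_{b\in S}$ separates classes: if $C_{s_1}\ne C_{s_2}$ then, the classes being disjoint, any $b\in C_{s_1}$ gives $H(b,s_1)=0\ne H(b,s_2)$. Once a \emph{finite} separating subfamily $H(b_1,-),\dots,H(b_N,-)$ is available, the map $f:=(H(b_1,-),\dots,H(b_N,-))\colon S\to\R^N$ has exactly the classes as fibres, so $T:=f(S)$ is semi-algebraic by Proposition \ref{prop:image} and $f\colon S\to T$ is the desired surjection with $R=\{(s_1,s_2)\mid f(s_1)=f(s_2)\}$.

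The main obstacle is precisely the passage from this infinite family to a finite one; this is where the real spectrum must enter, since a descending family of semi-algebraic sets indexed by a semi-algebraic parameter space need not stabilise over $\R$. One passes to $\mathrm{Sper}$ of the coordinate ring of $S\times S$: each $Z_b:=\{(s,s')\mid H(b,s)=H(b,s')\}\supseteq R$ becomes a constructible set $\widetilde{Z_b}$, which is quasi-compact in the constructible topology; provided one can check $\bigcap_b\widetilde{Z_b}=\widetilde R$, the constructible set $\mathrm{Sper}\setminus\widetilde R$ is covered by the constructibly clopen sets $\mathrm{Sper}\setminus\widetilde{Z_b}$, and quasi-compactness extracts a finite subcover, that is, $\bigcap_{b\in F}Z_b=R$ for a finite $F$. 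The delicate point — the technical heart of Brumfiel's argument — is exactly the identity $\bigcap_b\widetilde{Z_b}=\widetilde R$, which forces one to control $R$ and the functions $H(b,-)$ over \emph{all} real closed overfields of $\R$ simultaneously, not merely over $\R$.

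Finally, to see that $f$ is topological, let $g\colon T\to Q$ be semi-algebraic with $g\circ f$ continuous; I would deduce continuity of $g$ from properness of $\pi_1$ once more. From $f^{-1}(f(A))=\pi_1(\pi_2^{-1}(A))$ and the fact that $\pi_1$ restricted to the closed subset $\pi_2^{-1}(A)$ is still proper, one gets that $f$ carries closed sets to closed sets, so the semi-algebraic subspace topology of $T$ coincides with the one $f$ identifies. Concretely, if $g$ were discontinuous at $t_0=f(s_0)$, semi-algebraic curve selection would give a semi-algebraic arc in $T$ ending at $t_0$ along which $g$ fails to converge to $g(t_0)$; since the fibres of $f$ are compact and $f$ is a semi-algebraic surjection, Hardt triviality lets one lift this arc to a semi-algebraic arc in $S$ ending in the fibre $f^{-1}(t_0)$, contradicting continuity of $g\circ f$. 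In summary, the only genuinely hard ingredient is the finiteness reduction of the separating family; everything else is bookkeeping with semi-algebraic distance functions, Proposition \ref{prop:image}, curve selection and Hardt triviality.
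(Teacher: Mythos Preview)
The paper does not give its own proof of this statement: it is quoted verbatim as Theorem~1.4 of Brumfiel's paper \cite{brum} and used thereafter as a black box (in Corollary~\ref{cor:brum}, Proposition~\ref{prop:quot}, etc.). So there is no proof in the paper to compare your proposal against.

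What you have written is an honest outline of the ideas behind Brumfiel's actual argument --- compact equivalence classes from properness, invariant semi-algebraic separating functions built from distances to classes, a finiteness reduction via quasi-compactness of the constructible topology on the real spectrum, and closedness of $f$ from properness to get the ``topological'' conclusion --- and you correctly flag the one genuinely hard step (the identity $\bigcap_b\widetilde{Z_b}=\widetilde R$ over all real closed overfields) as something you are not carrying out in full. That is an accurate assessment of where the content lies. For the purposes of this paper, simply citing \cite{brum} is what the authors do and is entirely sufficient.
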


\begin{remark}
Note that the properness assumption in the previous theorem is automatically fulfilled if $S$ is compact. This is the case we are interested in.
\end{remark}

\begin{corollary} \label{cor:brum}
Let $S,S'$ and $T$ be compact semi-algebraic sets and $$f\colon T \to S \quad \mbox{and} \quad  f'\colon T \to S'$$ be continuous semi-algebraic maps. Then, the topological push-out
$S \cup_T S'$ carries a canonical semi-algebraic structure such that the natural maps
$\sigma\colon S \to S \cup_T S'$ and $\sigma' \colon S' \to S \cup_T S'$ are semi-algebraic.
\end{corollary}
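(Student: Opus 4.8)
The plan is to realize the push-out as a quotient of a compact semi-algebraic set by a closed semi-algebraic equivalence relation and then invoke Brumfiel's theorem \ref{thm:brum}. Say $S\subset\R^n$ and $S'\subset\R^m$, and put $N=\max(n,m)$. Embedding $S\hookrightarrow\R^{N+1}$ by $x\mapsto(x,0,\dots,0,0)$ and $S'\hookrightarrow\R^{N+1}$ by $x\mapsto(x,0,\dots,0,1)$ identifies $S$ and $S'$ with disjoint compact semi-algebraic subsets of $\R^{N+1}$; their union $Z$ is then a compact semi-algebraic set containing $S$ and $S'$ as clopen semi-algebraic subsets, i.e.\ a semi-algebraic model of the topological disjoint union $S\sqcup S'$. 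By the universal property of the topological push-out, $S\cup_TS'$ is the quotient $Z/R$, where $R\subset Z\times Z$ is the equivalence relation generated by the set $\Gamma=\{(f(t),f'(t))\colon t\in T\}\subset S\times S'\subset Z\times Z$ of pairs to be identified; under this identification the structure maps $\sigma,\sigma'$ become the composites of the semi-algebraic inclusions $S\hookrightarrow Z$, $S'\hookrightarrow Z$ with the quotient map $Z\to Z/R$.

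The heart of the matter is to check that $R$ is a \emph{closed} and \emph{semi-algebraic} equivalence relation on $Z$. The generating set $\Gamma$ is the image of the continuous semi-algebraic map $(f,f')\colon T\to Z\times Z$, hence semi-algebraic by Proposition \ref{prop:image}, and, $T$ being compact, it is compact, hence closed. From $\Gamma$ one builds $R$ by adjoining the diagonal $\Delta_Z$, the transposed relation, and the various relational composites; each such composite is a projection of a semi-algebraic subset of a finite product of copies of $Z$, hence semi-algebraic by Proposition \ref{prop:image}, and compact — so closed — because $Z$ is. The place where the compactness hypotheses on $S,S',T$ are really used is in controlling this generation process. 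In the situation needed later in the paper, where (as in \eqref{diag:pushout}) one of $f,f'$ is split injective, hence — $T$ being compact and the target Hausdorff — a closed embedding, this is transparent: if $f$ is injective then
\[
R=\Delta_Z\cup\Gamma\cup\Gamma^{\mathrm{op}}\cup R_S\cup R_{S'},\qquad R_S=\{(f(t),f(t'))\colon f'(t)=f'(t')\},
\]
and symmetrically for $R_{S'}$; injectivity of $f$ makes this relation transitive, and each of the finitely many pieces is visibly compact semi-algebraic, so $R$ is.

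Granting that $R$ is a closed semi-algebraic equivalence relation, the projection $\pi_1\colon R\to Z$ is automatically proper since $Z$ is compact, so Brumfiel's theorem \ref{thm:brum} supplies a semi-algebraic set $P$ and a topological semi-algebraic surjection $q\colon Z\to P$ whose fibres are exactly the $R$-classes. Being a \emph{topological} quotient, $P$ is homeomorphic to $Z/R=S\cup_TS'$, and we transport the semi-algebraic structure of $P$ along this homeomorphism. Then $\sigma\colon S\to S\cup_TS'$ and $\sigma'\colon S'\to S\cup_TS'$ are the composites of the semi-algebraic inclusions $S\hookrightarrow Z$, $S'\hookrightarrow Z$ with the semi-algebraic map $q$, hence semi-algebraic; and canonicity — that a different semi-algebraic embedding of $S\sqcup S'$, or the non-uniqueness of $P$ in \ref{thm:brum}, changes the outcome only through a semi-algebraic homeomorphism commuting with $\sigma,\sigma'$ — follows from the characterization of $q$ as \emph{the} topological semi-algebraic quotient of $Z$ by $R$, which pins down both the underlying space and the class of semi-algebraic maps out of it. The one genuinely non-formal step is thus the claim, isolated in the second paragraph, that $R$ is closed and semi-algebraic; everything else is the universal property of push-outs, Brumfiel's theorem, and the elementary stability properties of semi-algebraic sets (Propositions \ref{prop:closure} and \ref{prop:image}).
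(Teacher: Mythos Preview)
The paper gives no explicit proof of this corollary, but your approach --- model the disjoint union $S\sqcup S'$ as a compact semi-algebraic set $Z$, show the push-out equivalence relation $R$ on $Z$ is closed and semi-algebraic, and invoke Brumfiel's Theorem~\ref{thm:brum} --- is exactly the intended one, and your treatment of the case where one of $f,f'$ is injective is correct and complete. Since that is the only case the paper actually uses (in the proof of Theorem~\ref{thm:main}, the map $f^{-1}(\Delta^{n-1})\hookrightarrow f^{-1}(\Delta^n)$ is an inclusion), your argument suffices for everything downstream.

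One caveat on exposition: the sentence ``The place where the compactness hypotheses on $S,S',T$ are really used is in controlling this generation process'' is misleading as written. Compactness alone does \emph{not} bound the length of chains needed to generate $R$; it is the injectivity of $f$ that forces the finite description $R=\Delta_Z\cup\Gamma\cup\Gamma^{\mathrm{op}}\cup R_S$ (and makes $R_{S'}$ redundant, since $f(t)=f(t')\Rightarrow t=t'$). Without some such hypothesis the generated relation can require chains of unbounded length and fail to be closed: for instance, with $T=[0,1]\sqcup[0,1]$, $S=S'=[0,1]$, the first copy mapping by $t\mapsto(t,t)$ and the second by $t\mapsto(t,t/2)$, one gets $x\sim x/2$ in the quotient and hence non-closed equivalence classes $\{2^{-n}x\}$. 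So your proof establishes precisely the case the paper needs; the corollary in its fully general wording would seem to require an additional hypothesis of this kind.
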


For semi-algebraic sets, there is an intrinsic notion of connectedness, which is given by the following definition.

\begin{definition}
A semi-algebraic set $S \subset \R^k$ is said to be \emph{semi-algebraically connected} if it is not a non-trivial union of semi-algebraic subsets which are both open and closed in $S$.
\end{definition}

One of the first results on connectedness of semi-algebraic sets is the following theorem.

\begin{theorem}[see Theorem 5.20 in \cite{basu}]
Every semi-algebraic set $S$ is the disjoint union of a finite number of semi-algebraically connected semi-algebraic sets which are both open and closed in $S$.
\end{theorem}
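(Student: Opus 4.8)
The plan is to reduce the statement to a finite combinatorial problem by means of a cell decomposition of $S$, and then to settle that problem with a short point-set argument. First I would invoke the cylindrical algebraic decomposition adapted to $S$: there is a finite partition $S=\bigsqcup_{i=1}^{N}C_{i}$ into semi-algebraic \emph{cells}, each $C_{i}$ semi-algebraically homeomorphic to an open cube $(0,1)^{d_{i}}$ (a single point when $d_{i}=0$), and this decomposition satisfies the frontier condition: for $i\neq j$, if $C_{i}\cap\overline{C_{j}}\neq\emptyset$ then $C_{i}\subseteq\overline{C_{j}}$. One proves this by induction on the ambient dimension $k$, the base case $k=1$ being the elementary fact that a semi-algebraic subset of $\R$ is a finite disjoint union of points and open intervals; alternatively one may cite a semi-algebraic triangulation of $S$ and use open simplices in place of cells. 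Since the straight segment joining two points of a cube is a semi-algebraic path, each $(0,1)^{d}$ is semi-algebraically connected, and this property is invariant under semi-algebraic homeomorphism, so every $C_{i}$ is semi-algebraically connected. It follows that any semi-algebraic subset $A$ which is relatively clopen in a union of cells is itself a union of cells: for each such cell $C_{i}$ the set $A\cap C_{i}$ is a semi-algebraic, relatively clopen subset of $C_{i}$, hence empty or all of $C_{i}$.

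Next I would form the \emph{adjacency graph} $\Gamma$ with vertex set $\{C_{1},\dots,C_{N}\}$, joining $C_{i}$ and $C_{j}$ ($i\neq j$) by an edge whenever $C_{i}\subseteq\overline{C_{j}}$ or $C_{j}\subseteq\overline{C_{i}}$. Let $\Gamma_{1},\dots,\Gamma_{r}$ be the connected components of $\Gamma$ and put $S_{\ell}=\bigcup_{C_{i}\in\Gamma_{\ell}}C_{i}$. Each $S_{\ell}$ is semi-algebraic as a finite union of cells, the $S_{\ell}$ are pairwise disjoint, and they cover $S$. I claim each $S_{\ell}$ is relatively clopen in $S$; since there are finitely many and they partition $S$, it is enough to show $S_{\ell}\cap\overline{S\setminus S_{\ell}}=\emptyset$. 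If $x$ lay in this intersection, then $x\in C_{i}$ for some $C_{i}\in\Gamma_{\ell}$, and, $S\setminus S_{\ell}$ being a finite union of cells, $x\in\overline{C_{j}}$ for some $C_{j}\notin\Gamma_{\ell}$; the frontier condition would then give $C_{i}\subseteq\overline{C_{j}}$, producing an edge of $\Gamma$ between cells lying in different components of $\Gamma$, a contradiction.

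Finally I would verify that each $S_{\ell}$ is semi-algebraically connected. Suppose $S_{\ell}=U\sqcup V$ with $U,V$ nonempty semi-algebraic sets that are relatively clopen in $S_{\ell}$. By the observation at the end of the first paragraph, $U$ and $V$ are each unions of cells, and since $\Gamma_{\ell}$ is connected there is an edge of $\Gamma_{\ell}$ joining a cell $C_{i}\subseteq U$ to a cell $C_{j}\subseteq V$; say $C_{i}\subseteq\overline{C_{j}}$. Choose $x\in C_{i}\subseteq U$. As $U$ is open in $S_{\ell}$, some neighbourhood of $x$ in $S_{\ell}$ lies in $U$; but $x\in\overline{C_{j}}$ and $C_{j}\subseteq S_{\ell}$, so that neighbourhood meets $C_{j}\subseteq V$, contradicting $U\cap V=\emptyset$. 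Hence $S_{\ell}$ is semi-algebraically connected, and $S=\bigsqcup_{\ell=1}^{r}S_{\ell}$ is the desired decomposition into finitely many semi-algebraically connected, relatively clopen, semi-algebraic subsets. The only substantial ingredient, and the step I expect to be the main obstacle, is the existence of the finite cell decomposition satisfying the frontier condition; once that is in hand, the remaining arguments are entirely elementary.
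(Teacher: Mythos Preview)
The paper does not give its own proof of this theorem; the statement is simply quoted from Basu--Pollack--Roy \cite[Theorem~5.20]{basu} and used as input. So there is no in-paper argument against which to compare your proposal.

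For what it is worth, your argument is correct and is essentially the standard one underlying the cited reference: one produces a finite cell decomposition (via cylindrical algebraic decomposition), observes that cells are semi-algebraically connected, and then reads off the components from the combinatorics of cell adjacency. Your adjacency-graph packaging of the last step is a clean way to organise it. The caveat you already flag is the right one: the frontier condition is not automatic for an arbitrary CAD and must be secured by a suitable refinement, which is where the substantive work in \cite{basu} lies. One small remark: the alternative you mention via semi-algebraic triangulation is only directly available for compact (or at least bounded) $S$---compare Theorem~\ref{thm:triangular}---so for a general semi-algebraic set you should stay with the CAD route rather than invoke triangulation.
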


Next we come to aspects of semi-algebraic sets and continuous semi-algebraic maps which differ drastically from the expected results for general continuous maps. In fact, there is a far-reaching generalization of Ehresmann's theorem about local triviality of submersions. Let us consider the following definition.

\begin{definition}
Let $S$ and $T$ be two semi-algebraic sets and $f\colon S \to T$ be a continuous semi-algebraic function. We say that $f$ is a \emph{semi-algebraically trivial fibration} if there exist a semi-algebraic set $F$ and a semi-algebraic homeomorphism
$$\theta \colon T \times F \to S$$
such that $f \circ \theta$ is the projection onto $T$.
\end{definition}

A seminal theorem is Hardt's triviality result, which says that away from a subset of $T$ of smaller dimension, every map $f \colon S \to T$ looks like a semi-algebraically trivial fibration.

\begin{theorem}[\cite{basu} or \S4 in \cite{hardt}]\label{thm:hardt}
Let $S$ and $T$ be two semi-algebraic sets and $f \colon S \to T$ a continuous semi-algebraic function. Then there exists a closed semi-algebraic subset $V \subset T$ with $\dim V < \dim T$, such that $f$ is a semi-algebraically trivial fibration over every semi-algebraic connected component of $T \setminus V$.
\end{theorem}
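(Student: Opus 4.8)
We sketch how one would prove this classical semi-algebraic triviality theorem; the argument has three parts.

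\emph{Reduction to a coordinate projection.} If $S\subseteq\R^m$, replace $f$ by the restriction to the graph $\Gamma=\{(t,s)\in T\times S:f(s)=t\}\subseteq\R^k\times\R^m$ of the projection $\pi\colon\R^k\times\R^m\to\R^k$ onto the first $k$ coordinates; since $s\mapsto(f(s),s)$ is a semi-algebraic homeomorphism $S\to\Gamma$ over $T$, it suffices to treat $\pi|_\Gamma$, so from now on we assume $S\subseteq\R^k\times\R^m$ and $f=\pi|_S$. \emph{Trivialization over the cells of a cylindrical decomposition.} Choose a cylindrical algebraic decomposition of $\R^k\times\R^m$ (taking the $k$ coordinates of $\R^k$ first in the coordinate order) adapted to $S$ and to $T\times\R^m$. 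Being cylindrical over $\R^k$, it induces a cell decomposition $\mathcal D_0$ of $\R^k$ adapted to $T$, and every cell lying above a cell $C\in\mathcal D_0$ is cylindrical over $C$. One then shows, by induction on $m$, that for each cell $C\subseteq T$ of $\mathcal D_0$ the restriction $f\colon f^{-1}(C)\to C$ is a semi-algebraically trivial fibration, and that the trivializing homeomorphism can be chosen to respect the cells lying over $C$. This step is the technical heart of the argument and the main obstacle.

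In more detail: for $m=0$ the assertion is empty; for the inductive step, factor $\pi$ through $\R^k\times\R^{m-1}$, apply the inductive hypothesis to the projection of the image $S'\subseteq\R^k\times\R^{m-1}$ of $S$ onto $\R^k$, and separately trivialize the last-coordinate projection $S\to S'$ over each cell of the induced decomposition of $\R^k\times\R^{m-1}$. The latter is the ``straighten the graphs'' homeomorphism: above such a cell the points of $S$ form finitely many continuous semi-algebraic sections together with the open bands between consecutive sections, and one carries one fibre to a neighbouring fibre by the identity on the section cells and the obvious affine rescaling of the last coordinate on each band. The delicate points are the bicontinuity of this map and of its inverse across the boundaries between bands and sections -- which rests on the continuity (delineability) of the defining sections furnished by the cylindrical decomposition -- and the fact that, all cells in sight being cylindrical over $\R^k$, the last-coordinate trivialization and the inductive one over $C$ are compatible and compose to a single cell-compatible trivialization of $f$ over $C$.

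\emph{Assembling the exceptional set $V$.} The cells of $\mathcal D_0$ contained in $T$ give a finite semi-algebraic partition of $T$ over each block of which $f$ is semi-algebraically trivial. Refine this partition, by standard semi-algebraic stratification theory, to a stratification $\{D_j\}$ of $T$ satisfying the frontier condition and having semi-algebraically connected strata, each $D_j$ contained in some cell; restricting trivializations shows $f$ is still trivial over each $D_j$. Put $d=\dim T$, let $U$ be the union of the strata $D_j$ with $\dim D_j=d$, and set $V=T\setminus U$. By the frontier condition each such top-dimensional stratum is open in $T$ (a point of it lying in the closure of another stratum would force that stratum to have strictly larger dimension, which is impossible), so $U$ is open in $T$, $V$ is a closed semi-algebraic subset, and $\dim V<d$ since $V$ is the finite union of the remaining strata, all of dimension $<d$. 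Finally each top stratum is also closed in $U$, its complement in $U$ being the union of the other top strata; hence $U=T\setminus V$ is the disjoint union of its top strata, these are exactly the semi-algebraic connected components of $T\setminus V$, and $f$ is a semi-algebraically trivial fibration over each of them.
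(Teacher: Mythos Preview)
The paper does not give its own proof of this statement; it is quoted from the literature (the references to \cite{basu} and \cite{hardt}) and used as a black box. So there is nothing in the paper to compare against.

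Your sketch is a faithful outline of the cylindrical-decomposition proof found in \cite{basu}: reduce to a coordinate projection by passing to the graph, build a CAD adapted to $S$ that is cylindrical over the base, trivialize inductively in the fibre coordinates by the standard ``straighten the bands'' homeomorphism, and then take $V$ to be the complement of the union of top-dimensional cells. The two points you flag as delicate---continuity of the rescaling map across band boundaries, and compatibility of the last-coordinate trivialization with the inductive one---are exactly the ones requiring care, and both are handled by the cylindricality of the CAD. Hardt's original argument in \cite{hardt} is organized somewhat differently (via a stratification with Whitney-type conditions rather than a CAD), but the CAD route you describe is the one in the cited textbook and is correct at the level of detail you give.
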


We shall also need the following result about semi-algebraic triangulations.

\begin{theorem}[Thm. $5.41$ in \cite{basu}] \label{thm:triangular}
Let $S \subset \R^k$ be a compact semi-algebraic set, and let
$S_1,\dots,S_q$ be semi-algebraic subsets. There exists a simplicial complex $K$ and a semi-algebraic homeomorphism $h \colon |K| \to S$ such that each $S_j$ is the union of images of open simplices of $K$.
\end{theorem}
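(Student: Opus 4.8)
The plan is to prove this by induction on the ambient dimension $k$, reducing first to a convenient normal form and then refining a cylindrical cell decomposition into a simplicial one. First I would reduce to the case where $S$ is closed and bounded and where the family $S_1,\dots,S_q$ has been enlarged to include $S$ itself, the finitely many intersections and set-theoretic differences one needs, and, crucially, all the semi-algebraic sets occurring in a cylindrical algebraic decomposition of $\R^k$ adapted to a finite list of polynomials defining the $S_j$; this is harmless since there are only finitely many such sets. Compactness of $S$ enters twice here: to place everything inside a large closed box, and to guarantee that the semi-algebraic functions produced below are finite-valued. The case $k=1$ is then elementary: a closed bounded semi-algebraic subset of $\R$, together with finitely many semi-algebraic subsets, is cut out by finitely many breakpoints $a_0<\dots<a_m$, and the partition of $[a_0,a_m]$ into the points $\{a_i\}$ and the open intervals $(a_i,a_{i+1})$ is already the required triangulation.

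For the inductive step I would consider the projection $\pi\colon\R^k\to\R^{k-1}$ onto the first $k-1$ coordinates. Applying Hardt's triviality theorem \ref{thm:hardt} to $\pi$ restricted to $S$ (and simultaneously to the $S_j$), or equivalently using the adapted cylindrical decomposition, one obtains a finite semi-algebraic partition of the compact set $\pi(S)$ such that over each piece $D$ the sets $S,S_1,\dots,S_q$ are \emph{cylindrical}: there are continuous semi-algebraic functions $\xi_{D,1}<\dots<\xi_{D,l_D}\colon D\to\R$ so that each $S_j\cap\pi^{-1}(D)$ is a union of graphs $\{x_k=\xi_{D,i}\}$ and of bands $\{\xi_{D,i}<x_k<\xi_{D,i+1}\}$. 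By the inductive hypothesis I would triangulate a closed box in $\R^{k-1}$ by a semi-algebraic homeomorphism $h'\colon|K'|\to\R^{k-1}$ compatibly with all the pieces $D$ and their closures. Then I would build $K$ over $K'$ one open simplex at a time, by a secondary induction on $\dim\sigma$: over a simplex $\sigma$ lying in a piece $D$, each graph of $\xi_{D,i}$ is a copy of $\sigma$ and hence already triangulated, while each bounded band above $\sigma$ is coned off from a single new vertex placed over the barycenter of $\sigma$ at an intermediate height, the cone being taken on the already-triangulated boundary of the band. Gluing these local pictures produces the abstract complex $K$, and pasting together the corresponding semi-algebraic straightening homeomorphisms — each band over a simplex being semi-algebraically a product because its bounding graphs are graphs of continuous semi-algebraic functions — produces the semi-algebraic homeomorphism $h$ onto the box; restricting to the subcomplex of $K$ whose image is $S$ then gives the theorem.

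The hard part will be the combinatorial bookkeeping that makes the local triangulations glue into a genuine simplicial complex. Two points need real care. First, the cylindrical decomposition must be refined so that the closure of every cell is a union of cells (the frontier condition); without this the triangulations built over adjacent simplices of $K'$ — and in particular along the interface between simplices lying inside a piece $D$ and simplices lying in its boundary, where the functions $\xi_{D,i}$ degenerate or collide — fail to match. Second, the choices of barycentric vertices and the ordering conventions used in the coning step over a simplex $\sigma$ must be consistent with those used over every face of $\sigma$, which is precisely why the construction is organized as an induction on $\dim\sigma$ and why one triangulates an entire box rather than $S$ by itself. Once these compatibilities are in place, checking that $h$ is a homeomorphism and is semi-algebraic is routine, since on each closed simplex it is a composite of semi-algebraic straightenings with the inductively constructed $h'$. (Alternatively one could simply quote the classical triangulation theorem of {\L}ojasiewicz, or the general triangulation theorem for o-minimal structures, but the argument sketched here is the standard self-contained route, and is the one underlying Theorem $5.41$ of \cite{basu}.)
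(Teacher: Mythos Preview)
The paper does not prove this statement at all: Theorem \ref{thm:triangular} is quoted verbatim from \cite{basu} as background from real algebraic geometry and is used as a black box (in Proposition \ref{prop:descmap} and in the proof of Theorem \ref{thm:main}). So there is no ``paper's own proof'' to compare your attempt against.

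That said, what you have written is a correct outline of the standard textbook proof (induction on the ambient dimension via a cylindrical decomposition adapted to the $S_j$, triangulation of the base by the inductive hypothesis, and coning over bands), and it is essentially the argument that underlies the result in \cite{basu} and in Bochnak--Coste--Roy. Your identification of the two delicate points --- enforcing the frontier condition so that closures of cells are unions of cells, and organizing the coning step by increasing dimension of base simplices so that the local triangulations glue --- is accurate; these are exactly where the work lies. One small caveat: you invoke Hardt's theorem \ref{thm:hardt} to obtain the cylindrical structure, but in the usual development the logical order is the reverse (cylindrical decomposition is proved first, and Hardt-type triviality is deduced from triangulation), so in a self-contained treatment you should appeal directly to cylindrical algebraic decomposition rather than to \ref{thm:hardt}.
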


\begin{remark}
In the preceding theorem, the case where the subsets $S_j$ are closed is of special interest. Indeed, if the subsets $S_j$ are closed, the theorem implies that the triangulation of $S$ induces triangulations of $S_j$ for each $j \in \{1,\dots,q\}$.
\end{remark}

The following proposition is an application of Theorems \ref{thm:hardt} and \ref{thm:triangular}

\begin{proposition}\label{prop:descmap}
Let $T\subset\R^m$ be a compact semi-algebraic subset, $S$ a
semi-algebraic set and $f:S\to T$ a continuous semi-algebraic map.
Then there exist a semi-algebraic triangulation of $T$ and a finite
sequence of closed subcomplexes
\[\varnothing=V_{r+1} \subset V_r \subset V_{r-1} \subset \cdots \subset V_1 \subset V_0=T\]
such that the following conditions are satisfied

\begin{itemize}
\item[(i)] For each $k=0,\dots,r$, we have $\dim V_{k+1}<\dim V_k$,
and the map
\[
f|_{f^{-1}(V_{k} \setminus V_{k+1})} \colon f^{-1}(V_{k} \setminus V_{k+1}) \to V_{k} \setminus V_{k+1}
 \]
 is a semi-algebraically trivial fibration over every semi-algebraic connected component.

\item[(ii)] Each simplex in the triangulation lies in some $V_k$ and has at most one face of codimension one which intersects $V_{k+1}$.
\end{itemize}
\end{proposition}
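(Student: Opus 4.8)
The plan is to build the required filtration by iterating Hardt's triviality theorem, to realize its members simultaneously as subcomplexes by means of the semi-algebraic triangulation theorem, and finally to pass to a barycentric subdivision, where the local condition (ii) falls out for free.

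\emph{Construction of the filtration.} Put $V_0=T$. Given a closed semi-algebraic subset $V_k\subseteq T$, the restriction $f\colon f^{-1}(V_k)\to V_k$ is again a continuous semi-algebraic map: its source is semi-algebraic by Proposition \ref{prop:image}, and continuity and semi-algebraicity of the restriction are immediate. Applying Theorem \ref{thm:hardt} to it produces a closed semi-algebraic subset $V_{k+1}\subseteq V_k$ with $\dim V_{k+1}<\dim V_k$ such that $f$ restricts to a semi-algebraically trivial fibration over every semi-algebraically connected component of $V_k\setminus V_{k+1}$. Since $\dim V_0>\dim V_1>\cdots\ge 0$ as long as the sets are non-empty, the iteration stops after $r\le\dim T+1$ steps with $V_{r+1}=\varnothing$. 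The resulting chain $\varnothing=V_{r+1}\subseteq V_r\subseteq\cdots\subseteq V_1\subseteq V_0=T$ already satisfies condition (i), and this part of the proof uses nothing about a triangulation.

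\emph{Triangulation and subdivision.} Apply Theorem \ref{thm:triangular} to the compact semi-algebraic set $T$ together with the closed semi-algebraic subsets $V_1,\dots,V_r$. This gives a simplicial complex $K$ and a semi-algebraic homeomorphism $h\colon|K|\to T$ for which each $V_k$ is a union of images of open simplices; being closed, $V_k$ is then the image of a subcomplex $L_k\subseteq K$, and $L_r\subseteq\cdots\subseteq L_1\subseteq K$. Identify $T$ with $|K|$ along $h$ and replace $K$ by its barycentric subdivision $\mathrm{sd}(K)$, so that $V_k$ becomes the subcomplex $\mathrm{sd}(L_k)$; the $V_k$ are closed subcomplexes and $V_{r+1}=\varnothing$. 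Condition (i) is untouched, as it refers only to the sets $V_k$. To verify (ii), let $\eta$ be a simplex of $\mathrm{sd}(K)$, corresponding to a flag $\rho_0\lneq\rho_1\lneq\cdots\lneq\rho_q$ of simplices of $K$ with vertices the barycenters $\hat\rho_0,\dots,\hat\rho_q$, and let $k$ be the largest index with $\eta\subseteq V_k$; then $\eta$ lies in $V_k$ and $\mathring\eta\subseteq V_k\setminus V_{k+1}$. Since a subcomplex is closed under passing to faces, $\{\,i:\rho_i\in L_{k+1}\,\}$ is an initial segment $\{0,\dots,j\}$ of $\{0,\dots,q\}$ with $j<q$, whence $\eta\cap V_{k+1}$ is the single proper face $[\hat\rho_0,\dots,\hat\rho_j]$ of $\eta$. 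A face $\tau$ of the complex meets the subcomplex $V_{k+1}$ in its relative interior precisely when $\tau\subseteq\eta\cap V_{k+1}$, and among the codimension-one faces of $\eta$ at most one has this property — namely $[\hat\rho_0,\dots,\hat\rho_j]$ itself, when $j=q-1$. This is exactly (ii).

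The delicate point is not any individual step but their interaction: Hardt's theorem yields the stratification of $T$ but no triangulation, so all the $V_k$ must be fed at once into Theorem \ref{thm:triangular}, and it is only after the barycentric subdivision — using that a subcomplex cuts each subdivided simplex in a single face — that the ``local codimension-one'' requirement (ii) becomes a formal consequence rather than something one must arrange by hand. Checking that subdivision preserves (i) and handling the degenerate steps, where $\dim V_k=0$ and $V_{k+1}=\varnothing$, is then routine.
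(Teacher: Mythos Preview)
Your proof is correct and follows exactly the same route as the paper: iterate Hardt's theorem to produce the descending chain of closed semi-algebraic sets $V_k$, feed them all at once into the semi-algebraic triangulation theorem, and then pass to the barycentric subdivision. The paper's proof simply asserts that after barycentric subdivision condition (ii) holds, whereas you supply the explicit verification via the flag description of simplices of $\mathrm{sd}(K)$, showing that $\eta\cap V_{k+1}$ is a single proper face $[\hat\rho_0,\dots,\hat\rho_j]$ of $\eta$; this is precisely the content needed for the application in Theorem~\ref{thm:main}.
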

\begin{proof}
We set $n= \dim T$. By Theorem \ref{thm:hardt}, there exists a closed semi-algebraic subset $V_1 \subset T$ with $\dim V_1 < n$, such that $f$ is a semi-algebraic trivial fibration over every semi-algebraic connected component of $T \setminus V_1$. Consider now $f|_{f^{-1}(V_1)} \colon f^{-1}(V_1) \to V_1$ and proceed as before to find $V_2 \subset V_1$. By induction, we find a chain
$$\varnothing \subset V_r \subset V_{r-1} \subset \cdots \subset V_1 \subset V_0=T$$
such that $V_k \subset V_{k-1}$ is a closed semi-algebraic subset and
$$f|_{f^{-1}(V_{k-1} \setminus V_k)} \colon f^{-1}(V_{k-1} \setminus V_k) \to V_{k-1} \setminus V_k$$ is a semi-algebraically trivial fibration over every semi-algebraic connected component, for all $k \in \{1, \dots,r\}$.
Using Theorem \ref{thm:triangular}, we may now choose a semi-algebraic triangulation of $T$ such that the subsets $V_k$ are sub-complexes. Taking a barycentric subdivision, each simplex lies in $V_k$ for some $k \in \{0,\dots,r\}$ and has at most one face of codimension one which intersects the set $V_{k+1}$.
\end{proof}

\subsection{The theorem on split exact functors and proper maps.}\label{subsec:main}

The following result is our main technical result. It is key to the proofs of Theorems \ref{thm:fib} and Theorem \ref{thm:htpy}.

\begin{theorem}\label{thm:main}
Let $T$ be a compact semi-algebraic subset of $\R^k$.
Let $S$ be a semi-algebraic set and let $f\colon S \to T$ be a proper continuous semi-algebraic surjection.  Then, there exists a semi-algebraic triangulation of $T$ such that for every weakly split-exact contravariant functor $F \colon \pol \to \set_+$ and every simplex $\Delta^n$ in the
triangulation, we have
\[
\ker(F(\Delta^n)\to F(f^{-1}(\Delta^n)))=*
\]
\end{theorem}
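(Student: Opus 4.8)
The plan is to induct on $\dim T$, using the stratification furnished by Proposition~\ref{prop:descmap}. First I would apply that proposition to the map $f\colon S\to T$: it gives a semi-algebraic triangulation of $T$ together with a chain of closed subcomplexes $\varnothing=V_{r+1}\subset V_r\subset\cdots\subset V_0=T$ such that over each difference $V_k\setminus V_{k+1}$ the map $f$ restricts to a semi-algebraically trivial fibration on every semi-algebraic connected component, and such that (after barycentric subdivision) each simplex $\Delta^n$ of the triangulation lies in some $V_k$ and has at most one codimension-one face meeting $V_{k+1}$. This triangulation will be the one in the statement. Fix a weakly split-exact $F\colon\pol\to\set_+$ and a simplex $\Delta^n$; say $\Delta^n\subset V_k$. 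I want to show $\ker(F(\Delta^n)\to F(f^{-1}(\Delta^n)))=*$.

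The key reduction is the following. Let $L=\Delta^n\cap V_{k+1}$; by the structure of the triangulation this is either empty or a single face of $\Delta^n$ of codimension (at least) one, hence a polyhedron, and $\Delta^n\setminus L$ lies in $V_k\setminus V_{k+1}$, over which $f$ is semi-algebraically trivial on each connected component. Since $\Delta^n\setminus L$ is connected (a simplex minus a proper face), $f^{-1}(\Delta^n\setminus L)\cong(\Delta^n\setminus L)\times F_0$ for a semi-algebraic fiber $F_0$, and more to the point $f^{-1}(\Delta^n)$ deformation retracts onto $f^{-1}(L')$ for a suitable smaller closed simplex $L'\subset\Delta^n$ containing $L$ — concretely, pushing in from the open face opposite $L$ one can semi-algebraically deformation retract $\Delta^n$ onto a face or vertex contained in $L$, and by the triviality of $f$ away from $V_{k+1}$ this lifts to a proper semi-algebraic deformation retraction of $f^{-1}(\Delta^n)$ onto $f^{-1}$ of that face. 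If $L=\varnothing$ this retracts $f^{-1}(\Delta^n)$ onto $f^{-1}(\mathrm{pt})$ for a vertex $\mathrm{pt}$ of $\Delta^n$, which is nonempty since $f$ is surjective; then the composite $\{\mathrm{pt}\}\hookrightarrow\Delta^n$ together with any point of $f^{-1}(\mathrm{pt})$ exhibits a section $s\colon\Delta^n\to f^{-1}(\Delta^n)$ up to homotopy, and — using that $F$ is a homotopy-invariant-enough, or rather directly that $F$ factors through homotopy on $\pol$ via weak split-exactness applied to the mapping-cylinder/cone squares as in Proposition~\ref{prop:homcor} — the map $F(\Delta^n)\to F(f^{-1}(\Delta^n))$ is split injective, so its kernel is trivial. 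If $L\ne\varnothing$, I use the pushout square
\[
\xymatrix{
L \ar[r]\ar[d] & \Delta^n\ar[d]\\
f^{-1}(L)\big/{\sim}\ \ar@{}[r]|{} & \ast
}
\]
— more precisely the split pushout $\Delta^n=(\Delta^n)\cup_L L$ is trivial, so instead I feed into weak split-exactness the square obtained by gluing $\Delta^n$ to $f^{-1}(\Delta^n)$ along $L\hookrightarrow f^{-1}(L)$ (a split injection, split by $f$ restricted to a section over $L$, which exists by the inductive hypothesis applied to $L\subset V_{k+1}$, noting $\dim V_{k+1}<\dim T$). Chasing the weak-split-exactness diagram then forces any element of $\ker(F(\Delta^n)\to F(f^{-1}(\Delta^n)))$ to be detected already on $F(L)\to F(f^{-1}(L))$, whose kernel is trivial by induction.

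The base case $\dim T=0$ is immediate: $T$ is a finite set of points, each simplex is a point, $f^{-1}(\mathrm{pt})\ne\varnothing$, and $F(\mathrm{pt})\to F(f^{-1}(\mathrm{pt}))$ is split injective via any chosen preimage. The main obstacle I anticipate is the bookkeeping in the inductive step: one must arrange that the deformation retraction of $\Delta^n$ onto a face meeting $V_{k+1}$ is compatible with the trivialization of $f$ over $V_k\setminus V_{k+1}$ so that it genuinely lifts to a proper semi-algebraic deformation retraction of $f^{-1}(\Delta^n)$ — this is exactly where the clause in Proposition~\ref{prop:descmap}(ii) that $\Delta^n$ has \emph{at most one} codimension-one face meeting $V_{k+1}$ is used, since it guarantees there is a well-defined "opposite" direction in which to push. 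A secondary subtlety is ensuring the splittings produced by the inductive hypothesis (sections of $f$ over subcomplexes of $V_{k+1}$, and continuous splittings of inclusions of faces) are genuinely continuous and semi-algebraic; both follow from Hardt triviality over the strata and the simplicial structure, but must be stated carefully so that the pushout squares fed into the definition of weak split-exactness are squares in $\pol$ with an honest split injection on one side.
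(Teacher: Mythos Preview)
Your overall strategy --- use the stratified triangulation of Proposition~\ref{prop:descmap}, induct on simplex dimension, and feed a suitable pushout into weak split-exactness --- is exactly the paper's. But two steps are genuinely gapped.

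The main one is your claim that the deformation retraction of $\Delta^n$ onto its face $L$ ``lifts'' to a retraction of $f^{-1}(\Delta^n)$ onto $f^{-1}(L)$ by virtue of the triviality of $f$ over $\Delta^n\setminus L$. This is the crux of the whole argument, and it is not automatic: the semi-algebraic homeomorphism $\theta\colon(\Delta^n\setminus L)\times K\to f^{-1}(\Delta^n\setminus L)$ has no reason to extend continuously across $L$, so pushing $\theta(x,k)$ along the base retraction produces a path that may have no limit in $f^{-1}(L)$ as $x\to L$. The paper closes this gap differently: $f^{-1}(L)$, being compact semi-algebraic, is an ANR, so it has a compact neighborhood $N$ in $f^{-1}(\Delta^n)$ retracting onto it; a compactness argument then shows $f(N)$ contains a standard collar $A$ of $L$ in $\Delta^n$; one retracts $\Delta^n$ onto $A$ (this \emph{does} lift via $\theta$, since it stays inside $\Delta^n\setminus L$), and then composes with the ANR retraction $N\to f^{-1}(L)$. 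That two-step composite is the missing splitting of $f^{-1}(L)\hookrightarrow f^{-1}(\Delta^n)$.

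The second gap is that weak split-exactness does \emph{not} make $F$ homotopy invariant; Proposition~\ref{prop:homcor} requires in addition that $F$ vanish on contractibles, which is not assumed here, so a ``section up to homotopy'' buys you nothing. Relatedly, your inductive hypothesis says only that $\ker(F(L)\to F(f^{-1}(L)))=*$; it does not manufacture a continuous section of $f$ over $L$, so the split injection in your pushout cannot come from there. The correct square is
\[
\xymatrix{f^{-1}(L)\ar[r]\ar[d] & f^{-1}(\Delta^n)\ar[d]\\ L\ar[r] & Z}
\]
with the top map split by the retraction from the previous paragraph. One then shows the induced map $\sigma\colon Z\to\Delta^n$ itself has a semi-algebraic section (take the closure of the graph of a single slice $x\mapsto\theta(x,k_0)$), so $F(\sigma)$ is injective; weak split-exactness plus the inductive hypothesis on $L$ then finish. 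In the degenerate case $L=\varnothing$ you actually have a genuine section $x\mapsto\theta(x,k_0)$ of $f$ over $\Delta^n$, so no homotopy argument is needed there either.
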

\begin{proof}
Choose a triangulation of $T$ and a sequence of subcomplexes $V_k \subset T$ as in Proposition \ref{prop:descmap}. We shall show that $\ker(F(\Delta^n)\to F(f^{-1}(\Delta^n)))=*$ for each simplex in the chosen triangulation. The proof is by induction on the dimension of the simplex. The statement is clear for zero-dimensional simplices since $f$ is surjective. Let $\Delta^n$ be an $n$-dimensional simplex in the triangulation. By assumption, $f$ is a semi-algebraically trivial fibration over $\Delta^n \setminus \Delta^{n-1}$ for some face $\Delta^{n-1} \subset \Delta^n$. Hence, there exists a semi-algebraic set $K$ and a semi-algebraic homeomorphism $$\theta\colon (\Delta^n \setminus \Delta^{n-1}) \times K \to f^{-1}(\Delta^n \setminus \Delta^{n-1})$$ over $\Delta^n \setminus \Delta^{n-1}$. Consider the inclusion $f^{-1}(\Delta^{n-1}) \subset f^{-1}(\Delta^n)$. Since $f^{-1}(\Delta^{n-1})$ is an absolute neighborhood retract, there exists a compact neighborhood $N$ of $f^{-1}(\Delta^{n-1})$ in $f^{-1}(\Delta^n)$ which retracts onto $f^{-1}(\Delta^{n-1})$.
We claim that the set $f(N)$ contains some standard neighborhood $A$ of $\Delta^{n-1}$. Indeed, assume that $f(N)$ does not contain standard neighborhoods. Then there exists a sequence in the complement of $f(N)$ converging to $\Delta^{n-1}$. Lifting this sequence one can choose a convergent sequence in the complement of $N$ converging to $f^{-1}(\Delta^{n-1})$. This contradicts the fact that $N$ is a neighborhood and hence, there exists a standard compact neighborhood $A$ of $\Delta^{n-1}$ in $\Delta^n$ such that $f^{-1}(A) \subset N$.
Since $(\Delta^n \setminus \Delta^{n-1}) \times K \cong f^{-1}(\Delta^n \setminus \Delta^{n-1})$, any retraction of $\Delta^n$ onto $A$ yields a retraction of
$f^{-1}(\Delta^n)$ onto $f^{-1}(A)$. We have that $f^{-1}(A) \subset N$, and thus we can conclude that $f^{-1}(\Delta^{n-1})$ is a retract of $f^{-1}(\Delta^n)$.

By Corollary \ref{cor:brum}, the topological push-out
$$
\xymatrix{f^{-1}(\Delta^{n-1}) \ar[r] \ar[d] & f^{-1}(\Delta^n)\ar@{.>}[d]  \\
\Delta^{n-1}\ar@{.>}[r] & Z}
$$
carries a semi-algebraic structure. Moreover, by weak split-exactness
\begin{equation}\label{eq:ker*}
\ker(F(Z) \to F(\Delta^{n-1}) \times_{F(f^{-1}(\Delta^{n-1}))} F(f^{-1}(\Delta^n)))=*.
\end{equation}
Note that $f^{-1}(\Delta^n \setminus \Delta^{n-1}) \subset Z$ by definition of $Z$. We claim that the natural map $\sigma\colon Z \to \Delta^n$ is a semi-algebraic split surjection. Indeed, identify
$f^{-1}(\Delta^n \setminus \Delta^{n-1})= (\Delta^n \setminus \Delta^{n-1})\times K$, pick $k \in K$, and consider
\[
G_k = \overline{\{(d,(d,k)) \in \Delta^n \times Z \mid d \in \Delta^n \setminus \Delta^{n-1}\}}.
\]
Then $G_k$ is semi-algebraic by Proposition \ref{prop:closure},  and therefore defines a continuous semi-algebraic map $\rho_k\colon \Delta^n \to Z$ which splits $\sigma$.
Thus $F(\sigma):F(\Delta^n)\to F(Z)$ is injective, whence
\begin{equation}\label{eq:ker*2}
\ker(F(\Delta^n) \to F(\Delta^{n-1}) \times_{F(f^{-1}(\Delta^{n-1}))} F(f^{-1}(\Delta^n)))=*,
\end{equation}
by \eqref{eq:ker*}. But the kernel of $F(\Delta^{n-1})\to F(f^{-1}(\Delta^{n-1}))$ is trivial by induction, so the same must be true of $F(\Delta^n)\to F(f^{-1}(\Delta^n))$, by \eqref{eq:ker*2}.
\end{proof}

\comment{
\begin{remark}
Consider $Y=\{(x,y) \in \R^2 \mid xy=1\} \cup \{(0,0)\}$ and the projection $\pi_1\colon Y \to \R$ onto the first component. Then, $\pi_1$ is surjective but not proper. Since the conclusion of Theorem \ref{thm:main} fails for $\pi_1$, this shows that the assumption of properness is necessary.
\end{remark}
}

\section{Large semi-algebraic groups and the compact fibration theorem.}\label{sec:fib}

\subsection{Large semi-algebraic structures.}\label{subsec:largesets}

Recall a partially ordered set $\Lambda$ is \emph{filtered} if for any two $\lambda,\gamma\in\Lambda$
there exists a $\mu$ such that $\mu\ge \lambda$ and $\mu\ge \gamma$. We shall say that a filtered poset $\Lambda$ is \emph{archimedian} if there exists a monotone map $\phi:\N\to \Lambda$ from the ordered set of
natural numbers which is cofinal, i.e. it is such that for every $\lambda\in\Lambda$ there exists an $n\in\N$ such that $\phi(n)\ge \lambda$.
If $X$ is a set, we write $P(X)$ for the partially ordered set of all subsets of $X$, ordered by inclusion.

A \emph{large semi-algebraic structure}
on a set $X$ consists of
\begin{itemize}
\item[(i)] An archimedian filtered partially ordered set $\Lambda$.
\item[(ii)] A monotone map $X:\Lambda\to P(X)$, $\lambda\mapsto X_\lambda$, such that $X=\bigcup_\lambda X_\lambda$.
\item[(iii)] A compact semi-algebraic structure on each $X_\lambda$ such that if $\lambda\le\mu$ then
the inclusion $X_\lambda\subset X_\mu$ is semi-algebraic and continuous.
\end{itemize}
We think of large a semi-algebraic structure on $X$ as an exhaustive
filtration $\{X_\lambda\}$ by compact semi-algebraic sets. A
structure  $\{X_\gamma:\gamma\in\Gamma\}$ is \emph{finer} than a
structure $\{X_\lambda:\lambda\in\Lambda\}$ if for every
$\gamma\in\Gamma$ there exists a $\lambda\in \Lambda$ so that
$X_\gamma\subset X_\lambda$ and the inclusion is continuous and
semi-algebraic. Two structures are \emph{equivalent} if each of them
is finer than the other. A \emph{large semi-algebraic set} is a set
$X$ together with an equivalence class of semi-algebraic structures
on $X$. If $X$ is a large semi-algebraic set, then any large
semi-algebraic structure $\{X_\lambda\}$ in the equivalence class
defining $X$ is called a \emph{defining structure} for $X$. If
$X=(X,\Lambda)$ and $Y=(Y,\Gamma)$ are large semi-algebraic sets,
then a set map $f:X\to Y$ is called a \emph{morphism} if for every
$\lambda\in\Lambda$ there exists a $\gamma\in\Gamma$ such that
$f(X_\lambda)\subset Y_\gamma$, and such that the induced map
$f\colon X_\lambda\to Y_\gamma$ is semi-algebraic and continuous. We
write $\cV_{\infty}$ for the category of large semi-algebraic sets.

\begin{remark} If $f:X\to Y$ is a morphism of large semi-algebraic sets, then we may choose structures
$\{X_n\}$ and $\{Y_n\}$ indexed by $\N$, such that $f$ strictly preserves filtrations, i.e. $f(X_n)\subset Y_n$ for all $n$. However, if $X=Y$,
then there may not exist a structure $\{X_\lambda\}$ such that $f(X_\lambda)\subset X_\lambda$.
\end{remark}

\begin{remark}\label{rem:large-ind}
Consider the category $\cV_{s,b}$ of compact semi-algebraic sets with \emph{continuous} semi-algebraic mappings and its ind-category $\mathrm{ind}$-$\cV_{s,b}$. The objects are functors
$$T \colon (X_T, \leq) \to \cV_{s,b}$$
where $(X_T,\leq)$ is a filtered partially ordered set.
We set
$$\hom(T,S) = \lim_{d \in X_T} {\rm co} \!  \!  \lim_{e \in X_S} \hom_{\cV_{s,b}}(T(d),S(e)).$$

The category of large semi-algebraic sets is equivalent to the subcategory of those ind-objects whose
structure maps are all injective and whose index posets are archimedean. In particular, a filtering colimit
of an archimedian system of injective homomorphisms of large semi-algebraic sets is again a large semi-algebraic set.
\end{remark}

If $X$ and $Y$ are large semi-algebraic sets with structures $\{X_\lambda:\lambda\in\Lambda\}$
and $\{Y_\gamma:\gamma\in\Gamma\}$ then the cartesian product $X\times Y$ is again a large semi-algebraic
set, with structure $\{X_\lambda\times Y_\gamma:(\lambda,\gamma)\in \Lambda\times\Gamma\}$. A \emph{large semi-algebraic group} is a group object $G$ in $\cV_\infty$.  Thus $G$ is a group which is a large semi-algebraic set and each of the maps defining the multiplication, unit, and inverse, are homomorphisms in $\cV_\infty$.
We shall additionally assume that $G$ admits a structure $\{G_\lambda\}$ such that $G_\lambda^{-1}\subset G_\lambda$. This hypothesis, although not strictly necessary, is verified by all the examples we shall consider, and makes proofs technically simpler. We shall also need the notions of large semi-algebraic vectorspace and of large semi-algebraic ring, which are defined similarly.

\begin{example}\label{exa:large}
Any semi-algebraic set $S$ can be considered as a large semi-algebraic set, with the structure defined by its
compact semi-algebraic subsets, which is equivalent to the structure defined by any exhaustive filtration
of $S$ by compact semi-algebraic subsets. In particular this applies to any finite dimensional real vectorspace $V$;
moreover the vectorspace operations are semi-algebraic and continuous, so that $V$ is a (large) semi-algebraic vectorspace. Any linear map between finite dimensional vectorspaces is semi-algebraic
and continuous, whence it is a homomorphism of semi-algebraic vectorspaces. Moreover, the same is
true of any multilinear map $f:V_1\times\dots\times V_n\to V_{n+1}$ between finite dimensional vectorspaces.
\end{example}

\begin{definition}
Let $V$ be a real vectorspace of countable dimension. The \emph{fine} large semi-algebraic structure $\cF(V)$ is that given by all the compact semi-algebraic subsets of all the finite dimensional subspaces
of $V$.
\end{definition}

\begin{remark} The fine large semi-algebraic structure is reminiscent of the \emph{fine locally convex topology} which makes every complex algebra of countable dimension into a locally convex algebra. For details, see \cite{bourb}, chap. II, \S 2, Exercise 5.
\end{remark}

\begin{lemma}\label{lem:multilin}
Let $n\ge 1$, $V_1,\dots,V_{n+1}$ be  countable dimensional $\R$-vectorspaces, and  $f:V_1\times\dots\times V_n\to V_{n+1}$ a multilinear map. Equip each  $V_i$ with the fine large semi-algebraic structure and $V_1\times\dots\times V_n$ with the product large semi-algebraic structure. Then $f$ is a morphism of large
semi-algebraic sets.
\end{lemma}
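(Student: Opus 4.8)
The plan is to reduce the statement to the finite-dimensional situation already handled in Example \ref{exa:large}, where every multilinear map between finite-dimensional real vector spaces is known to be semi-algebraic and continuous.

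First I would unwind the definitions. The fine large semi-algebraic structure $\cF(V_i)$ has for its index poset the collection of all compact semi-algebraic subsets of all finite-dimensional subspaces of $V_i$, ordered by inclusion; fixing a basis $(e^{(i)}_j)_{j}$ of $V_i$, setting $W^{(i)}_m=\mathrm{span}(e^{(i)}_1,\dots,e^{(i)}_m)$ and letting $B^{(i)}_m\subset W^{(i)}_m$ be a Euclidean ball of radius $m$, the chain $\{B^{(i)}_m\}_{m\in\N}$ is cofinal, since any compact semi-algebraic subset of a finite-dimensional subspace is bounded and lies in some $W^{(i)}_m$. By definition of the product large semi-algebraic structure, $V_1\times\dots\times V_n$ thus carries the defining structure whose members are the products $K_1\times\dots\times K_n$ with each $K_i$ a compact semi-algebraic subset of a finite-dimensional subspace of $V_i$.

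Next I would verify the morphism condition against this defining structure. Fix such a product $K_1\times\dots\times K_n$ and choose finite-dimensional subspaces $W_i\subset V_i$ with $K_i\subset W_i$. The restriction of $f$ to $W_1\times\dots\times W_n$ is multilinear, and its image is contained in the span $W_{n+1}$ of the finitely many vectors $f(b^{(1)},\dots,b^{(n)})$ with each $b^{(i)}$ running through a basis of $W_i$; this $W_{n+1}$ is a finite-dimensional subspace of $V_{n+1}$. Hence $f$ restricts to a multilinear map $W_1\times\dots\times W_n\to W_{n+1}$ of finite-dimensional vector spaces, which is semi-algebraic and continuous by Example \ref{exa:large}. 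Now $K_1\times\dots\times K_n$ is compact (a finite product of compact sets) and semi-algebraic (a finite product of semi-algebraic sets is semi-algebraic), so by Proposition \ref{prop:image} its image $f(K_1\times\dots\times K_n)\subset W_{n+1}$ is semi-algebraic, and it is compact, being a continuous image of a compact set. Therefore $f(K_1\times\dots\times K_n)$ is a member of $\cF(V_{n+1})$, and the map induced by $f$ from $K_1\times\dots\times K_n$ onto it is the (co)restriction of the semi-algebraic continuous map $f|_{W_1\times\dots\times W_n}$, hence is itself semi-algebraic and continuous. This is exactly the condition for $f$ to be a morphism of large semi-algebraic sets.

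I do not expect any serious obstacle: the only slightly delicate points are (a) observing that the image of a multilinear map lies in a finite-dimensional subspace, which is immediate from multilinearity once bases are fixed, and (b) the bookkeeping that the product large semi-algebraic structure on $V_1\times\dots\times V_n$ is literally built from products of compact semi-algebraic subsets of finite-dimensional subspaces, so that Example \ref{exa:large} and Proposition \ref{prop:image} apply verbatim.
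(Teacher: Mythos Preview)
Your proposal is correct and follows exactly the approach of the paper, which simply states that ``in view of the definition of the fine large semi-algebraic structure, the general case is immediate from the finite dimensional case.'' You have merely spelled out the details the paper leaves implicit: restricting $f$ to products of compact semi-algebraic subsets of finite-dimensional subspaces, observing the image lies in a finite-dimensional subspace, and invoking Example~\ref{exa:large}.
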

\begin{proof} In view of the definition of the fine
large semi-algebraic structure, the general case is immediate from the finite dimensional case.
\end{proof}

\begin{proposition}\label{prop:glnlarge}
Let $A$ be a countable dimensional $\R$-algebra, equipped with the fine large semi-algebraic structure. Then:
\begin{itemize}
\item[(i)] $A$ is a large semi-algebraic ring.
\item[(ii)] Assume $A$ is unital. Then the group $\GL_n(A)$ together with the structure
$$\cF(\GL_n(A))=\{\{g\in \GL_n(A): g,g^{-1}\in F\}:F\in\cF(M_n(A))\}$$
is a large semi-algebraic group, and if $A\to B$ is an algebra homomorphism, then the induced group
homomorphism  $\GL_n(A)\to \GL_n(B)$ is a homomorphism of large semi-algebraic sets.
\end{itemize}
\end{proposition}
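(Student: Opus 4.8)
The plan is to verify, in each part, the three axioms defining a large semi-algebraic structure, and then to check that the structure maps of the ring object, resp.\ group object, are morphisms in $\cV_\infty$. For (i): fix a basis $e_1,e_2,\dots$ of $A$ and set $V_m=\mathrm{span}_\R(e_1,\dots,e_m)$. First I would record that $\cF(A)$, ordered by inclusion, is filtered — the union of two compact semi-algebraic subsets of finite dimensional subspaces is again one, sitting inside the sum of the two ambient subspaces — and archimedian, because $m\mapsto \bar B_m(V_m)$, the closed ball of radius $m$ in $V_m$, is a cofinal monotone map $\N\to\cF(A)$. Conditions (ii) and (iii) are then immediate: $A=\bigcup_F F$, each $F$ carries its compact semi-algebraic structure, and for $F\subset F'$ the inclusion is the restriction of a linear embedding of ambient subspaces, hence semi-algebraic and continuous. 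It remains to see that $0,1\colon\{*\}\to A$, addition, negation and multiplication are morphisms: the first two are constant; multiplication is bilinear, hence a morphism by Lemma \ref{lem:multilin}; and the restriction of addition to $F_1\times F_2$ (with $F_i\subset W_i$ finite dimensional) is a polynomial, hence semi-algebraic, continuous map into the finite dimensional subspace $W_1+W_2$ with compact semi-algebraic image by Proposition \ref{prop:image}. Negation is identical.

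For (ii) the one non-routine point is to describe the sets $\GL_n(A)^F=\{g\in\GL_n(A):g,g^{-1}\in F\}$. Given $F\in\cF(M_n(A))$ contained in a finite dimensional subspace $W\subset M_n(A)$, I would introduce
\[
\Gamma_F=\{(g,h)\in F\times F:\ gh=hg=1_n\}\subset W\times W,
\]
which is a closed semi-algebraic subset of the compact semi-algebraic set $F\times F$, since the entries of $gh$ and $hg$ are polynomials in those of $g$ and $h$; hence $\Gamma_F$ is compact semi-algebraic. The first projection restricts to a continuous bijection $\Gamma_F\to\GL_n(A)^F$ — bijective because the inverse of an invertible matrix is unique — from a compact space onto a subspace of $W$, so it is a homeomorphism, and it is semi-algebraic with semi-algebraic inverse $g\mapsto(g,g^{-1})$. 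Therefore $\GL_n(A)^F$, with the subspace structure from $W$, is compact semi-algebraic. The three axioms are then checked as in (i): $\cF(M_n(A))$ is archimedian filtered; $\GL_n(A)=\bigcup_F\GL_n(A)^F$, since for $g\in\GL_n(A)$ both $g$ and $g^{-1}$ lie in a common $F$; $F\mapsto\GL_n(A)^F$ is monotone; and for $F\subset F'$ the inclusion $\GL_n(A)^F\subset\GL_n(A)^{F'}$, being an inclusion of semi-algebraic subsets of a common finite dimensional subspace, is semi-algebraic and continuous.

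Next I would check that the group operations and $\GL_n(\phi)$ are morphisms. The unit is constant. For inversion: the defining condition on $\GL_n(A)^F$ is symmetric in $g$ and $g^{-1}$, so inversion carries $\GL_n(A)^F$ onto itself, and under $\GL_n(A)^F\cong\Gamma_F$ it is the coordinate swap $(g,h)\mapsto(h,g)$, which is linear, hence semi-algebraic and continuous (this also verifies the extra hypothesis $G_\lambda^{-1}\subset G_\lambda$). For multiplication: given $F,F'$, the maps $(g,g')\mapsto gg'$ and $(g,g')\mapsto (gg')^{-1}=g'^{-1}g^{-1}$ are polynomial on $\GL_n(A)^F\times\GL_n(A)^{F'}$ (using the $\Gamma$-coordinates, in which $g^{-1},g'^{-1}$ are themselves coordinates) and take values in a fixed finite dimensional subspace $W''\subset M_n(A)$, so their images $P_1,P_2$ are compact semi-algebraic by Proposition \ref{prop:image} and $F''=P_1\cup P_2$ lies in $\cF(M_n(A))$; multiplication then restricts to a semi-algebraic continuous map $\GL_n(A)^F\times\GL_n(A)^{F'}\to\GL_n(A)^{F''}$. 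Finally, for a unital algebra homomorphism $\phi\colon A\to B$ of countable dimensional $\R$-algebras, $M_n(\phi)$ is $\R$-linear and multiplicative; it sends a finite dimensional $W\supseteq F$ to a finite dimensional subspace of $M_n(B)$ and $F$ to a compact semi-algebraic $F'$ inside it, and, $\phi$ being unital, maps $\GL_n(A)^F$ into $\GL_n(B)^{F'}$ by a restriction of a linear map, hence semi-algebraically and continuously; so $\GL_n(\phi)$ is a morphism.

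The genuinely delicate step — everything else being bookkeeping with images of compact semi-algebraic sets together with Lemma \ref{lem:multilin} and Proposition \ref{prop:image} — is the identification $\GL_n(A)^F\cong\Gamma_F$. It simultaneously endows $\GL_n(A)^F$ with a compact semi-algebraic structure and turns inversion into a linear map on it, which is exactly what circumvents the usual failure of continuity of inversion in infinite dimensional topological groups; this works precisely because the defining filtration of $\GL_n(A)$ bounds $g$ and $g^{-1}$ at once.
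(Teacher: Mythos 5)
Your proposal is correct and follows essentially the same route as the paper: the key step in both is to realize $\GL_n(A)^F$ as the first-projection image of $\{(g,h)\in F\times F:\ gh=hg=1\}$ (the paper writes this as $\pi_1(m_{|F\times F}^{-1}(1)\cap \tau m_{|F\times F}^{-1}(1))$ and invokes Proposition \ref{prop:image}), with part (i) reduced to Lemma \ref{lem:multilin}. Your additional observation that this projection is a semi-algebraic homeomorphism onto $\GL_n(A)^F$, which linearizes inversion, is a correct elaboration of the verifications the paper leaves implicit.
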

\begin{proof} Part (i) is immediate from Lemma \ref{lem:multilin}. If $F\in\cF(M_n(A))$, write
\begin{equation}\label{formu:glF}
\GL_n(A)^F=\{g\in \GL_n(A): g,g^{-1}\in F\}
\end{equation}
We will show that $\GL_n(A)^F$ is a compact semi-algebraic
set. Write $m,\pi_i:M_n(A)\times M_n(A)\to M_n(A)$ for the multiplication and projection maps $i=1,2$,
and $\tau:M_n(A)\times M_n(A)\to M_n(A)\times M_n(A)$ for the permutation of factors. If $A$ is unital, then
\[
\GL_n(A)^F=\pi_1(m_{|F\times F}^{-1}(1)\cap \tau m_{|F\times F}^{-1}(1))
\]
which is compact semi-algebraic, by Proposition \ref{prop:image}.
\end{proof}

We will also study the large semi-algebraic group $\GL_S(R)$ for a subset $S \subset \Z$. This is understood to be the group of matrices $g$ indexed by $\Z$, where $g_{i,j}=\delta_{i,j}$ if either $i$ or
 $j$ is not in $S$.

\begin{corollary}\label{cor:gl}
If $A$ is unital and $S\subset \Z$, then $\GL_S(A)$ carries a natural large semi-algebraic group structure,
namely that of the colimit $\GL_S(A)= \bigcup_T\GL_T(A)$ where $T$ runs among the finite subsets of $S$.
\end{corollary}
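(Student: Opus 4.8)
The plan is to exhibit $\GL_S(A)$ as a filtering colimit of the large semi-algebraic groups $\GL_T(A)$, with $T$ ranging over the finite subsets of $S$, and then to appeal to Proposition \ref{prop:glnlarge} together with Remark \ref{rem:large-ind}. First I would observe that the poset $\mathcal{T}$ of finite subsets of $S$, ordered by inclusion, is filtered (since $T_1\cup T_2$ dominates $T_1$ and $T_2$) and archimedean: because $S\subseteq\Z$ is countable, an enumeration $S=\{s_1,s_2,\dots\}$ gives a monotone cofinal map $\N\to\mathcal{T}$, $n\mapsto\{s_1,\dots,s_n\}$. One also has $\GL_S(A)=\bigcup_{T\in\mathcal{T}}\GL_T(A)$ as a set.

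Next I would fix $T\in\mathcal{T}$ and note that an ordering of $T$ identifies $\GL_T(A)$ with $\GL_{|T|}(A)$, which is a large semi-algebraic group by Proposition \ref{prop:glnlarge}(ii); transporting the structure back, $\GL_T(A)$ is a large semi-algebraic group with defining structure $\{\GL_T(A)^F : F\in\cF(M_T(A))\}$, where $M_T(A)$ denotes the algebra of $T\times T$ matrices over $A$ and $\GL_T(A)^F=\{g\in\GL_T(A): g,g^{-1}\in F\}$. For $T\subseteq T'$ I would check that the inclusion $\GL_T(A)\hookrightarrow\GL_{T'}(A)$ (extending $g$ by the identity on the indices in $T'\setminus T$) is a morphism in $\cV_\infty$: it is the restriction of the linear inclusion $M_T(A)\hookrightarrow M_{T'}(A)$ followed by adding the idempotent $1_{T'\setminus T}$, hence semi-algebraic and continuous on each $\GL_T(A)^F$ by Lemma \ref{lem:multilin} and Proposition \ref{prop:image}, and it carries $\GL_T(A)^F$ into $\GL_{T'}(A)^{F'}$, where $F'$ is the compact semi-algebraic image of $F$, lying in the finite-dimensional subspace spanned by the span of $F$ together with $1_{T'\setminus T}$. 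Thus the $\GL_T(A)$ form an archimedean filtering system of large semi-algebraic groups and injective homomorphisms, and Remark \ref{rem:large-ind} then yields that $\GL_S(A)$ is a large semi-algebraic set, with defining structure $\{\GL_T(A)^F : T\in\mathcal{T},\ F\in\cF(M_T(A))\}$; this structure is inverse-stable, since $g\in\GL_T(A)^F$ if and only if $g^{-1}\in\GL_T(A)^F$.

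Finally I would verify that multiplication, inversion and the unit are morphisms in $\cV_\infty$. Since $(T_1,T_2)\mapsto T_1\cup T_2$ shows the finite subsets of $S$ to be cofinal among pairs of finite subsets of $S$, one has $\GL_S(A)\times\GL_S(A)=\colim_{T\in\mathcal{T}}\bigl(\GL_T(A)\times\GL_T(A)\bigr)$, and the multiplication maps $\GL_T(A)\times\GL_T(A)\to\GL_T(A)$ — which are morphisms because each $\GL_T(A)$ is a large semi-algebraic group — are compatible with the structure maps, so they assemble to a morphism $\GL_S(A)\times\GL_S(A)\to\GL_S(A)$; the same argument applies to inversion and to the unit. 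Hence $\GL_S(A)$ is a group object in $\cV_\infty$ with precisely the colimit structure claimed. The main obstacle I anticipate is the verification that the block inclusions $\GL_T(A)\hookrightarrow\GL_{T'}(A)$ are genuine morphisms of large semi-algebraic sets and that the resulting structure on $\GL_S(A)$ is compatible with cartesian products, so that the multiplication descends to the colimit; the remaining steps are routine once the definitions and the two cited results are in hand.
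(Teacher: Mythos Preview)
Your proposal is correct and follows exactly the approach the paper intends: the paper gives no separate proof of this corollary, treating it as immediate from Proposition~\ref{prop:glnlarge} and Remark~\ref{rem:large-ind}. What you have written is precisely the routine verification those two results leave implicit --- namely that the finite subsets of $S$ form an archimedean filtered poset, that each $\GL_T(A)\cong\GL_{|T|}(A)$ is a large semi-algebraic group, that the block inclusions are morphisms in $\cV_\infty$, and that the group operations pass to the colimit --- so there is nothing to compare beyond noting that you have supplied the details the authors omitted.
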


\begin{remark}\label{rem:glnuni}
If $A$ is any, not necessarily unital ring, and $a,b\in M_nA$, then $a\star b=a+b+ab$ is an associative
operation, with neutral element the zero matrix; the group $\GL_n(A)$ is defined as the set of all matrices
which are invertible under $\star$.
If $A$ happens to be unital, the resulting group is isomorphic to that of invertible matrices via $g\mapsto
g+1$. If $A$ is any countable dimensional $\R$-algebra, then part ii) of Proposition \ref{prop:glnlarge} still holds if we replace $g^{-1}$ by the inverse of $g$ under the operation $\star$ in the definition
of $\GL_n(A)^F$. Corollary \ref{cor:gl} also remains valid in the non-unital case, and the proof is
the same.
\end{remark}

\subsection{Construction of quotients of large semi-algebraic groups.}\label{subsec:quot}

\begin{lemma}\label{lem:compastru}
Let $X\subset Y$ be an inclusion of large semi-algebraic sets. Then the following are equivalent.
\begin{itemize}
\item[(i)] There exists a defining structure $\{Y_\lambda\}$ of $Y$ such that $\{Y_\lambda\cap X\}$
is a defining structure for $X$.
\item[(ii)] For every defining structure $\{Y_\lambda\}$ of $Y$, $\{Y_\lambda\cap X\}$
is a defining structure for $X$.
\end{itemize}
\end{lemma}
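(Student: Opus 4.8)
The implication (ii) $\Rightarrow$ (i) is trivial, since every large semi-algebraic set carries at least one defining structure by definition; I would simply apply (ii) to such a structure. So the content is (i) $\Rightarrow$ (ii). Here is the plan. Fix a defining structure $\{Y_\lambda\}_{\lambda\in\Lambda}$ of $Y$ witnessing (i), so that $\mathcal X:=\{Y_\lambda\cap X\}_{\lambda\in\Lambda}$ is a defining structure for $X$, and let $\{Z_\mu\}_{\mu\in M}$ be an arbitrary defining structure of $Y$. I must show that $\mathcal Z:=\{Z_\mu\cap X\}_{\mu\in M}$ is a large semi-algebraic structure on $X$, equivalent to $\mathcal X$.

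The step I would carry out first, and which I expect to be the main obstacle, is to promote the purely \emph{abstract} content of (i) to a \emph{geometric} fact about the embedding: for every $\lambda$, the set $Y_\lambda\cap X$ is a semi-algebraic subset of $Y_\lambda$, and the subspace semi-algebraic structure it thereby inherits coincides (via the identity) with the structure $\mathcal X$ assigns to it. This is not given: (i) only says that the compact semi-algebraic sets $Y_\lambda\cap X$, taken with some structure, assemble into a defining structure for $X$; nothing a priori relates that structure to $Y_\lambda$. To bridge the gap I would use that the inclusion $X\hookrightarrow Y$ is a morphism in $\cV_{\infty}$: applied to the structures $\mathcal X$ on $X$ and $\{Y_\bullet\}$ on $Y$, and after enlarging the resulting index, it produces for each $\lambda$ an index $\gamma\ge\lambda$ and a \emph{continuous semi-algebraic} map $j\colon Y_\lambda\cap X\hookrightarrow Y_\gamma$ which is the set inclusion. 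By Proposition \ref{prop:image}, applied to $j$ and to the structure map $Y_\lambda\hookrightarrow Y_\gamma$, both $Y_\lambda\cap X$ and $Y_\lambda$ are semi-algebraic subsets of $Y_\gamma$; since $Y_\lambda\cap X\subseteq Y_\lambda$, it follows that $Y_\lambda\cap X$ is a semi-algebraic subset of $Y_\lambda$. Finally, $j$ is a continuous semi-algebraic bijection from the compact set $Y_\lambda\cap X$ onto its image in the Hausdorff set $Y_\gamma$, hence a semi-algebraic homeomorphism onto that image (the inverse is continuous by compactness, and semi-algebraic because its graph is the reflection of the graph of $j$); so the $\mathcal X$-structure on $Y_\lambda\cap X$ is precisely the subspace structure. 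Exactly the same argument shows: whenever $Z\subseteq Y'$ are members of defining structures of $Y$ with the inclusion semi-algebraic, $Z$ is a semi-algebraic subset of $Y'$ and its intrinsic structure is the induced one; and the structure maps of $\mathcal X$ are semi-algebraic homeomorphisms onto semi-algebraic subsets.

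Granting this, the remaining steps are routine manipulations with semi-algebraic subspaces. For $\mu\in M$, equivalence of the two structures on $Y$ gives an index $\lambda=\lambda(\mu)\in\Lambda$, which I may take arbitrarily large, with $Z_\mu\subseteq Y_\lambda$ and $Z_\mu\hookrightarrow Y_\lambda$ semi-algebraic; then $Z_\mu\cap X=Z_\mu\cap(Y_\lambda\cap X)$ is a semi-algebraic subset of $Y_\lambda$ (by the previous step and Proposition \ref{prop:image}), hence of $Z_\mu$, and it is closed in $Z_\mu$ because $Y_\lambda\cap X$ is compact, hence closed in $Y_\lambda\supseteq Z_\mu$; so it is compact, and I equip it with the subspace semi-algebraic structure from $Z_\mu$ — a prescription that mentions no choice of $\lambda(\mu)$, so nothing has to be checked for well-definedness. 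Monotonicity of $\mathcal Z$, and the two comparisons needed for equivalence of $\mathcal Z$ with $\mathcal X$, then all follow by realizing the relevant sets as semi-algebraic subspaces of a common $Y_\lambda$ with $\lambda$ large, using transitivity of the subspace structure (for semi-algebraic subsets $A\subseteq B\subseteq C$, the structure $C$ induces on $A$ equals the one $B$ does) together with the identification of the $\mathcal X$-structure on each $Y_\lambda\cap X$ with its subspace structure. Since $M$ is archimedean and $\bigcup_\mu(Z_\mu\cap X)=X$, this exhibits $\mathcal Z$ as a defining structure for $X$, which proves (i) $\Rightarrow$ (ii).
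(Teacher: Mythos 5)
Your proof is correct. The paper actually states Lemma \ref{lem:compastru} without proof (only \ref{lem:compatop} and \ref{lem:g/h} are later flagged as ``straightforward''), so there is no argument to compare against; your write-up supplies the missing verification in the natural way. The two points you isolate are exactly the ones that need care: first, that under (i) each $Y_\lambda\cap X$ must carry the subspace structure from $Y_\lambda$ (via the morphism property of the inclusion, Proposition \ref{prop:image}, and the fact that a continuous semi-algebraic injection of a compact semi-algebraic set into a Hausdorff one is a semi-algebraic homeomorphism onto a semi-algebraic image); and second, that this identification transfers to an arbitrary defining structure $\{Z_\mu\}$ of $Y$ by embedding everything in a common large $Y_\lambda$ and using transitivity of the induced structure. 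Both steps are carried out correctly.
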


\begin{definition}\label{defi:compa} Let $X\subset Y$ be an inclusion of large semi-algebraic sets. We say that $X$ is \emph{compatible} with $Y$ if the equivalent conditions of Lemma \ref{lem:compastru} are satisfied.
\end{definition}

\begin{proposition}\label{prop:quot} Let $H\subset G$ be a inclusion of large semi-algebraic groups, $\{G_\lambda\}$ a defining structure for $G$ and $\pi:G\to G/H$ the projection. Assume that the inclusion is compatible in the
sense of Definition \ref{defi:compa}. Then $(G/H)_\lambda=\pi(G_\lambda)$ is a large semi-algebraic structure, and the resulting large semi-algebraic set $G/H$ is the categorical quotient in $\cV_\infty$.
\end{proposition}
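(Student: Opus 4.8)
The plan is to construct, for each $\lambda$, the compact semi-algebraic set $(G/H)_\lambda=\pi(G_\lambda)$ by applying Brumfiel's Theorem \ref{thm:brum} to the translation equivalence relation on the compact set $G_\lambda$, and then to check that these pieces assemble into a large semi-algebraic structure on $G/H$ that enjoys the universal property of the quotient. All the continuity assertions will follow formally from the fact that the quotient maps furnished by Brumfiel's theorem are \emph{topological} semi-algebraic surjections, together with Proposition \ref{prop:image}.

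First I would fix $\lambda\in\Lambda$ and set $R=\{(g_1,g_2)\in G\times G:g_1^{-1}g_2\in H\}$ and $R_\lambda=R\cap(G_\lambda\times G_\lambda)$. Since $G$ is a large semi-algebraic group, $(g_1,g_2)\mapsto g_1^{-1}g_2$ is a morphism $G\times G\to G$, so it carries $G_\lambda\times G_\lambda$ into some $G_\nu$ by a continuous semi-algebraic map $\beta$. By the compatibility hypothesis, $\{G_\mu\cap H\}$ is a defining structure for $H$, so $H\cap G_\nu$ is a compact — hence closed — semi-algebraic subset of $G_\nu$, and therefore $R_\lambda=\beta^{-1}(H\cap G_\nu)$ is a closed semi-algebraic equivalence relation on the compact set $G_\lambda\times G_\lambda$. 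As $G_\lambda$ is compact, $\pi_1\colon R_\lambda\to G_\lambda$ is automatically proper, so Theorem \ref{thm:brum} yields a (compact, since $G_\lambda$ is) semi-algebraic set $T_\lambda$ and a topological semi-algebraic surjection $f_\lambda\colon G_\lambda\to T_\lambda$ whose fibres are exactly the $R_\lambda$-classes. Since $\pi|_{G_\lambda}$ has the same fibres as $f_\lambda$, there is a canonical bijection $T_\lambda\cong\pi(G_\lambda)$; I would define $(G/H)_\lambda=\pi(G_\lambda)$ equipped with the compact semi-algebraic structure transported along this bijection, so that $\pi|_{G_\lambda}\colon G_\lambda\to(G/H)_\lambda$ becomes, up to this identification, the topological semi-algebraic surjection $f_\lambda$.

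Next I would verify the structure axioms and the universal property. For $\lambda\le\mu$ the inclusion $j\colon(G/H)_\lambda\hookrightarrow(G/H)_\mu$ satisfies $j\circ f_\lambda=f_\mu\circ\iota$ with $\iota\colon G_\lambda\hookrightarrow G_\mu$, and the right-hand side is continuous and semi-algebraic; the graph of $j$ is the image of the semi-algebraic graph of $j\circ f_\lambda$ under the semi-algebraic map $f_\lambda\times\mathrm{id}$ (using surjectivity of $f_\lambda$), hence semi-algebraic by Proposition \ref{prop:image}, and then $j$ is continuous because $f_\lambda$ is topological and $j\circ f_\lambda$ is continuous. Together with $\bigcup_\lambda\pi(G_\lambda)=\pi(G)=G/H$ and the fact that $\Lambda$ is already archimedean filtered, this gives the large semi-algebraic structure. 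For the universal property: the induced map of $\pi$ over $G_\lambda$ is $f_\lambda$, so $\pi\colon G\to G/H$ is a morphism with $\pi(gh)=\pi(g)$ for $h\in H$; and if $\phi\colon G\to Z$ is any morphism with $\phi(gh)=\phi(g)$ for all $g\in G,\,h\in H$, it factors through a unique set map $\bar\phi\colon G/H\to Z$ (by surjectivity of $\pi$), and repeating the graph-image argument together with the topological property of $f_\lambda$ shows the induced maps $(G/H)_\lambda\to Z_\gamma$ are continuous semi-algebraic, so $\bar\phi$ is a morphism. Applied to the pair of morphisms $G\times H\rightrightarrows G$, $(g,h)\mapsto gh$ and $(g,h)\mapsto g$, this identifies $(G/H,\pi)$ as the coequalizer, i.e.\ the categorical quotient, in $\cV_\infty$ (in particular the structure does not depend on the choices made, since categorical quotients are unique).

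The main obstacle is the very first step: one needs $R_\lambda$ to be a \emph{closed semi-algebraic} equivalence relation on the \emph{compact} set $G_\lambda$, and this is exactly where compatibility of $H\subset G$ is indispensable — without it $H$ need not meet $G_\nu$ in a compact (or even closed) semi-algebraic set, and Brumfiel's theorem would not apply. Once this is arranged, Theorem \ref{thm:brum} does the substantive work, and everything else reduces to formal manipulations with the defining property of topological semi-algebraic surjections and with Proposition \ref{prop:image}.
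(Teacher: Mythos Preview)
Your proof is correct and follows essentially the same approach as the paper: both argue that $R_\lambda$ is the preimage of $H\cap G_\mu$ under the continuous semi-algebraic map $(g_1,g_2)\mapsto g_1^{-1}g_2$, use compatibility to see this is closed semi-algebraic, and then apply Brumfiel's Theorem \ref{thm:brum}. The paper's proof is terser, declaring that the $(G/H)_\lambda$ form a large semi-algebraic structure and that the universal property is ``straightforward'', whereas you have carefully spelled out why the transition maps and the factored map are continuous semi-algebraic using the topological property of $f_\lambda$ together with the graph-image argument via Proposition \ref{prop:image}.
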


\begin{proof}
The map $G_\lambda\to (G/H)_\lambda$ is the set-theoretical quotient modulo the relation $G_\lambda\times G_\lambda\supset R_\lambda=\{(g_1,g_2):g_1^{-1}g_2\in H\}$. Let $\mu$ be such that the product map $m$ sends $G_\lambda\times G_\lambda$ into $G_\mu$; write $\rm{inv}:G\to G$ for the map $\mathrm{inv}(g)=g^{-1}$. Then
\[
R_\lambda=(m\circ(\mathrm{inv},id))^{-1}(H\cap G_\mu)
\]
Because $H\subset G$ is compatible, $H\cap G_\mu\subset G_\mu$ is closed and semi-algebraic, whence the
same is true of $R_\lambda$. By Theorem \ref{thm:brum}, $(G/H)_\lambda$ is semi-algebraic and
$G_\lambda\to (G/H)_\lambda$ is semi-algebraic and continuous. It follows that the $(G/H)_\lambda$ define
a large semi-algebraic structure on $G/H$ and that the projection is a morphism in $\cV_\infty$. The universal property of the quotient is straightforward.
\end{proof}

\begin{examples}\label{exas:pn}
Let $R$ be a unital, countable dimensional $\R$-algebra. The set
\[
P_n(R)=\{g(1_n\oplus 0_\infty)g^{-1}:g\in\GL(R)\}
\]
of all finite idempotent matrices which are conjugate to the $n\times n$ identity matrix can be written as a quotient of a compatible inclusion of large semi-algebraic groups. We have
\[
P_n(R)=\frac{\GL(R)_{[1,\infty)}}{\GL_{[1,n]}(R) \times \GL_{[n+1,\infty)}(R)}.
\]
On the other hand, since  $P_n(R)\subset M_\infty R$, it also carries another large semi-algebraic structure, induced by the fine structure on $M_\infty R$. Since $g\mapsto g(1_n\oplus 0_\infty)g^{-1}$ is
semi-algebraic, the universal property of the quotient \ref{prop:quot} implies that the quotient structure is finer than the subspace structure. Similarly, we may write the set of those idempotent matrices which are stably
conjugate to $1_n\oplus 0_\infty$ as
\begin{align*}
P_n^\infty (R)=&\{g(1_\infty\oplus 1_n\oplus 0_\infty)g^{-1}:g\in \GL_\Z(R)\}\\
              =& \frac{\GL_{(-\infty,+\infty)}(R)}{\GL_{(-\infty,n]}(R) \times \GL_{[n+1,\infty)}(R)}
\end{align*}
Again this carries two large semi-algebraic structures; the quotient structure, and that coming from
the inclusion
\[
P_n^\infty(R)\subset M_2(M_\infty(R)^+)
\]
into the $2\times 2$ matrices of the unitalization of $M_\infty(R)$.
\end{examples}

\subsection{Large semi-algebraic sets as compactly generated spaces.}\label{subsec:LSSop}

A topological space $X$ is said to be \emph{compactly generated} if it carries the inductive topology with respect to its compact subsets, i.e. a map $f \colon X \to Y$ is continuous if and only if its restriction to any compact subset of $X$ is continuous. In other words, a subset $U \subset X$ is open (resp.\ closed) if and only if $U \cap K$ is open (resp.\ closed) in $K$  for every $K \subset X$ compact. Observe that
any filtering colimit of compact spaces is compactly generated. In particular, if $X$ is a large semi-algebraic set, with defining structure $\{X_\lambda\}$, then $X=\bigcup_\lambda X_\lambda$ equipped with the colimit topology is compactly
generated, and this topology depends only on the equivalence class of the structure $\{X_\lambda\}$.
In what follows, whenever we regard a large semi-algebraic set as a topological space, we will implicitly
assume it equipped with the compactly generated topology just defined. Note further that any morphism of large semi-algebraic sets is continuous for the compactly generated topology. Lemma \ref{lem:compatop}
characterizes those inclusions of large semi-algebraic sets which are closed subspaces, and Lemma \ref{lem:g/h} concerns quotients of large semi-algebraic groups with the compactly generated
topology. Both lemmas are straightforward.

\begin{lemma}\label{lem:compatop}
An inclusion $X\subset Y$ of large semi-algebraic sets is compatible if and only if $X$ is a closed subspace of $Y$ with respect to the compactly generated topologies.
\end{lemma}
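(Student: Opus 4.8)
The plan is to unwind both sides of the equivalence directly from the definitions, using the fact that the compactly generated topology on a large semi-algebraic set $Y$ with defining structure $\{Y_\lambda\}$ is precisely the colimit topology along the filtered system of compact inclusions $Y_\lambda\hookrightarrow Y$: a subset $C\subset Y$ is closed if and only if $C\cap Y_\lambda$ is closed in $Y_\lambda$ for every $\lambda$ (this is independent of the defining structure chosen, since any two such structures are cofinal in each other). So I would first record this once and for all, together with the obvious remark that each $Y_\lambda$, being compact, carries the subspace topology from $Y$.

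Next I would prove the implication ``compatible $\Rightarrow$ closed subspace''. Suppose $X\subset Y$ is compatible, so by Lemma \ref{lem:compastru}(i) there is a defining structure $\{Y_\lambda\}$ of $Y$ with $\{Y_\lambda\cap X\}$ a defining structure of $X$. In particular each $Y_\lambda\cap X$ is compact, hence closed in the Hausdorff (indeed compact semi-algebraic) set $Y_\lambda$. Therefore $X\cap Y_\lambda$ is closed in $Y_\lambda$ for all $\lambda$, which by the colimit description above says exactly that $X$ is closed in $Y$. For the subspace assertion: the compactly generated topology of $X$ is the colimit along $\{X\cap Y_\lambda\}$, and each $X\cap Y_\lambda$ is compact and sits inside the compact $Y_\lambda$ with the subspace topology (compact subsets of Hausdorff spaces are topological subspaces in the strongest sense), so the colimit topology on $X$ agrees with the subspace topology inherited from the colimit topology on $Y$. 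Hence $X$ is a closed subspace of $Y$.

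For the converse, ``closed subspace $\Rightarrow$ compatible'', I would start from an arbitrary defining structure $\{Y_\lambda\}$ of $Y$ and set $X_\lambda = X\cap Y_\lambda$; the goal is to check that $\{X_\lambda\}$ is a defining structure of $X$, which by Lemma \ref{lem:compastru} gives compatibility. Since $X$ is closed in $Y$, each $X_\lambda = X\cap Y_\lambda$ is a closed subset of the compact semi-algebraic set $Y_\lambda$; being closed and semi-algebraic—here I would invoke that $X$ itself carries a large semi-algebraic structure $\{X_\gamma\}$, and that $X\cap Y_\lambda$ is a closed semi-algebraic subset of $Y_\lambda$ because it is covered by the compact semi-algebraic sets $X_\gamma\cap Y_\lambda$ and is closed—it is again a compact semi-algebraic set, and the inclusions $X_\lambda\subset X_\mu$ for $\lambda\le\mu$ are restrictions of the $Y_\lambda\subset Y_\mu$, hence continuous and semi-algebraic. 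The index poset is unchanged, hence still archimedean, and $\bigcup_\lambda X_\lambda = X\cap\bigcup_\lambda Y_\lambda = X$. So $\{X_\lambda\}$ is a large semi-algebraic structure on $X$; that it is \emph{equivalent} to the given structure $\{X_\gamma\}$ follows because $X$ is a subspace of $Y$ (so inclusions are continuous) and each $X_\gamma$, being compact in $X$, lands in some $X\cap Y_\lambda = X_\lambda$ with continuous semi-algebraic inclusion, and conversely. This establishes condition (i) of Lemma \ref{lem:compastru}, hence compatibility.

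The only genuinely delicate point—``the hard part''—is the semi-algebraicity bookkeeping in the converse direction: one must be careful that ``$X$ closed in $Y$'' together with ``$X$ a large semi-algebraic set'' really forces $X\cap Y_\lambda$ to be a \emph{semi-algebraic} (not merely closed) subset of $Y_\lambda$, so that Theorem-grade facts like Proposition \ref{prop:closure} and Proposition \ref{prop:image} can be applied to intersections of a given $X_\gamma$ with the $Y_\lambda$. Everything else is a routine comparison of colimit and subspace topologies, which the excerpt already signals is ``straightforward.''
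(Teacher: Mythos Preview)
The paper gives no proof of this lemma, declaring it ``straightforward,'' so there is nothing to compare your approach to; your outline is the natural one. The forward direction is fine. In the converse direction, however, your stated justification for $X\cap Y_\lambda$ being semi-algebraic is not valid: ``covered by the compact semi-algebraic sets $X_\gamma\cap Y_\lambda$ and closed'' does not imply semi-algebraic (a Cantor set in $[0,1]$ is closed and covered by singletons). You flag this as the delicate point, but the one-line argument you give does not resolve it.

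The correct argument is already implicit in the ``and conversely'' you mention later, and this is exactly where the \emph{subspace} hypothesis (not just closedness) is used. Since $X$ is closed in $Y$, the set $X\cap Y_\lambda$ is closed in the compact $Y_\lambda$, hence compact in $Y$; since $X$ carries the subspace topology, $X\cap Y_\lambda$ is compact in $X$. Now use archimedeanity: take a defining structure $\{X_n\}_{n\in\N}$ for $X$; the standard argument (if $K\subset X$ is compact and $K\not\subset X_n$ for all $n$, choose $x_n\in K\setminus X_n$ and observe $\{x_n\}$ is an infinite closed discrete subset of $K$) shows that every compact subset of $X$ lies in some $X_n$. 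Thus $X\cap Y_\lambda\subset X_n$ for some $n$, whence $X\cap Y_\lambda = X_n\cap Y_\lambda$. Since the inclusion $X\hookrightarrow Y$ is a morphism in $\cV_\infty$, $X_n$ sits as a compact semi-algebraic subset of some $Y_\mu$ with $\mu\ge\lambda$, and then $X_n\cap Y_\lambda$ is semi-algebraic by Proposition~\ref{prop:image}. This simultaneously gives you semi-algebraicity of $X\cap Y_\lambda$ and the equivalence of $\{X\cap Y_\lambda\}$ with $\{X_\gamma\}$, so the two loose ends in your write-up are really the same step.
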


\begin{lemma}\label{lem:g/h}
Let $H\subset G$ be a compatible inclusion of large semi-algebraic groups. View $G$ and $H$ as topological
groups equipped with the compactly generated topologies. Then the compactly generated
topology associated to the quotient large semi-algebraic set $G/H$ is the quotient topology.
\end{lemma}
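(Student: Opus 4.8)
The plan is to compare, level by level along the defining structures, the two topologies on $G/H$. Fix a defining structure $\{G_\lambda\}$ of $G$. Since $H\subset G$ is compatible, Proposition \ref{prop:quot} applies, so the sets $(G/H)_\lambda:=\pi(G_\lambda)$ form a defining structure for the large semi-algebraic set $G/H$ and each $f_\lambda:=\pi|_{G_\lambda}\colon G_\lambda\to (G/H)_\lambda$ is a continuous semi-algebraic surjection. Write $\tau_q$ for the quotient topology on $G/H$ induced by $\pi\colon G\to G/H$ when $G$ carries its compactly generated topology, and $\tau_{\mathrm{cg}}$ for the compactly generated topology of the large semi-algebraic set $G/H$, which by definition is the colimit topology along $\{(G/H)_\lambda\}$. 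Since $G$ (resp.\ $G/H$) carries the colimit topology along $\{G_\lambda\}$ (resp.\ $\{(G/H)_\lambda\}$), a subset $U\subset G/H$ is $\tau_q$-open if and only if $\pi^{-1}(U)\cap G_\lambda$ is open in $G_\lambda$ for every $\lambda$, and $U$ is $\tau_{\mathrm{cg}}$-open if and only if $U\cap (G/H)_\lambda$ is open in $(G/H)_\lambda$ for every $\lambda$. The elementary set identity $f_\lambda^{-1}(U\cap (G/H)_\lambda)=\pi^{-1}(U)\cap G_\lambda$ then links the two conditions.

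One inclusion is just the continuity of $\pi$: a morphism of large semi-algebraic sets is continuous for the compactly generated topologies, hence $\tau_{\mathrm{cg}}\subseteq\tau_q$; concretely, if $U$ is $\tau_{\mathrm{cg}}$-open then each $U\cap(G/H)_\lambda$ is open, so $\pi^{-1}(U)\cap G_\lambda=f_\lambda^{-1}(U\cap(G/H)_\lambda)$ is open by continuity of $f_\lambda$. For the reverse inclusion I would show that each $f_\lambda$ is a topological quotient map; granting this, if $U$ is $\tau_q$-open then $f_\lambda^{-1}(U\cap(G/H)_\lambda)=\pi^{-1}(U)\cap G_\lambda$ is open, so $U\cap(G/H)_\lambda$ is open in $(G/H)_\lambda$ for every $\lambda$, i.e.\ $U$ is $\tau_{\mathrm{cg}}$-open. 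To see that $f_\lambda$ is a quotient map, note that $G_\lambda$ is compact (part of the definition of a large semi-algebraic structure), $(G/H)_\lambda$ is Hausdorff (being a semi-algebraic set, hence a subspace of some $\R^N$), and $f_\lambda$ is a continuous surjection; so $f_\lambda$ sends closed — hence compact — subsets of $G_\lambda$ to compact, hence closed, subsets of $(G/H)_\lambda$, making $f_\lambda$ a closed continuous surjection and therefore a quotient map. This is also implicit in the proof of Proposition \ref{prop:quot}, where $(G/H)_\lambda$ is produced by Brumfiel's Theorem \ref{thm:brum} together with the \emph{topological} semi-algebraic surjection $f_\lambda$. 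Combining the two inclusions gives $\tau_q=\tau_{\mathrm{cg}}$.

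I do not expect a real obstacle here: the lemma amounts to the compatibility of two colimit-of-topological-spaces constructions with the set-theoretic quotient map $\pi$. The single point that requires a word of justification is that each $f_\lambda\colon G_\lambda\to(G/H)_\lambda$ is a topological quotient map, and even that is routine, resting only on the compactness of the $G_\lambda$ built into the definition of a large semi-algebraic structure (equivalently, on the \emph{topological} surjectivity furnished by Brumfiel's theorem, as already used in Proposition \ref{prop:quot}).
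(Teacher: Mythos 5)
Your proof is correct, and it supplies exactly the routine argument the paper omits (the paper simply declares Lemma \ref{lem:g/h} straightforward). The key identity $f_\lambda^{-1}(U\cap(G/H)_\lambda)=\pi^{-1}(U)\cap G_\lambda$ together with the observation that each $f_\lambda$, being a continuous surjection from a compact set onto a Hausdorff semi-algebraic set, is a closed and hence quotient map, is precisely what is needed.
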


We shall be concerned with large semi-algebraic groups which are Hausdorff for the compactly generated
topology. The main examples are countable dimensional $\R$-vector spaces and groups such as  $\GL_S(A)$, for some subset $S \subset \Z$ and some countably dimensional unital $\R$-algebra $A$.


\begin{lemma}\label{lem:map}
Let $\F=\R$ or $\C$, $V$ a countable dimensional $\F$-vectorspace, $A$ a countable dimensional $\F$-algebra,
$S\subset \Z$ a subset, $X$ a compact Hausdorff topological space, and $C(X)=\map(X,\F)$. Equip $V$, $A$, and $\GL_S(A)$ with the compactly generated topologies. Then
the natural homomorphisms
\[
C(X)\otimes_\F V\to \map(X,V)\quad \mbox{and} \quad \GL_S(\map(X,A))\to \map(X,\GL_S(A))
\]
are bijective.
\end{lemma}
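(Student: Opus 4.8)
The plan is to prove the two bijectivity statements separately, both by reducing to the finite-dimensional (or finite subset) situation where they are essentially classical.

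First I would treat the map $C(X)\otimes_\F V\to \map(X,V)$. Write $V=\bigcup_n V_n$ as an increasing union of finite-dimensional subspaces, so that the fine large semi-algebraic structure on $V$ is given by the compact semi-algebraic subsets of the $V_n$, and the compactly generated topology on $V$ is the colimit topology. Since $X$ is compact, the image of any continuous map $X\to V$ is a compact subset of the colimit $\bigcup_n V_n$, hence is contained in some $V_n$ (a compact subset of a colimit along closed inclusions lands in a finite stage); therefore $\map(X,V)=\colim_n\map(X,V_n)$. Likewise $C(X)\otimes_\F V=\colim_n C(X)\otimes_\F V_n$ because tensor product commutes with filtered colimits. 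So it suffices to prove $C(X)\otimes_\F V_n\to\map(X,V_n)$ is bijective for $V_n$ finite-dimensional; choosing a basis this is the standard fact that $C(X)^{\oplus d}\cong\map(X,\F^d)$, with the euclidean topology on $\F^d$ agreeing with the fine large semi-algebraic topology in finite dimensions (Example \ref{exa:large}). The one point to check here is that the compactly generated topology and the euclidean topology coincide on a finite-dimensional space, which is immediate since a finite-dimensional space is already compactly generated and locally compact.

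Next I would handle $\GL_S(\map(X,A))\to\map(X,\GL_S(A))$. By Corollary \ref{cor:gl}, $\GL_S(A)=\bigcup_T\GL_T(A)$ over finite subsets $T\subset S$, and both sides are filtered colimits of the corresponding statements for finite $T$ (using compactness of $X$ again on the right, and the fact that every element of $\GL_S(\map(X,A))$ and every invertible matrix involves only finitely many indices on the left), so it reduces to $S=T$ finite, i.e. to $\GL_n$. Now $\GL_n(\map(X,A))=\map(X,\GL_n(A))$ as \emph{sets}: a matrix over $\map(X,A)$ is the same as a matrix-valued function $X\to M_n(A)$, and invertibility under the operation $\star$ (Remark \ref{rem:glnuni}) is a pointwise condition with the inverse also given pointwise and continuously, because inversion $\GL_n(A)\to\GL_n(A)$ is a morphism of large semi-algebraic sets (Proposition \ref{prop:glnlarge}(ii)) hence continuous. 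Concretely, given $g\in\map(X,\GL_n(A))$, the composite $x\mapsto g(x)^{-1}$ is continuous, so $g$ lies in $\GL_n(\map(X,A))$; conversely an element of $\GL_n(\map(X,A))$ is a continuous map $X\to M_n(A)$ taking values in $\GL_n(A)$, i.e. an element of $\map(X,\GL_n(A))$. Using the already-proved first bijection $\map(X,M_n(A))\cong C(X)\otimes M_n(A)=M_n(C(X)\otimes A)=M_n(\map(X,A))$ identifies the two descriptions and shows the map in the statement is exactly this identity.

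The main obstacle, and the only genuinely non-formal point, is the claim that a continuous map from the compact space $X$ into a large semi-algebraic set $Y=\bigcup Y_\lambda$ has image contained in a single $Y_\lambda$. This needs the archimedean hypothesis: we may take the defining structure indexed by $\N$ with $Y_0\subset Y_1\subset\cdots$ closed inclusions, $Y=\colim_n Y_n$. If the image of $X$ met infinitely many $Y_n\setminus Y_{n-1}$ we could pick a sequence $x_k\in X$ with $g(x_k)\in Y_{n_k}\setminus Y_{n_k-1}$ for strictly increasing $n_k$; by compactness $x_k$ has a convergent subnet, but $\{g(x_k)\}$ is a closed discrete subset of $Y$ (its intersection with each $Y_n$ is finite, hence closed), contradicting continuity at the limit point. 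Hence $g(X)\subset Y_N$ for some $N$. Once this is in place the rest is the routine colimit and finite-dimensional bookkeeping sketched above, and the lemma follows. \qed
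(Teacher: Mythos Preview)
Your proof is correct and follows essentially the same strategy as the paper's: both arguments hinge on the observation that a continuous map from a compact space into a large semi-algebraic set (with its colimit topology) has image contained in a single stage of the defining filtration, then reduce to the finite-dimensional/finite-index case. The paper dispatches injectivity in one word and handles surjectivity for $\GL_S$ directly via the structure $\GL_S(A)=\bigcup_{S',F}\GL_{S'}(A)^F$, whereas you organize everything through explicit filtered colimits and invoke the first bijection for $M_n(A)$ to handle the second; your version is more detailed but not materially different.
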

\begin{proof}
It is clear that the both homomorphisms are injective. The image of the first one consists of those
continuous maps whose image is contained in a finitely generated subspace of $V$. But since $X$ is compact and $V$
has the inductive topology of all closed balls in a finitely generated subspace, every continuous map is
of that form. Next note that $\GL_S(A)=\bigcup_{S',F} \GL_{S'}(A)^F$ where the union runs among the finite subsets
of $S$ and the compact semi-algebraic sets of the form $F=M_{S'}(B)$ where $B$ is a compact
semi-algebraic subset of some finitely generated subspace of $A$. Hence any continuous map $f:X\to\GL_S(A)$ sends $X$ into some $M_{S' }B$, and
thus each of the entries $f(x)_{i,j}$ $i,j\in S'$ is a continuous function. Thus $f(x)$ comes from an element of $GL_S(\map(X,A))$.
\end{proof}

\subsection{The compact fibration theorem for quotients of large semi-algebraic groups.}\label{subsec:fib}

Recall that a continuous map $f:X\to Y$ of topological spaces is said to have the \emph{homotopy lifting property} with respect to a space $Z$ if for any solid arrow diagram
\[
\xymatrix{Z\ar[d]_{id\times 0}\ar[r]&X\ar[d]^f\\
Z\times [0,1]\ar[r]\ar@{.>}[ur]& Y}
\]
of continuous maps a continuous dotted arrow exists and makes both triangles commute. The map $f$ is
a (Hurewicz) fibration if it has the $HLP$ with respect to any space $Z$, and is a Serre fibration if
it has the $HLP$ with respect to all disks $D^n$ $n\ge 0$.

\begin{definition} Let $X,Y$ be topological space and $f \colon X \to Y$ a continuous map. We say that $f$ is a  \emph{compact fibration} if for every compact subspace $K$, the map $f^{-1}(K)\to K$ is a fibration.
\end{definition}

\begin{remark}
Note that every compact fibration is a Serre fibration. Also, since every map $p:E\to B$ with compact $B$
which is a locally trivial bundle is a fibration, any map $f:X\to Y$  such that the restriction  $f^{-1}(K)\to K$ to any compact subspace $K\subset Y$ is a locally trivial bundle, is a compact fibration. The notion of compact fibration comes up naturally in the study of homogenous spaces of infinite dimensional topological groups.
\end{remark}

\begin{theorem} \label{thm:fib}
Let $H \subset G$ be a compatible inclusion of large semi-algebraic groups. Then the quotient map $\pi\colon G \to G/H$ is a compact fibration.
\end{theorem}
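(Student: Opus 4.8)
The plan is to unwind the definition of compact fibration, reduce via Theorem~\ref{thm:main} to producing local sections of $\pi$ over a finite open cover of a compact semi-algebraic piece of $G/H$, and obtain those sections by applying Theorem~\ref{thm:main} to the split-exact functor of Lemma~\ref{splitlem} and then gluing. For the reduction, fix a defining structure $\{G_\lambda\}$ of $G$ with $G_\lambda^{-1}\subset G_\lambda$; since $\Lambda$ is archimedean we may index by $\N$, and then $G/H=\bigcup_n(G/H)_n$ with $(G/H)_n=\pi(G_n)$ compact and semi-algebraic (Proposition~\ref{prop:quot}, Lemma~\ref{lem:g/h}). Each $(G/H)_n$ is closed in $G/H$, being a compact subset of a Hausdorff space, so the usual argument (an infinite closed discrete set otherwise) shows that any compact $K\subset G/H$ lies in some $T:=(G/H)_n=\pi(G_n)$. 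As $\pi^{-1}(K)\to K$ is the pullback of $\pi^{-1}(T)\to T$ along $K\hookrightarrow T$, it suffices to show $\pi^{-1}(T)\to T$ is a Hurewicz fibration, and for that it is enough to exhibit a finite open cover $\{U\}$ of $T$ together with continuous sections $s_U\colon U\to G$ of $\pi$: then $(u,h)\mapsto s_U(u)\cdot h$ is a homeomorphism $U\times H\xrightarrow{\;\sim\;}\pi^{-1}(U)$ with inverse $g\mapsto(\pi(g),s_U(\pi(g))^{-1}g)$ (the point-set issues with the compactly generated topologies being handled by compatibility of $H\subset G$ and Lemmas~\ref{lem:compatop}--\ref{lem:g/h}), so $\pi^{-1}(T)\to T$ is a locally trivial bundle with fibre $H$ over a finite, hence numerable, cover and therefore a Hurewicz fibration.

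Next I would produce sections over simplices. The restriction $f:=\pi|_{G_n}\colon G_n\to T$ is a proper (since $G_n$ is compact) continuous semi-algebraic surjection onto a compact semi-algebraic subset of some $\R^k$, so Theorem~\ref{thm:main} yields a semi-algebraic triangulation $L$ of $T$ with $\ker\big(F(\Delta)\to F(f^{-1}(\Delta))\big)=*$ for every simplex $\Delta$ of $L$ and every weakly split-exact $F\colon\pol\to\set_+$. Apply this to $F(Y)=\map(Y,G/H)/\map(Y,G)$, which is split-exact by Lemma~\ref{splitlem}. For a simplex $\Delta$ of $L$ the inclusion $\iota_\Delta\colon\Delta\hookrightarrow G/H$ pulls back under $f$ to the composite $f^{-1}(\Delta)\hookrightarrow G\xrightarrow{\pi}G/H$, whose class in $F(f^{-1}(\Delta))$ is the base point; hence $[\iota_\Delta]$ lies in the above kernel and so $[\iota_\Delta]=*$, which says exactly that $\iota_\Delta$ lifts through $\pi$, i.e. $\pi$ admits a continuous section over $\Delta$.

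Finally I would glue these. Pass to the barycentric subdivision $L'$; its vertices are the barycentres $\hat\sigma$, $\sigma\in L$, and the open stars $U_\sigma:=\mathrm{st}_{L'}(\hat\sigma)$ form a finite open cover of $T$. One computes $\overline{U_\sigma}=|\sigma|\cup\bigcup_{\tau\in\mathrm{lk}_L(\sigma)}|\sigma*\tau|$, a finite union of closed simplices of $L$ over each of which $\pi$ has a section by the previous step. Enumerating $\mathrm{lk}_L(\sigma)$ by non-decreasing dimension and setting $P_0=|\sigma|$, $P_{j}=P_{j-1}\cup|\sigma*\tau_j|$, each $P_j$ is a pushout of compact polyhedra $P_{j-1}\cup_{P_{j-1}\cap|\sigma*\tau_j|}|\sigma*\tau_j|$ in which $P_{j-1}\cap|\sigma*\tau_j|=|\sigma*\partial\tau_j|$ includes into the simplex $|\sigma*\tau_j|$ as a contractible subcomplex, hence as a split injection (a contractible subcomplex of a simplex is a strong deformation retract of it, being the source of a cofibration which is a homotopy equivalence). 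Weak split-exactness of $F$ therefore propagates $[\iota_{P_{j-1}}]=*$ and $[\iota_{|\sigma*\tau_j|}]=*$ to $[\iota_{P_j}]=*$; by induction $[\iota_{\overline{U_\sigma}}]=*$, so $\pi$ has a section over $\overline{U_\sigma}\supset U_\sigma$. This supplies the desired finite open cover of $T$ with sections, and the theorem follows.

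I expect that no single step is deep — the real content is Theorem~\ref{thm:main}, which we are free to use — and that the main work is bookkeeping: checking that a compact subset of $G/H$ sits inside one $\pi(G_n)$, that $f|_{G_n}$ satisfies the hypotheses of Theorem~\ref{thm:main}, that the local trivialisations are homeomorphisms for the compactly generated topologies, and, most delicately, isolating the combinatorial decomposition $\overline{U_\sigma}=|\sigma|\cup\bigcup_\tau|\sigma*\tau|$ into simplices of $L$ and recognising $|\sigma*\partial\tau|\hookrightarrow|\sigma*\tau|$ as split, so that the split-exactness of $F$ can be invoked to glue the simplexwise sections into sections over the members of an open cover. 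This gluing step is the one I would package as a separate lemma.
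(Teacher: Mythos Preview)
Your proof is correct and follows essentially the same route as the paper: reduce to a compact semi-algebraic piece $T=\pi(G_n)$, apply Theorem~\ref{thm:main} to the split-exact functor $F(Y)=\map(Y,G/H)/\map(Y,G)$ to get sections over each simplex of a triangulation, then glue to sections over the open stars of the barycentric subdivision and conclude local triviality. Your gluing step is spelled out in more combinatorial detail than the paper's one-line ``using split-exactness of $F$''; the only slip is that the equality $\overline{U_\sigma}=|\sigma|\cup\bigcup_{\tau\in\mathrm{lk}_L(\sigma)}|\sigma*\tau|$ is false in general (the right-hand side is the closed star of $\sigma$ in $L$, which strictly contains the closed star of $\hat\sigma$ in $L'$ whenever $\sigma$ has a proper coface), but since you only need the containment $\overline{U_\sigma}\subset\overline{\st}_L(\sigma)$ to restrict your glued section, the argument goes through unchanged.
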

\begin{proof}
Choose defining structures $\{H_p\}$ and $\{G_p\}$ indexed over $\N$ and such that $H_p=G_p\cap H$; let $\{(G/H)_p\}$ be as in Proposition \ref{prop:quot}. Since any compact
subspace $K\subset G/H$ is contained in some $(G/H)_p$, it suffices to show that the projection $\pi_p=\pi|_{\pi^{-1}((G/H)_p)} \colon \pi^{-1}((G/H)_p)\to (G/H)_p$ is a locally
trivial bundle. By a well-known argument (see e.g. \cite[Thm. 4.13]{switzer}) if the quotient map of a group by a closed subgroup admits local sections then it is a locally trivial bundle;
the same argument applies in our case to show that if $\pi_p$ admits local sections then it is a locally trivial bundle. Consider the functor
\[
F:\comp\to\set_+,\quad F(X)= \frac{\map(X,G/H)}{\map(X,G)}
\]
By Lemma \ref{splitlem}, $F$ is split exact. By Theorem \ref{thm:main} applied to $F$ and to the proper semi-algebraic surjection $G_p\to (G/H)_p$, there is a triangulation of $(G/H)_p$ such that
\begin{equation}\label{eq:ker}
\ker(F(\Delta^n)\to F(\pi_p^{-1}(\Delta^n)))=*
\end{equation}
for each simplex $\Delta^n$ in the triangulation. The diagram
\[
\xymatrix{
\pi^{-1}(\Delta^n)\cap G_p\ar[r]\ar[d]&G_p\ar[r]\ar[d]_{\pi_p}& G\ar[d]^{\pi}\\ \Delta^n\ar[r]&(G/H)_p\ar[r]& G/H}
\]
shows that the class of the inclusion $\Delta^n\subset G/H$ is an element of that kernel,
and therefore it can be lifted to a continuous map $\Delta^n\to G$, by \eqref{eq:ker}. Thus $\pi_p$ admits
a continuous section over every simplex in the triangulation. Therefore, using split-exactness of $F$, it admits a continuous section over each
of the open stars $\st^{o}(x)$ of the vertices of the barycentric subdivision. As the open stars of vertices form an open covering
 of $(G/H)_p$, we conclude that $\pi_p$ admits local sections. This finishes the proof.
\end{proof}

\section{Algebraic compactness, bounded sequences and algebraic approximation.}\label{sec:algcomp}

\subsection{Algebraic compactness.}\label{subsec:algcomp}

Let $R$ be a countable dimensional unital $\R$-algebra, equipped with the fine large semi-algebraic structure. Consider the large semi-algebraic sets
\begin{align}
M_\infty R\supset P_n(R) &=\frac{\GL(R)_{[1,\infty)}}{\GL_{[1,n]}(R) \times \GL_{[n+1,\infty)}(R)}\label{formu:pn}\\
M_2((M_\infty R)^+)\supset P_n^\infty (R)=& \frac{\GL_{(-\infty,+\infty)}(R)}{\GL_{(-\infty,n]}(R) \times \GL_{[n+1,\infty)}(R)}\label{formu:pinfty}
\end{align}
introduced in \ref{exas:pn}. Recall that each of $P_n(R)$, $P_n(R)^\infty$ carries two large semi-algebra structures:
the homogenous ones, coming from the quotients, and those induced by the inclusions above. As the homogeneous structures are finer than the induced ones, the same is true of the corresponding compactly generated topologies; they agree if and only if the corresponding large semi-algebraic structures are equivalent, or, what is the same, if every subset which is compact in the homogeneous topology is also compact in the induced one. This motivates the following definition.

\begin{definition}\label{defi:algcomp}
Let $R$ be a countable dimensional unital $\R$-algebra, equipped with the fine large semi-algebraic structure. We say that $R$ has the \emph{algebraic compactness} property if for every $n\ge 1$ the homogeneous and the induced large semi-algebraic structure of $P^\infty_n(R)$ agree.
\end{definition}

We show in Proposition \ref{prop:pn} below that if $R$ satisfies algebraic compactness, then the two topologies in \eqref{formu:pn}
also agree. For this we need some properties of compactly generated topological groups. All topological groups under consideration are assumed to be Hausdorff.

\begin{lemma} \label{lem:techlem}
Let $H$ and $H'$ be closed subgroups of a Hausdorff compactly generated group $G$. Then the quotient
topology on $H/H \cap H'$ is the subspace topology inherited from the quotient topology on $G/H'$.
\end{lemma}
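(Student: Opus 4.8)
The plan is to show the canonical continuous bijection $H/(H\cap H') \to G/H'$, which has image the subset $HH'/H'$, is a topological embedding when both sides carry their respective quotient topologies (and the target its subspace topology from $G/H'$). The map in question factors as $H/(H\cap H') \to HH'/H' \subset G/H'$, where the first map is the natural continuous bijection induced by $H \hookrightarrow HH'$ and the second is the subspace inclusion; so it suffices to see that the continuous bijection $q: H/(H\cap H') \to HH'/H'$ is a homeomorphism onto $HH'/H'$ with the subspace topology. Since $G$ is compactly generated and all subgroups in sight are closed, both quotients $H/(H\cap H')$ and $G/H'$ are again compactly generated (a quotient of a compactly generated space by a closed equivalence relation, or by a group action, is compactly generated), and a continuous bijection between compactly generated Hausdorff spaces is a homeomorphism as soon as it is a homeomorphism after restriction to compact subsets. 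So I would reduce to the following: for every compact $K \subset H/(H\cap H')$, the restriction $q|_K$ is a homeomorphism onto its image with the subspace topology inherited from $G/H'$ — and since $q$ is a continuous bijection, this amounts to showing $q$ is a closed (equivalently, proper) map on compacts, i.e. that $q$ carries compact sets to compact sets and that the subspace topology on $q(K)$ from $G/H'$ agrees with the quotient-topology image.

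First I would set up the commuting square of quotient maps: write $p_H: H \to H/(H\cap H')$ and $p_G: G \to G/H'$ for the projections, and note $p_G|_H = q \circ p_H$. The key point is that $p_G|_H: H \to G/H'$ has image $HH'/H'$ and its fibers are exactly the cosets of $H\cap H'$ in $H$, so set-theoretically it induces the bijection $q$. To check $q$ is an embedding onto the subspace $HH'/H'$, take $U \subset H/(H\cap H')$ open; I must produce an open $W \subset G/H'$ with $W \cap (HH'/H') = q(U)$. Equivalently, working with closed sets (which is cleaner here since everything is Hausdorff and we will use compactness), take $C \subset H/(H\cap H')$ closed; by compact generation of $H/(H\cap H')$ it is enough to show $q(C) \cap L$ is closed in $L$ for every compact $L \subset G/H'$, and since $q(C) \subset HH'/H'$ we may intersect with $L \cap (HH'/H')$ first. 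Now the decisive step: lift $L$ through $p_G$ to a compact set. This is where I expect the main obstacle — one needs that $p_G: G \to G/H'$ admits, over the compact set $L$, a compact set mapping onto it; i.e. that $p_G$ is "compactly proper". For large semi-algebraic groups this follows from the machinery already in place: by Proposition \ref{prop:quot} the structure sets $(G/H')_\lambda = \pi(G_\lambda)$ are compact semi-algebraic, $L$ lies in some $(G/H')_\lambda = \pi(G_\lambda)$ with $G_\lambda$ compact, and $G_\lambda \cap p_G^{-1}(L)$ is a closed subset of the compact set $G_\lambda$, hence compact, and surjects onto $L$. Intersecting further with $H$: the set $\Sigma := G_\lambda \cap H \cap p_G^{-1}(L)$ is compact (closed in $G_\lambda$, using that $H$ is closed), and $p_H(\Sigma)$ is a compact subset of $H/(H\cap H')$ mapping onto $L \cap (HH'/H')$ under $q$.

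With that compact lift in hand the argument closes quickly. Given the closed set $C \subset H/(H\cap H')$, the preimage $p_H^{-1}(C)$ is closed in $H$, so $p_H^{-1}(C) \cap \Sigma$ is compact, hence its image $p_H(p_H^{-1}(C)\cap \Sigma) = C \cap p_H(\Sigma)$ is compact, hence closed in the Hausdorff space $G/H'$ (via $q$, which is injective continuous into a Hausdorff space). But $C \cap p_H(\Sigma)$ maps under $q$ onto $q(C) \cap \big(L \cap (HH'/H')\big)$, so $q(C) \cap L$ is closed in $L$. Since $L$ was an arbitrary compact subset of $G/H'$ and $G/H'$ is compactly generated, $q(C)$ is closed in $HH'/H'$ with the subspace topology from $G/H'$; as $C$ ranges over closed sets this says $q$ is a closed embedding, i.e. the quotient topology on $H/(H\cap H')$ coincides with the subspace topology from $G/H'$. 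The only genuinely delicate point, as noted, is producing the compact lift $\Sigma$, and for the groups of interest in this paper this is exactly what the large semi-algebraic structure (Proposition \ref{prop:quot}, together with the assumption $G_\lambda^{-1}\subset G_\lambda$ used to keep $H_\lambda = H \cap G_\lambda$ a defining structure for $H$) guarantees; for a general Hausdorff compactly generated group one would instead invoke the standard fact that $p_G$ restricted over any compact subset of $G/H'$ admits a compact preimage section, which is where the hypothesis that $G$ is compactly generated enters essentially.
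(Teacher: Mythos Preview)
Your argument has a genuine gap. You claim that $\Sigma := G_\lambda \cap H \cap p_G^{-1}(L)$ maps \emph{onto} $L \cap (HH'/H')$ under $p_G$, but only the inclusion $p_G(\Sigma) \subset L \cap p_G(H)$ is clear. For the reverse inclusion you would need, for each $y \in L \cap p_G(H)$, an element of $G_\lambda \cap H$ mapping to $y$; you know separately that some $g\in G_\lambda$ and some $h\in H$ both map to $y$, but nothing forces the coset $hH'=p_G^{-1}(y)$ to meet $G_\lambda\cap H$. Without this surjectivity your identity $q(C\cap p_H(\Sigma)) = q(C)\cap L$ fails, and the closedness argument does not go through.

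The gap is not repairable in the stated generality, because the lemma is actually false for arbitrary Hausdorff compactly generated groups. Take $G=\ell^2$ (metrizable, hence compactly generated), $H=\overline{\mathrm{span}}\{e_{2k}:k\ge1\}$, and $H'=\overline{\mathrm{span}}\{e_{2k}+k^{-1}e_{2k+1}:k\ge1\}$. Then $H\cap H'=0$, so $H/(H\cap H')=H$ with its norm topology; but $\|e_{2k}+H'\|_{G/H'}\le k^{-1}\to 0$ while $\|e_{2k}\|_H=1$, so $\{e_{2k}:k\ge1\}$ is closed in the quotient topology on $H$ yet not in the subspace topology from $G/H'$ (the point $0=\iota(0)\in\iota(H)$ lies in its closure). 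Note that compact lifting through $p_G$ \emph{does} hold here---one lifts the compact set $\{0\}\cup\{\iota(e_{2k})\}\subset G/H'$ to the compact set $\{0\}\cup\{-k^{-1}e_{2k+1}\}\subset G$---so that is not the obstruction; the problem is precisely lifting into $H$, which is the step you left unjustified. Whatever makes the conclusion correct for the large semi-algebraic $\GL$-groups of Proposition~\ref{prop:pn} must exploit more structure than bare compact generation, and neither your argument nor the paper's short proof isolates it.
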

\begin{proof}
First of all, it is clear that the canonical inclusion map $\iota \colon H/H \cap H' \to G/H'$ is continuous. Indeed, let $\pi:G\to G/H'$ be the projection; identify $H\to H/H\cap H'$ with the restriction of $\pi$. A subset
$A\subset G/H'$ is closed if and only if $\pi^{-1}(A) \cap K$ is closed for $K \subset G$ compact. Hence, $\pi^{-1}(A) \cap H \cap K = \pi^{-1}(A \cap \pi(H)) \cap K$ is closed and the claim follows since compact subsets of $H$ are also compact in $G$. Let now $A \subset H/H \cap H'$ be closed, i.e.\ $\pi^{-1}(A) \cap K'$ closed for every compact $K' \subset H$. For compact $K \subset G$, the set $K'=K \cap H$ is compact in $H$ and we get that $\pi^{-1}(A) \cap K$ closed in $H$ and hence in $G$. This finishes the proof.
\end{proof}

\begin{proposition}\label{prop:pn}
Let $R$ be a unital, countable dimensional $\R$-algebra, equipped with the fine large semi-algebraic structure. Assume that $R$ has the algebraic compactness property. Then the homogeneous and induced large semi-algebraic structures of $P_n(R)$ agree.
\end{proposition}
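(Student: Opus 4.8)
The plan is to compare each of the two structures on $P_n(R)$ with the corresponding structure on $P_n^\infty(R)$, where the algebraic compactness hypothesis is available, using Lemma~\ref{lem:techlem} to pass between them. Set $G=\GL_{(-\infty,+\infty)}(R)$, $K=\GL_{[1,\infty)}(R)$ and $H'=\GL_{(-\infty,n]}(R)\times\GL_{[n+1,\infty)}(R)$, and note that $K\cap H'=\GL_{[1,n]}(R)\times\GL_{[n+1,\infty)}(R)=:H''$. Unwinding the orbit maps of \eqref{formu:pn} and \eqref{formu:pinfty}, the inclusion $K\subset G$ takes the stabilizer in $K$ of $1_n\oplus 0_\infty$ onto $H''$ and induces an injection $P_n(R)\to P_n^\infty(R)$, $e\mapsto 1_\infty\oplus e$; thus $P_n(R)^{\mathrm{hom}}=K/H''$ inside $P_n^\infty(R)^{\mathrm{hom}}=G/H'$. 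The first thing to check is that $K$ and $H'$ are closed subgroups of $G$; by Lemma~\ref{lem:compatop} this amounts to compatibility of these inclusions of large semi-algebraic groups, which is immediate from the explicit defining structures $\{\GL_T(R)^F\}$ of $G$ (\ref{prop:glnlarge}, \ref{cor:gl}), the conditions ``supported on $[1,\infty)$'' and ``block-diagonal with respect to $(-\infty,n]\sqcup[n+1,\infty)$'' being closed and semi-algebraic. Since $G$ is Hausdorff and compactly generated, Lemma~\ref{lem:techlem}, together with Lemma~\ref{lem:g/h} (which identifies the topological-group quotient topologies on $G/H'$ and $K/H''$ with the compactly generated topologies of the large semi-algebraic quotients), then gives that the homogeneous topology on $P_n(R)$ is the subspace topology inherited from the homogeneous topology on $P_n^\infty(R)$.

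Next one does the analogue for the induced structures. The affine map $\epsilon\colon M_\infty R\to M_2((M_\infty R)^+)$, $a\mapsto 1_\infty\oplus a$, is a morphism of large semi-algebraic sets, since it sends a compact semi-algebraic subset of a finite-dimensional subspace to a translate of one, again of that form; and the block projection onto the lower-right corner, followed by $(M_\infty R)^+\to M_\infty R$, is a linear retraction of $\epsilon$, hence also a morphism. So $\epsilon$ is an embedding of topological spaces with closed image, and it restricts to the inclusion $P_n(R)\hookrightarrow P_n^\infty(R)$ of the first paragraph, while $P_n^\infty(R)\subset M_2((M_\infty R)^+)$ is precisely the inclusion defining the induced structure of $P_n^\infty(R)$. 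By transitivity of subspace topologies, the induced topology on $P_n(R)$, coming from $M_\infty R$, equals the subspace topology inherited from the induced topology on $P_n^\infty(R)$.

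By the algebraic compactness hypothesis, the homogeneous and induced structures of $P_n^\infty(R)$ agree, so a fortiori their topologies coincide; combined with the two subspace descriptions above, the homogeneous and induced topologies of $P_n(R)$ coincide. To upgrade this to an equality of large semi-algebraic structures, recall from \ref{exas:pn} that the homogeneous structure is always finer; it remains to see it is coarser, i.e.\ that each defining compact semi-algebraic set $P_n(R)\cap B$ of the induced structure lies, with continuous semi-algebraic inclusion, in some defining set $\pi(K_\lambda)$ of the homogeneous structure ($\pi\colon K\to P_n(R)$ the projection, $\{K_\lambda\}$ a defining structure of $K$). By the preceding sentence $P_n(R)\cap B$ is compact for the homogeneous topology, and $P_n(R)^{\mathrm{hom}}$ is, by Proposition~\ref{prop:quot}, the colimit of an $\N$-indexed chain of closed inclusions of compact semi-algebraic sets $\pi(K_p)$ (closed since $P_n(R)^{\mathrm{hom}}$, being a quotient of the Hausdorff group $K$ by a closed subgroup, is Hausdorff); hence $P_n(R)\cap B$ lies in a single $\pi(K_\lambda)$ --- here the archimedean hypothesis is used --- and the inclusion is a restriction of the identity of a finite-dimensional subspace containing both, so it is continuous and semi-algebraic. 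This proves the two structures agree.

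The one step requiring real care is the first paragraph: identifying $P_n(R)^{\mathrm{hom}}$ precisely as the subspace $K/H''$ of $P_n^\infty(R)^{\mathrm{hom}}=G/H'$ and verifying the closedness of $K$ and $H'$ in $G$, so that Lemma~\ref{lem:techlem} applies. Everything afterwards is transitivity of subspace topologies plus the standard fact that a compact subset of an archimedean colimit of closed inclusions of compact spaces lies in a single stage.
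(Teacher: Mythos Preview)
Your proof is correct and follows the same approach as the paper's: the paper simply displays the diagram
\[
\xymatrix{
\GL_{(-\infty,n]}(R)\times\GL_{[n+1,\infty)}(R)\ar[r]&\GL_{(-\infty,\infty)}(R)\ar[r]&P_n^\infty(R)\\
\GL_{[1,n]}(R)\times\GL_{[n+1,\infty)}(R)\ar[u]\ar[r]&\GL_{[1,\infty)}(R)\ar[r]\ar[u]&P_n(R)\ar[u]
}
\]
and says ``Now apply Lemma~\ref{lem:techlem}'', leaving the comparison of the induced structures and the passage from topologies to large semi-algebraic structures to the reader (the latter being asserted in the text preceding Definition~\ref{defi:algcomp}); you have spelled out both of these steps explicitly. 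One small point: in your final paragraph, the inclusion $P_n(R)\cap B\hookrightarrow\pi(K_\lambda)$ is not literally the restriction of an identity of a subspace, since $\pi(K_\lambda)$ carries Brumfiel's quotient semi-algebraic structure; what makes it continuous and semi-algebraic is that the orbit map $\pi(K_\lambda)\to M_\infty R$ is a continuous semi-algebraic bijection onto its image, and once the topologies agree it is a homeomorphism of compact Hausdorff spaces whose inverse has semi-algebraic graph, so you may transport the inclusion through it.
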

\begin{proof}
Consider the diagram
\[
\xymatrix{
\GL_{(-\infty,n]}(R)  \times \GL_{[n+1,\infty)}(R) \ar[r] & \GL_{(-\infty,\infty)}(R) \ar[r] & { P}^{\infty}_n(R) \\
\GL_{[1,n]}(R) \times \GL_{[n+1,\infty)}(R) \ar[u] \ar[r]  & \GL_{[1,\infty)}(R) \ar[r] \ar[u]& { P}_n(R). \ar[u]
}
\]
Now apply Lemma \ref{lem:techlem}.
\end{proof}

\subsection{Bounded sequences, algebraic compactness and $K_0$-triviality.}

Let $X$ be a large semi-algebraic set and let $\{X_\lambda\}$ be a defining structure. The space of \emph{bounded sequences} in $X$ is
\[
\ell^\infty(X)=\ell^\infty(\N,X)=\{z\colon\N\to X \mid \exists \lambda :  z(\N)\subset X_\lambda\}
\]
Note that with our definition, the objects $\ell^{\infty}(\R)$ and $\ell^{\infty}{\C}$ coincide with the well-known spaces of bounded sequences.

\begin{lemma}\label{lem:bounded}
Let $\F=\R$ or $\C$ and $V$ a countable dimensional $\F$-vectorspace equipped with the fine large semi-algebraic structure. Then the natural map
\[
\ell^\infty(\F)\otimes_\F V\to \ell^\infty(V)
\]
is an isomorphism.
\end{lemma}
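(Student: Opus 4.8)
\textbf{Proof proposal for Lemma \ref{lem:bounded}.}

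The plan is to build an explicit inverse to the natural map $\ell^\infty(\F)\otimes_\F V\to \ell^\infty(V)$, using the fact that bounded sequences in $V$, by definition of the fine large semi-algebraic structure, land in a compact semi-algebraic subset of some \emph{finite dimensional} subspace of $V$. First I would observe that the natural map is easily seen to be injective: if $\sum_{i=1}^k f_i\otimes v_i$ maps to the zero sequence, we may assume the $v_i$ are linearly independent in $V$, extend to a basis, and read off coordinates to conclude $f_i=0$ for all $i$. For surjectivity, let $z\colon\N\to V$ be a bounded sequence. By definition of $\ell^\infty$ and of the fine structure, there is a $\lambda$ with $z(\N)\subset V_\lambda$, where $V_\lambda$ is a compact semi-algebraic subset of a finite dimensional subspace $W\subset V$. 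Choose a basis $w_1,\dots,w_m$ of $W$. Writing $z(n)=\sum_{j=1}^m f_j(n)\,w_j$ defines scalar sequences $f_j\colon\N\to\F$; since $z(\N)$ is contained in the compact (hence bounded, in the usual sense) set $V_\lambda$ and the coordinate projections $W\to\F$ are linear, each $f_j$ is a bounded sequence, i.e.\ $f_j\in\ell^\infty(\F)$. Then $\sum_{j=1}^m f_j\otimes w_j\in\ell^\infty(\F)\otimes_\F V$ maps to $z$, proving surjectivity.

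The only point that needs a word of care — and the closest thing here to an obstacle — is making precise that ``bounded in $V$ in the large semi-algebraic sense'' implies ``bounded coordinatewise in the ordinary sense,'' so that the $f_j$ genuinely lie in $\ell^\infty(\F)$. This is immediate once one unwinds the definitions: a compact semi-algebraic subset of a finite dimensional real (or, via $\C\cong\R^2$, complex) vector space is a compact subset of a finite dimensional euclidean space in the usual topology, hence bounded, and any linear functional is continuous and therefore sends it to a bounded subset of $\F$. The same remark identifies $\ell^\infty(\R)$ and $\ell^\infty(\C)$ with the classical bounded-sequence spaces, as noted just before the statement, so there is no ambiguity in the target.

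Finally I would check that the two constructions are mutually inverse, which is formal: starting from $\sum_i f_i\otimes v_i$, apply the map into $\ell^\infty(V)$ and then decompose along a basis containing the (independent part of the) $v_i$ to recover the same element; and starting from $z$, the decomposition $\sum_j f_j\otimes w_j$ visibly maps back to $z$. Both verifications are bilinearity bookkeeping and present no difficulty. This establishes that the natural map is an isomorphism of $\F$-vector spaces (indeed of $\ell^\infty(\F)$-modules, if one wishes to record that), completing the proof.
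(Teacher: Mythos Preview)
Your proof is correct and follows essentially the same approach as the paper's: injectivity via linear independence of basis vectors, and surjectivity by observing that a bounded sequence lands in a compact subset of a finite-dimensional subspace $W$, then expanding in a basis of $W$ and noting that the coordinate sequences are bounded. The only cosmetic difference is that the paper extracts its basis of $W$ from among the values $z_{i_1},\dots,z_{i_p}$ of the sequence itself, whereas you pick an arbitrary basis of $W$; both choices work for the same reason (continuity of linear coordinate functionals on a compact set).
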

\begin{proof} Choose a basis $\{v_q\}$ of $V$. Any element of $\ell^\infty(\F)\otimes_\F V$ can be written uniquely as a finite sum $\sum_q \lambda_q\otimes v_q$; this gets mapped to the sequence $\{n \mapsto \sum_q\lambda_q(n) \cdot v_q\}$, which vanishes if and only if all the $\lambda_q$ are zero. This proves the injectivity statement. Let $z\in\ell^\infty(V)$; by definition, there
is a finite dimensional subspace $W\subset V$ and a bounded closed semi-algebraic subset $S\subset W$
such that $z(\N)\subset S$. We may assume that $S$ is a closed ball centered at zero, and that
$W$ is the smallest subspace containing $z(\N)$. Hence there exist $i_1<\dots<i_p\in\N$ such that
$B=\{z_{i_1},\dots,z_{i_p}\}$ is a basis of $W$. The map $W\to \R^p$, $w\mapsto [w]_B$ that sends a vector $w$ to the
$p$-tuple of its coordinates with respect to $B$ is linear and therefore bounded. In particular
there exists a $C>0$ such that $||[w]_B||_\infty<C$ for all $w\in S$. Thus we may write
$z=\sum_{j=1}^p \lambda_i z_{i_j}$ with $\lambda_i\in \ell^\infty(\F)$. This proves the surjectivity
assertion of the lemma.
\end{proof}

\begin{proposition}\label{prop:algcompeq}
Let $R$ be a unital, countable dimensional $\R$-algebra, equipped with the fine large semi-algebraic structure. Then the following are equivalent
\begin{itemize}
\item[(i)] $R$ has the algebraic compactness property.
\item[(ii)] For every $n$ the map
\begin{equation}\label{map:algcomp}
P^\infty_n(\ell^\infty(R))\to \ell^\infty( P^\infty_n(R))
\end{equation}
is surjective.
\item[(iii)] The map $K_0(\ell^\infty(R))\to \prod_{r\ge 1}K_0(R)$ is injective.
\end{itemize}
\end{proposition}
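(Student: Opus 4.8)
The plan is to establish the two equivalences (i)$\,\Leftrightarrow\,$(ii) (the geometric part) and (ii)$\,\Leftrightarrow\,$(iii) (the $K_0$-theoretic part). Throughout write $S=\ell^\infty(R)$ and $e_0=1_\infty\oplus 1_n\oplus 0_\infty$, so that by \eqref{formu:pinfty} the set $P^\infty_n(A)$ is the orbit of $e_0$ under the conjugation action of $\GL_\Z(A)$ on $M_2((M_\infty A)^+)$, with quotient map $\pi_A\colon\GL_\Z(A)\to P^\infty_n(A)$. The first step is to see that $\ell^\infty$ commutes with all the group constructions in play. A sequence in $\GL_\Z(R)$ is bounded exactly when it takes values in a single compact piece $\GL_T(R)^F$ of the defining structure (here $T\subset\Z$ is finite and $F=M_T(B)$ with $B$ compact semi-algebraic in a finite-dimensional subspace of $M_T(R)$), and such a piece consists of the $g$ with both $g$ and $g^{-1}$ having entries in $B$; so (compare Lemma \ref{lem:map} and Lemma \ref{lem:bounded}) one gets natural identifications $\ell^\infty(\GL_\Z(R))=\GL_\Z(S)$, and likewise $\ell^\infty(\GL_{(-\infty,n]}(R))=\GL_{(-\infty,n]}(S)$ and $\ell^\infty(\GL_{[n+1,\infty)}(R))=\GL_{[n+1,\infty)}(S)$. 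Under these identifications \eqref{map:algcomp} becomes the map induced by $\ell^\infty(\pi_R)$: it sends the class of $g\in\GL_\Z(S)$ to the sequence $(g_m e_0 g_m^{-1})_m$ of its termwise values.

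For (i)$\,\Leftrightarrow\,$(ii) I would read $\ell^\infty(P^\infty_n(R))$ with the \emph{induced} structure (with the homogeneous one \eqref{map:algcomp} would be surjective unconditionally), so that $\ell^\infty(P^\infty_n(R))$ is the set of sequences in $P^\infty_n(R)$ taking values in a single compact semi-algebraic subset of $M_2((M_\infty R)^+)$, while the image of \eqref{map:algcomp} is the set of sequences bounded for the \emph{homogeneous} structure. Here "$\subseteq$" is clear, and for "$\supseteq$" a sequence valued in one homogeneous piece $\pi_R(\GL_T(R)^F)$ lifts termwise, by the axiom of choice, to a sequence inside the \emph{compact} set $\GL_T(R)^F$, hence a bounded one, giving an element of $\GL_\Z(S)$. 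Thus \eqref{map:algcomp} is surjective for all $n$ iff, for all $n$, the homogeneous and induced structures on $P^\infty_n(R)$ have the same bounded sequences; and since both are archimedean this forces them to be equivalent. Indeed, writing them as exhausting chains $\{X_m\}_{m\in\N}$ and $\{Y_m\}_{m\in\N}$, if some $Y_{m_0}$ lay in no $X_m$ we could choose $y_m\in Y_{m_0}\setminus X_m$ and get a sequence bounded for the induced but not the homogeneous structure; and any resulting set containment $Y_{m_0}\subseteq X_m$ is continuous and semi-algebraic, because by Brumfiel's Theorem \ref{thm:brum} and compactness each homogeneous piece $X_m=\pi_R(\GL_T(R)^F)$ is semi-algebraically isomorphic to its image in $M_2((M_\infty R)^+)$, of which $Y_{m_0}$ is a semi-algebraic subset. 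Since the homogeneous structure is always finer than the induced one (universal property of the quotient in $\cV_\infty$, cf. Examples \ref{exas:pn}), equivalence for all $n$ is precisely (i).

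For (ii)$\,\Leftrightarrow\,$(iii), note that the $m$-th component of the map in (iii) is $K_0(\mathrm{ev}_m)$, where $\mathrm{ev}_m\colon S\to R$ is evaluation at $m$, and recall the standard description of $K_0$ by $\GL_\Z$-orbits of idempotents: for a unital ring $A$, an idempotent $1_\infty\oplus q$ with $q$ a finite idempotent over $A$ lies in $P^\infty_n(A)$ if and only if $[q]=n[A]$ in $K_0(A)$ (the summand $1_\infty$ and the infinitely many vanishing coordinates provide the room needed for the stable equivalences). Assuming (iii): a sequence $(p_m)\in\ell^\infty(P^\infty_n(R))$ has uniformly bounded entries concentrated in a fixed finite block, so it assembles to an idempotent $1_\infty\oplus q$ over $S$ with $\mathrm{ev}_m(q)=q_m$, where $p_m=1_\infty\oplus q_m$; from $[q_m]=n[R]$ for all $m$ and injectivity in (iii) we get $[q]=n[S]$, so $1_\infty\oplus q\in P^\infty_n(S)$ maps onto $(p_m)$, proving surjectivity. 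Assuming (ii): any element of $K_0(S)$ is $[q]-n[S]$ for a finite idempotent $q$ over $S$ and some $n$; if it lies in the kernel of (iii) then $[\mathrm{ev}_m(q)]=n[R]$ for all $m$, so $(1_\infty\oplus\mathrm{ev}_m(q))_m\in\ell^\infty(P^\infty_n(R))$, which by (ii) is the image of some $p\in P^\infty_n(S)$; injectivity of $S\hookrightarrow\prod_m R$ forces $p=1_\infty\oplus q$, hence $[q]=n[S]$ and the element is $0$.

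The hard part will be the bookkeeping with the two large semi-algebraic structures in (i)$\,\Leftrightarrow\,$(ii): one must track which structure is in force at each point, and the argument rests on two genuinely geometric facts — that preimages under $\pi_R$ of a sequence valued in one compact piece $\pi_R(\GL_T(R)^F)$ can be chosen inside the compact set $\GL_T(R)^F$, and that a set-theoretic containment of an induced piece in a homogeneous piece upgrades to a continuous semi-algebraic map via Brumfiel's theorem together with compactness. By contrast, (ii)$\,\Leftrightarrow\,$(iii) is routine once the $\GL_\Z$-orbit description of $K_0$ is granted.
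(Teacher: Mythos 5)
Your proof is correct and follows essentially the same route as the paper's: (i)$\Leftrightarrow$(ii) by choosing, for each homogeneous piece, a point of a fixed induced piece outside it to produce a bounded but unliftable sequence, and (ii)$\Leftrightarrow$(iii) via the description of $K_0$ by $\GL_\Z$-orbits of idempotents of the form $1_\infty\oplus q$. The only additions are points the paper leaves implicit --- the identification $\GL_\Z(\ell^\infty(R))=\ell^\infty(\GL_\Z(R))$ and the Brumfiel-plus-compactness argument upgrading set-theoretic containments of pieces to morphisms of large semi-algebraic sets --- and these are handled correctly.
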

\begin{proof} Choose countable indexed structures $\{G_n\}$ on $G=\GL_\Z(R)$, and $\{X_n\}$ on
$X=M_2((M_\infty R)^+)$. Let
$\pi:G\to G/H=P_n^\infty(R)$ be the projection. We know already (see \ref{exas:pn}) that the induced structure on $P^\infty_n(R)$ is coarser than the homogeneous one,  i.e. that each $(G/H)_r=\pi(G_r)$ is contained in some $X_m$. Assertion (i) is therefore equivalent to saying that each $P^\infty_n(R)\cap X_m$ is contained in some $\pi(G_r)$. Negating this we obtain a bounded sequence $e=(e_r)$ of idempotent matrices, i.e.\ $e \in \ell^{\infty}(P_n^{\infty}(R))$ with respect to the induced large semi-algebraic structure on $P_n^{\infty}(R)$,
each $e_r$ is equivalent to $1_{\infty} \oplus (1_n\oplus 0_\infty)$ in $M_2(M_{\infty}(R)^+)$, but there is no sequence $(g_r)$ of invertible matrices in $GL_2(M_{\infty}(R)^+)$ such that
$g_re_rg_r^{-1} = 1_{\infty} \oplus (1_n\oplus 0_\infty)$ and both $(g_r)$ and $(g_r^{-1})$ are bounded. In other words, $e$ is not in $P_n^\infty(\ell^\infty(R))$. We have shown that the negation of (i) is equivalent to that of (ii). Next
note that every element $x\in K_0(\ell^\infty(R))$ can be written as a difference $x=[e]-[1_\infty \oplus 0_\infty]$ with $e \in M_2(M_{\infty}(R)^+)$ idempotent and $e \equiv 1_{\infty} \oplus 0_{\infty}$ modulo the ideal $M_2M_{\infty}(R)$. The idempotent $e$ is determined up to conjugation by $GL_2(M_{\infty}(R)^+)$.
The element $x$ goes to zero in $\prod_{r\ge 1}K_0(R)$ if and only if each $e_r$ is conjugate to $1_{\infty} \oplus 0_\infty$. Hence condition (iii) is satisfied if (ii) is. The converse follows easily. Indeed, for any sequence $(e_r)$ as above, we see that the image of the classes $[e]-[1_\infty \oplus 0_\infty]$ and $[1_{\infty} \oplus (1_n \oplus 0_{\infty})] - [1_{\infty} \oplus 0_{\infty}]$ in $\prod_{p\ge 1}K_0(R)$ coincide. Hence, by injectivity of the comparison map, $e$ is conjugate to $1_{\infty} \oplus (1_n \oplus 0_{\infty})$ and we get a sequence of invertible elements $(g_r)$ in $GL_2(M_{\infty}(R)^+)$ such that $g_r$ conjugates $e_r$ to $1_{\infty} \oplus (1_n \oplus 0_{\infty})$ and the sequences $(g_r)$ and $(g_r^{-1})$ are bounded. This completes the proof.
\end{proof}

\begin{example}\label{exa:calgcomp}
Both $\R$ and $\C$ have the algebraic compactness property since the third condition is well-known to be satisfied. Indeed, $\ell^{\infty}(\R)$ and $\ell^{\infty}(\C)$ are (real) $C^*$-algebras, and one can easily compute that
$$ K_0(\ell^{\infty}(\C)) = \ell^{\infty}(\Z) \subset \prod_{n \geq 1} \Z  = \prod_{n \geq 1} K_0(\C).$$
The same computation applies to $\R$ in place of $\C$.
\end{example}

\subsection{Algebraic approximation and bounded sequences.}\label{subsec:approx}

\begin{theorem}\label{thm:boundedapprox}
Let $F$ and $G$ be functors from commutative $\C$-algebras to sets.
Assume that both $F$ and $G$ preserve filtering colimits. Let $\tau:F\to G$ be a natural transformation.
Assume that $\tau(\cO(V))$ is injective (resp. surjective) for each smooth affine algebraic variety $V$ over $\C$.
Then $\tau(\ell^\infty(\C))$ is injective (resp. surjective).
\end{theorem}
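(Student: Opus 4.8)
The plan is to prove this by \emph{algebraic approximation}: write $\ell^\infty(\C)$ as a filtering colimit of coordinate rings of affine varieties indexed by precompact maps from $\N$, as in \eqref{eq:filtering}, and show that this colimit may be restricted to \emph{smooth} affine varieties, after which the hypotheses on $\tau$ give the conclusion directly. Concretely, $\ell^\infty(\C)\subset\prod_\N\C$ is reduced, so every finitely generated $\C$-subalgebra $A\subset\ell^\infty(\C)$ is $\cO(Y_A)$ for an affine variety $Y_A$, and $\ell^\infty(\C)=\colim_A\cO(Y_A)$ is a filtering colimit over these subalgebras. A choice of generators $f_1,\dots,f_m$ of $A$ realizes $Y_A$ as a closed subvariety of $\C^m$ and identifies the inclusion $\iota_A\colon A\hookrightarrow\ell^\infty(\C)$ with the algebra homomorphism attached to the map $g_A\colon\N\to (Y_A)_{an}$, $n\mapsto(f_1(n),\dots,f_m(n))$, whose image is precompact since the $f_i$ are bounded.

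\textbf{The desingularization lemma.} The key step is to prove: for every affine variety $Y$ over $\C$ and every map $g\colon\N\to Y_{an}$ with precompact image, there exist a smooth affine variety $V$, a morphism $p\colon V\to Y$, and a map $g'\colon\N\to V_{an}$ with precompact image such that $p_{an}\circ g'=g$. First, by Hironaka's theorem \cite{hironaka} there is a proper surjective morphism $\pi\colon\widetilde Y\to Y$ with $\widetilde Y$ smooth and quasi-projective; since $\pi$ is proper, $\pi_{an}$ is proper for the classical topology, so if $K=\overline{g(\N)}$ then $\pi_{an}^{-1}(K)$ is compact and any pointwise choice of lifts $g_1(n)\in\pi_{an}^{-1}(g(n))$ (possible by surjectivity of $\pi$) is a precompact lift $g_1\colon\N\to\widetilde Y_{an}$ of $g$; note that continuity of $g_1$ is not needed, which is why Theorem \ref{thm:main} does not enter here. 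Next, by Jouanolou's device \cite{jou} applied to the quasi-projective $\widetilde Y$, there is an affine-space bundle (torsor under a vector bundle) $q\colon W\to\widetilde Y$ with $W$ affine; as $\widetilde Y$ and $q$ are smooth, $W$ is a smooth affine variety. Over the paracompact space $\widetilde Y_{an}$ the bundle $q_{an}$ admits a continuous section $s$ --- the sheaf of continuous sections of the underlying vector bundle is soft, or one patches local sections over a compact neighbourhood of $\pi_{an}^{-1}(K)$ by a partition of unity, using that the fibres are affine spaces. Then $g':=s\circ g_1$ has precompact image and $q_{an}\circ g'=g_1$, so $V:=W$ and $p:=\pi\circ q$ work.

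\textbf{Conclusion.} By the lemma, each $\iota_A$ factors as $\cO(Y_A)\xrightarrow{p^*}\cO(V)\xrightarrow{\psi}\ell^\infty(\C)$ with $V$ smooth affine. If $\tau(\cO(V))$ is surjective for all smooth affine $V$ and $y\in G(\ell^\infty(\C))$, then, $G$ commuting with filtering colimits, $y=G(\iota_A)(y_A)$ for some $y_A\in G(\cO(Y_A))$; choosing $x_V\in F(\cO(V))$ with $\tau(\cO(V))(x_V)=G(p^*)(y_A)$ gives, by naturality, $\tau(\ell^\infty(\C))(F(\psi)(x_V))=G(\psi)G(p^*)(y_A)=G(\iota_A)(y_A)=y$. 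If instead $\tau(\cO(V))$ is injective for all smooth affine $V$ and $x_1,x_2\in F(\ell^\infty(\C))$ satisfy $\tau(x_1)=\tau(x_2)$, then since $F$ commutes with filtering colimits they are the images of some $x_1^A,x_2^A\in F(\cO(Y_A))$, and since $G$ does too we may enlarge $A$ so that $\tau(\cO(Y_A))(x_1^A)=\tau(\cO(Y_A))(x_2^A)$ already in $G(\cO(Y_A))$; applying $F(p^*)$, naturality and injectivity of $\tau(\cO(V))$ give $F(p^*)(x_1^A)=F(p^*)(x_2^A)$, hence $x_1=F(\psi)F(p^*)(x_1^A)=F(\psi)F(p^*)(x_2^A)=x_2$.

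\textbf{Main obstacle.} The entire difficulty is concentrated in the desingularization lemma, and within it in the two interactions between the complex topology and the algebraic geometry: that properness of $\pi$ makes every lift through $\pi_{an}$ of a precompact sequence automatically precompact (so the pointwise lifts need no control), and that the Jouanolou bundle --- unavoidable because Hironaka only produces a quasi-projective resolution --- has a continuous section over the relevant compact set, which rests on paracompactness of $\widetilde Y_{an}$. The filtered-colimit bookkeeping in the last step, especially the enlargement of $A$ in the injective case so that the two $\tau$-images agree before desingularizing, is routine.
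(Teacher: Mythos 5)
Your proposal is correct and follows essentially the same route as the paper: algebraic approximation of $\ell^\infty(\C)$ by coordinate rings of affine varieties via precompact maps from $\N$, then Hironaka's resolution plus properness to lift the (automatically continuous, since $\N$ is discrete) sequence, and Jouanolou's torsor with a continuous section to land in a smooth affine variety. Your observations that the lift through $\pi_{an}$ needs no continuity and that the section of the Jouanolou bundle exists by softness/paracompactness are valid minor variants of the paper's use of the CW structure of $\tilde V_{an}$, and your colimit bookkeeping at the end is the standard argument the paper leaves implicit.
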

\begin{proof}
Let $\cF \subset \ell^{\infty} \C$ be a finite subset. Put $A_\cF = \C\langle  \cF \rangle \subset
\ell^{\infty}(\C)$ for the unital subalgebra generated by $\cF$. Because $A_\cF$ is reduced, it corresponds to an affine algebraic variety $V_\cF$ and the inclusion $A_\cF \subset \ell^{\infty} \C$ is dual to a map with pre-compact image $\iota_\cF \colon \N \to (V_\cF)_{an}$ to the analytic variety associated to $V_\cF$. Thus we may
write $\ell^\infty(\C)$ as the filtering colimit
\begin{equation}\label{eq:bapro}
\ell^\infty(\C)=\colim_{\N\to V_{an}}\cO(V)
\end{equation}
Here the colimit is taken over all maps $\iota:\N\to V_{an}$ whose codomain is the associated analytic variety of the closed points of some affine algebraic variety $V$, and which have precompact image in the euclidean topology. We claim that
every such map factors through a map $V'_{an}\to V_{an}$ with $V'$ smooth and affine. Note that the claim
implies that we may write \eqref{eq:bapro} as a colimit of smooth algebras; the theorem is immediate from this.
Recall Hironaka's desingularization (see \cite{hironaka})
provides a proper surjective homomorphism of algebraic varieties $\pi:\tilde V \to V$
from a smooth quasi-projective variety. Thus the induced map $\pi_{an}:\tilde{V}_{an}\to V_{an}$ between
the associated analytic varieties is proper and surjective for the usual euclidean topologies.
It follows from this that we can lift $\iota$ along $\pi_{an}$. Next, Jouanoulou's
device (see \cite{jou}) provides a smooth affine vector bundle torsor $\sigma \colon V' \to \tilde V$; the associated
map $\sigma_{an}$ is also a bundle torsor, and in particular a fibration and weak equivalence.
Because $\tilde{V}_{an}$ is a $CW$-complex, $\sigma_{an}$ admits a continuous section.
Thus $\iota_F$ finally factors through the smooth affine variety $V'_\cF$.
\end{proof}

\begin{remark}\label{rem:surreal}
The proof above does not work in the real case, since a desingularization $\tilde{V}\to V$ of real algebraic
varieties need not induce a surjective map between the corresponding real analytic (or semi-algebraic) varieties. For example,
consider $R=\R[x,y]/\langle x^2+y^2-x^3\rangle$. The homomorphism $f:R\to \R[t]$, $f(x,y)\mapsto f(t^2+1,t(t^2+1))$, is
injective and $\R[t]$ is integral over $R$. Thus the induced scheme homomorphism $f_{\#}:\mathbb{A}^1_\R=\Spec\, \R[t]\to V=\Spec \,\R[x,y]/\langle x^2+y^2-x^3\rangle$
is a desingularization; it is finite (whence proper) and surjective, and an isomorphism outside of the point zero, represented by the maximal
ideal $\frakm=\langle x,y\rangle\in V$. But note that the pre-image of $\frakm$ consists just of the maximal ideal $\langle t^2+1\rangle$, which has residue field $\C$; this means that the pre-image of zero has no real points. Therefore the restriction of $f_\#$ to real points is not surjective.
\end{remark}
\subsection{The algebraic compactness theorem.}

\begin{theorem}\label{thm:algcomp}
Let $R$ be a countable dimensional unital $\C$-algebra. Assume that the map
$K_0(\cO(V))\to K_0(\cO(V)\otimes R)$ is an isomorphism for every affine smooth algebraic variety $V$ over $\C$.
Then $R$ has the algebraic compactness property.
\end{theorem}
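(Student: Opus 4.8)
The plan is to reduce to the already known case $R=\C$ by algebraic approximation. By Proposition \ref{prop:algcompeq} it suffices to prove that the comparison map $K_0(\ell^\infty(R))\to\prod_{r\ge 1}K_0(R)$, induced by the coordinate evaluations $\ell^\infty(R)\to R$, is injective. To this end consider the functors $F,G\colon\Co/\C\to\set$ given by $F(A)=K_0(A)$ and $G(A)=K_0(A\otimes_\C R)$; both preserve filtering colimits, since $K_0$ commutes with filtering colimits of rings and $-\otimes_\C R$ commutes with all colimits. The base-change maps $A=A\otimes_\C\C\to A\otimes_\C R$ assemble into a natural transformation $\tau\colon F\to G$, and by hypothesis $\tau(\cO(V))$ is an isomorphism --- hence both injective and surjective --- for every smooth affine variety $V$. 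Theorem \ref{thm:boundedapprox}, applied once for injectivity and once for surjectivity, then shows that $\tau(\ell^\infty(\C))\colon K_0(\ell^\infty(\C))\to K_0(\ell^\infty(\C)\otimes_\C R)$ is an isomorphism. By Lemma \ref{lem:bounded} the canonical map identifies $\ell^\infty(\C)\otimes_\C R$ with $\ell^\infty(R)$ as rings (multiplication is componentwise on each side), so $\tau(\ell^\infty(\C))$ is an isomorphism $K_0(\ell^\infty(\C))\xrightarrow{\ \sim\ }K_0(\ell^\infty(R))$.

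It then remains to compare the two product maps using naturality. For each $r$ the coordinate evaluation $\ell^\infty(\C)\to\C$, the map $\ell^\infty(\C)\to\ell^\infty(R)$, and the coordinate evaluation $\ell^\infty(R)\to R$ form a commuting triangle; applying $K_0$ shows that the composite $K_0(\ell^\infty(\C))\xrightarrow{\tau(\ell^\infty(\C))}K_0(\ell^\infty(R))\to\prod_r K_0(R)$ equals the composite of the evaluation map $K_0(\ell^\infty(\C))\to\prod_r K_0(\C)$ with $\prod_r\tau(\C)$. Now $\tau(\C)\colon K_0(\C)\to K_0(R)$ is an isomorphism (the hypothesis for $V$ a point), so $\prod_r\tau(\C)$ is an isomorphism, while $K_0(\ell^\infty(\C))\to\prod_r K_0(\C)=\prod_r\Z$ is injective by Example \ref{exa:calgcomp}. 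Hence this composite is injective; since $\tau(\ell^\infty(\C))$ is an isomorphism, it follows that $K_0(\ell^\infty(R))\to\prod_r K_0(R)$ is injective, and Proposition \ref{prop:algcompeq} then gives that $R$ has the algebraic compactness property.

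The real content of the argument is imported from Theorem \ref{thm:boundedapprox} (which rests on Hironaka's desingularization and Jouanoulou's device) and from the computation of $K_0(\ell^\infty(\C))$ in Example \ref{exa:calgcomp}; what remains to check here is routine, namely that $K_0$ and $K_0(-\otimes_\C R)$ preserve filtering colimits, that an isomorphism hypothesis lets one invoke both versions of Theorem \ref{thm:boundedapprox}, the ring identification $\ell^\infty(\C)\otimes_\C R\cong\ell^\infty(R)$, and the commutativity of the coordinate-evaluation diagrams. Thus I do not expect a genuine obstacle; the one point that calls for care is tracking that $\tau(\ell^\infty(\C))$ really intertwines the two product maps coordinatewise. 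If one preferred not to cite Theorem \ref{thm:boundedapprox} as a black box, one could instead argue directly from the identity $\ell^\infty(\C)=\colim_{\,\N\to V_{an}}\cO(V)$ over smooth affine $V$ that is established inside its proof, passing to filtering colimits in the isomorphisms $K_0(\cO(V))\xrightarrow{\ \sim\ }K_0(\cO(V)\otimes_\C R)$.
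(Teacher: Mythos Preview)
Your proof is correct and follows essentially the same route as the paper: both reduce via Proposition \ref{prop:algcompeq} to the injectivity of $K_0(\ell^\infty(R))\to\prod_r K_0(R)$, apply Theorem \ref{thm:boundedapprox} to the natural transformation $K_0(-)\to K_0(-\otimes_\C R)$ to identify the columns of the evident square (using Lemma \ref{lem:bounded} for the left column and the hypothesis at $V=\mathrm{pt}$ for the right), and then invoke Example \ref{exa:calgcomp} for the bottom row. Your write-up simply spells out more of the bookkeeping than the paper does.
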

\begin{proof} We have a commutative diagram
\[
\xymatrix{K_0(\ell^\infty(R))\ar[r]& \prod_{p\ge 1}K_0(R)\\
          K_0(\ell^\infty(\C))\ar[u]\ar[r]& \prod_{p\ge 1}K_0(\C)\ar[u]}
\]
The bottom row is a monomorphism by Example \ref{exa:calgcomp}. Our hypothesis on $R$ together with Theorem \ref{thm:boundedapprox} applied to the natural transformation $K_0(-)\to K_0(-\otimes_\C R)$ imply that both columns are isomorphisms. It follows that the top row is injective, which by Proposition \ref{prop:algcompeq} says that $R$ satisfies algebraic compactness.
\end{proof}

\section{Applications: projective modules, lower $K$-theory, and bundle theory.}\label{sec:pgubel}

\subsection{Parametrized Gubeladze's theorem and Rosenberg's conjecture.}\label{subsec:pgubel}

All monoids considered are commutative, cancellative and without nonzero torsion
elements. If $M$ is cacellative then it embeds into its total quotient group $G(M)$. A cancellative monoid $M$ is said to be seminormal
if for every element $x$ of the total quotient group $G(M)$ for which $2x$ and $3x$
are contained in the monoid $M$, it follows that $x$ is contained in the monoid $M$.

\vspace{0.2cm}

The following is a particular case of a theorem of Joseph Gubeladze, which in turn generalized the
celebrated theorem of Daniel Quillen \cite{quiqs} and Andrei Suslin \cite{susqs} which settled Serre's conjecture: every finitely generated
projective module over a polynomial ring over a field is free.

\begin{theorem}[see \cite{gubelan1},\cite{gubelan2}] \label{thm:gubel}
Let $D$ be a principal ideal domain, and $M$ a commutative, cancellative, torsion free,
seminormal monoid. Then every finitely generated projective module over the monoid
algebra $D[M]$ is free.
\end{theorem}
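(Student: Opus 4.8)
The plan is to follow Gubeladze's proof of Anderson's conjecture, in the streamlined form due to Swan; this is a substantial argument, so I will only describe its architecture. I would begin with two standard reductions. First, a finitely generated projective $D[M]$-module is the image of an idempotent matrix whose finitely many entries involve only finitely many elements of $M$, hence is extended from $D[M_0]$ for a finitely generated submonoid $M_0\subseteq M$; replacing $M_0$ by its seminormalization, which is again finitely generated and, since $M$ is seminormal and $G(M_0)\subseteq G(M)$, still contained in $M$, one may assume $M$ itself finitely generated and seminormal. Secondly, after splitting off the group of units $M^{\times}=M\cap(-M)$ (a free abelian group, whose contribution to $D[M]$ consists of Laurent variables absorbed by Swan's theorem on projective modules over Laurent polynomial rings) and observing that seminormality is inherited by $M/M^{\times}$ and by products with $\N_0^n$, the problem reduces to the case of a finitely generated, positive (i.e. $M\cap(-M)=0$), seminormal monoid $M$.

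For that case I would argue by induction. Such an $M$ spans a rational polyhedral cone $C=\R_{\ge 0}M$ in $\R^d$ (with $d=\mathrm{rk}\,G(M)$), and one inducts on $\dim C$ together with a secondary combinatorial invariant measuring the distance of $M$ from the free monoid $\N_0^d$. The base case $M\cong\N_0^d$ is the Quillen-Suslin theorem over a principal ideal domain, which also supplies the Horrocks-type local patching lemma and the ``stably free implies free'' input reused at every stage. For the inductive step one picks a facet $F$ of $C$, with face $M_F=M\cap F$, a finitely generated positive seminormal monoid of strictly smaller dimension, and uses the facet retraction $D[M]\twoheadrightarrow D[M_F]$ to build a Milnor square of $D[M]$-algebras relating $D[M]$, via an appropriate conductor ideal, to monoid rings (or conductor quotients thereof) of strictly simpler seminormal monoids. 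Milnor's patching theorem for projective modules, combined with a local-global principle (Quillen's patching theorem, applied to the polynomial pieces that occur), then assembles a projective $D[M]$-module from free modules over the simpler pieces; a final triviality argument for the patching datum, again reduced via the local-global principle to the already-settled free-monoid situation, shows the assembled module is free.

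The crux — and the point where seminormality is indispensable — is twofold: (a) the combinatorial structure theory of seminormal monoids, namely that a finitely generated monoid has a finitely generated seminormalization and that a finitely generated positive seminormal monoid which is not already free admits a facet-based conductor decomposition into strictly simpler seminormal pieces; and (b) checking that this decomposition genuinely produces a Milnor square with well-behaved conductor ideals, so that Milnor and Quillen patching apply — precisely the step that collapses for non-seminormal $M$. I expect (a)–(b) to be the entire difficulty; they are Gubeladze's main technical contributions. Passing from a field to a general principal ideal domain $D$ is by contrast inessential: Quillen-Suslin, Horrocks' lemma, and Quillen's and Milnor's patching theorems all hold with $D$-coefficients, and since $D$ — hence each monoid ring occurring after the reductions — has trivial Picard group and trivial reduced $K_0$ along the induction, rank-one and stably free projectives cause no trouble. (A purely $K$-theoretic route, via the vanishing of $\widetilde K_0$ and $\mathrm{Pic}$ of seminormal monoid rings, is also possible, but it rests on the same structure theory.)
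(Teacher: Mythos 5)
This statement is not proved in the paper at all: it is imported verbatim from Gubeladze's work (the bracketed citation in the theorem header points to \cite{gubelan1} and \cite{gubelan2}, and Swan's exposition \cite{swanpol} is invoked separately as Theorem \ref{thm:swan}). So there is no in-paper argument to measure your proposal against; the only question is whether what you wrote would stand on its own as a proof. It would not. Your two opening reductions (passing to a finitely generated seminormal submonoid, and splitting off the unit group so as to reduce to a positive monoid, absorbing the Laurent variables via Swan's theorem) are genuine and standard, and your identification of Quillen--Suslin over a PID as the base case and of Quillen/Milnor patching as the assembling tools is consistent with the known proof. But everything after that is a declared black box: the ``structure theory of seminormal monoids'' and the claim that a non-free positive seminormal monoid ``admits a facet-based conductor decomposition into strictly simpler seminormal pieces'' are precisely the content of Gubeladze's theorem, and you explicitly defer them. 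A roadmap that names the hard lemmas without proving them is not a proof, and you acknowledge as much.

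Beyond the deferral, the one place where your architecture is concretely doubtful is the inductive mechanism itself. The facet retraction $D[M]\twoheadrightarrow D[M_F]$ exists, but it is not true in general that its kernel sits in a conductor-type Milnor square relating $D[M]$ to monoid rings of \emph{seminormal} monoids of smaller dimension with a well-behaved conductor ideal; the ideal generated by the interior of the cone is typically not a conductor from a simpler seminormal monoid ring, and naive facet-by-facet descent does not terminate in the free case. Gubeladze's actual argument (in Swan's form) instead proceeds by an induction organized around a pyramidal/simplicial subdivision of the cone together with Quillen's local--global patching over the pieces, with conductor squares entering only to compare a monoid with its seminormalization and with carefully chosen submonoids; the combinatorial engine is quite different from ``pick a facet and form the conductor square.'' So even as an outline, step (b) of your plan would need to be replaced, not merely filled in.
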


We shall also need the following generalization of Gubeladze's theorem, due to Richard G. Swan. Recall that
if $R\to S$ is a homomorphism of unital rings, and $M$ is an $S$-module, then we say that $M$ is \emph{extended}
from $R$ if there exists an $R$-module $N$ such that $M\cong S\otimes_RN$ as $S$-modules.

\begin{theorem}[see \cite{swanpol}] \label{thm:swan}
Let $R=\cO(V)$ be the coordinate ring of a smooth affine algebraic variety over a field, and let $d=\dim V$.
Also let $M$ be a torsion-free, seminormal, cancellative monoid. Then all finitely generated projective
$R[M]$-modules of rank $n>d$ are extended from $R$.
\end{theorem}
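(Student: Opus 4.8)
The plan is to bootstrap from Gubeladze's theorem over a principal ideal domain (Theorem~\ref{thm:gubel}) to the parametrized statement, using a local--global argument together with Gubeladze's descent on the combinatorial structure of the monoid, and leaning on the classical stability theorems of Serre and Bass to carry the rank hypothesis $n>d$ through the construction.

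\emph{Reduction to a finitely generated monoid.} A finitely generated projective $R[M]$-module is finitely presented, hence extended from $R[M']$ for some finitely generated submonoid $M'\subseteq M$; since a seminormal cancellative torsion-free monoid is the filtered union of its finitely generated seminormal submonoids, and since being extended from $R$ is preserved under the base change $R[M']\to R[M]$, we may assume $M$ is finitely generated (``affine'') and seminormal.

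\emph{Reduction to a regular local base.} By the monoid version of Quillen patching, a finitely generated projective $R[M]$-module is extended from $R$ as soon as its localization at each maximal ideal $\mathfrak m$ of $R$ is extended from $R_\mathfrak m$. Each $R_\mathfrak m$ is a regular local ring of Krull dimension at most $d$ containing a field, so every finitely generated projective $R_\mathfrak m$-module is free, and moreover $n>d\ge\dim R_\mathfrak m$, so the hypothesis on the rank passes down. Thus it suffices to show: for $R$ a regular local ring containing a field, every finitely generated projective $R[M]$-module of rank $n>\dim R$ is free.

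\emph{Descent on the structure of $M$.} Now one induces on a combinatorial complexity of the seminormal affine monoid $M$ (measured by the rank of $M$ and the shape of the rational cone it spans). In the base case $M$ is free, $M\cong\Z^a\times\N_0^b$, so that $R[M]=R[t_1^{\pm1},\dots,t_a^{\pm1},s_1,\dots,s_b]$; Lindel's theorem (the Bass--Quillen conjecture for regular rings essentially of finite type over a field) shows that projective modules over the polynomial variables are extended from $R$, while the Laurent variables are handled by the Bass--Heller--Swan decomposition, using the vanishing of the nil- and negative $K$-theory of the regular ring $R$ together with Bass cancellation in rank $n>\dim R$; hence such a module is extended from $R$, i.e.\ free. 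When $M$ is not free, Gubeladze's structure theory produces conductor (Milnor) squares relating $R[M]$ to $R[M']$ for seminormal monoids $M'$ of strictly smaller complexity and to localizations of $R[M]$ along the faces of $M$; applying the inductive hypothesis to the lower-complexity pieces and patching by Mayer--Vietoris, while using Serre's splitting theorem and Bass's cancellation theorem over $R$ to keep all ranks that occur above $\dim R$, one concludes that the original module is extended from $R$.

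\emph{The main obstacle.} The heart of the matter is the non-free step of the descent: this is exactly Gubeladze's ``pyramidal descent'', and the real work — the essential upgrade over the PID case — is to run it over an arbitrary regular base while controlling the inequality $n>d$ across each Milnor square. It is precisely because $n>\dim R$ that Serre's and Bass's theorems can be invoked to perform this bookkeeping, which is why the rank hypothesis cannot simply be dropped; the two reduction steps are routine once the monoid Quillen patching theorem is in hand.
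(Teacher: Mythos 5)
First, a point of reference: the paper does not prove this statement at all --- it is quoted from Swan's article \cite{swanpol} (``Gubeladze's proof of Anderson's conjecture'') and used as a black box, so there is no internal proof to compare yours against; any genuine proof has to reproduce the substance of the Gubeladze--Swan argument. Your outline does follow the shape of that argument (reduction to an affine seminormal monoid, Quillen patching to a regular local base, the free case $M\cong\Z^a\times\N_0^b$ via Lindel and Bass--Heller--Swan, then descent on the combinatorial structure of $M$). But as written it is a roadmap rather than a proof: the decisive step --- what you yourself call ``the main obstacle'' --- is the non-free descent over a non-field base, and you do not carry it out. Saying that it ``is exactly Gubeladze's pyramidal descent'' and that the Milnor-square patching ``can be invoked'' defers the entire content of the theorem to the result being proved.

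Second, the rank bookkeeping as you describe it does not close. Bass's cancellation theorem cancels a free summand from a projective module whose rank exceeds the Krull dimension of the ring \emph{over which the module is defined}; here the modules live over $R[M]$ (or $R_{\mathfrak m}[M]$), whose dimension is $d+\operatorname{rank}(M)$, not $d$. So the hypothesis $n>d$ does not license classical Bass cancellation or Serre splitting over $R[M]$, and in the base case ``stably extended by Bass--Heller--Swan plus cancellation in rank $n>\dim R$'' is not a valid deduction. To make $n>d$ suffice one needs the stronger cancellation theorems for polynomial and monoid extensions (Suslin's cancellation theorem for $R[t_1,\dots,t_m]$ and its monoid-ring analogues), whose stable range is controlled by $\dim R$ rather than $\dim R[M]$; this is precisely where the hypotheses on $R$ enter and it is not routine. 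Likewise the ``monoid version of Quillen patching'' is standard for positive (unit-free) monoids but needs justification when $M$ has nontrivial units, e.g.\ by splitting off $U(M)\cong\Z^a$. None of these gaps is unfixable --- they are all closed in \cite{swanpol} --- but your text does not close them.
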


In the next theorem and elsewhere below, we shall consider only the complex case; thus in what follows, $C(X)$ shall always means $\map(X,\C)$.

\begin{theorem}\label{thm:pswan}
Let $X$ be a contractible compact space, and $M$ an abelian, countable, torsion-free, seminormal,
cancellative monoid. Then
every finitely generated projective module over $C(X)[M]$ is free.
\end{theorem}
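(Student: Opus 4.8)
The plan is to combine the algebraic compactness theorem with the compact fibration theorem and the classical fact that vector bundles over a contractible compact space are trivial, following the strategy outlined in the introduction around diagram \eqref{diag:introlift}. Let $R=\C[M]$. First I would observe that a finitely generated projective module over $C(X)[M]=\map(X,R)$ of constant rank $n$ is classified by a continuous map $e\colon X\to P_n(R)$, where $P_n(R)\subset M_\infty R$ carries the large semi-algebraic structure induced from the fine structure on $M_\infty R$ and its associated compactly generated topology; continuity of $e$ follows from Lemma \ref{lem:map} (applied to $A=M_\infty R$), since an idempotent-valued function on $X$ takes values in $M_{S'}(B)$ for some finite $S'$ and some compact semi-algebraic $B$. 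Here I need Gubeladze's Theorem \ref{thm:gubel} (with $D=\C$) to know that \emph{every} rank-$n$ idempotent matrix over $R$ is conjugate to $1_n\oplus 0_\infty$, so that the target really is $P_n(R)$ and not some larger set; a projective module of non-constant rank splits as a direct sum over the clopen pieces of $X$, so it suffices to treat the constant-rank case.

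Next I would set up the lifting problem. By Examples \ref{exas:pn} we have the presentation $P_n(R)=\GL(R)_{[1,\infty)}/(\GL_{[1,n]}(R)\times\GL_{[n+1,\infty)}(R))$ as a quotient of a compatible inclusion of large semi-algebraic groups, giving the \emph{homogeneous} large semi-algebraic structure on $P_n(R)$, and hence by Theorem \ref{thm:fib} the projection $\pi\colon\GL(R)_{[1,\infty)}\to P_n(R)$ (for the homogeneous topology) is a compact fibration. The key point is that $R=\C[M]$ satisfies the hypothesis of the algebraic compactness Theorem \ref{thm:algcomp}: by Swan's Theorem \ref{thm:swan}, for a smooth affine variety $V$ of dimension $d$, every finitely generated projective $\cO(V)[M]$-module of rank $>d$ is extended from $\cO(V)$, and combined with Gubeladze applied to $\cO(V)$ being a PID only in dimension $\le 1$ — actually the cleaner route is that extendedness plus stability gives $K_0(\cO(V))\xrightarrow{\sim}K_0(\cO(V)\otimes_\C R)=K_0(\cO(V)[M])$; this is precisely what Theorem \ref{thm:swan} yields on $K_0$. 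Therefore $R$ has the algebraic compactness property, so by Proposition \ref{prop:pn} the homogeneous and induced large semi-algebraic structures on $P_n(R)$ agree, and consequently the map $\iota$ in \eqref{diag:introlift} is a homeomorphism and $e\colon X\to P_n(R)$ is continuous for the \emph{homogeneous} topology too.

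Now the proof finishes by a lifting argument: I want to lift $e\colon X\to P_n(R)$ through the compact fibration $\pi$ to a map $\hat e\colon X\to\GL(R)_{[1,\infty)}$, for then $\hat e$ trivializes the module, exhibiting it as free of rank $n$. Since $X$ is contractible and compact, and $\pi$ is a compact fibration — hence a Serre, indeed Hurewicz, fibration over the compact set $e(X)$ — a nullhomotopy of $e$ combined with the homotopy lifting property produces the desired lift, exactly as in the classical proof that bundles over a contractible compact space are trivial (one lifts the constant map, which lifts trivially, along the homotopy $X\times[0,1]\to e(X)$). Concretely one restricts $\pi$ to $\pi^{-1}(K)\to K$ where $K\subset P_n(R)$ is a compact subset containing the image of the contracting homotopy, uses the $HLP$ with respect to $Z=X$, and reads off $\hat e$ at time $1$.

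The step I expect to be the main obstacle, and the one deserving the most care, is verifying that $R=\C[M]$ satisfies the hypothesis \eqref{map:introalgcomp} of Theorem \ref{thm:algcomp}, i.e. that $K_0(\cO(V))\to K_0(\cO(V)\otimes_\C\C[M])$ is an isomorphism for all smooth affine $V$; this is where Swan's Theorem \ref{thm:swan} must be deployed correctly — surjectivity follows since any class is represented by a rank $>\dim V$ bundle which is extended, and injectivity follows because two extended modules that become isomorphic after adding a free module are already isomorphic after adding a free module over $\cO(V)$, by the uniqueness part of extendedness together with cancellation over the Noetherian ring $\cO(V)$. A secondary point needing attention is the reduction from arbitrary countable $M$: one must check that $\C[M]$ is countable-dimensional over $\R$ (it is, since $M$ is countable) so that the fine large semi-algebraic structure and all the machinery of Sections \ref{sec:fib}--\ref{sec:algcomp} apply, and that reducing a projective module to constant rank via the clopen decomposition of $X$ is harmless.
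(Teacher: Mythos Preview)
Your proposal is correct and follows essentially the same route as the paper: translate to idempotent matrices via Lemma \ref{lem:map}, use Gubeladze's Theorem \ref{thm:gubel} to land in $P_n(\C[M])$, invoke Swan's Theorem \ref{thm:swan} together with Theorem \ref{thm:algcomp} (and Proposition \ref{prop:pn}) to identify the homogeneous and induced topologies on $P_n(\C[M])$, and then lift along the compact fibration of Theorem \ref{thm:fib} using contractibility of $X$. Your elaboration of the lifting step via the homotopy lifting property over a compact set containing the image of the contracting homotopy makes explicit what the paper states in one line, and your verification of the $K_0$ hypothesis is fine (injectivity comes for free from the augmentation $\C[M]\to\C$ splitting the inclusion, so only surjectivity, which is Swan's theorem, needs work).
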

\begin{proof}
The assertion of the theorem is equivalent to the assertion that every idempotent matrix with coefficients
in $C(X)[M]$ is conjugate to a diagonal matrix with only zeroes and ones in the diagonal. By Lemma \ref{lem:map}, an idempotent matrix with coefficients in $C(X)[M]$
is the same as a continuous map from $X$ to the space $\Idem_\infty(\C[M])$ of all idempotent matrices in
$M_\infty(\C[M])$, equipped with the induced topology. Now observe that, since the trace map $M_\infty \C[M]\to \C[M]$ is continuous,
so is the rank map $\Idem_\infty(\C[M])\to \N_{0}$. Hence by Theorem \ref{thm:gubel}, the space $\Idem_\infty(\C[M])$ is the topological coproduct
$\Idem_\infty(\C[M])=\coprod_nP_n(\C[M])$, and thus any continuous map $e:X\to \Idem_\infty(\C[M])$
factors through a map $e:X\to P_n(\C[M])$. By Theorem \ref{thm:swan} and Theorem \ref{thm:algcomp},
the induced topology of $P_n(\C[M])=\GL(\C[M])/\GL_{[1,n]}(\C[M])\times \GL_{[n+1,\infty)}(\C[M])$ coincides with
the quotient topology. By Theorem \ref{thm:fib}, $e$ lifts to a continuous map $g:X\to \GL(\C[M])$. By
Lemma \ref{lem:map}, $g\in \GL(C(X)[M])$ and conjugates $e$ to $1_n\oplus 0_\infty$. This concludes the proof.
\end{proof}

\begin{theorem}\label{thm:k0htpy}
The functor $\comp\to \ab$, $X\mapsto K_0(C(X)[M])$
is homotopy invariant.
\end{theorem}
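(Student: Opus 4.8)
The plan is to deduce the statement from Proposition~\ref{prop:homcor} applied to the reduced functor
\[
\tilde F(X)\ :=\ \coker\bigl(K_0(\C[M])\longrightarrow K_0(C(X)[M])\bigr),
\]
where the map is induced by the unital inclusion of the constant functions $\C\hookrightarrow C(X)$. This transformation is natural in $X$, and for nonempty $X$ it is split injective, since evaluation at a point of $X$ splits $C(X)\to\C$. So it suffices to prove that $\tilde F$ is split-exact and vanishes on contractible spaces, and then to transfer homotopy invariance from $\tilde F$ back to $X\mapsto K_0(C(X)[M])$.

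For split-exactness, let \eqref{diag:pushout} be a push-out of compact Hausdorff spaces with $\iota_1$ split injective. Then $\iota_1$ is a closed embedding, $X_1,X_2$ are closed in $X$ with intersection $X_{12}$, and by the Tietze extension theorem the restriction maps are surjective, so the square formed by the $C(-)$'s is a Milnor square, split because a retraction $X_1\to X_{12}$ induces a section of $C(X_1)\to C(X_{12})$. Since $\C[M]$ is free, hence flat, over $\C$, tensoring the exact sequence $0\to C(X)\to C(X_1)\oplus C(X_2)\to C(X_{12})\to 0$ with $\C[M]$ preserves exactness, so the corresponding square of monoid algebras $C(-)[M]$ is again a split Milnor square. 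By Milnor patching there is a Mayer--Vietoris exact sequence whose relevant portion is
\[
K_1(C(X_1)[M])\to K_1(C(X_{12})[M])\xrightarrow{\ \partial\ } K_0(C(X)[M])\to K_0(C(X_1)[M])\oplus K_0(C(X_2)[M])\to K_0(C(X_{12})[M]),
\]
and the section makes the first arrow split surjective, so $\partial=0$ and $K_0(C(X)[M])\xrightarrow{\ \sim\ }K_0(C(X_1)[M])\times_{K_0(C(X_{12})[M])}K_0(C(X_2)[M])$. Thus $X\mapsto K_0(C(X)[M])$ is split-exact. The constant functor $X\mapsto K_0(\C[M])$ is trivially split-exact, and since a fibre product is a finite limit the short exact sequence $0\to K_0(\C[M])\to K_0(C(X)[M])\to\tilde F(X)\to 0$ of functors stays left exact after forming the fibre products over the square; a short diagram chase (trivial kernel because the first two comparison maps are isomorphisms, surjectivity by a lift-and-correct argument using the complementary constant summand) then shows that $\tilde F$ is split-exact.

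Next, if $X$ is contractible, Theorem~\ref{thm:pswan} gives that every finitely generated projective $C(X)[M]$-module is free, so $K_0(C(X)[M])$ is generated by $[C(X)[M]]$, the image of $[\C[M]]$; hence $K_0(\C[M])\to K_0(C(X)[M])$ is onto and $\tilde F(X)=0$. By Proposition~\ref{prop:homcor}, $\tilde F$ is homotopy invariant. To conclude, the map $K_0(C(X\times[0,1])[M])\to K_0(C(X)[M])$ induced by $x\mapsto(x,0)$ is split surjective (split by the projection $X\times[0,1]\to X$), so only injectivity is at issue; if $\alpha$ lies in its kernel, its image in $\tilde F(X\times[0,1])$ lies in $\ker(\tilde F(X\times[0,1])\to\tilde F(X))=0$, so $\alpha$ is the image of some $c\in K_0(\C[M])$, and $c$ maps to $0$ in $K_0(C(X)[M])$; for nonempty $X$ this last map is injective, so $c=0$ and $\alpha=0$ (the case $X=\varnothing$ being trivial). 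Hence $X\mapsto K_0(C(X)[M])$ is homotopy invariant.

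The main obstacle is not a computation but the bookkeeping forced by the fact that $K_0(C(X)[M])$ does \emph{not} vanish on contractible $X$, so Proposition~\ref{prop:homcor} cannot be applied to it directly: one must pass to $\tilde F$, check that split-exactness survives the quotient, and only then push homotopy invariance back up, while remembering that the splitting of $K_0(\C[M])\to K_0(C(X)[M])$ is pointwise but not natural. One can also avoid Milnor patching altogether: by Lemma~\ref{lem:map} the idempotent and invertible matrices over $C(X)[M]$ are exactly $\map(X,\Idem_\infty(\C[M]))$ and $\map(X,\GL(\C[M]))$, so $K_0(C(X)[M])$ is built functorially out of sets $\map(X,Z)$ with $Z$ a fixed space, each of which is a homotopy functor of $X$; whence so is $K_0(C(-)[M])$.
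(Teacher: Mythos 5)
Your proof is correct and follows the paper's own route: the paper's proof is literally ``immediate from Theorem \ref{thm:pswan} and Proposition \ref{prop:homcor}'', and what you have done is supply the two details that this one-liner leaves implicit, namely the passage to the reduced functor $\tilde F$ (needed because $K_0(C(X)[M])$ itself does not vanish on contractible spaces, so Proposition \ref{prop:homcor} cannot be applied to it directly) and the verification, via split Milnor squares and the Mayer--Vietoris sequence, that $K_0(C(-)[M])$ and hence $\tilde F$ are split-exact on $\comp$. Only your closing aside is suspect: $\map(X,Z)$ is not a homotopy functor of $X$ for fixed $Z$, and upgrading the identification of idempotent matrices with $\map(X,\Idem_\infty(\C[M]))$ to homotopy invariance of $K_0$ is essentially the content of Theorems \ref{thm:fib} and \ref{thm:algcomp}, so that remark is not a genuine shortcut around the main argument.
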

\begin{proof}
Immediate from Theorem \ref{thm:pswan} and Proposition \ref{prop:homcor}.
\end{proof}

\begin{theorem}[Rosenberg's conjecture] \label{thm:rosen1}
The functor $\comp\to \ab$, $X\mapsto K_{-n}(C(X))$ is homotopy invariant for $n>0$.
\end{theorem}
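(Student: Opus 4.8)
The plan is to verify that the contravariant functor $F\colon\comp\to\ab$, $F(X)=K_{-n}(C(X))$ (with $n>0$ fixed), satisfies the two hypotheses of Proposition \ref{prop:homcor}: it vanishes on contractible spaces, and it is split exact. Throughout I use that, by the iterated fundamental theorem of algebraic $K$-theory, $K_{-n}(R)$ is a canonical, hence functorial, direct summand of $K_0(R[\Z^n])$.

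First I would show that $F$ vanishes on contractible $X$. The monoid $\Z^n$ is countable, abelian, cancellative, torsion-free and seminormal, so Theorem \ref{thm:pswan} applies and shows that, for contractible $X$, every finitely generated projective $C(X)[\Z^n]$-module is free. Evaluation at a point gives a ring homomorphism $C(X)[\Z^n]\to\C[\Z^n]$, and $\C[\Z^n]$ is a domain, so $C(X)[\Z^n]$ has invariant basis number and therefore $K_0(C(X)[\Z^n])\cong\Z$; taking $M=\{0\}$ likewise gives $K_0(C(X))\cong\Z$. The fundamental theorem writes $K_0(C(X)[\Z^n])$ as the direct sum of $K_0(C(X))$, $K_{-n}(C(X))$ and further summands; since the ambient group and the summand $K_0(C(X))$ are both isomorphic to $\Z$, every other summand must vanish, so $K_{-n}(C(X))=0$.

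Next I would prove that $F$ is split exact. Let \eqref{diag:pushout} be a push-out square of compact Hausdorff spaces with, say, $\iota_1$ split injective; then $\iota_1$ is a closed embedding, and applying $C(-)$ produces a split Milnor square of commutative $C^*$-algebras whose surjection $C(X_1)\to C(X_{12})$ has kernel the ideal $I=\{f\in C(X_1):f|_{X_{12}}=0\}$, itself a commutative $C^*$-algebra. Tensoring this square over $\C$ with the flat $\C$-algebra $\C[\Z^n]$ yields another split Milnor square, now with kernel $I\otimes_\C\C[\Z^n]$. Since $I$ is a $C^*$-algebra it is $K$-excisive by Theorem \ref{thm:swcstar}, i.e.\ it satisfies the $\tor$-vanishing criterion of Theorem \ref{thm:swex}; this criterion is stable under $-\otimes_\C\C[\Z^n]$ by flatness, so $I\otimes_\C\C[\Z^n]$ is $K$-excisive as well. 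By Lemma \ref{lem:Exci} the $K$-theory spectrum sends the tensored Milnor square to a homotopy cartesian square, and, the square being split, the associated Mayer--Vietoris sequence collapses to a short exact sequence
\[
0\to K_0(C(X)[\Z^n])\to K_0(C(X_1)[\Z^n])\oplus K_0(C(X_2)[\Z^n])\to K_0(C(X_{12})[\Z^n])\to 0.
\]
Passing to the functorial direct summand $K_{-n}(C(-))$ of $K_0(C(-)[\Z^n])$ yields the same kind of exact sequence for $F$, which is exactly split exactness of $F$.

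With both inputs established, Proposition \ref{prop:homcor} applied with $\mathfrak{C}=\comp$ shows that $F$ is homotopy invariant, which proves the theorem. I expect the only genuinely non-formal point to be the passage of $K$-excisiveness from a commutative $C^*$-algebra $A$ to $A\otimes_\C\C[\Z^n]$; this is handled by the $\tor$-criterion of Theorem \ref{thm:swex} together with flatness of $\C[\Z^n]$ over $\C$, after which the argument reduces to the standard bookkeeping of a split Mayer--Vietoris sequence and one application of Proposition \ref{prop:homcor}.
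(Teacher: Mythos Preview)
Your proof is correct and follows essentially the same route as the paper (Theorem~\ref{thm:pswan} with $M=\Z^n$ for vanishing on contractible $X$, then Proposition~\ref{prop:homcor}); the paper simply packages the intermediate step as Theorem~\ref{thm:k0htpy} and passes to the natural summand $K_{-n}$ afterwards rather than before. Your verification of split exactness via Suslin--Wodzicki excision works but is more than needed: $K_0$ is split exact on all rings by the classical Bass exact sequence $K_1(B)\to K_1(C)\to K_0(A)\to K_0(B)\to K_0(C)$, and hence so is its natural retract $K_{-n}$.
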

\begin{proof} By \eqref{bhs}, $K_{-n}(C(X))$ is naturally a direct summand of $K_0(C(X)[\Z^n])$, whence
it is homotopy invariant by Theorem \ref{thm:k0htpy}.
\end{proof}

\begin{remark}
Let $X$ be a compact topological space, $S^1$ the circle and $j\ge 0$. By \eqref{bhs}, Theorem \ref{thm:rosen1} and
excision, we have
\begin{align*}
K_{-j}(C(X\times S^1))=& K_{-j}(C(X))\oplus K_{-j-1}(C(X))\\
                      =& K_{-j}(C(X)[t,t^{-1}])
\end{align*}
Thus the effect on negative $K$-theory of the cartesian product of the maximal ideal spectrum $X=\Max(C(X))$ with $S^1$ is the same as that of taking the product
of the prime ideal spectrum $\spec C(X)$ with the algebraic circle $\spec(\C[t,t^{-1}])$. More generally, for the $C^*$-algebra tensor product $\otimes_{\rm min}$ and any commutative $C^*$-algebra $A$, we have $K_{-j}(A\otimes_{\rm min}C(S^1))=K_{-j}(A\otimes_\C\C[t,t^{-1}])$ $(j> 0)$.
\end{remark}

\begin{remark} \label{credithom}
Theorem \ref{thm:rosen1} was stated by Jonathan Rosenberg in \cite[Thm.\ 2.4]{roskk}
and again in \cite[Thm. 2.3]{rosop} for the real case.
Later, in \cite{roshan}, Rosenberg acknowledges that the proof was faulty, but conjectures the statement to be true. Indeed, a mistake was pointed out by Mark E. Walker (see line 8 on page 799 in \cite{fw} or line 12 on page 26 in \cite{roshan}).
In their work on semi-topological $K$-theory, Eric Friedlander and Mark E. Walker prove \cite[Thm.\ 5.1]{fw} that the negative algebraic $K$-theory of the ring $C(\Delta^n)$ of complex-valued continuous functions on the simplex vanishes for all $n$. We show in Subsection \ref{subsec:second} how another proof of Rosenberg's conjecture
can be obtained using the Friedlander-Walker result.
\end{remark}

\begin{remark}
The proof of Theorem \ref{thm:rosen1} does not need the detour of the proof of our main results in the case $n=1$.
Indeed, the ring of germs of continuous functions at a point in $X$ is a Hensel local ring with residue field $\C$ and Vladimir  Drinfel'd proves that $K_{-1}$ vanishes for Hensel local rings with residue field $\C$, see \cite[Thm.\ 3.7]{MR2181808}. This solves the problem locally and reduces the remaining complications to bundle theory. (This was observed by the second author in discussions with Charles Weibel at Institut Henri Poincar\'e, Paris in 2004.) No direct approach like this is known for $K_{-2}$ or in lower dimensions.
\end{remark}

Already in \cite{roskk}, Rosenberg computed the values of negative algebraic $K$-theory on commutative unital $C^*$-algebras, assuming the homotopy invariance result.

\begin{corollary}[Rosenberg, \cite{roshan}]\label{coro:bu}
Let $X$ be a compact topological space. Let ${ \rm \bf bu}$ denote the connective $K$-theory spectrum. Then,
\begin{gather*}
K_{-i}(C(X)) = {\rm bu}^{i}(X) = [\Sigma^iX,{\rm \bf bu}], \ \ i\ge 0.
\end{gather*}
\end{corollary}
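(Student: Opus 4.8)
The plan is to identify $K_{-i}(C(X))$ ($i\ge 0$) with the groups of connective complex $K$-theory; this is in substance Rosenberg's computation from \cite{roskk}, carried out now that the homotopy invariance it presupposes is available (Theorem \ref{thm:rosen1}). The case $i=0$ is Swan's theorem together with Bott: the finitely generated projective $C(X)$-modules are the complex vector bundles on $X$, so $K_0(C(X))\cong K^0_{\mathrm{top}}(X)=[X,\Z\times BU]=\mathbf{bu}^0(X)$, naturally in $X$. For $i>0$ the first step is to record that the contravariant functors $F_i:=K_{-i}(C(-))\colon\comp\to\ab$ behave like the non-negative part of a generalized cohomology theory. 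Homotopy invariance of $F_i$ is Theorem \ref{thm:rosen1}. For a closed cover $X=A\cup B$ the square $C(X)\to C(A)\times_{C(A\cap B)}C(B)$ is a Milnor square whose legs are surjective (Tietze) with kernel isomorphic to the $C^*$-algebra $C_0(A\setminus B)$, which is $K$-excisive by Theorem \ref{thm:swcstar}; Lemma \ref{lem:Exci} then makes its image under the $K$-theory spectrum homotopy cartesian, giving a Mayer--Vietoris sequence whose connecting maps raise $i$ by one. Finally $K$-theory preserves finite products and $C(X\sqcup Y)=C(X)\times C(Y)$, so the $F_i$ are additive, and $F_i(\mathrm{pt})=K_{-i}(\C)$ is $\Z$ for $i=0$ and $0$ for $i>0$ (as $\C$ is regular), i.e. $\pi_{-i}(\mathbf{bu})$.

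The second step is the sphere computation. Write $S^n=D^n_+\cup_{S^{n-1}}D^n_-$ with the $D^n_\pm$ contractible; then the reduced functor $\widetilde F_i(X)=\ker(K_{-i}(C(X))\to K_{-i}(\C))$ vanishes on $D^n_\pm$ by homotopy invariance, so the Mayer--Vietoris sequence collapses to natural isomorphisms $\widetilde F_i(S^n)\cong\widetilde F_{i-1}(S^{n-1})$ for $i\ge 1$. Iterating, one lands on $\widetilde F_0(S^{n-i})=\widetilde K^0_{\mathrm{top}}(S^{n-i})$ when $i\le n$ and on $\widetilde F_{i-n}(S^0)=K_{-(i-n)}(\C)=0$ when $i>n$; in all cases $\widetilde F_i(S^n)\cong\pi_{n-i}(\mathbf{bu})=\widetilde{\mathbf{bu}}^i(S^n)$, and in particular $\widetilde F_i$ vanishes on spheres of dimension $<i$.

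The third step produces a natural isomorphism on compact polyhedra. The natural comparison of spectra $K^{\mathrm{alg}}(C(X))\to K^{\mathrm{top}}(C(X))$ gives natural maps $K_{-i}(C(X))\to K^{\mathrm{top}}_{-i}(C(X))=KU^{i}(X)$ that are compatible with the excision (Mayer--Vietoris) sequences on both sides. Since, by the second step, the source is ``$i$-connective'' while the cofiber of the connective cover $\mathbf{bu}\to\mathbf{KU}$ is coconnective, the composite $K_{-i}(C(-))\to KU^i(-)\to(\mathbf{KU}/\mathbf{bu})^i(-)$ must be zero, so the comparison map factors naturally through a map $\eta_i\colon K_{-i}(C(X))\to\mathbf{bu}^i(X)$; for $i=0$, $\eta_0$ is Swan's isomorphism. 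By the second step $\eta_i$ is an isomorphism on every sphere, and being compatible with the Mayer--Vietoris sequences it is then an isomorphism on every compact polyhedron, by induction on the number of cells and the five lemma.

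Finally one passes from compact polyhedra to arbitrary compact Hausdorff $X$ by continuity. Writing $X=\varprojlim_\alpha X_\alpha$ as a cofiltered limit of compact polyhedra, $\mathbf{bu}^i(-)$ is continuous, and $K_{-i}(C(-))$ should be continuous too, since algebraic $K$-theory commutes with filtering colimits of rings and, by the algebraic-approximation description $C(X)=\colim_{X\to Y_{an}}\cO(Y)$ of Section \ref{sec:algcomp}, $C(X)$ is such a colimit; then $\eta_i$ is an isomorphism on $X$. This last point is the real obstacle: $C(\varprojlim_\alpha X_\alpha)$ is a $C^*$-completion and not literally the colimit of the $C(X_\alpha)$, so the continuity of $X\mapsto K_{-i}(C(X))$ on $\comp$ must be obtained from the paper's approximation machinery --- alternatively, one identifies $X\mapsto K_{-i}(C(X))$ on $\comp$ with the right Kan extension of its restriction to $\pol$, just as in the proof of Theorem \ref{thm:draw} (the Calder--Siegel result together with an algebraic-approximation argument), which then propagates the isomorphism from $\pol$ to $\comp$.
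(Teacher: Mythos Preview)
The paper gives no proof of this corollary; it simply credits the computation to Rosenberg, noting that his argument was conditional on the homotopy invariance now furnished by Theorem~\ref{thm:rosen1}. Your proposal is a reasonable attempt at a reconstruction, and the Mayer--Vietoris and sphere computations are fine. But the construction of the natural comparison map $\eta_i\colon K_{-i}(C(X))\to\mathbf{bu}^i(X)$ has a real gap. You assert that because the composite $K_{-i}(C(-))\to KU^i(-)\to(\mathbf{KU}/\mathbf{bu})^i(-)$ vanishes, the first arrow factors \emph{naturally} through $\mathbf{bu}^i$. Even granting the vanishing, this does not follow: the map $\mathbf{bu}^i(X)\to KU^i(X)$ need not be injective, so a lift of an individual element is not canonical, and there is no mechanism here producing a natural lift. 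What is implicitly needed is that the family $\{K_{-i}(C(-))\}_{i\ge 0}$ on $\pol$, together with the Mayer--Vietoris boundaries, is represented by an honest (necessarily connective) spectrum mapping to $\mathbf{KU}$; only then does the universal property of the connective cover give the factorization through $\mathbf{bu}$. You have not supplied such a representability statement, and it is not automatic for a ``half'' cohomology theory defined only in non-negative degrees. A cleaner route avoids the issue entirely: use $K$-regularity (Theorem~\ref{thm:kreg}) and the fundamental theorem to realize $K_{-i}(C(X))$ as a natural summand of $K_0(C(X)[\Z^i])$, then compare directly with the corresponding natural summand of $\mathbf{bu}^0(X\times T^i)=K_0(C(X\times T^i))$ via the inclusion $C(X)[\Z^i]\hookrightarrow C(X\times T^i)$.

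Your final step, the passage from $\pol$ to $\comp$, is also incomplete as written, and you are right to flag it. The proof of Theorem~\ref{thm:draw} exhibits $E(C(X))$ only as a natural \emph{retract} of the right Kan extension $\colim_{f\in(X\downarrow\pol)}E(C(\cod(f)))$ (see diagram~\eqref{diag:pi'}), not as the whole of it; a natural isomorphism on $\pol$ therefore does not automatically propagate to $\comp$ by that argument. The route via $K_0(C(X\times T^i))$ again resolves this: $K_0$ of a commutative $C^*$-algebra agrees with topological $K^0$, and $X\mapsto K^0_{\mathrm{top}}(X\times T^i)$ is continuous on $\comp$, as is $\mathbf{bu}^0(X\times T^i)$, so the identification passes to the limit.
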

The fact that connective $K$-theory shows up in this context was further explored and clarified in the thesis of the second author \cite{thomthesis}, which was also partially built on the validity of Theorem \ref{thm:rosen1}.

\subsection{Application to bundle theory: local triviality.}\label{subsec:loctriv}

Let $R$ be a countable dimensional $\R$-algebra. Any finitely generated $R$-module $M$ is a countable dimensional
vectorspace, and thus it can be regarded as a compactly generated topological space.
We consider not necessarily locally trivial bundles of finitely generated free $R$-modules over compact spaces, such that each
fiber is equipped with the compactly generated topology just recalled. We call such
such a gadget a \emph{quasi-bundle of finitely generated free $R$-modules}

\begin{theorem}\label{thm:loctriv}
Let $X$ be a compact space, $M$ a countable, torsion-free, seminormal,
cancellative monoid.  Let $E\to X$ be a quasi-bundle of finitely generated free $\C[M]$-modules. Assume that there exist an $n\ge 1$,
another quasi-bundle $E'$, and a quasi-bundle isomorphism $E\oplus E'\cong X\times \C[M]^n$. Then $E$ is locally trivial.
\end{theorem}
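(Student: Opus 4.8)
The plan is to translate the statement into a local lifting problem for an idempotent, exactly as in the proof of Theorem \ref{thm:pswan}, and then solve that problem locally with the help of the Fibration Theorem \ref{thm:fib}. First, the direct sum decomposition $X\times\C[M]^n\cong E\oplus E'$ gives a continuous fibrewise projection onto $E$, i.e.\ a continuous map $e\colon X\to M_n(\C[M])$ (for the fine/compactly generated topology) with $e_x^2=e_x$ and $\mathrm{im}(e_x)=E_x$; equivalently, by Lemma \ref{lem:map}, an idempotent $e\in M_n(C(X)[M])$. Since local triviality is a local property, it suffices to show that every point of $X$ has a neighbourhood $U$ over which $e|_U$ is conjugate, by an invertible matrix over $C(U)[M]$, to a constant diagonal idempotent $1_j\oplus 0$. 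By Theorem \ref{thm:gubel} (the case $D=\C$) every finitely generated projective $\C[M]$-module is free, so each $E_x$ is free, of some rank $j(x)$ with $0\le j(x)\le n$ by invariant basis number. The trace map $M_n(\C[M])\to\C[M]$ is continuous and linear, and $\mathrm{tr}(e_x)=j(x)\cdot 1_{\C[M]}$ because $e_x$ is conjugate in $M_n(\C[M])$ to $1_{j(x)}\oplus 0_{n-j(x)}$; hence $x\mapsto j(x)$ is a continuous map into the finite discrete subset $\{0,1,\dots,n\}\cdot 1_{\C[M]}$, so it is locally constant. This decomposes $X$ into clopen pieces on each of which $j$ is constant, and we may therefore assume $j(x)\equiv j$, so that $e$ takes values in $P_j(\C[M])\subset M_\infty(\C[M])$.

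Next I bring in the algebraic compactness machinery. By Theorem \ref{thm:swan} applied to $R=\cO(V)$ for $V$ a smooth affine variety over $\C$, every finitely generated projective $\cO(V)[M]$-module of sufficiently large rank is extended from $\cO(V)$; combined with the splitting of $\cO(V)\to\cO(V)[M]$ by the augmentation $M\to 1$, this gives that $K_0(\cO(V))\to K_0(\cO(V)\otimes_\C\C[M])$ is an isomorphism for every smooth affine $V$. Since $\C[M]$ is countable dimensional, Theorem \ref{thm:algcomp} shows that $\C[M]$ has the algebraic compactness property, so by Proposition \ref{prop:pn} the induced and homogeneous large semi-algebraic structures on $P_j(\C[M])$ agree. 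In particular $e\colon X\to P_j(\C[M])$ is continuous for the homogeneous (quotient) structure, i.e.\ for the identification $P_j(\C[M])=\GL(\C[M])_{[1,\infty)}/(\GL_{[1,j]}(\C[M])\times\GL_{[j+1,\infty)}(\C[M]))$. By Theorem \ref{thm:fib} the projection $\pi$ of this quotient is a compact fibration; more precisely, as established inside the proof of that theorem, the restriction of $\pi$ over any compact subset of the base is a locally trivial bundle with fibre $H=\GL_{[1,j]}(\C[M])\times\GL_{[j+1,\infty)}(\C[M])$.

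Now $e(X)$ is compact, so the pullback $Q=\{(x,g)\in X\times\GL(\C[M])_{[1,\infty)}: \pi(g)=e(x)\}\to X$ is a pullback of such a locally trivial bundle, hence is itself a locally trivial bundle over $X$; being a bundle with nonempty fibre, it admits a section over some neighbourhood $U$ of any prescribed point of $X$. Such a section is a continuous lift $\hat e\colon U\to\GL(\C[M])_{[1,\infty)}$ of $e|_U$; since $U$ is compact, $\hat e$ factors through some $\GL_{[1,N]}(\C[M])$, and by Lemma \ref{lem:map} it is an invertible matrix $\hat e\in\GL_N(C(U)[M])$ satisfying $\hat e\,(1_j\oplus 0)\,\hat e^{-1}=e|_U$. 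This exhibits an isomorphism of quasi-bundles $E|_U\cong U\times\C[M]^j$ (given fibrewise by $v\mapsto\hat e_x^{-1}v$), and since local triviality over each clopen piece gives local triviality over $X$, we conclude that $E$ is locally trivial.

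The point that needs care is that a compact fibration need not a priori admit local sections; the argument relies on the stronger fact, contained in the proof of Theorem \ref{thm:fib}, that over compact subsets the quotient map of a large semi-algebraic group by a closed subgroup is not merely a fibration but a locally trivial bundle, together with the elementary facts that local triviality is inherited by pullbacks and that a bundle with nonempty fibre has local sections. Apart from this, the proof is bookkeeping: keeping track of the dictionary between quasi-bundles that are direct summands of trivial ones and continuous idempotent-valued maps, and of the bijections of Lemma \ref{lem:map} identifying matrices over $C(U)[M]$ with $U$-families of matrices over $\C[M]$.
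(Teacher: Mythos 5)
Your proof is correct and follows essentially the same route as the paper, which reduces to locally lifting the idempotent-valued map $e$ along $\GL(\C[M])\to P_j(\C[M])$ and cites Theorems \ref{thm:swan}, \ref{thm:algcomp} and \ref{thm:fib} for the existence of such lifts. You supply details the paper leaves implicit — the clopen decomposition by rank and the observation that local sections come from the local triviality established inside the proof of Theorem \ref{thm:fib} rather than from the bare statement that $\pi$ is a compact fibration — but the argument is the same.
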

\begin{proof}
Put $R=\C[M]$. The isomorphism $E\oplus E'\cong X\times R^n$ gives a continuous function $e:X\to P_n(R)$; $E$ is locally
trivial if $e$ is locally conjugate to an idempotent of the form $1_r\oplus 0_\infty$, i.e. if it can be lifted
locally along the projection $\GL(R)\to P_n(R)$ to a continuous map $X\to \GL(R)$. Our hypothesis on
$M$ together with Theorems \ref{thm:pswan}, \ref{thm:algcomp}, \ref{thm:swan}, and \ref{thm:fib} imply that such local liftings exist.
\end{proof}

\section{Homotopy invariance.}\label{sec:htpy}

\subsection{From compact polyhedra to compact spaces: a result of Calder-Siegel.}\label{subsec:calder}

Consider the category $\comp$ of compact Hausdorff topological spaces with continuous maps and its full subcategory $\pol \subset \comp$ formed by those spaces which are compact polyhedra. In this subsection we show that for a functor which commutes with filtering colimits and is split exact on $C^*$-algebras,
homotopy invariance on $\pol$ implies homotopy invariance on $\comp$. For this we shall need a particular case of a result of Allan Calder and Jerrold Siegel \cite{calsie1, calsie2} that we shall presently
recall. We point out that the Calder-Siegel results have been further generalized by Armin Frei in \cite{frei}.
For each object
$X \in \comp$ we consider the comma category $(X \downarrow \pol)$ whose objects are morphisms
$f\colon X \to \cod(f)$ where the codomain $\cod(f)$ is a compact polyhedron. Morphisms are commutative diagrams as usual.
Let  $G\colon \pol \to \ab$ be a (contravariant) functor to the category of abelian groups. Its right Kan extension $G^\pol\colon \comp \to {\rm Ab}$ is defined by
$$G^\pol(X) = \colim_{f \in (X \downarrow \pol)} G(\cod(f)), \quad \forall X \in \comp.$$
The result of Calder-Siegel (see Corollary 2.7 and Theorem 2.8 in \cite{calsie2}) says that homotopy invariance properties of $G$ give rise to homotopy invariance properties of $G^{\pol}$. More precisely:

\begin{theorem}[Calder-Siegel] \label{thm:cs}
If $G\colon \pol \to \ab$ is a (contravariant) homotopy invariant functor, then the functor $G^{\pol}\colon \comp \to \ab$ is homotopy invariant.
\end{theorem}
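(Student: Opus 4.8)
The plan is to reduce homotopy invariance of $G^\pol$ to the statement that for every $X\in\comp$ the projection $p_X\colon X\times[0,1]\to X$ induces an isomorphism $G^\pol(p_X)\colon G^\pol(X)\to G^\pol(X\times[0,1])$, and then to identify this map, through the colimit defining $G^\pol$, with a filtered colimit of isomorphisms. Two structural facts about the comma categories are used throughout: $(X\downarrow\pol)$ is cofiltered, so that $G^\pol(X)=\colim_{f\in(X\downarrow\pol)}G(\cod f)$ is a filtered colimit, and $X\cong\lim_{f\in(X\downarrow\pol)}\cod f$ in $\comp$. Granting the reduction it is formal: applying $G^\pol$ to $p_X\circ i_0=\mathrm{id}_X=p_X\circ i_1$, where $i_0,i_1\colon X\to X\times[0,1]$ are the two end inclusions, gives $G^\pol(i_0)=G^\pol(i_1)=G^\pol(p_X)^{-1}$; hence for any homotopy $H$ one gets $G^\pol(H\circ i_0)=G^\pol(H\circ i_1)$, i.e. $G^\pol$ sends homotopic maps to equal maps.

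The first step is the key cofinality input: the functor $\Phi\colon(X\downarrow\pol)\to(X\times[0,1]\downarrow\pol)$, $f\mapsto f\times\mathrm{id}_{[0,1]}$, is cofinal up to homotopy (on the opposite, filtered indexing categories). Indeed each $(\cod f)\times[0,1]$ is again a compact polyhedron and $X\times[0,1]=\lim_f\bigl((\cod f)\times[0,1]\bigr)$ is a cofiltered limit of compact polyhedra; since a compact polyhedron is an ANR, any continuous map $g\colon X\times[0,1]\to Q$ into a compact polyhedron $Q$ factors, up to homotopy, through some finite stage $(\cod f)\times[0,1]$, and cofilteredness makes these factorizations compatible. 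Because $G$ is homotopy invariant on $\pol$, this homotopy cofinality suffices to yield
\[
G^\pol\bigl(X\times[0,1]\bigr)=\colim_{f\in(X\downarrow\pol)}G\bigl((\cod f)\times[0,1]\bigr).
\]

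Under this identification, together with $G^\pol(X)=\colim_f G(\cod f)$, a diagram chase through the comma categories shows that $G^\pol(p_X)$ is the colimit of the stagewise maps $G(p_{\cod f})\colon G(\cod f)\to G\bigl((\cod f)\times[0,1]\bigr)$ induced by the projections $(\cod f)\times[0,1]\to\cod f$. Each such projection is a homotopy equivalence inside $\pol$, so $G(p_{\cod f})$ is an isomorphism by the homotopy invariance of $G$ on $\pol$; a filtered colimit of isomorphisms is an isomorphism, so $G^\pol(p_X)$ is an isomorphism and the reduction finishes the proof.

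The hard part is the cofinality of $\Phi$: the facts that $X$ is the cofiltered limit of the compact polyhedra it maps to, that $(X\downarrow\pol)$ is genuinely cofiltered, and that a map from such a limit into a compact ANR factors — at least up to homotopy, and compatibly — through a finite stage. This is exactly the technical content supplied by Calder and Siegel (Corollary 2.7 and Theorem 2.8 in \cite{calsie2}), and the homotopy invariance hypothesis on $G$ is what lets one work up to homotopy there; everything else is routine manipulation of filtered colimits and comma categories.
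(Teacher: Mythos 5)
The paper offers no proof of this statement at all: it is quoted verbatim from Calder--Siegel (Corollary 2.7 and Theorem 2.8 of \cite{calsie2}), so there is nothing internal to compare your argument against. Your overall architecture --- reduce to $G^{\pol}(p_X)$ being an isomorphism, identify $G^{\pol}(X\times[0,1])$ with $\colim_f G(\cod(f)\times[0,1])$ by a homotopy-cofinality argument, and then take a colimit of the isomorphisms $G(p_{\cod f})$ --- is the standard shape-theoretic route and is surely close to what Calder--Siegel do. The formal reduction in your first paragraph and the identification of $G^{\pol}(p_X)$ as a colimit of stagewise maps are fine, and the fact that $X\cong\lim_f\cod(f)$ for compact Hausdorff $X$ is correct.

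There is, however, a genuine flaw at exactly the point where the difficulty lives. The category $(X\downarrow\pol)$ is \emph{not} cofiltered in general: take $X=C$ a Cantor set in $[0,1]$, $f\colon C\hookrightarrow[0,1]$, and two maps $h_1=\mathrm{id}$, $h_2\colon[0,1]\to[0,1]$ that agree exactly on $C$; these are parallel arrows $f\to f$ in $(C\downarrow\pol)$, and any $e\colon R\to[0,1]$ equalizing them must have image inside $C$, hence finite image (a compact polyhedron has finitely many components), so $f$ cannot factor through $e$. This is the same phenomenon the paper points out in the Remark following Theorem \ref{thm:draw}. Since you invoke cofilteredness precisely to make the homotopy factorizations through finite stages "compatible" --- i.e.\ to show the comparison maps are well defined and mutually inverse --- your argument as written does not close. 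The repair is to observe that the \emph{homotopy} category of $(X\downarrow\pol)$ (same objects, homotopy classes of maps over $X$... or rather, a suitable expansion in the sense of shape theory) is cofiltered, and that $G(\cod(-))$ factors through it because $G$ is homotopy invariant on $\pol$; together with the continuity of $[-,Q]$ for compact ANR targets over limits of compacta, this is the actual content of Calder--Siegel. You correctly name this as the hard part and defer it to the citation, so your write-up is an honest sketch rather than a proof; but the cofilteredness assertion it leans on should be corrected, since in its stated form it is false.
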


We want to apply the theorem when $G$ is of the form $D\mapsto
E(C(D))$, the functor $E$ commutes with (algebraic) filtering
colimits, and is split exact on $C^*$-algebras. For this we have to
compare $E$ with the right Kan extension of $G$; we need some preliminaries.
Let $X\in\comp$, $D\subset\C$ the unit disk, and $\cF$ the set of all finite subsets
of $C(X,D)$. Since $X$ is compact, for each $f\in C(X)$ there is an
$n\in\N$ such that $(1/n)f\in C(X,D)$. Thus any finitely generated
subalgebra $A\subset C(X)$ is generated by a finite subset $F\subset
C(X,D)$. Let $\cF$ be the set of all finite subsets of $C(X,D)$;
since $C(X)$ is the colimit of its finitely generated subalgebras
$\C\langle F\rangle$, we have
\[
\colim_{F \in \cF} \C\langle F\rangle = C(X).
\]
For $F\in\cF$, write $Y_F\subset\C^F$ for the Zariski closure of the
image of the map $\alpha_F:X\to \C^F$, $x\mapsto (f(x))_{f\in F}$.
The image of $\alpha_F$ is contained in the compact semi-algebraic set
\[
P_F=D\cap Y_F
\]
In particular, $P_F$ is a compact polyhedron. Note that $\alpha_F$ induces an isomorphism between the ring
$\cO(Y_F)$ of regular polynomial functions and the subalgebra
$\C\langle F\rangle\subset C(X)$ generated by $F$. Hence the inclusion $P_F\subset Y_F$ induces a homomorphism
$\beta_F$ which makes the following diagram commute
\[
\xymatrix{\C\langle F\rangle\ar[dr]_{\beta_F}\ar[rr]&& C(X)\\
                &C(P_F)\ar[ur]&}
\]
Taking colimits we obtain

\begin{equation}\label{eq:fixem}
\xymatrix{C(X)\ar[dr]_{\beta}\ar@{=}[rr]&& C(X)\\
                &\colim_{F\in\cF}C(P_F)\ar[ur]^\pi&}
\end{equation}
Thus the map $\pi$ is a split surjection.

We can now draw the desired conclusion.

\begin{theorem}\label{thm:draw}
Let $E:\Co\to\ab$ be a functor. Assume that $F$ satisfies each of the following conditions.
\begin{enumerate}
\item $E$ commutes with filtered colimits.
\item $\pol\to\ab$, $D \mapsto E(C(D))$, is homotopy invariant.
\end{enumerate}
Then the functor
\[
\comp\to\ab,\quad X \mapsto E(C(X)),
\]
is homotopy invariant on the category of compact topological spaces.
\end{theorem}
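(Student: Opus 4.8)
The plan is to exhibit the functor $X\mapsto E(C(X))$ on $\comp$ as a natural retract of the right Kan extension $G^\pol$ of the (contravariant) functor $G\colon\pol\to\ab$, $D\mapsto E(C(D))$, and then to invoke the Calder--Siegel Theorem \ref{thm:cs}. By hypothesis (2) the functor $G$ is homotopy invariant, so by Theorem \ref{thm:cs} the functor $G^\pol\colon\comp\to\ab$ is homotopy invariant as well. Now a natural retract of a homotopy invariant functor is homotopy invariant: if $s\colon F\Rightarrow H$ and $r\colon H\Rightarrow F$ are natural transformations with $r\circ s=\mathrm{id}_F$, then for every morphism $f$ naturality gives $F(f)=r\circ H(f)\circ s$, so $f\simeq g$ implies $H(f)=H(g)$ and therefore $F(f)=F(g)$. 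Hence it suffices to produce the retraction.

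The retraction $q\colon G^\pol\Rightarrow E(C(-))$ is formal: each object of the comma category $(X\downarrow\pol)$ is a map $f\colon X\to D$ with $D$ a compact polyhedron, the induced homomorphism $f^*\colon C(D)\to C(X)$ gives a map $E(C(D))\to E(C(X))$, and these are compatible with the morphisms of $(X\downarrow\pol)$, so they assemble into a natural transformation $q_X\colon G^\pol(X)=\colim_{(X\downarrow\pol)}E(C(\cod(f)))\to E(C(X))$. For the section $s$ I would use the constructions assembled just before the statement. Write $\cF$ for the directed set of finite subsets of $C(X,D)$; the maps $\alpha_F\colon X\to P_F$ are objects of $(X\downarrow\pol)$ and, for $F\subset F'$, the projection $P_{F'}\to P_F$ is a morphism there, so $F\mapsto\alpha_F$ is a functor $\cF\to(X\downarrow\pol)$ and therefore induces a canonical map $\colim_F E(C(P_F))\to G^\pol(X)$. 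Since $E$ commutes with filtering colimits, $\colim_F E(C(P_F))=E(\colim_F C(P_F))$, and by \eqref{eq:fixem} the split surjection $\pi\colon\colim_F C(P_F)\to C(X)$ has a section $\beta$. I would then define $s_X$ as the composite
\[
E(C(X))\xrightarrow{E(\beta)}E(\colim_F C(P_F))=\colim_F E(C(P_F))\longrightarrow G^\pol(X).
\]
Since the structure map $C(P_F)\to C(X)$ occurring in \eqref{eq:fixem} is exactly $\alpha_F^*$, which is also the map defining $q_X$ on the component indexed by $\alpha_F$, one obtains $q_X\circ s_X=E(\pi)\circ E(\beta)=E(\pi\circ\beta)=E(\mathrm{id}_{C(X)})=\mathrm{id}$. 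Thus $E(C(-))$ is a natural retract of $G^\pol$, and the theorem follows from the first paragraph.

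The step I expect to require the most care is the naturality of $s$ in $X$, that is, the functoriality in $X\in\comp$ of the whole package $(\colim_F C(P_F),\pi,\beta)$ of \eqref{eq:fixem}. A continuous map $\varphi\colon X'\to X$ carries a finite set $F\subset C(X,D)$ to $\varphi^*F\subset C(X',D)$, and since the image of $X'$ in $\C^F$ is contained in that of $X$ one gets inclusions $Y_{\varphi^*F}\subseteq Y_F$ and $P_{\varphi^*F}\subseteq P_F$ compatible with the maps $\alpha$; one must verify that these induce a map of the relevant colimits intertwining $\pi$, $\beta$ and $q$. A minor additional point is to reconcile the variance conventions for the comma category and for the colimit defining $G^\pol$ with those of \cite{calsie2}; once this is fixed, the assertions that $\cF$ maps into $(X\downarrow\pol)$ and that one gets the induced map of colimits used above are routine. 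Note that, in contrast with the applications of this theorem, no split-exactness hypothesis on $E$ is actually needed here.
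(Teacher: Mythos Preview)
Your argument is correct and follows essentially the same route as the paper's proof: both exhibit $E(C(-))$ as a natural retract of the right Kan extension $G^\pol$ via the section $E(\beta)$ followed by the canonical comparison map $\theta$, and then invoke Calder--Siegel. Your discussion is in fact more careful than the paper's, which asserts the naturality of the splitting without further comment; the verification you sketch (tracing $F\mapsto\varphi^*F$ and the resulting inclusions $P_{\varphi^*F}\subset P_F$) is exactly what is needed to fill that gap.
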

\begin{proof}
By \eqref{eq:fixem} and the first hypothesis, the map $E(\beta)$ is
a right inverse of the map
$$E(\pi):E(\colim_{F\in\cF}C(P_F))=\colim_{F\in\cF}E(C(P_F))\to E(C(X)).$$ On the other
hand, we have a commutative diagram
\begin{equation}\label{diag:pi'}
\xymatrix{\colim_{F\in\cF}E(C(P_F))\ar[r]^(.45)\theta\ar[dr]_{E(\pi)}&\colim_{f
\in (X
\downarrow \pol)} E(C(\cod(f)))\ar[d]^{\pi'}\\
&E(C(X))}
\end{equation}
Hence the map $\pi'$ in the diagram above is split by the composite
$\theta E(\beta)$, and therefore $E(C(-))$ is naturally a direct
summand of the functor
\begin{equation}\label{lakan}
G^\pol:\comp\to\ab,\quad X\mapsto \colim_{f \in (X \downarrow \pol)}
E(C(\cod(f)))
\end{equation}
But \eqref{lakan} is the right Kan extension of the functor
$G:\pol\to \ab$, $G(K)=E(C(K))$, and thus it is homotopy invariant
by the second hypothesis and Calder-Siegel's theorem. It follows
that $E(C(-))$ is homotopy invariant, as we had to prove.
\end{proof}

\begin{remark} \label{rem:cstar}
If in Theorem \ref{thm:draw} the functor $E$ is split exact, then $A\mapsto
E(A)$ is homotopy invariant on the category of commutative
$C^*$-algebras. Indeed, since every commutative unital $C^*$-algebra
is of the form $C(X)$ for some compact space $X$, it follows that
$F$ is homotopy invariant on unital commutative $C^*$-algebras.
Using this and split exactness, we get that it is also homotopy
invariant on all commutative $C^*$-algebras.
\end{remark}

\begin{remark}
In general, one cannot expect that the homomorphism $\pi'$ of
\eqref{diag:pi'} be an isomorphism. For the injectivity one would
need the following implication: If $D$ is a compact polyhedron, and
$f \colon X \to D$ and $s\colon D \to \C$ are continuous maps such
that $0=s \circ f\colon X \to \C$, then there exist a compact
polyhedron $D'$ and continuous maps $g:X \to D'$, $h: D' \to D$ such
that $h \circ g = f \colon X \to D$ and $0 = h\circ s\colon D' \to
\C$. That is too strong if $X$ is a pathological space. To give a
concrete example: let $X$ be a Cantor set inside $[0,1]$, \ $f$ the
natural inclusion, and $s$ the distance function to the Cantor set,
and suppose $g$ and $h$ as above exist. If $0=h \circ s\colon D' \to
\C$, then the image of $D'$ in $[0,1]$ has to be contained in $X$.
But the image has only finitely many connected components, since
$D'$ has this property. Hence, since $X$ is totally disconnected,
the image of $D'$ in $[0,1]$ cannot be all of $X$. This is a
contradiction.
\end{remark}

\subsection{Second proof of Rosenberg's conjecture.}\label{subsec:second}

A second proof of Rosenberg's conjecture can be obtained by combining Theorem \ref{thm:draw} with
the following theorem, which is due to Eric Friedlander and Mark E. Walker.

\begin{theorem}[Theorem $5.1$ in \cite{fw}] \label{thm:fw}
If $n>0$ and $q\ge 0$, then
\[
K_{-n}(C(\Delta^q))=0
\]
\end{theorem}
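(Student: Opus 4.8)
The plan is to deduce the statement from results already established above (in the paper itself it is instead quoted from \cite{fw}, precisely so that the second proof of Rosenberg's conjecture in Subsection \ref{subsec:second} does not depend circularly on them).

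The quickest route goes through Rosenberg's conjecture, Theorem \ref{thm:rosen1}: the functor $X\mapsto K_{-n}(C(X))$ on $\comp$ is homotopy invariant for $n>0$, hence invariant under homotopy equivalences of compact spaces. Since $\Delta^q$ is contractible this gives $K_{-n}(C(\Delta^q))\cong K_{-n}(C(\ast))=K_{-n}(\C)$, which is $0$ for $n>0$ because $\C$ is a regular noetherian ring.

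More fundamentally, the statement follows directly from the parametrized Gubeladze theorem (Theorem \ref{thm:pswan}), which is in any case what Theorem \ref{thm:rosen1} ultimately rests on. As $\Delta^q$ is a contractible compact Hausdorff space and $\Z^n$ is a countable, torsion-free, seminormal, cancellative abelian monoid, Theorem \ref{thm:pswan} shows that every finitely generated projective module over $C(\Delta^q)[\Z^n]$ --- and, taking the trivial monoid, over $C(\Delta^q)$ itself --- is free; hence the rank maps identify $K_0(C(\Delta^q))\cong\Z\cong K_0(C(\Delta^q)[\Z^n])$, with the natural map between them an isomorphism. Now iterate the Bass--Heller--Swan decomposition \eqref{bhs} (using the inductive definition of negative $K$-theory): $K_0(C(\Delta^q)[\Z^n])$ splits naturally into the summand $K_0(C(\Delta^q))$ together with a complementary summand that contains $K_{-n}(C(\Delta^q))$. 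Since the inclusion of the $K_0$-summand is already an isomorphism onto the whole group, the complement vanishes, and in particular $K_{-n}(C(\Delta^q))=0$.

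The one point to check in the second argument is the routine bookkeeping that $K_{-n}(R)$ really is a canonical direct summand of $K_0(R[\Z^n])$ for an arbitrary unital ring $R$; this follows from \eqref{bhs} by induction on $n$ and is the fact already recalled in the Introduction and used in the proof of Theorem \ref{thm:rosen1}. I expect no genuine obstacle here --- all the substance sits in Theorem \ref{thm:pswan}. (Friedlander and Walker's own proof is of a different character, passing through their semi-topological $K$-theory and its comparison with topological $K$-theory, which is exactly why it can serve here as an ingredient independent of the present methods.)
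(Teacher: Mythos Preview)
Your argument is correct. Both routes you give are sound: the first via Theorem \ref{thm:rosen1} is immediate, and the second via Theorem \ref{thm:pswan} together with the iterated Bass--Heller--Swan decomposition is exactly the mechanism already invoked in the proof of Theorem \ref{thm:rosen1} (and spelled out in Section \ref{subsec:kth}, where it is observed that $K_0(R)\overset{\sim}{\to}K_0(R[\Z^n])$ for all $n$ forces the vanishing of all negative $K$-groups).

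As you yourself note, the paper does not prove this statement at all: it is simply quoted from Friedlander--Walker \cite{fw}. The whole point of placing it in Subsection \ref{subsec:second} is that Friedlander--Walker's proof is independent of the machinery of Sections \ref{sec:real}--\ref{sec:pgubel}; their argument goes through semi-topological $K$-theory and its comparison with topological $K$-theory. Your derivation from Theorem \ref{thm:pswan} (or \ref{thm:rosen1}) is therefore a genuinely different proof, self-contained within the paper's own framework, but it cannot serve the role the citation plays here --- namely, providing an \emph{independent} input so that the ``second proof'' of Rosenberg's conjecture is not merely a rephrasing of the first. You have identified this circularity issue clearly and handled it honestly; there is nothing to correct.
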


\noindent\emph{Second proof of Rosenberg's conjecture.} By Proposition \ref{prop:homcor} and Theorem \ref{thm:draw}
it suffices to show that $K_n(C(D))=0$ for contractible $D\in\pol$. If $D$ is contractible, then the identity $1_D \colon D \to D$ factors over the cone $cD$. Hence, it is sufficient to show that $K_n(C(cD))=0$. The cone $cD$
is a star-like simplicial complex and for any subcomplexes $A,B \subset D$ with $A \cup B=D$, we get a Milnor square
$$
\xymatrix{ C(cD) \ar[r] \ar[d] & C(cA) \ar[d] \\ C(cB) \ar[r] & C(c(A \cap B)). }
$$
Since $cA$ is contractible, it retracts onto $c(A \cap B)$ and therefore, the square above is split. Using excision,
we obtain split-exact sequence of abelian groups as follows:
$$0 \to K_n(C(cD)) \to K_n(C(cA)) \oplus K_n(C(cB)) \to K_n(C(c(A\cap B))) \to 0.$$
Decomposing $cD$ like this, we see that the result of Theorem \ref{thm:fw} is sufficient for the vanishing of
$K_n(C(cD))$.
\qed

\subsection{The homotopy invariance theorem.}\label{subsec:htpy}

The aim of this subsection is to prove the following result.

\begin{theorem} \label{thm:htpy}
Let $F$ be a functor on the category of commutative $\C$-algebras with values in abelian groups. Assume that the following three conditions are satisfied.
\begin{itemize}
\item[(i)] $F$ is split-exact on $C^*$-algebras.
\item[(ii)] $F$ vanishes on coordinate rings of smooth affine varieties.
\item[(iii)] $F$ commutes with filtering colimits.
\end{itemize}
Then the functor
$$\comp\to\ab,\quad X \mapsto F(C(X))$$
is homotopy invariant on the category of compact Hausdorff topological spaces
and $F(C(X))=0$ for $X$ contractible.
\end{theorem}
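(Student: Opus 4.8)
The plan is to reduce the statement in stages---first to compact polyhedra, then to starlike polyhedra, then to standard simplices---and finally to establish $F(C(\Delta^p))=0$ by algebraic approximation together with Theorem \ref{thm:main}. For the first reduction, note that $C(-)=\map(-,\C)$ sends a push-out square of compact polyhedra with one split injective leg to a split Milnor square of commutative $C^*$-algebras, so by (i) and Lemma \ref{lem:fsplit} the contravariant functor $\pol\to\ab$, $D\mapsto F(C(D))$, is split exact. Hence by Proposition \ref{prop:homcor} (with $\mathfrak{C}=\pol$) its homotopy invariance on $\pol$ will follow as soon as we know $F(C(D))=0$ for contractible $D\in\pol$, and then (iii) together with Theorem \ref{thm:draw} upgrades this to homotopy invariance on $\comp$. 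A contractible polyhedron is a retract of its cone, which is starlike, so it suffices to treat starlike $P$. Triangulating $P$ with the apex as a vertex makes $P$ collapsible, and running the collapse while using split exactness---the relevant Milnor squares are split by the retraction of a simplex onto the union of all but one of its facets---reduces the vanishing of $F(C(P))$ to $F(C(\tau))=F(C(\Delta^{\dim\tau}))=0$ for the simplices $\tau$ and to $F(C(\mathrm{pt}))=F(\C)=F(\cO(\mathrm{pt}))=0$, which holds by (ii). Thus everything comes down to $F(C(\Delta^p))=0$ for all $p$; once this is known, homotopy invariance of $X\mapsto F(C(X))$ on $\comp$ follows, and $F(C(X))\cong F(C(\mathrm{pt}))=0$ for contractible $X$ by (ii), which is the second assertion.

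To prove $F(C(\Delta^p))=0$, use algebraic approximation: $C(\Delta^p)$ is reduced, so each of its finitely generated subalgebras is $\cO(Y)$ for an affine variety $Y$, the inclusion being dual to a continuous map $g\colon\Delta^p\to Y_{an}$ with (automatically) compact image; thus $C(\Delta^p)=\colim_{g\colon\Delta^p\to Y_{an}}\cO(Y)$, and by (iii) it is enough to show that for each such $g$ the map $F(\cO(Y))\to F(C(\Delta^p))$ is zero. Fix $g$ and choose a compact semi-algebraic $T$ with $g(\Delta^p)\subset T\subset Y_{an}$ (say the intersection of $Y_{an}$ with a large closed ball). Hironaka's desingularization \cite{hironaka} gives a proper surjective morphism $\pi\colon\tilde Y\to Y$ with $\tilde Y$ smooth quasi-projective, and Jouanolou's device \cite{jou} a torsor $\rho\colon W\to\tilde Y$ under a vector bundle with $W$ smooth affine; $\pi$ analytifies to a proper semi-algebraic surjection $\pi_{an}$ and $\rho$ to a fibration $\rho_{an}$. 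Put $S=\pi_{an}^{-1}(T)$---compact semi-algebraic since $\pi_{an}$ is proper---and $f=\pi_{an}|_S\colon S\to T$, a proper continuous semi-algebraic surjection. Theorem \ref{thm:main}, applied to the split-exact functor $D\mapsto F(C(D))$ and to $f$, produces a triangulation of $T$ such that $\ker\bigl(F(C(\Delta^n))\to F(C(f^{-1}(\Delta^n)))\bigr)=0$ for every simplex $\Delta^n$ of the triangulation.

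The heart of the argument is a local factorization. For each simplex $\Delta^n$ in that triangulation, $f^{-1}(\Delta^n)$ is a compact polyhedron, so the vector-bundle torsor $\rho_{an}$ has a continuous section over it; consequently the composite $f^{-1}(\Delta^n)\to\Delta^n\hookrightarrow Y_{an}$ (first map $f$, second the inclusion of a simplex of $T\subset Y_{an}$) agrees with $(\pi\rho)_{an}$ precomposed with that section, hence factors through the analytification $W_{an}$ of a \emph{smooth affine} variety. Dualizing and applying $F$, the composite $F(\cO(Y))\to F(C(\Delta^n))\to F(C(f^{-1}(\Delta^n)))$ factors through $F(\cO(W))=0$ by (ii); so, for any $\xi\in F(\cO(Y))$, the image of $\xi$ in $F(C(\Delta^n))$ lies in the trivial kernel above, i.e.\ is $0$, for every simplex of $T$. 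Writing $\eta\in F(C(T))$ for the image of $\xi$: passing to the barycentric subdivision, each closed star is a cone (hence collapsible) on whose simplices $\eta$ restricts to $0$, so by split exactness $\eta$ restricts to $0$ on each closed star. Pulling the cover by open stars back along $g$ and choosing a subdivision $K$ of $\Delta^p$ subordinate to it, we have $g(\tau)\subset(\text{closed star of a vertex})$ for every $\tau\in K$, hence the image of $\xi$ in $F(C(\Delta^p))$ restricts to $0$ on each $\tau$; since $(\Delta^p,K)$ is collapsible, split exactness forces this image to vanish. As $\xi$ and $g$ were arbitrary, $F(C(\Delta^p))=0$.

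The main obstacle is the package in the second and third steps: arranging matters (the choice of $T$, the use of properness of $\pi_{an}$) so that Theorem \ref{thm:main} is applicable, and then leveraging its simplexwise conclusion---through the Hironaka/Jouanolou factorization and through careful collapsibility and split-exactness bookkeeping---to descend from the simplices of $T$ all the way back to $\Delta^p$. All the deep inputs (Theorem \ref{thm:main}, Hironaka, Jouanolou) are available; the real work is in assembling them and in the repeated reductions via split Milnor squares.
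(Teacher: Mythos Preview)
Your proof is correct and follows essentially the same strategy as the paper: reduce to $F(C(\Delta^p))=0$ via Proposition~\ref{prop:homcor} and the starlike reduction, then use algebraic approximation, Hironaka, Jouanolou, and Theorem~\ref{thm:main} to kill each class coming from $F(\cO(Y))$ simplexwise on $T$, and finally descend back to $\Delta^p$ by split-exactness along a fine subdivision.

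There is one genuine variation worth flagging. The paper builds the compact semi-algebraic set $S$ inside the Jouanolou total space $Y'$, by covering $\tilde T=\pi_{an}^{-1}(T)$ with closed semi-algebraic pieces over which the torsor is trivial and choosing compact semi-algebraic lifts; then $f=(\pi\circ\sigma)|_S\colon S\to T$ and the factorization $f^{-1}(\Delta^m)\subset Y'$ through a smooth affine variety is automatic. You instead take $S=\pi_{an}^{-1}(T)\subset\tilde Y$, apply Theorem~\ref{thm:main} to $f=\pi_{an}|_S$, and \emph{afterwards} lift each $f^{-1}(\Delta^n)$ to $W$ via a continuous section of the vector-bundle torsor $\rho_{an}$, using that $f^{-1}(\Delta^n)$ is a compact semi-algebraic set, hence triangulable, hence a CW complex over which such a section exists. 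Your route makes the construction of $S$ trivial (properness of $\pi_{an}$ does the work), at the cost of invoking the existence of sections of $\rho_{an}$ over each preimage; the paper front-loads that work into the construction of $S$. Both are valid.

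One small point you are taking for granted, as the paper also does: in the final descent you need that the iterated barycentric subdivision $K$ of $\Delta^p$ is collapsible, so that ``$\eta|_\tau=0$ for all $\tau\in K$'' forces $\eta=0$ via split Milnor squares. This is true, but it is not entirely trivial (not every triangulation of a ball is collapsible); it is worth a word of justification.
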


\begin{proof}
Note that, since a point is a smooth algebraic variety, our hypothesis imply that $F(\C)=0$. Thus if $F$ is homotopy invariant and $X$ is contractible, then $F(X)=F(\C)=0$.
Let us prove then that $X \mapsto F(C(X))$ is homotopy invariant on the category of compact Hausdorff topological spaces.
Proceeding as in the proof of Theorem \ref{thm:fw}, we see that it is sufficient to show that $F(C(\Delta^n))=0$, for all $n \geq 0$.
Any finitely generated subalgebra of $C(\Delta^n)$ is reduced and hence corresponds to an algebraic variety over $\C$. Since $F$ commutes with filtered colimits, we obtain:
$$F(C(\Delta^n)) = \colim_{\Delta^n\to Y_{an}} \,F(\cO(Y)),$$
where the colimit runs over all continuous maps from $\Delta^n$ to the analytic variety $Y_{an}$ equipped
with the usual euclidean topology. For ease of notation, we will from now on just write $Y$ for both the
algebraic variety and the analytic variety associated to it. Let $\iota:\Delta^n\to Y$ be a continuous map.
As in the proof of the algebraic approximation theorem (\ref{thm:boundedapprox}), we consider Hironaka's desingularization $\pi:\tilde{Y}\to Y$ and Jouanoulou's affine bundle torsor $\sigma: Y'\to \tilde{Y}$.
Let $T \subset Y$ be a compact semi-algebraic subset such that $\iota(\Delta^n) \subset T$. Because $\pi$
is a proper morphism, $\tilde{T}=\pi^{-1}(T)$ is compact and semi-algebraic. By definition of vector bundle
 torsor (\cite{kh}), there is a Zariski cover of $\tilde{Y}$ such that the pull-back of $\sigma$ over each open subscheme $U\subset\tilde{Y}$ of the covering is isomorphic, as a scheme over $U$, to an algebraic trivial vector bundle. Thus $\sigma$ is locally trivial fibration for the euclidean topologies, and the trivialization maps are (semi)-algebraic. Hence because $\tilde{T}$, being compact, is locally compact, we may find a finite covering $\{\tilde{T}_i\}$ of $\tilde{T}$ by closed semi-algebraic subsets such that $\sigma$ is a trivial fibration over each $\tilde{T}_i$, and compact semi-algebraic subsets
$S_i \subset Y'$ such that $\sigma(S_i)=\tilde{T}_i$. Put $S=\bigcup_iS_i$; then $S$ is compact semi-algebraic,
and $f=(\pi\circ\sigma)|_S \colon S \to T$ is a continuous semi-algebraic surjection.
By Theorem \ref{thm:main}, there exists a semi-algebraic triangulation of $T$ such that $\ker(F(\Delta^m)\to F(f^{-1}(\Delta^m))=0$ for each simplex $\Delta^m$ in the triangulation. Consider the diagram
$$
\xymatrix{ F(C(f^{-1}(\Delta^m)) & F(C(S)) \ar[l] & F(\cO(Y')) \ar[l]\\
F(C(\Delta^m)) \ar[u] & F(C(T)) \ar[u] \ar[l] & F(\cO(Y)) \ar[l] \ar[u]}
$$
If $\alpha \in F(\cO(Y))$, then its image in $F(C(f^{-1}(\Delta^m)))$ vanishes since $f^{-1}(\Delta^m)\to \Delta^m$ factors through the smooth affine variety $Y'$, and $F(\cO(Y'))=0$. Hence, by Theorem \ref{thm:main}, $\alpha|_{\Delta^m}=0$, for each simplex in the triangulation.
Coming back to the map $\iota \colon \Delta^n \to Y$, we have a diagram
\[
\xymatrix{\Delta^n\ar[r]^\iota\ar[dr]^{\j}&Y\\
          \Delta^m\ar[r]_\theta & T\ar[u]}
\]
Here $\theta:\Delta^m\to T$ is the inclusion of a simplex in the triangulation, and $\j$ is the correstriction
of $\iota$. We need
to conclude that $\iota^*(\alpha)=0$, knowing only that $\theta^*(\alpha)=0$ for each simplex in a triangulation of $T$. This is done using split-exactness and barycentric subdivisions. Indeed, we perform the barycentric subdivision of $\Delta^n$ sufficiently many times so that each $n$-dimensional simplex is mapped to the closed star $\st(x)$ of some some vertex $x$ in the triangulation of $T$. Since $\Delta^n$ is star-like, the reduction argument of the proof of Theorem \ref{thm:boundedapprox} shows that it is enough to show the vanishing of $\iota^*(\alpha)$  for the (top-dimensional) simplices in this subdivision of $\Delta^n$. If ${\Delta'}^n$ is one of these top dimensional simplices, and $\Delta^m\subset\st(x)$, we can complete the diagram above to a diagram
\[
\xymatrix{{\Delta'}^n\ar[r]\ar[dr]&\Delta^n\ar[r]^\iota\ar[dr]^{\j}&Y\\
          \Delta^m \ar[r]& \st(x)\ar[r] & T\ar[u]}
\]
Hence it suffices to show that the pullback of $\alpha$ to $\st(x)$ vanishes. But since $\st(x)$ is
 star-like, then by the same reduction argument as before, the vanishing of the pullback of $\alpha$ to each of the top simplices $\Delta^m\subset \st(x)$ is sufficient to conclude that $\alpha_{|\st(x)}=0$. This finishes the proof.
\end{proof}

As an application, we obtain the following.

\goodbreak

\medskip

\paragraph{\emph{\bf Third proof of Rosenberg's conjecture.}}
\goodbreak
If $n<0$ then $K_n$ is split-exact and vanishes on coordinate rings of smooth affine algebraic varieties. By Theorem \ref{thm:htpy}, this implies that $X\mapsto K_n(C(X))$ is homotopy invariant.\qed

\subsection{A vanishing theorem for homology theories.}\label{subsec:vanish}

\begin{theorem}\label{thm:vanish}
Let $E:\Co/\C\to \Spt$ be a homology theory of commutative $\C$-algebras and $n_0\in\Z$. Assume
\begin{enumerate}
\item[(i)] $E$ is excisive on commutative $C^*$-algebras.
\item[(ii)] $E_n$ commutes with algebraic filtering colimits for $n\ge n_0$.
\item[(iii)] $E_n(\cO(V))=0$ for each smooth affine algebraic variety $V$ for $n\ge n_0$.
\end{enumerate}
Then $E_n(A)=0$ for every commutative $C^*$-algebra $A$ for $n\ge n_0$.
\end{theorem}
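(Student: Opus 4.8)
The plan is to run Theorem~\ref{thm:htpy} for each individual homotopy group $E_n$ with $n\ge n_0$ to obtain vanishing on contractible compact spaces, and then to propagate this vanishing to all commutative $C^*$-algebras by means of the long exact sequences furnished by excision. First I would fix $n\ge n_0$ and check that $F=E_n\colon\Co/\C\to\ab$ satisfies the three hypotheses of Theorem~\ref{thm:htpy}: its condition (i) (split-exactness on $C^*$-algebras) follows from hypothesis (i) here, since a \emph{split} extension of commutative $C^*$-algebras is carried by $E$ to a homotopy fibration equipped with a section, so its long exact homotopy sequence breaks up into split short exact sequences; its condition (ii) is hypothesis (iii) here, and its condition (iii) is hypothesis (ii) here. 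Hence $X\mapsto E_n(C(X))$ is homotopy invariant on $\comp$ and $E_n(C(X))=0$ for every contractible compact $X$; in particular $E_n(\C)=0$ and $E_n(C(D^q))=0$ for every disc $D^q$.

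Next I would show that $E_n(C(P))=0$ for every compact polyhedron $P$ and every $n\ge n_0$, by the standard cellular bootstrap, which excision now makes available. For a pointed compact space $Z$ write $\tilde C(Z)=\ker(C(Z)\to\C)$; since $E_n(\C)=0$, the split extension $0\to\tilde C(Z)\to C(Z)\to\C\to0$ gives $E_n(C(Z))\cong E_n(\tilde C(Z))$ for $n\ge n_0$. I would first treat spheres by induction on $q$: the cofibration $S^{q-1}\hookrightarrow D^q\to S^q$ produces an extension $0\to\tilde C(S^q)\to C(D^q)\to C(S^{q-1})\to 0$ of commutative $C^*$-algebras, whose excision long exact sequence, together with $E_n(C(D^q))=0$, yields a surjection $E_{n+1}(C(S^{q-1}))\twoheadrightarrow E_n(\tilde C(S^q))\cong E_n(C(S^q))$; as $n+1\ge n_0$, the left-hand side vanishes by the inductive hypothesis (the base case $q=0$ being $E_n(\tilde C(S^0))=E_n(\C)=0$), so $E_n(C(S^q))=0$. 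Then, for a finite CW structure on $P$ with skeleta $P^{(k)}$, the extensions $0\to\tilde C(P^{(k)}/P^{(k-1)})\to C(P^{(k)})\to C(P^{(k-1)})\to 0$ of commutative $C^*$-algebras have kernels which are finite products of copies of $\tilde C(S^k)$ — because $P^{(k)}/P^{(k-1)}$ is a finite wedge of $k$-spheres and $E_n$ preserves finite products — hence have vanishing $E_n$; the excision long exact sequence then gives injections $E_n(C(P^{(k)}))\hookrightarrow E_n(C(P^{(k-1)}))$, and induction on $k$, starting from the finite discrete set $P^{(0)}$, gives $E_n(C(P))=0$.

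It remains to pass from polyhedra to arbitrary compact Hausdorff spaces and then to non-unital algebras. By the argument in the proof of Theorem~\ref{thm:draw} — which uses only that $E_n$ commutes with filtering colimits, i.e.\ hypothesis (ii) — the functor $X\mapsto E_n(C(X))$ on $\comp$ is a natural direct summand of its right Kan extension $X\mapsto\colim_{f\in(X\downarrow\pol)}E_n(C(\cod(f)))$; since all the terms of this colimit vanish by the previous paragraph, $E_n(C(X))=0$ for every compact Hausdorff $X$, i.e.\ $E_n$ vanishes on all unital commutative $C^*$-algebras. Finally, for an arbitrary commutative $C^*$-algebra $A$, the unitalization extension $0\to A\to A^+\to\C\to0$ is split and consists of commutative $C^*$-algebras, so split-exactness of $E_n$ together with $E_n(A^+)=0$ (the unital case just proved) and $E_n(\C)=0$ forces $E_n(A)=0$ for $n\ge n_0$, which is the assertion.

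The step I expect to be the main obstacle is the second one: Theorem~\ref{thm:htpy} only delivers vanishing on \emph{contractible} spaces, so the real work is to turn the excision hypothesis on commutative $C^*$-algebras into honest long exact sequences for CW pairs and push the vanishing from a point up to all finite complexes. Two things need care: the indexing must be arranged so that the sphere induction consumes only degrees $\ge n_0$, where our hypotheses provide information, and one must correctly identify the kernels $\tilde C(P^{(k)}/P^{(k-1)})$ as finite products of $k$-fold suspensions of $\C$ so that the sphere computation applies to them.
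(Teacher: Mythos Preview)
Your proof is correct and follows essentially the same strategy as the paper's: apply Theorem~\ref{thm:htpy} to get vanishing on contractible spaces, use excision to bootstrap to all compact polyhedra, invoke the Kan-extension argument from the proof of Theorem~\ref{thm:draw} to reach arbitrary compact Hausdorff $X$, and finish via unitalization. The only cosmetic difference is in the polyhedral bootstrap: you run a cellular induction through spheres and skeleta, whereas the paper does a simplicial Mayer--Vietoris induction, writing $D=\Delta^m\cup D'$ with $L=\Delta^m\cap D'$ of lower dimension and using the exact sequence $E_{n+1}(C(L))\to E_n(C(D))\to E_n(C(\Delta^m))\oplus E_n(C(D'))\to E_n(C(L))$; both are standard and interchangeable.
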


\begin{proof}
Let $n\ge n_0$. We have to show that $E_n(A)=0$ for every
commutative $C^*$-algebra $A$. Because by i), each $E_n$ is split
exact on commutative $C^*$-algebras, it suffices to show  that
$E_n(A)=0$ for unital $A$, i.e. for $A=C(X)$, $X\in\comp$. Since by
ii) $E_n$ preserves filtering colimits, the proof of Theorem \ref{thm:draw} shows that $E_n(C(X))$ is a direct summand
of
\[
\colim_{f
\in (X
\downarrow \pol)} E(C(\cod(f)))
\]
Hence it suffices to show that $E_n(C(D))=0$ for every
compact polyhedron $D$. By iii) and excision, this is true if  $\dim
D=0$. Let $m\ge 1$ and assume the assertion of the theorem holds for
compact polyhedra of dimension $<m$. By Theorem \ref{thm:htpy},
$D\mapsto E_n(C(D))$ is homotopy invariant; in particular
$E_n(C(\Delta^m))=0$. If $\dim D=m$ and $D$ is not a simplex, write
$D=\Delta^m\cup D'$, as the union of an $m$-simplex and a subcomplex
$D'$ which has fewer $m$-dimensional simplices. Put $L=\Delta^m\cap
D'$; then $\dim L<m$, and we have an exact sequence
\[
E_{n+1}(C(L))\to E_n(C(D))\to E_n(C(\Delta^m))\oplus E(C(D'))\to E_n(C(L))
\]
We have seen above that $E_n(C(\Delta^m))=0$; moreover $E_n(C(L))=E_{n+1}(C(L))=0$ because $\dim L<m$, and $E_n(C(D'))=0$ because $D'$ has fewer $m$-dimensional simplices than $D$. This concludes the proof.
\end{proof}

\section{Applications of the homotopy invariance and vanishing homology theorems.}\label{sec:appvanish}

\subsection{$K$-regularity for commutative $C^*$-algebras.}\label{subsec:kreg}

\begin{theorem}\label{thm:kreg}
Let $V$ be a smooth affine algebraic variety over $\C$, $R=\cO(V)$, and $A$ a commutative $C^*$-algebra. Then $A\otimes_\C R$ is $K$-regular.
\end{theorem}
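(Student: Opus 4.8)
The plan is to deduce Theorem \ref{thm:kreg} as a direct application of the vanishing theorem for homology theories (Theorem \ref{thm:vanish}). Fix a smooth affine variety $V$ over $\C$ and set $R = \cO(V)$. For each integer $p \ge 1$ define a functor on commutative $\C$-algebras by
\[
F^p(A) = \cofi\bigl(K(A \otimes_\C R) \to K(A \otimes_\C R[t_1,\dots,t_p])\bigr),
\]
the homotopy cofiber of the map induced by the inclusion of constants. Each $F^p$ is a functor from $\Co/\C$ to spectra; since $K$-theory preserves finite products up to homotopy and so does $-\otimes_\C R$ at the level of algebras, $F^p$ preserves finite products up to homotopy, hence is a homology theory in the sense of the paper. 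The homotopy groups $(F^p)_n(A) = \pi_n F^p(A)$ fit into the long exact sequence relating $K_*(A\otimes_\C R)$, $K_*(A\otimes_\C R[t_1,\dots,t_p])$, and $(F^p)_*(A)$; in particular $(F^p)_n(A) = 0$ for all $n\in\Z$ if and only if $A\otimes_\C R \to A\otimes_\C R[t_1,\dots,t_p]$ is a $K$-theory equivalence, i.e. $A\otimes_\C R$ is $K_n$-regular in the relevant range after iterating in $p$.

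Next I would verify the three hypotheses of Theorem \ref{thm:vanish} for $E = F^p$ with $n_0$ arbitrary (say $n_0 = -\infty$, i.e. for all $n\in\Z$). For (i), excision on commutative $C^*$-algebras: if $A$ is a commutative $C^*$-algebra then so is $A\otimes_\C R$ reduced... more precisely $A\otimes_\C R$ need not be a $C^*$-algebra, but an extension $0\to A'\to A\to A''\to 0$ of commutative $C^*$-algebras gives, upon applying $-\otimes_\C R$ (which is exact since $R$ is free, hence flat, as a $\C$-module), an extension $0\to A'\otimes R\to A\otimes R\to A''\otimes R\to 0$; by the Suslin--Wodzicki theorem \ref{thm:swex} combined with Remark \ref{rem:wodpnas} (or \ref{thm:swcstar}), $A'$ is $K$-excisive, and one checks $A'\otimes_\C R$ remains $K$-excisive because the $\tor$-vanishing condition is preserved under $-\otimes_\C R$ with $R$ flat over $\C$; the same argument applies after replacing $R$ by $R[t_1,\dots,t_p]$, so both $K(-\otimes R)$ and $K(-\otimes R[t_1,\dots,t_p])$ send the extension to a homotopy fibration, hence so does their cofiber $F^p$. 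For (ii), $F^p$ commutes with filtering colimits because $K$-theory does and $-\otimes_\C R$ commutes with colimits. For (iii), if $W$ is a smooth affine variety over $\C$ then $\cO(W)\otimes_\C R = \cO(W\times_\C V)$ is the coordinate ring of a smooth affine variety, which is regular noetherian, hence $K$-regular; thus $K(\cO(W)\otimes R)\to K(\cO(W)\otimes R[t_1,\dots,t_p])$ is an equivalence and $F^p(\cO(W)) \simeq *$, so $(F^p)_n(\cO(W)) = 0$ for all $n$.

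Applying Theorem \ref{thm:vanish} to $E = F^p$ then yields $(F^p)_n(A) = 0$ for every commutative $C^*$-algebra $A$ and every $n\in\Z$; equivalently, the map $K_n(A\otimes_\C R)\to K_n(A\otimes_\C R[t_1,\dots,t_p])$ is an isomorphism for all $n\in\Z$ and all $p\ge 1$. Since this holds for every $p$, $A\otimes_\C R$ is $K$-regular, which is precisely the assertion of the theorem. The main obstacle I anticipate is the careful verification of hypothesis (i): one must confirm that $E$-excisiveness (the $\tor$-vanishing condition of \ref{thm:swex} and \ref{thm:wodex}) for the kernel ideal is inherited by its tensor product with $R$ and with $R[t_1,\dots,t_p]$ over $\C$ — this is where the flatness of a coordinate ring over $\C$ and the compatibility of $\tor$ with flat base change (or simply with tensoring the whole $\tor$ computation by a flat module) must be invoked — and that the cofiber of two excisive homology theories is again excisive, which follows formally from the octahedral axiom once the two constituent fibrations are in hand.
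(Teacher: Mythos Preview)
Your proposal is correct and follows essentially the same route as the paper: define $F^p(A)=\cofi(K(A\otimes_\C R)\to K(A\otimes_\C R[t_1,\dots,t_p]))$, verify the three hypotheses of Theorem~\ref{thm:vanish}, and conclude. The only difference is in the justification of hypothesis~(i): where you sketch a flatness/Tor argument, the paper simply invokes \cite[Corollary~9.7]{MR997314}, which states directly that $A[t_1,\dots,t_p]\otimes_\C R$ is $K$-excisive (equivalently, H-unital) for every $C^*$-algebra $A$ --- this is Wodzicki's result that the tensor product of an H-unital ring with an arbitrary ring is again H-unital, and it disposes of exactly the obstacle you flagged at the end.
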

\begin{proof}
For each fixed $p\ge 1$ and $i \in \Z$, write $F^p(A)=\cofi(K(A\otimes R)\to K(A[t_1,\dots,t_p]\otimes R))$ for the homotopy cofiber. It suffices
to prove that the homology theory $F^p:\Co/\C\to \Spt$ satisfies the hypothesis of Theorem \ref{thm:vanish}.
By \cite[Corollary 9.7]{MR997314}, $A[t_1,\dots,t_p]\otimes_\C R$ is $K$-excisive for every $C^*$-algebra
$A$ and every $p\ge 1$. It follows that the homology theory $F^p:\rA/\C\to \Spt$ is excisive on
$C^*$-algebras. In particular its restriction to $\Co/\C$ is excisive on commutative $C^*$-algebras.
Moreover, if $W$ is any smooth affine algebraic variety, then $R\otimes_\C\cO(W)=\cO(V\times W)$, is regular noetherian, and therefore $K$-regular. Finally, $F^p_*$ preserves filtering colimits because both $K_*$ and
$(-)\otimes\Z[t_1,\dots,t_p]\otimes R$ do.
\end{proof}

\begin{remark} \label{rem:creditreg}
The case $R=\C$ of the previous theorem was discovered by Jonathan Rosenberg. Unfortunately, the two proofs he has given, in \cite[Thm. 3.1]{rosop} and \cite[p.\ 866]{roshan} turned out to be problematic. A version
of Theorem \ref{thm:kreg} for $A=C(D)$, $D\in\pol$, was given by Friedlander and Walker in \cite[Thm.\ 5.3]{fw}.
Furthermore, Rosenberg acknowledges in \cite[p. 24]{roshan} that Walker also found a proof of this in the general case, but that he did not publish it. Anyhow, as Rosenberg observed in \cite[p. 91]{rosop},
in this situation, the polyhedral case implies the general case by a short reduction argument (this also follows from Theorem \ref{thm:draw} above).
Hence, an essentially complete argument for the proof of Theorem \ref{thm:kreg} existed already in the literature, although it was scattered in various
sources.
\end{remark}

The following corollary compares Quillen's algebraic $K$-theory with Charles A. Weibel's homotopy algebraic $K$-theory, $KH$, introduced in \cite{kh}.

\begin{corollary}\label{cor:correg}
If $A$ is a commutative $C^*$-algebra,
then the map  $K_*(A)\to KH_*(A)$ is an isomorphism.
\end{corollary}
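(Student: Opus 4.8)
\textbf{Proof proposal for Corollary \ref{cor:correg}.}
The plan is to deduce the statement from the $K$-regularity of $A$, which is already in hand, together with Weibel's construction of $KH$ from \cite{kh}. Recall that $KH(R)$ is the realization of the simplicial spectrum $[n]\mapsto K(R[\Delta^n])$, where $R[\Delta^n]=R[t_0,\dots,t_n]/\langle 1-\sum_i t_i\rangle\cong R[t_1,\dots,t_n]$, and that the comparison map $K(R)\to KH(R)$ is the one induced by the inclusion of $0$-simplices (equivalently, by the common augmentation $R[\Delta^\bullet]\to R$).

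First I would observe that every commutative $C^*$-algebra $A$ is $K$-regular: this is the special case $V=\mathrm{pt}$, $R=\cO(V)=\C$ of Theorem \ref{thm:kreg}, since $A\otimes_\C\C=A$. Thus each structure map $A\to A[\Delta^n]\cong A[t_1,\dots,t_n]$ induces an isomorphism on all homotopy groups $K_*$. Its retraction, the augmentation $A[\Delta^n]\to A$, is then a $K_*$-isomorphism as well, and since the augmentations assemble into a simplicial map to the constant simplicial ring $A$, the simplicial identities force every face and degeneracy map of $[n]\mapsto K(A[\Delta^n])$ to be a weak equivalence of spectra. Hence the map of simplicial spectra $\underline{K(A)}\to K(A[\Delta^\bullet])$ induced by the units $A\to A[\Delta^n]$ is a levelwise weak equivalence, so it is a weak equivalence after realization; as the realization of the constant simplicial spectrum is $K(A)$, we conclude that $K(A)\to KH(A)$ is a weak equivalence, and in particular $K_*(A)\xrightarrow{\ \sim\ }KH_*(A)$. (Equivalently one may simply quote Weibel's theorem from \cite{kh} that $K_n(R)\to KH_n(R)$ is an isomorphism for all $n$ whenever $R$ is $K$-regular.)

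I do not expect any serious obstacle here: once Theorem \ref{thm:kreg} is available the argument is purely formal. The only mild point is that this must be run for possibly non-unital commutative $C^*$-algebras, but the simplicial comparison above uses nothing beyond the defining property of $K$-regularity and therefore goes through verbatim, with $K$ and $KH$ extended to the non-unital setting by the standard unitalization device recalled in Subsection \ref{subsec:func-n-H}.
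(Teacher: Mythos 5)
Your proposal is correct and takes essentially the same route as the paper: the paper likewise obtains $K$-regularity of $A$ from Theorem \ref{thm:kreg} (the case $R=\C$), quotes Weibel's \cite[Proposition 1.5]{kh} for unital $K$-regular rings, and passes to arbitrary commutative $C^*$-algebras by excision/unitalization. Your unwinding of Weibel's simplicial-spectrum argument is just a harmless elaboration of the quoted result.
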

\begin{proof} Weibel proved in \cite[Proposition 1.5]{kh} that if $A$ is a unital $K$-regular
 ring, then $K_*(A) \stackrel{\sim}{\to} KH_*(A)$. Using excision, it follows that this is true for all commutative $C^*$-algebras. Now apply Theorem \ref{thm:kreg}.
\end{proof}

\subsection{Hochschild and cyclic homology of commutative $C^*$-algebras.}\label{subsec:lq}

In the following paragraph we recall some basic facts about Hochschild and cyclic homology which we shall need; the standard reference for these topics is Jean-Louis Loday's book \cite{lod}.\\

Let $k$ be a field of characteristic zero. Recall a {\it mixed complex} of $k$-vectorspaces is a graded vectorspace $\{M_n\}_{n\ge 0}$
together with maps $b:M_*\to M_{*-1}$ and $B:M_*\to M_{*+1}$ satisfying $b^2=B^2=bB+Bb=0$. One can associate
various chain complexes to a mixed complex $M$, giving rise to the Hochschild, cyclic, negative cyclic and
periodic cyclic homologies of $M$, denoted respectively $HH_*$, $HC_*$, $HN_*$ and $HP_*$. For example
$HH_*(M)=H_*(M,b)$. A map of mixed complexes is a homogeneous map which commutes with both $b$ and $B$. It is
called a {\it quasi-isomorphism} if it induces an isomorphism at the level of Hochschild homology; this automatically implies that it also induces an isomorphism for $HC$ and all the other homologies mentioned above. For a $k$-algebra $A$ there is defined a mixed complex $(C(A/k),b,B)$ with
\[
C_n(A/k)=\left\{\begin{matrix}\tilde{A}_k\otimes_kA^{\otimes_kn}& n>0\\ A & n=0\end{matrix}\right.
\]
We write $HH_*(A/k)$, $HC_*(A/k)$, etc. for $HH_*(C(A/k))$, $HC_*(C(A/k))$, etc. If furthermore $A$ is unital and
$\bar{A}=A/k$, then there is also a mixed complex $\bar{C}(A/k)$ with $\bar{C}_n(A/k)=A\otimes_k\bar{A}^{\otimes_kn}$
and the natural surjection $C(A/k)\to \bar{C}(A/k)$ is a quasi-isomorphism. Note also that
\begin{equation}\label{cnuni}
\ker(\bar{C}(\tilde{A}_k/k)\to \bar{C}(k/k))=C(A/k)
\end{equation}
If $A$ commutative and unital, we have
a third mixed complex $(\Omega_{A/k},0,d)$
given in degree $n$ by $\Omega^n_{A/k}$, the module of $n$-K\"ahler differential forms, and where $d$ is the exterior derivation
of forms. A natural map of mixed complexes $\mu:\bar{C}(A/k)\to \Omega_{A/k}$ is defined
by
\begin{equation}\label{mumap}
\mu(a_0\otimes_k\bar{a}_1\otimes_k\dots\otimes_k\bar{a}_n)=\frac{1}{n!}a_0da_1\land\dots\land da_n
\end{equation}
It was shown by Loday and Quillen in \cite{lq} (using a classical result of Hochschild-Kostant-Rosenberg \cite{hkr}) that $\mu$ is a quasi-isomorphism if $A$ is
a \emph{smooth} $k$ algebra, i.e. $A=\cO(V)$ for some smooth affine algebraic variety over $k$. It follows from this (see \cite{lod}) that for
$Z\Omega^n_{A/k}=\ker(d:\Omega^n_{A/k}\to \Omega^{n+1}_{A/k})$ and $H_{dR}^*(A/k)=H^*(\Omega_{A/k},d)$,
we have $(n\in\Z)$
\begin{gather}\label{formu:lq}
HH_n(A/k)=\Omega^n_{A/k}\qquad  HC_n(A/k)=\Omega^n_{A/k}/d\Omega^{n-1}_{A/k}\oplus\bigoplus_{0\le 2i<n}H_{dR}^{n-2i}(A/k)\\
HN_n(A/k)=Z\Omega^n_{A/k}\oplus \prod_{p>0} H_{dR}^{n+2p}(A/k)\qquad HP_n(A/k)=\prod_{p\in\Z}H_{dR}^{2p-n}(A/k)\nonumber
\end{gather}

For arbitrary commutative unital $A$, there is
a decomposition \[C_n(A/k)=\oplus_{p=0}^nC^{(p)}(A/k)\] such that $b$ maps $C^{(p)}$ to itself, while
$B(C^{(p)})\subset C^{(p+1)}$ (see \cite{lod}). One defines
\[
HH^{(p)}_n(A/k)=H_nC^{(p)}(A/k)
\]
We have $HH_q^{(p)}(A/k)=0$ for $q<p$, $HH_p^{(p)}(A/k)=\Omega^p_{A/k}$, but in general for $q>0$,
$HH_{p+q}^{(p)}(A/k)\ne 0$. The map \eqref{mumap} is still a quasi-isomorphism
if $A$ is smooth over a field $F\supset k$; this follows from the Loday-Quillen result using the base change
spectral sequence of Kassel-Sletsj{\o}e, which we recall below.

\begin{lemma}\label{lem:main11} (Kassel-Sletsj{\o}e, \cite[4.3a]{ks})
Let $k\subseteq F$ be fields of characteristic zero.
For each $p\ge 1$ there is a bounded second quadrant homological
spectral sequence ($0\le i<p$, $j\ge0$):
\[
{}_p E^1_{-i,i+j}=\Omega^{i}_{F/k}\otimes_FHH^{(p-i)}_{p-i+j}(R/F)
\Rightarrow HH_{p+j}^{(p)}(R/k)
\]
\end{lemma}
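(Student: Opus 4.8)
\emph{Proof plan.} The plan is to derive this spectral sequence from the transitivity distinguished triangle for cotangent complexes together with Quillen's description of the Hodge ($\lambda$-)pieces of Hochschild homology. First I would invoke Quillen's theorem (see \cite{lod}): for any commutative algebra $A$ over a field $k$ of characteristic zero and all $i\ge0$ there is a natural isomorphism $HH^{(i)}_n(A/k)\cong H_{n-i}(L\Lambda^i_A\mathbf{L}_{A/k})$, where $\mathbf{L}_{A/k}$ is the Andr\'e--Quillen cotangent complex and $L\Lambda^i_A$ the $i$-th derived exterior power over $A$ (in characteristic zero derived exterior, symmetric and divided powers coincide, so $L\Lambda^i$ is well behaved). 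Granting this, it suffices to construct, for each $p\ge1$, a bounded filtration of the complex $L\Lambda^p_R\mathbf{L}_{R/k}$ whose $i$-th graded piece is quasi-isomorphic to $\Omega^i_{F/k}\otimes_F L\Lambda^{p-i}_R\mathbf{L}_{R/F}$.

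The first step is to use that $F/k$ is a field extension in characteristic zero, hence separable and therefore formally smooth; consequently $\mathbf{L}_{F/k}$ is quasi-isomorphic to $\Omega^1_{F/k}$ concentrated in homological degree $0$, and $\Omega^1_{F/k}$, being an $F$-vector space, is flat. (If $F/k$ is not finitely generated one reduces to this case by writing $F$ as a filtered union of finitely generated, hence smooth, subextensions and using that $\mathbf{L}$ and $\Omega^1$ commute with filtered colimits of rings.) Inserting this into the transitivity triangle $R\otimes^{\mathbf{L}}_F\mathbf{L}_{F/k}\to\mathbf{L}_{R/k}\to\mathbf{L}_{R/F}\xrightarrow{+1}$ identifies its left term with $M':=R\otimes_F\Omega^1_{F/k}$, a flat $R$-module placed in degree $0$. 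Thus $\mathbf{L}_{R/k}$ sits in a triangle with $M'$ (flat, degree $0$) as a subobject and $\mathbf{L}_{R/F}$ as quotient.

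Next I would invoke the standard Koszul (Cartan) filtration of the derived exterior power of such a triangle: $L\Lambda^p_R\mathbf{L}_{R/k}$ carries a finite decreasing filtration $0=F^{p+1}\subseteq F^p\subseteq\cdots\subseteq F^0=L\Lambda^p_R\mathbf{L}_{R/k}$ with $F^i/F^{i+1}\simeq L\Lambda^i_R M'\otimes^{\mathbf{L}}_R L\Lambda^{p-i}_R\mathbf{L}_{R/F}$, $0\le i\le p$. Since $M'$ is flat and concentrated in degree $0$, $L\Lambda^i_R M'=\Lambda^i_R M'=R\otimes_F\Omega^i_{F/k}$, again flat in degree $0$; hence the derived tensor above is an ordinary tensor product, and flat base change along $F\hookrightarrow R$ gives $F^i/F^{i+1}\simeq\Omega^i_{F/k}\otimes_F L\Lambda^{p-i}_R\mathbf{L}_{R/F}$. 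As $\Omega^i_{F/k}$ is $F$-flat, $\Omega^i_{F/k}\otimes_F-$ commutes with homology, so $H_j(F^i/F^{i+1})=\Omega^i_{F/k}\otimes_F H_j(L\Lambda^{p-i}_R\mathbf{L}_{R/F})=\Omega^i_{F/k}\otimes_F HH^{(p-i)}_{(p-i)+j}(R/F)$ by Quillen's formula. The spectral sequence of this finite filtration is the asserted one: placing the graded piece $F^i/F^{i+1}$ in filtration degree $-i$, its degree-$j$ homology lands in bidegree $(-i,\,i+j)$ and equals $\Omega^i_{F/k}\otimes_F HH^{(p-i)}_{(p-i)+j}(R/F)$, while the abutment in total degree $j$ is $H_j(L\Lambda^p_R\mathbf{L}_{R/k})=HH^{(p)}_{p+j}(R/k)$, exactly as in the statement; boundedness and the second-quadrant shape follow from $0\le i\le p$ and $j\ge0$.

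A word on the difficulties and on alternatives. The only substantive inputs are Quillen's formula $HH^{(i)}_n(A/k)=H_{n-i}(L\Lambda^i_A\mathbf{L}_{A/k})$ and the Koszul filtration on derived exterior powers of a distinguished triangle with the asserted graded pieces; both are classical in characteristic zero (Quillen, Illusie), but they must be cited carefully, and the point needing the most attention is that every identification above is natural in $R$, so that the spectral sequence is functorial. An alternative --- closer to the original argument of Kassel--Sletsj\o e --- dispenses with cotangent complexes: one filters the Hodge component $C^{(p)}(R/k)$ of the Hochschild complex directly according to the number of tensor slots contributed by $F$, using the shuffle product and the Eulerian idempotents to control the $\lambda$-weights. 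The weight bookkeeping required there is precisely what the cotangent-complex formulation makes automatic, which is why I would prefer the route above.
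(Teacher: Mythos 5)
The paper does not prove this lemma; it is quoted verbatim from Kassel--Sletsj{\o}e \cite[4.3a]{ks}. Your argument is correct in substance and is, in fact, essentially the argument of the original source: Kassel--Sletsj{\o}e also work with Quillen's identification $HH^{(i)}_n(A/k)\cong H_{n-i}(L\Lambda^i_A\mathbf{L}_{A/k})$ and derive the spectral sequence from the transitivity triangle for the cotangent complex, the point being exactly the one you isolate, namely that $\mathbf{L}_{F/k}\simeq\Omega^1_{F/k}[0]$ is flat in degree $0$ because char-$0$ field extensions are formally smooth, so Illusie's Koszul filtration of $L\Lambda^p$ of the triangle has underived, flat graded pieces. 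All the individual steps (Quillen's formula, the transitivity triangle, the finite filtration of $L\Lambda^p$ with graded pieces $L\Lambda^i M'\otimes^{\mathbf{L}}_R L\Lambda^{p-i}\mathbf{L}_{R/F}$, flat base change, convergence of the spectral sequence of a finite filtration) are standard and correctly assembled; you are right that the only care needed is in citing Quillen and Illusie precisely and in checking naturality in $R$.

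One point you should reconcile with the statement as quoted: your filtration has $p+1$ graded pieces, $0\le i\le p$, whereas the lemma restricts to $0\le i<p$. The $i=p$ column is $\Omega^p_{F/k}\otimes_F HH^{(0)}_{j}(R/F)$, which is $\Omega^p_{F/k}\otimes_F R$ for $j=0$ and vanishes for $j>0$; it cannot in general be discarded (take $R=F$: all columns with $i<p$ vanish, yet the abutment $HH^{(p)}_p(F/k)=\Omega^p_{F/k}$ is nonzero, and it is precisely the $i=p$ entry that supplies it). So the correct range is $0\le i\le p$, as in the original of Kassel--Sletsj{\o}e; the restriction to $i<p$ in the quoted statement is an inaccuracy that is harmless for the paper's application in Corollary \ref{coro:ks}, which only uses the vanishing of the $E^1$-page for $j\ge 1$, where the extra column contributes nothing. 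Your proof as written actually produces the correct object; just state the index range you obtain and note why the discrepancy does not matter.
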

\begin{corollary}\label{coro:ks}
If $A$ is a smooth $F$-algebra, then \eqref{mumap} is a quasi-isomorphism.
\end{corollary}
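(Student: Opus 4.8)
The plan is to derive Corollary \ref{coro:ks} from the Kassel--Sletsj{\o}e spectral sequence of Lemma \ref{lem:main11} together with the Loday--Quillen computation \eqref{formu:lq} applied over the intermediate field $F$. First I would reduce the statement to a computation of the Hodge (i.e.\ $\lambda$-) pieces of Hochschild homology. The map $\mu$ of \eqref{mumap} is a morphism of mixed complexes compatible with the decomposition $C_n(A/k)=\oplus_{p=0}^n C^{(p)}(A/k)$, carrying $C^{(p)}(A/k)$ to $\Omega^p_{A/k}$ placed in homological degree $p$; hence $\mu$ is a quasi-isomorphism if and only if, for every $p\ge 0$, the induced map
\[
HH^{(p)}_n(A/k)\longrightarrow \begin{cases}\Omega^p_{A/k}& n=p\\ 0 & n\ne p\end{cases}
\]
is an isomorphism. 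Since $HH^{(p)}_q(A/k)=0$ for $q<p$ and $\mu$ identifies $HH^{(p)}_p(A/k)$ with $\Omega^p_{A/k}$ for arbitrary commutative unital $A$ (as recalled above), and the case $p=0$ is trivial, the only thing left to prove is that $HH^{(p)}_{p+j}(A/k)=0$ for all $p\ge 1$ and all $j\ge 1$.

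For that I would run the spectral sequence of Lemma \ref{lem:main11} for the algebra $R=A$, which is smooth over $F$. On the $E^1$-page the entry ${}_pE^1_{-i,i+j}=\Omega^i_{F/k}\otimes_F HH^{(p-i)}_{p-i+j}(A/F)$ contributing (for fixed $j$, as $i$ runs over $0\le i<p$) to $HH^{(p)}_{p+j}(A/k)$ involves the Hodge piece $HH^{(p-i)}_{p-i+j}(A/F)$ in homological degree $(p-i)+j$. Because $A$ is a smooth $F$-algebra, the Loday--Quillen theorem --- equivalently, the fact that $\mu$ is a quasi-isomorphism over $F$, i.e.\ the identities \eqref{formu:lq} for $A/F$ --- gives $HH^{(q)}_m(A/F)=0$ whenever $m\ne q$. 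Applying this with $q=p-i$, $m=p-i+j$ and using $j\ge 1$ yields $HH^{(p-i)}_{p-i+j}(A/F)=0$ for every $i$ with $0\le i<p$. Thus every $E^1$-entry of total degree $j$ vanishes, and since the spectral sequence is a bounded second-quadrant one converging to $HH^{(p)}_{p+j}(A/k)$, we conclude $HH^{(p)}_{p+j}(A/k)=0$.

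Putting the two steps together, $HH^{(p)}_n(A/k)$ equals $\Omega^p_{A/k}$ for $n=p$ and vanishes for $n\ne p$, for every $p\ge 0$; summing over $p$ this says exactly that $\mu$ induces an isomorphism on Hochschild homology, hence on all of $HC$, $HN$, $HP$, so $\mu$ is a quasi-isomorphism. The only mildly delicate point is the reduction in the first paragraph: one must observe that the Loday--Quillen statement over $F$ supplies the vanishing of the \emph{individual} Hodge components $HH^{(q)}_m(A/F)$ for $m\ne q$, and not merely of the total group $HH_m(A/F)$. This is immediate, however, because the quasi-isomorphism $\mu$ over $F$ respects the $\lambda$-decomposition and its target $\Omega_{A/F}$ is, in weight $q$, concentrated in homological degree $q$.
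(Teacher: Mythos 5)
Your argument is correct and is exactly the route the paper intends: it reduces to showing $HH^{(p)}_{p+j}(A/k)=0$ for $j\ge 1$ and obtains this from the Kassel--Sletsj{\o}e spectral sequence of Lemma \ref{lem:main11}, whose $E^1$-terms vanish by the Loday--Quillen computation over $F$ (the paper states the corollary without a written proof, citing precisely this combination). Your closing remark that the vanishing over $F$ must be read off Hodge-componentwise, which holds because $\mu$ respects the decomposition, is the right point to flag.
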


\begin{theorem}\label{thm:hh}
Let $X$ be a compact topological space, $A=C(X)$, and $k\subset\C$ a subfield.
Then the map \eqref{mumap} is a quasi-isomorphism, and we have the identities \eqref{formu:lq}.
\end{theorem}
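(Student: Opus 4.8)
The plan is to prove the statement by applying the vanishing theorem for homology theories (Theorem~\ref{thm:vanish}) to the Hodge summands of Hochschild homology. Recall that for a commutative unital $\C$-algebra $A$ the reduced complex carries the $\lambda$-decomposition $\bar C(A/k)=\bigoplus_{p\ge 0}\bar C^{(p)}(A/k)$, that $\mu$ decomposes accordingly as $\bigoplus_p\mu^{(p)}$, where $\mu^{(p)}$ maps $\bar C^{(p)}(A/k)$ (which is concentrated in homological degrees $\ge p$) onto $\Omega^p_{A/k}$ sitting in degree $p$, and that $HH^{(p)}_n(A/k)=0$ for $n<p$ while $HH^{(p)}_p(A/k)=\Omega^p_{A/k}$ via $\mu^{(p)}$; all of these are standard facts about the $\lambda$-decomposition (see \cite{lod}). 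Consequently $\mu$ is a quasi-isomorphism for $A=C(X)$ if and only if $HH^{(p)}_n(C(X)/k)=0$ for every $p$ and every $n>p$. Once this is known, the identities \eqref{formu:lq} follow as explained after \eqref{formu:lq}: $\mu$ is then a quasi-isomorphism of mixed complexes, hence induces isomorphisms on $HH_*$, $HC_*$, $HN_*$ and $HP_*$, and for the mixed complex $(\Omega_{C(X)/k},0,d)$ these are given by the right-hand sides of \eqref{formu:lq} by a direct computation (see \cite{lod}).

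To obtain the vanishing, fix $p\ge 0$ and consider the functor $E^{(p)}\colon\Co/\C\to\Spt$ sending $A$ to the spectrum associated via Dold--Kan to the complex $\bar C^{(p)}(A/k)$, so that $E^{(p)}_n(A)=HH^{(p)}_n(A/k)$. Since Hochschild homology sends finite products of algebras to products and the $\lambda$-decomposition is a natural, additive splitting, $E^{(p)}$ is a homology theory in the sense of Subsection~\ref{subsec:func-n-H}. I will verify the three hypotheses of Theorem~\ref{thm:vanish} with $n_0=p+1$. Hypothesis (ii) is clear, since the Hochschild complex, its homology, and the natural summand $\bar C^{(p)}$ all commute with filtering colimits of algebras. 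For hypothesis (i), let $0\to A\to B\to C\to 0$ be an extension of commutative $C^*$-algebras; then $A$ is a $C^*$-algebra, so $\tor^{A^+_k}_*(k,A)=0$ by Remark~\ref{rem:wodpnas}, whence $HH(-/k)$ is excisive on this extension by Theorem~\ref{thm:wodex}; as the $\lambda$-decomposition is a natural direct-sum decomposition of homology theories, the summand $E^{(p)}$ is excisive as well. For hypothesis (iii), let $V$ be a smooth affine variety over $\C$; then $\cO(V)$ is a smooth $\C$-algebra, so by Corollary~\ref{coro:ks} (Loday--Quillen combined with the Kassel--Sletsj{\o}e spectral sequence of Lemma~\ref{lem:main11}, applied to $k\subseteq\C$) the map \eqref{mumap} is a quasi-isomorphism for $\cO(V)$ relative to $k$; in particular $E^{(p)}_n(\cO(V))=HH^{(p)}_n(\cO(V)/k)=0$ for all $n\ge p+1$.

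Theorem~\ref{thm:vanish} then yields $HH^{(p)}_n(A/k)=0$ for every commutative $C^*$-algebra $A$ and every $n\ge p+1$, in particular $HH^{(p)}_n(C(X)/k)=0$ for $n>p$. Combined with $HH^{(p)}_n(C(X)/k)=0$ for $n<p$ and $HH^{(p)}_p(C(X)/k)=\Omega^p_{C(X)/k}$ recorded above, this shows that each $\mu^{(p)}$, hence $\mu$, is a quasi-isomorphism for $C(X)$, and the identities \eqref{formu:lq} follow as indicated.

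Essentially all of the content is imported from the earlier sections, so I do not anticipate a genuine obstacle. The two points that need care are the verification that the Hodge summand $E^{(p)}$ inherits from $HH(-/k)$ both the structure of a homology theory and the excision property on commutative $C^*$-algebras — which comes down to the naturality and additivity of the $\lambda$-decomposition — and, what I expect to be the most delicate bookkeeping, hypothesis (iii): checking via the Kassel--Sletsj{\o}e spectral sequence that $HH^{(p)}_n(\cO(V)/k)=0$ really holds in the full range $n\ge p+1$ for the possibly transcendental base change $k\subseteq\C$, and not merely relative to $\C$.
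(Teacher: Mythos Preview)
Your proposal is correct and follows essentially the same route as the paper: apply Theorem~\ref{thm:vanish} to the Hodge summand $E^{(p)}$ of Hochschild homology with $n_0=p+1$, checking excision via Remark~\ref{rem:wodpnas} and Theorem~\ref{thm:wodex}, filtered colimits directly, and vanishing on smooth affines via Corollary~\ref{coro:ks}. The only point the paper makes explicit that you leave implicit is the extension of $C^{(p)}(-/k)$ to non-unital algebras via $C^{(p)}(A/k)=\ker(C^{(p)}(\tilde A_k/k)\to C^{(p)}(k/k))$, which is needed to have a homology theory defined on all of $\Co/\C$ before invoking Theorem~\ref{thm:vanish}.
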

\begin{proof}
Extend $C^{(p)}(/k)$ (and $HH_n^{(p)}(/k)$) to non-unital algebras by $$C^{(p)}(A/k)=\ker(C^{(p)}(\tilde{A}_k/k)\to C^{(p)}(k/k)).$$
Let $E^{(p)}(A/k)$ be the spectrum the Dold-Kan correspondence associates to $C^{(p)}(A/k)$. Regard $E^{(p)}$
as a homology theory of $\C$-algebras. Then $E^{(p)}$ is excisive on $C^*$-algebras, by Remark \ref{rem:wodpnas}
and naturality. Further $E^{(p)}_n(A/k)=HH_n^{(p)}(A/k)=0$ whenever $n>p$ and $A$ is smooth over $\C$, by Corollary \ref{coro:ks}.
It is also clear that $HH^{(p)}_*(/k)$ preserves filtering colimits, since $HH_*(/k)$ does. Thus we may apply
Theorem \ref{thm:vanish} to conclude the proof.
\end{proof}

\subsection{The Farrell-Jones Isomorphism Conjecture.}\label{subsec:fj}
Let $A$ be a ring, $\Gamma$ a group, and $q\le 0$. Put
\[
Wh^A_q(\Gamma)=\coker(K_q(A)\to K_q(A[\Gamma]))
\]
for the cokernel of the map induced by the natural inclusion $A\subset A[\Gamma]$.
Recall \cite[Conjecture ~1, pp. $708$]{MR2181833} that the Farrell-Jones conjecture with coefficients for a torsion free group implies that if $\Gamma$
torsion free and $A$ a noetherian regular unital ring, then
\begin{equation}\label{fj}
Wh_q^A(\Gamma)=0\qquad (q\le 0)
\end{equation}
Note that the conjecture in particular implies that $K_q(A[\Gamma])=0$ for $q<0$ if $A$ is noetherian regular,
for in this case we have $K_q(A)=0$ for $q<0$.

\begin{theorem}\label{thm:fj}
Let  $\Gamma$ be torsion free group which satisfies \eqref{fj} for every commutative smooth
$\C$-algebra $A$. Also let $q\le 0$. Then the functor $Wh^{?}_q(\Gamma)$ is homotopy invariant on commutative $C^*$-algebras.
\end{theorem}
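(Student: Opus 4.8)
The plan is to realize $A\mapsto Wh^A_q(\Gamma)$ as a homotopy group of a homology theory on $\Co/\C$ and then to apply the homotopy invariance theorem \ref{thm:htpy} in each degree $q\le 0$.

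First I would introduce $E\colon\Co/\C\to\Spt$, $E(A)=\cofi\bigl(K(A)\to K(A[\Gamma])\bigr)$, the homotopy cofiber of the map induced by the inclusion $A\hookrightarrow A[\Gamma]$ into the (in general non-commutative) group ring. Since $(\prod_{i\in I}A_i)[\Gamma]=\prod_{i\in I}A_i[\Gamma]$ for finite $I$, both $K(-)$ and $K(-[\Gamma])$ preserve finite products up to homotopy, and hence so does $E$; thus $E$ is a homology theory in the sense of \ref{subsec:func-n-H}. The augmentation $A[\Gamma]\to A$ splits the inclusion, so $K_*(A)\to K_*(A[\Gamma])$ is split injective, and the long exact homotopy sequence of the cofibration degenerates into natural short exact sequences, giving $E_q(A)=\coker\bigl(K_q(A)\to K_q(A[\Gamma])\bigr)=Wh^A_q(\Gamma)$ for every $q\in\Z$.

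Next, fixing $q\le 0$, I would verify the three hypotheses of Theorem \ref{thm:htpy} for $F=E_q=Wh^{?}_q(\Gamma)\colon\Co/\C\to\ab$. Hypothesis (iii) holds because $K_*$ commutes with filtering colimits, $A\mapsto A[\Gamma]$ commutes with filtering colimits, and filtering colimits of abelian groups are exact. Hypothesis (ii) is exactly the standing assumption: $F(\cO(V))=Wh^{\cO(V)}_q(\Gamma)=0$ for every smooth affine $\C$-variety $V$ because $\Gamma$ satisfies \eqref{fj} on commutative smooth $\C$-algebras. For hypothesis (i) it is enough to show that the homology theory $E$ satisfies excision on commutative $C^*$-algebras, for then a split extension $0\to A\to B\to C\to 0$ of such algebras is sent to a homotopy fibration $E(A)\to E(B)\to E(C)$ which splits, so each $0\to E_q(A)\to E_q(B)\to E_q(C)\to 0$ is split exact. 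To obtain excision for $E$, observe that $K(A)\to K(B)\to K(C)$ is a homotopy fibration by Theorem \ref{thm:swcstar}, while $K(A[\Gamma])\to K(B[\Gamma])\to K(C[\Gamma])$ is one because $A[\Gamma]=A\otimes_\C\C[\Gamma]$ is $K$-excisive: $A$ is $H$-unital by Remark \ref{rem:wodpnas}, and this property is inherited by $A\otimes_\C\C[\Gamma]$ via \cite[Corollary 9.7]{MR997314}, just as in the proof of Theorem \ref{thm:kreg}. Taking the homotopy cofiber of this map of fibration sequences then gives the homotopy fibration $E(A)\to E(B)\to E(C)$.

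Granting (i)--(iii), Theorem \ref{thm:htpy} shows that $X\mapsto Wh^{C(X)}_q(\Gamma)$ is homotopy invariant on $\comp$ (and, as a bonus, vanishes on contractible $X$), which is the assertion. I expect the only genuinely delicate point to be the $K$-excisiveness of $A[\Gamma]$ for a commutative $C^*$-algebra $A$ — equivalently, the permanence of $H$-unitality under tensoring with $\C[\Gamma]$ — since the rest of the argument is formal and runs exactly parallel to the $K$-regularity result of Theorem \ref{thm:kreg}.
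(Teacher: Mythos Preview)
Your proposal is correct and follows exactly the approach the paper takes: the paper's proof is the single line ``It follows from Theorem \ref{thm:htpy} applied to $Wh^{?}_q(\Gamma)$,'' and you have faithfully unpacked the verification of hypotheses (i)--(iii), including the key point that $A[\Gamma]=A\otimes_\C\C[\Gamma]$ is $K$-excisive for a $C^*$-algebra $A$ via \cite[Corollary 9.7]{MR997314}, just as in the proof of Theorem \ref{thm:kreg}. The only cosmetic addition you might make is to cite Remark \ref{rem:cstar} when passing from homotopy invariance on $\comp$ to homotopy invariance on all commutative $C^*$-algebras.
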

\begin{proof} It follows from Theorem \ref{thm:htpy} applied to $Wh^{?}_q(\Gamma)$.
\end{proof}
\begin{corollary}\label{cor:fj}
Let $\Gamma$ be as above and $X$ a contractible compact space. Then
$K_0(C(X)[\Gamma])=\Z$ and $K_q(C(X)[\Gamma])=0$ for $q<0$.
\end{corollary}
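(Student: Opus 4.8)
\emph{Proof proposal.} The plan is to deduce everything from the vanishing of $Wh^{?}_q(\Gamma)$ on $C(X)$ for contractible $X$, together with the known value of $K_*(C(X))$ in that case. First I would check that the functor $A\mapsto Wh^A_q(\Gamma)$ on commutative $\C$-algebras satisfies the three hypotheses of Theorem \ref{thm:htpy} for each fixed $q\le 0$: it is split-exact on $C^*$-algebras because $K_q$ is and $A\mapsto A[\Gamma]$ preserves split extensions; it commutes with filtering colimits because both $K_q$ and $A\mapsto A[\Gamma]$ do and filtering colimits of abelian groups are exact; and it vanishes on coordinate rings of smooth affine varieties over $\C$ precisely by the hypothesis on $\Gamma$ (this is what already underlies Theorem \ref{thm:fj}). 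Hence Theorem \ref{thm:htpy} gives $Wh^{C(X)}_q(\Gamma)=0$ for every contractible compact $X$ and every $q\le 0$; equivalently, the map $K_q(C(X))\to K_q(C(X)[\Gamma])$ induced by the inclusion $C(X)\subset C(X)[\Gamma]$ is surjective.

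For $q<0$ I would then invoke Rosenberg's conjecture: by Theorem \ref{thm:rosen1} the functor $X\mapsto K_q(C(X))$ is homotopy invariant, so for contractible $X$ we get $K_q(C(X))=K_q(\C)=0$ since $\C$ is a regular ring; as $K_q(C(X)[\Gamma])$ is a quotient of this group by the surjectivity just established, it vanishes. For $q=0$, the inclusion $C(X)\hookrightarrow C(X)[\Gamma]$ is split by the augmentation $C(X)[\Gamma]\to C(X)$ sending $\Gamma$ to $1$, so $K_0(C(X))\to K_0(C(X)[\Gamma])$ is split injective; combined with surjectivity it is an isomorphism. Finally, since $X$ is contractible (hence nonempty and connected), every finitely generated projective $C(X)$-module is free — the case of the trivial monoid in Theorem \ref{thm:pswan}, or classically Swan's theorem — and the rank then identifies $K_0(C(X))=\Z$. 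Therefore $K_0(C(X)[\Gamma])=\Z$, completing the argument.

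The argument is essentially formal given Theorems \ref{thm:htpy}, \ref{thm:rosen1} and \ref{thm:pswan}; the only point that needs care is that homotopy invariance of $Wh^{?}_0(\Gamma)$ by itself does \emph{not} pin down $K_0(C(X)[\Gamma])$ — one genuinely uses the vanishing clause of Theorem \ref{thm:htpy} (equivalently, that a point is a smooth affine variety, so that $Wh^{\C}_q(\Gamma)=0$), and the identification of $K_0(C(X)[\Gamma])$ with $\Z$ rests on the augmentation splitting rather than on homotopy invariance alone. There is no serious obstacle beyond verifying the three hypotheses of Theorem \ref{thm:htpy} for $Wh^{?}_q(\Gamma)$, which is routine.
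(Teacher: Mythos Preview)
Your proof is correct and is exactly the argument the paper has in mind: the corollary is stated without proof, as an immediate consequence of Theorem~\ref{thm:fj} (which is Theorem~\ref{thm:htpy} applied to $Wh^{?}_q(\Gamma)$) together with the known values $K_q(C(X))=0$ for $q<0$ (Theorem~\ref{thm:rosen1}) and $K_0(C(X))=\Z$ for contractible $X$. You have simply unwound these references and supplied the augmentation-splitting step for $q=0$, which is precisely what the reader is expected to do.
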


\begin{corollary}
Let $\Gamma$ be as above. Then, the functor
$$X \mapsto K_q(C(X)[\Gamma])$$
is homotopy invariant on the category of compact topological spaces for $q \leq 0$.
\end{corollary}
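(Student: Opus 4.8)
The plan is to deduce the corollary from Theorem~\ref{thm:fj} together with the already-known homotopy invariance of $X\mapsto K_q(C(X))$, using the standard splitting of the $K$-theory of group rings. For any unital ring $A$ the inclusion $A\hookrightarrow A[\Gamma]$ is split by the augmentation $A[\Gamma]\to A$, $\sum_g a_g g\mapsto \sum_g a_g$; both maps are natural in $A$ and their composite is the identity. Hence $K_q(A)\to K_q(A[\Gamma])$ is a natural split monomorphism, and since $Wh^A_q(\Gamma)=\coker(K_q(A)\to K_q(A[\Gamma]))$ there is a natural (in $A$) direct sum decomposition
\[
K_q(A[\Gamma])\ \cong\ K_q(A)\ \oplus\ Wh^A_q(\Gamma).
\]
Specializing to $A=C(X)$ yields an isomorphism of functors $\comp\to\ab$ between $X\mapsto K_q(C(X)[\Gamma])$ and the direct sum of $X\mapsto K_q(C(X))$ and $X\mapsto Wh^{C(X)}_q(\Gamma)$.

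Next I would check that each of the two summands is homotopy invariant on $\comp$. For the Whitehead term: every commutative unital $C^*$-algebra is of the form $C(X)$, and $C(X)\otimes C[0,1]=C(X\times[0,1])$, so homotopy invariance of $Wh^{?}_q(\Gamma)$ on commutative $C^*$-algebras, which is the content of Theorem~\ref{thm:fj}, is exactly the assertion that $X\mapsto Wh^{C(X)}_q(\Gamma)$ is homotopy invariant on $\comp$. For the $K$-theory term: when $q<0$ the functor $X\mapsto K_q(C(X))$ is homotopy invariant by Rosenberg's conjecture (Theorem~\ref{thm:rosen1}), and when $q=0$ it is homotopy invariant by the special case $M=\{e\}$ of Theorem~\ref{thm:k0htpy}. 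Since a finite direct sum of homotopy invariant functors is homotopy invariant, the decomposition above shows that $X\mapsto K_q(C(X)[\Gamma])$ is homotopy invariant on $\comp$ for every $q\le 0$.

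There is essentially no serious obstacle; the only point that deserves a line of justification is the translation between ``homotopy invariance on commutative $C^*$-algebras'' (as stated in Theorem~\ref{thm:fj}) and homotopy invariance of the space-indexed functor, which rests on Gelfand duality together with the identification $C(X\times[0,1])=C(X)\otimes C[0,1]$. Alternatively, one could bypass this translation by re-running the argument of Theorem~\ref{thm:htpy} directly for the functor $A\mapsto Wh^{A}_q(\Gamma)$, but invoking Theorem~\ref{thm:fj} is cleaner.
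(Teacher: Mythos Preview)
Your argument is correct. The natural splitting $K_q(A[\Gamma])\cong K_q(A)\oplus Wh^A_q(\Gamma)$ coming from the augmentation is functorial in $A$, so the functor $X\mapsto K_q(C(X)[\Gamma])$ splits as the direct sum of two functors each of which you identify as homotopy invariant by results already proved in the paper. Your translation between Theorem~\ref{thm:fj} (homotopy invariance on commutative $C^*$-algebras) and homotopy invariance of the space-indexed functor is fine: in fact Theorem~\ref{thm:fj} is proved by invoking Theorem~\ref{thm:htpy}, whose conclusion is precisely homotopy invariance of $X\mapsto Wh^{C(X)}_q(\Gamma)$ on $\comp$, so there is nothing to translate.

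The paper takes a different route: it applies Proposition~\ref{prop:homcor} to the functor $X\mapsto K_q(C(X)[\Gamma])$ directly, using the preceding Corollary~\ref{cor:fj} for vanishing on contractible $X$. That approach needs one ingredient you avoid, namely split-exactness of $X\mapsto K_q(C(X)[\Gamma])$ on $\comp$ (which holds because a split pushout of compact spaces gives a split Milnor square of $C^*$-algebras, the group-ring functor $(-)[\Gamma]=(-)\otimes_\C\C[\Gamma]$ preserves it, and $K_q$ is split-exact on all rings). In exchange, the paper's proof is a single stroke and handles $q=0$ without having to cite Theorem~\ref{thm:k0htpy} separately (modulo passing to the reduced functor, since $K_0$ takes the value $\Z$ rather than $0$ on contractibles). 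Your decomposition approach trades that split-exactness verification for citing three homotopy invariance results; both proofs are short and neither is deeper than the other.
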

\begin{proof}
This follows directly from Proposition \ref{prop:homcor} and the preceding corollary.
\end{proof}

The general case of the Farrell-Jones conjecture predicts that for any group $\Gamma$ and any unital ring $R$,
the assembly map \cite{MR2181833}
\begin{equation}\label{fjg}
\mathcal{A}^\Gamma (R):\mathbb{H}^{\Gamma}(E_{\mathcal{VC}}(\Gamma),K(R))\to K(R[\Gamma])
\end{equation}
is an equivalence. Here $\mathbb{H}^{\Gamma}(-,K(R))$ is the equivariant homology theory associated to the spectrum $K(R)$ and
$E_{\mathcal{VC}}((\Gamma))$ is the classifying space with respect to the class of virtually cyclic subgroups (see \cite{MR2181833} for definitions of these objects).

We also get:

\begin{theorem}\label{thm:fjg} Let $\Gamma$ be a group such that the map \eqref{fjg} is an equivalence for every smooth commutative $\C$-algebra
$A$. Then \eqref{fjg} is an equivalence for every $C^*$-algebra $A$.
\end{theorem}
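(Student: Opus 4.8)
The plan is to realise the assembly map as a homology theory and reduce to the vanishing theorem. Set $F^\Gamma(A)=\cofi\bigl(\mathcal A^\Gamma(A)\bigr)$, the homotopy cofiber of the assembly map \eqref{fjg}. Since $A\mapsto K(A[\Gamma])$ and $A\mapsto\mathbb H^\Gamma(E_{\mathcal{VC}}(\Gamma),K(A))$ — the latter being the equivariant homology of $E_{\mathcal{VC}}(\Gamma)$ with coefficients in $K(A)$, built by a homotopy colimit from the spectra $K(A[\Gamma'])$ with $\Gamma'$ ranging over the virtually cyclic subgroups of $\Gamma$ — both preserve finite products up to homotopy, $F^\Gamma\colon\rA/\C\to\Spt$ is a homology theory, and $\mathcal A^\Gamma(A)$ is an equivalence if and only if $F^\Gamma(A)\simeq *$. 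The hypothesis on $\Gamma$ says precisely that $F^\Gamma(\cO(V))\simeq *$ for every smooth affine algebraic variety $V$ over $\C$, and the theorem asserts that $F^\Gamma$ vanishes on every $C^*$-algebra.

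I would prove this in two stages. First, for \emph{commutative} $C^*$-algebras, apply Theorem \ref{thm:vanish} to the restriction of $F^\Gamma$ to $\Co/\C$ (taking $n_0$ as small as one likes; the inductive argument of Theorem \ref{thm:vanish} in fact treats all degrees simultaneously). Hypothesis (iii) of that theorem is the assumption on $\Gamma$. Hypothesis (ii) holds because $K_*$ and $(-)\otimes_\Z\Z[\Gamma']$ commute with filtering colimits and homotopy colimits of spectra commute with filtering colimits. Hypothesis (i) is the point needing care: every $C^*$-algebra is $K$-excisive by Theorem \ref{thm:swcstar}, and $K$-excisiveness (i.e.\ $H$-unitality) is preserved under $-\otimes_\Z\Z[\Gamma']$ by \cite[Corollary 9.7]{MR997314}, exactly as in the proof of Theorem \ref{thm:kreg}, so $K((-)[\Gamma'])$ carries $C^*$-extensions to homotopy fibrations; since homotopy colimits of spectra preserve homotopy fibrations, the same holds for $\mathbb H^\Gamma(E_{\mathcal{VC}}(\Gamma),K(-))$ and hence for $F^\Gamma$. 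Theorem \ref{thm:vanish} then gives $F^\Gamma(A)\simeq *$, i.e.\ $\mathcal A^\Gamma(A)$ is an equivalence, for every commutative $C^*$-algebra $A$.

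Second, and this is where the real difficulty lies, one must remove the commutativity restriction, for the proof of Theorem \ref{thm:vanish} is intrinsically commutative (it uses the identification of a unital commutative $C^*$-algebra with some $C(X)$ and the polyhedral approximation of the compact space $X$). I would attack it as follows. Split-exactness of $F^\Gamma$ on $C^*$-algebras (a formal consequence of excision) reduces the claim to \emph{unital} $A$, using $F^\Gamma(\C)\simeq *$; Morita invariance of $K$-theory and the compatibility of $\mathcal A^\Gamma$ with the equivalences $K(M_nR)\simeq K(R)$ show that $F^\Gamma$ is unaffected by passing from $A$ to $M_n(A)$, and, as $F^\Gamma$ commutes with filtering colimits, also to $M_\infty(A)=\colim_n M_n(A)$; and since every $C^*$-algebra is the \emph{algebraic} filtered union of its separable $C^*$-subalgebras and $F^\Gamma$ commutes with such colimits, one may also reduce to separable $A$. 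The remaining task is to propagate the known vanishing from commutative $C^*$-algebras to an arbitrary (separable, stabilised) $A$ through $C^*$-algebra extensions, against which $F^\Gamma$ is excisive — for instance by placing commutative subquotients of $A$ into a mapping-cone sequence. I expect this propagation step to be the main obstacle: the gap between the algebraic colimit $\bigcup A_i$ of a dense filtration and its $C^*$-completion $A$ is invisible to filtering colimits but not to algebraic $K$-theory, so a genuinely new input seems necessary — plausibly a noncommutative counterpart of the algebraic-approximation method of Sections \ref{sec:algcomp}--\ref{sec:htpy}, or the $K$-regularity of general $C^*$-algebras — to complete the argument.
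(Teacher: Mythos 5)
Your first stage is, in substance, the paper's entire proof: the argument given there reads ``It follows from Theorem \ref{thm:vanish} applied to $E(R)=\cofi(\mathcal{A}^\Gamma(R))$'', and your verification of hypotheses (i)--(iii) of that theorem --- excision on $C^*$-algebras via Suslin--Wodzicki together with the fact that $H$-unitality survives tensoring with the unital ring $\Z[\Gamma']$ (Wodzicki, as in the proof of Theorem \ref{thm:kreg}), compatibility with filtering colimits, and vanishing on $\cO(V)$ by the assumption on $\Gamma$ --- is exactly the unwritten content of that one line.

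Your second stage identifies a real discrepancy, but it is a discrepancy in the paper rather than a gap in your argument. Theorem \ref{thm:vanish} is a statement about homology theories on $\Co/\C$ and concludes vanishing only for \emph{commutative} $C^*$-algebras; correspondingly, the introduction announces this application as ``\dots\ then $\mathcal{A}^\Gamma(A)$ is an equivalence for any commutative $C^*$-algebra $A$'', and the neighbouring Theorem \ref{thm:fj} is likewise stated only for commutative $C^*$-algebras. The word ``commutative'' has evidently been dropped from the statement of Theorem \ref{thm:fjg}: the proof supplied cannot yield more than the commutative case, for exactly the reason you give (the method is anchored to writing $A=C(X)$ and approximating the compact space $X$ by polyhedra). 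So you should not feel obliged to push your stage two through; nothing in the paper supplies the propagation from commutative to general $C^*$-algebras, and with ``commutative'' reinstated your stage one is a complete and correct proof.
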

\begin{proof}
It follows from Theorem \ref{thm:vanish} applied to $E(R)=\cofi(\mathcal{A}^\Gamma(R))$.
\end{proof}

\begin{remark}
We have $$K_q(C(X)[\Gamma]) = \pi^S_q\left(\map_+(X_+, KD(\C[\Gamma]))\right), \quad (q \leq 0)$$
where $KD(\C[\Gamma])$ denotes the diffeotopy $K$-theory spectrum, see Definition $4.1.3$ in \cite{MR2409415}. This follows from the study of a suitable co-assembly map and is not carried out in detail here. The homotopy groups of $KD(\C[\Gamma])$ can be computed from the equivariant connective $K$-homology of $\underbar{E} \Gamma$ using the Farrell-Jones assembly map.
\end{remark}

\subsection{Adams operations and the decomposition of rational $K$-theory.}\label{subsec:bs}

The rational $K$-theory of a unital commutative ring $A$ carries a natural decomposition
\[
K_n(A)\otimes\Q=\oplus_{i\ge 0}K_n(A)^{(i)}
\]
Here $K_n(A)^{(i)}=\bigcap_{k\ne 0}\{x \in K_n(A) \mid \psi^k(x)=k^i x\}$, where $\psi^k$ is the Adams operation.
For example, $K_0^{(0)}(A)=H^0(\spec A,\Q)$ is the rank component, and $K_n^{(0)}(A)=0$ for
$n>0$ (\cite[6.8]{kra}).
A conjecture of Alexander Be\u\i linson and Christophe Soul\'e (see \cite{bei}, \cite{sou}) asserts
that
\begin{equation}\label{bs}
K_n^{(i)}(A)=0 \text{ for } n\ge\max\{1,2i\}.
\end{equation}
The conjecture as stated was proved wrong for non-regular $A$ (see \cite{gw} and \cite[7.5.6]{ft}) but no regular counterexamples have been found. Moreover, the original statement has been formulated in terms of motivic cohomology (with rational,
torsion and integral coefficients) and generalized
to regular noetherian schemes \cite[4.3.4]{kahn}. For example if $X=\Spec R$ is smooth then
$K_n^{(i)}(R)=H^{2i-n}(X,\Q(i))$ is the motivic cohomology of $X$ with coefficients in the twisted sheaf $\Q(i)$.

We shall need the well-known fact that the validity of \eqref{bs} for $\C$ implies its validity for all smooth $\C$-algebras; this is Proposition \ref{prop:bs} below. In turn this uses the also well-known fact that
rational $K$-theory sends field inclusions to monomorphisms. We include proofs of both facts for completeness sake.

\begin{lemma}\label{lem:kinj}
Let $F\subset E$ be fields. Then $K_*(E)\otimes\Q\to K_*(F)\otimes\Q$ is injective.
\end{lemma}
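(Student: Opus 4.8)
The plan is to reduce to a finitely generated extension and split it into a purely transcendental part, handled by homotopy invariance and a localization sequence, and a finite part, handled by a transfer. Write $\iota\colon E\hookrightarrow F$ for the inclusion, so that the map in question is the induced $\iota_*\colon K_*(E)\otimes\Q\to K_*(F)\otimes\Q$; recall that $E$, being a subfield of $\C$, has characteristic zero and is in particular infinite. For the reduction, $F$ is the filtered union of its subfields $F_\alpha$ that are finitely generated over $E$; since algebraic $K$-theory commutes with filtering colimits of rings and $(-)\otimes\Q$ is exact, $K_*(F)\otimes\Q=\colim_\alpha K_*(F_\alpha)\otimes\Q$, so every class in $\ker\iota_*$ already dies in some $K_*(F_\alpha)\otimes\Q$. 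Thus we may assume $F/E$ is finitely generated; choosing a transcendence basis $t_1,\dots,t_d$ of $F$ over $E$ factors $\iota$ as $E\subseteq L:=E(t_1,\dots,t_d)\subseteq F$ with $F/L$ finite, and it suffices to prove injectivity of $K_*(E)\otimes\Q\to K_*(L)\otimes\Q$ and of $K_*(L)\otimes\Q\to K_*(F)\otimes\Q$ separately.

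The finite part is immediate: restriction of scalars along the finite extension $F/L$ gives a transfer $\tau\colon K_*(F)\to K_*(L)$, and since $F$ is free of rank $[F:L]$ over $L$ one has $\tau\circ\mathrm{res}_{F/L}=[F:L]\cdot\mathrm{id}_{K_*(L)}$; as $[F:L]$ is a nonzero integer, $\mathrm{res}_{F/L}$ is injective after tensoring with $\Q$.

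For the transcendental part I would in fact get $K_*(E)\to K_*(L)$ injective integrally. By $\mathbb A^1$-homotopy invariance of $K$-theory for the regular ring $E[t_1,\dots,t_d]$ (Quillen's fundamental theorem, \cite{qui}), the structure map induces an isomorphism $p^*\colon K_*(E)\xrightarrow{\ \sim\ }K_*(\mathbb A^d_E)$. Since $\Spec L$ is the generic point of $\mathbb A^d_E$, applying Quillen's localization theorem to the dense affine open subschemes of $\mathbb A^d_E$ and passing to the colimit yields an exact sequence
\[
\colim_{Z}G_n(Z)\ \xrightarrow{\ j_*\ }\ K_n(\mathbb A^d_E)\ \longrightarrow\ K_n(L)\ \longrightarrow\ \colim_{Z}G_{n-1}(Z),
\]
where $Z$ runs over the closed subschemes of $\mathbb A^d_E$ of codimension $\ge 1$ and $j_*$ is the sum of the proper pushforwards; so it is enough to see $j_*=0$. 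Given such a $Z\subsetneq\mathbb A^d_E$, since $E$ is infinite there is an $E$-rational point $s\in\mathbb A^d_E(E)$ with $s\notin Z$; then $s^*\colon K_*(\mathbb A^d_E)\to K_*(E)$ satisfies $s^*\circ p^*=\mathrm{id}$, hence is an isomorphism, while it annihilates the image of the pushforward from $Z$: as $s$ lies in the open complement $U:=\mathbb A^d_E\setminus Z$, the map $s^*$ factors through $K_*(U)$, and the composite $G_*(Z)\to K_*(\mathbb A^d_E)\to K_*(U)$ vanishes by exactness of the localization sequence. Hence the pushforward $G_*(Z)\to K_*(\mathbb A^d_E)$ is zero for every such $Z$, so $j_*=0$ and $K_*(E)\xrightarrow{\ \sim\ }K_*(\mathbb A^d_E)\hookrightarrow K_*(L)$. (Alternatively one may quote Quillen's explicit computation of the $K$-theory of a rational function field, which directly splits $K_*(E)\to K_*(E(t_1,\dots,t_d))$ and also disposes of finite ground fields.)

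Combining the two parts gives injectivity of $\iota_*$ when $F/E$ is finitely generated, and the colimit reduction completes the argument. The main obstacle is the transcendental step — precisely, recognizing that the only possible obstruction to $K_*(\mathbb A^d_E)\hookrightarrow K_*(E(t_1,\dots,t_d))$ is carried by the pushforwards from positive-codimension closed subschemes, and then killing those by restricting along a sufficiently generic $E$-rational point; the colimit reduction and the transfer estimate are formal once this is in hand.
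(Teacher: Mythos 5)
Your proposal is correct and follows essentially the same route as the paper: reduce to a finitely generated extension by commuting $K$-theory with filtered colimits, split off a purely transcendental subextension via a transcendence basis, and handle the finite part by the transfer, which splits restriction up to multiplication by the degree. The only divergence is cosmetic: for the purely transcendental step the paper inducts down to a single variable and cites Gersten's computation of $K_*(F(t))$, whereas you prove the injectivity $K_*(E)\to K_*(E(t_1,\dots,t_d))$ directly by combining homotopy invariance with Quillen's localization sequence and killing the pushforward from each proper closed $Z\subset\mathbb{A}^d_E$ by restricting along a rational point off $Z$ — a correct, self-contained substitute for that citation (and your relabelling of $E$ and $F$ silently fixes the direction of the map as misstated in the lemma).
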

\begin{proof} Since $K$-theory commutes with filtering colimits, we may assume that $E/F$ is a finitely generated field extension, which we may write as a finite extension of a finitely generated purely transcendental extension. If $E/F$ is purely transcendental, then by induction we are reduced to the case $E=F(t)$, which
follows from \cite[Thm. 1.3]{gers}. If $d=\dim_FE$ is finite, then the transfer map $K_*(E)\to K_*(F)$ \cite[pp. 111]{qui} splits $K_*(F)\to K_*(E)$ up to $d$-torsion.
\end{proof}

\begin{proposition}\label{prop:bs}
If \eqref{bs} holds for $\C$, then it holds for all smooth $\C$-algebras.
\end{proposition}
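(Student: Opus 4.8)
The plan is to reduce the assertion to its hypothesis — the case of the ground field $\C$ — by two descents: from a smooth affine $\C$-algebra to its residue fields, and from an arbitrary finitely generated field extension of $\C$ down to $\C$ itself.

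\emph{Finitely generated extensions of $\C$.} First I would observe that every finitely generated field extension $E$ of $\C$ admits a (not necessarily $\C$-linear) embedding into $\C$. Indeed, the algebraic closure $\overline E$ is an algebraically closed field of characteristic zero with $\mathrm{trdeg}_\Q(\overline E)=|\overline E|=|\C|=\mathrm{trdeg}_\Q(\C)=2^{\aleph_0}$, so Steinitz's theorem gives an isomorphism $\overline E\cong\C$, and $E\hookrightarrow\overline E\cong\C$ is then a field embedding. Applying Lemma \ref{lem:kinj} to it (rational $K$-theory sends field inclusions to monomorphisms), and using that the induced map respects the Adams operations since it is induced by a ring homomorphism, we get $K_n^{(i)}(E)\hookrightarrow K_n^{(i)}(\C)$ for all $n,i$. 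The hypothesis that \eqref{bs} holds for $\C$ makes the target vanish for $n\ge\max\{1,2i\}$, hence so does $K_n^{(i)}(E)$: thus \eqref{bs} holds for every finitely generated field extension of $\C$.

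\emph{From algebras to residue fields.} Now let $A=\cO(V)$ with $V$ smooth affine over $\C$. Passing to a connected component — these are irreducible since $V$ is smooth — I may assume $A$ is a domain. I would then feed the previous conclusion into Quillen's coniveau spectral sequence for $X=\Spec A$,
\[
E_1^{p,q}=\bigoplus_{x\in X^{(p)}}K_{-p-q}(\kappa(x))\ \Longrightarrow\ K_{-p-q}(X),
\]
invoking the classical fact that, rationally, this spectral sequence is compatible with the Adams operations, with the contribution of a point of codimension $p$ shifted in weight by $p$ (see \cite{sou}). Consequently $K_n^{(i)}(A)$ has a finite filtration whose $p$-th graded piece is a subquotient of $\bigoplus_{x\in X^{(p)}}K_n^{(i-p)}(\kappa(x))$. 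Each residue field $\kappa(x)$ is a finitely generated extension of $\C$, and $\max\{1,2(i-p)\}\le\max\{1,2i\}$ because $p\ge0$ (with the summand trivially zero when $i-p<0$); so by the first step every such summand vanishes once $n\ge\max\{1,2i\}$. Therefore $K_n^{(i)}(A)=0$ for $n\ge\max\{1,2i\}$, which is \eqref{bs} for $A$.

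I expect the one genuinely non-formal point to be the weight-compatibility of the coniveau spectral sequence invoked in the second step — equivalently, that the filtration of $K_*(A)$ by codimension of support refines the $\gamma$-filtration weight by weight. This is where care is needed, and it is indispensable: the cruder statement that $K_*(A)\to K_*(\mathrm{Frac}\,A)$ is injective is simply false (it already fails on $K_0$ as soon as $\mathrm{Pic}\,A\ne0$), and it is exactly the weight shift by the codimension that lets the residue-field vanishing in weights $\le i$ do the job. By contrast, the first step is essentially Steinitz's theorem together with an invocation of Lemma \ref{lem:kinj}.
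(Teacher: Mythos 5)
Your proposal is correct and follows essentially the same route as the paper: the paper also reduces from smooth $\C$-algebras to finitely generated field extensions of $\C$ (citing the Gysin-sequence/coniveau argument from Kahn's Handbook article, which you spell out explicitly as the weight-graded coniveau spectral sequence), then embeds each such extension into $\C$ using that $\C$ is algebraically closed of infinite transcendence degree over $\Q$, and concludes with Lemma \ref{lem:kinj}. The only difference is one of exposition: you make the weight shift by codimension explicit where the paper delegates it to a reference.
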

\begin{proof}
The Gysin sequence argument at the beginning of \cite[4.3.4]{kahn} shows that if \eqref{bs} is an isomorphism for all finitely generated field extensions of $\C$ then it is an isomorphism for all smooth $R$. If $E\supset \C$ is a finitely generated field extension, then we may write $E=F[\alpha]$ for some
purely transcendental field extension $F\cong \C(t_1,\dots,t_n)\supset \C$ and some algebraic element $\alpha$. From this and the fact that $\C$ is algebraically closed and of infinite transcendence degree over $\Q$, we see
that $E$ is isomorphic to a subfield of $\C$. Now apply Lemma \ref{lem:kinj}.
\end{proof}

\begin{theorem}\label{thm:bs}
Assume that \eqref{bs} holds for the field $\C$. Then it also holds for
all commutative $C^*$-algebras.
\end{theorem}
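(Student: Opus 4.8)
The plan is to deduce Theorem \ref{thm:bs} from the vanishing theorem (Theorem \ref{thm:vanish}), in exactly the same way that Theorems \ref{thm:kreg}, \ref{thm:hh} and \ref{thm:fjg} were deduced from it. Fix $i\ge 0$ and put $n_0=\max\{1,2i\}$. The Adams operations act naturally on rational algebraic $K$-theory, and one knows that the rationalised $K$-theory spectrum splits naturally into its eigenspace summands; write $K^{(i)}\colon\Co/\C\to\Spt$ for the functor sending a commutative $\C$-algebra $A$ to the $i$-th such summand (extended to non-unital algebras by the device of Subsection \ref{subsec:func-n-H}), so that $\pi_nK^{(i)}(A)=K_n^{(i)}(A)$ for unital $A$. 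Since rationalisation and the eigenspace decomposition are natural and $K$ preserves finite products up to homotopy, $K^{(i)}$ is a homology theory in the sense of this paper.

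I would then verify the three hypotheses of Theorem \ref{thm:vanish} for $E=K^{(i)}$ with this $n_0$. Condition (ii) is straightforward: $K$-theory commutes with filtering colimits, hence so does its rationalisation, and a filtering colimit of Adams decompositions is the Adams decomposition of the colimit, so $K^{(i)}_n$ commutes with filtering colimits for every $n$. Condition (iii) is precisely Proposition \ref{prop:bs}: the hypothesis that \eqref{bs} holds for $\C$ forces it to hold for every smooth $\C$-algebra, that is, $K^{(i)}_n(\cO(V))=0$ for $n\ge\max\{1,2i\}=n_0$ and every smooth affine variety $V$. Condition (i) follows from the Suslin--Wodzicki theorem that $C^*$-algebras are $K$-excisive (Theorem \ref{thm:swcstar}): rationalising a homotopy fibration of $K$-theory spectra and passing to the natural retract given by the $i$-th eigenspace summand again gives a homotopy fibration, so $K^{(i)}$ is excisive on commutative $C^*$-algebras. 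Theorem \ref{thm:vanish} then gives $K_n^{(i)}(A)=0$ for every commutative $C^*$-algebra $A$ and all $n\ge n_0=\max\{1,2i\}$; since $i\ge 0$ was arbitrary, this is exactly \eqref{bs} for all commutative $C^*$-algebras.

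The one point that requires genuine care --- and the only real obstacle --- is the input about the eigenspace spectrum $K^{(i)}$: one must know that the Adams-operation eigenspace decomposition of rational $K$-theory is realised at the level of spectra, is functorial in the (possibly non-unital) algebra, and is compatible with homotopy fibration sequences, so that the verifications of (i)--(iii) above are meaningful. This is classical, but it is the step on which everything rests; granting it, the argument is a routine instance of Theorem \ref{thm:vanish}.
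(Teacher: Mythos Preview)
Your proposal is correct and follows essentially the same route as the paper: both apply Theorem \ref{thm:vanish} to the homology theory $K^{(i)}$ with $n_0=\max\{1,2i\}$, checking condition (iii) via Proposition \ref{prop:bs}, condition (i) via Suslin--Wodzicki excision (Theorem \ref{thm:swcstar}), and condition (ii) via the fact that $K$-theory commutes with filtering colimits. Your discussion of the spectrum-level realization of the Adams decomposition is more explicit than the paper's, which simply takes the existence of the homology theory $K^{(i)}$ for granted.
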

\begin{proof}
By Proposition \ref{prop:bs}, our current hypothesis imply that \eqref{bs} holds for smooth
$A$. In particular the homology theory $K^{(i)}$ vanishes on smooth $A$ for $n\ge n_0=\max\{2i,1\}$. Because
$K$-theory satisfies excision for $C^*$-algebras and commutes with algebraic filtering colimits,
the same is true of $K^{(i)}$. Hence we can apply Theorem \ref{thm:vanish}, concluding the proof.
\end{proof}

\begin{ack} Part of the research for this article was carried out during a visit of the first named author
to Universit\"at G\"ottingen. He is indebted to this institution for their hospitality. He also wishes to thank
Chuck Weibel for a useful e-mail discussion of Be\u \i linson-Soul\'e's conjecture. A previous version of this article contained a technical mistake in the proof of Theorem \ref{thm:draw}; we are thankful to Emanuel
Rodr\'\i guez Cirone for bringing this to our attention.
\end{ack}

\bibliographystyle{plain}

\end{document}